\newcommand\NN{\mathbb{N}} 
\newcommand\ZZ{\mathbb{Z}} 
\newcommand\QQ{\mathbb{Q}} 
\newcommand\RR{\mathbb{R}} 
\newcommand\CC{\mathbb{C}} 
\newcommand\FF{\mathbb{F}} 
\newcommand\FP{\mathbb{F}_p} 
\newcommand\eps{\varepsilon} 
\newcommand\fleche{\longrightarrow} 
\DeclareMathOperator{\Ker}{Ker}
\DeclareMathOperator{\ord}{ord}
\DeclareMathOperator{\ch}{ch}
\DeclareMathOperator{\sh}{sh}
\DeclareMathOperator{\Cond}{Cond}
\DeclareMathOperator{\Stab}{Stab}
\DeclareMathOperator{\Coker}{Coker}
\DeclareMathOperator{\Isom}{Isom}
\DeclareMathOperator{\tr}{tr}
\DeclareMathOperator{\Tr}{Tr}
\DeclareMathOperator{\Hom}{Hom}
\DeclareMathOperator{\Spec}{Spec}
\DeclareMathOperator{\Spf}{Spf}
\DeclareMathOperator{\id}{id}
\DeclareMathOperator{\Gal}{Gal}
\DeclareMathOperator{\Fil}{Fil}
\DeclareMathOperator{\im}{Im}
\DeclareMathOperator{\Sym}{Sym}
\DeclareMathOperator{\Ha}{Ha}
\DeclareMathOperator{\Fitt}{Fitt}
\DeclareMathOperator{\Diag}{Diag}
\DeclareMathOperator{\End}{End}
\DeclareMathOperator{\Res}{Res}
\DeclareMathOperator{\HN}{HN}
\DeclareMathOperator{\Deg}{Deg}
\DeclareMathOperator{\Sp}{sp}
\DeclareMathOperator{\GL}{GL}
\DeclareMathOperator{\HT}{HT}
\DeclareMathOperator{\ha}{ha}
\DeclareMathOperator{\ind}{ind}
\DeclareMathOperator{\Spm}{Spm}
\DeclareMathOperator{\GSp}{GSp}
\definecolor{cqcqcq}{rgb}{0.752941176471,0.752941176471,0.752941176471}
\definecolor{ffqqqq}{rgb}{0.333333333333,0.333333333333,0.333333333333}
\definecolor{cqcqcq}{rgb}{0.752941176471,0.752941176471,0.752941176471}
\definecolor{qqqqff}{rgb}{0.,0.,1.}
\definecolor{cqcqcq}{rgb}{0.752941176471,0.752941176471,0.752941176471}
\definecolor{ffqqqq}{rgb}{1.,0.,0.}
 \def\dar[#1]{\ar@<2pt>[#1]\ar@<-2pt>[#1]}
 \def\tar[#1]{\ar@<4pt>[#1]\ar@<0pt>[#1]\ar@<-4pt>[#1]}
\theoremstyle{definition} 
\newtheorem{definen}{Definition}[section]
\newtheorem{defin}[definen]{Definition}
\theoremstyle{plain} 
\newtheorem{theoren}[definen]{Theorem}
\newtheorem{theor}[definen]{Theorem}
\newtheorem{lemmen}[definen]{Lemma}  
\newtheorem{propen}[definen]{Proposition}
\newtheorem{coren}[definen]{Corollary}
\newtheorem{conjecture}[definen]{Conjecture}
\theoremstyle{remark} 
\newtheorem{remaen}[definen]{Remark}
\begin{document}

\title{Families of Picard modular forms and an application to the Bloch-Kato conjecture.}
\author{Valentin Hernandez}
\email{valentin.hernandez@math.cnrs.fr}

\date{}
\address{Bureau iC1\\LMO Orsay}

\classification{11G18, 11F33, 14K10 (primary), 11F55, 11G40, 14L05, 14G35, 14G22 (secondary).}
\keywords{Eigenvarieties, $p$-adic automorphic forms, Picard modular varieties, Bloch-Kato conjecture, $p$-divisible groups}
\thanks{The author has received partial funding from the European Research Council under the European Union's Horizon 2020 research and innovation programme (grant agreement No 682152).}

\begin{abstract}
In this article we construct a p-adic three dimensional Eigenvariety for the group U(2,1)(E), where E is a quadratic imaginary field and p is inert in E. The Eigenvariety parametrizes Hecke eigensystems on the space of overconvergent, locally analytic, cuspidal Picard modular forms of finite slope. The method generalized the one developed in Andreatta-Iovita-Pilloni by interpolating the coherent automorphic sheaves when the ordinary locus is empty. As an application of this construction, we reprove a particular case of the Bloch-Kato conjecture for some Galois characters of E, extending the results of Bellaiche-Chenevier to the case of a positive sign.
\end{abstract}

\maketitle

\vspace*{6pt}\tableofcontents  

\section{Introduction}

Families of automorphic forms have been a rather fruitful area of research since their introduction by Hida in 1986 for ordinary modular forms and their generalisations, notably the 
Coleman-Mazur eigencurve, but also to other groups than $GL_2$. Among examples of applications we can for example cite some cases of the Artin conjecture, for 
many modular forms the parity conjecture, and generalisations to a bigger class of automorphic representations of instances of Langlands' philosophy (together with local-global 
compatibility).

The goal of this article is to present a new construction of what is called an "eigenvariety", i.e. a $p$-adically rigid-analytic variety which parametrises \textit{Hecke eigensystems}. More precisely, the idea is to construct families of eigenvalues for an appropriate Hecke algebra acting on certain rather complicated cohomology groups which are large $\QQ_p$-Banach spaces, into which we can identify \textit{classical} Hecke eigenvalues. For example Hida and Emerton consider for these cohomology groups some projective systems of étale cohomology on a tower of Shimura varieties, whereas Ash-Stevens and Urban instead consider cohomology of a large system of coefficients on a Shimura variety. Another construction which was introduced for $\GL_2$ by Andreatta-Iovita-Stevens and Pilloni was to construct large coherent Banach sheaves on some open neighborhood of the rigid modular curve (more precisely on strict neighborhoods of the ordinary locus at $p$) indexed by $p$-adic weights and that vary $p$-adically. Their approach was then improved in \cite{AIP,AIP2} to treat the case of Siegel and Hilbert modular forms, still interpolating classical automorphic sheaves by large (coherent) Banach sheaves. This method relies heavily on the construction of the Banach sheaves for which the theory of the canonical subgroup is central. For example in the case of $\GL_2$, the idea is to construct a fibration in open ball centered in the images through the Hodge-Tate map of generators of the (dual of the) canonical subgroup inside the line bundle associated to the conormal sheaf $\omega$ on the modular curve $X_0(p)$.
This rigid sub-bundle has then more functions but as the canonical subgroup doesn't exists on the entire modular curve this fibration in open balls only exists on a strict neighborhood of the ordinary locus. Following the strategy of \cite{AIP,AIP2}, Brasca \cite{Bra} extended this eigenvariety construction to groups that are associated to PEL Shimura varieties whose ordinary locus (at $p$) is non empty, still using the canonical subgroup theory as developed in \cite{Far2}.

As soon as the ordinary locus is empty, the canonical subgroup theory gives no information and without a generalisation of it the previous strategy seems vacuous. To my knowledge no eigenvarieties has been constructed using coherent cohomology when the ordinary locus is empty. Fortunately we developed in \cite{Her2} a generalisation of this theory, called the canonical filtration, for (unramified at $p$) PEL Shimura varieties. The first example when this happen is the case of $U(2,1)_{E/\QQ}$, where $E$ is a quadratic imaginary field, as the associated Picard modular surface has a non empty ordinary locus if and only if $p$ splits in $E$. In this article we present a construction of an eigenvariety interpolating $p$-adically (cuspidal) Picard modular forms when $p$ is inert in $E$. The strategy is then to construct new coherent Banach sheaves on strict neighborhoods of the $\mu$-ordinary locus using the ($2$-steps) canonical filtration, and we get the following result,

\begin{theor}
Let $E$ be a quadratic imaginary field and $p \neq 2$ a prime, inert in $E$. Fix a neat level $K$ outside $p$, and a type\footnote{In first approximation we can simply think of $(K_J,J)$ being $(K,1)$. Forms of type $(K,1)$ are simply forms of level $K$} $(K_J,J)$ outside $p$, i.e. a compact open $K_J$ trivial at $p$, together with $J$ a finite dimensionnal complex representation of $K_J$, and we assume moreover that $K \subset \Ker J \subset K_J$. 
Let $N$ be the places where $K$ is not hyperspecial (or very special) and $I_p$ a Iwahori subgroup at $p$. There exists two equidimensional of dimension $3$ rigid spaces,
\[ \mathcal E \overset{\kappa}{\fleche} \mathcal W,\]
with $\kappa$ locally finite, together with dense inclusions $\ZZ^3 \subset \mathcal W$ and $\mathcal Z \subset \mathcal E$ such that $\kappa(\mathcal Z) \subset \ZZ^3$, and each $z \in \mathcal Z$ coincides with Hecke eigensystem for $\mathcal H = \mathcal H^{Np}\otimes \ZZ[U_p,S_p]$  acting on cuspidal Picard modular forms of weight $\kappa(z)$, type $(K_JI_p,J)$   that are finite slope for the action of $U_p$. More precisely, we have a map $\mathcal H \fleche \mathcal O(\mathcal E)$, which induces an injection,
\[ \mathcal E(\overline{\QQ_p}) \hookrightarrow \Hom(\mathcal H,\overline{\QQ_p})\times \mathcal W(\overline{\QQ_p}),\]
and such that for all $w \in \mathcal W(\overline{\QQ_p})$, $\kappa^{-1}(w)$ is identified by the previous map with eigenvalues for $\mathcal H$ on the space of cuspidal, overconvergent and locally analytic Picard modular forms of weight $w$, type $(K_J,J)$ which are finite slope for the action of $U_p$. When $z \in \mathcal Z$ is of weight $w$ (necessarily in $\ZZ^3$), then the system of Hecke eigenvalues for $\mathcal H$ moreover coincide with one of $\mathcal H$ acting on \emph{classical} previous such forms.
\end{theor}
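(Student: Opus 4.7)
\medskip

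\noindent\textbf{Proof strategy.}

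The plan is to adapt the Andreatta--Iovita--Pilloni construction to the Picard setting, replacing the theory of the canonical subgroup (which is vacuous here, since the ordinary locus is empty) with the $2$-step canonical filtration built in \cite{Her2}. First I would fix a toroidal compactification $\overline{X}_K$ of the Picard modular surface of level $K$, together with its rigid analytification, and consider the $\mu$-ordinary locus $\overline{X}_K^{\mu\textup{-ord}}$ which is non empty because the signature is $(2,1)$ and $p$ is inert. On a strict neighborhood $\mathcal U$ of this locus, the results of \cite{Her2} give a canonical filtration $0 \subset H_1 \subset H_2 \subset A[p]$ of the $p$-torsion of the universal abelian scheme, whose graded pieces have prescribed Hodge heights and on which the Verschiebung acts in a controlled way. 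Crucially, $H_1$ is a rank one (and $H_2/H_1$ a rank one) finite flat subgroup scheme, and both have a well behaved Hodge--Tate map.

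Next I would build the overconvergent coherent sheaves. The conormal sheaf $\omega$ of the universal abelian scheme decomposes, thanks to the action of $\mathcal{O}_E\otimes\ZZ_p$, as $\omega = \omega^+ \oplus \omega^-$, where the $+$-part has rank $2$ and the $-$-part has rank $1$; automorphic weights are naturally parametrized by triples $(k_1,k_2;k_3) \in \ZZ^3$, hence a $3$-dimensional weight space $\mathcal W = \Hom_{\textup{cts}}((\ZZ_p^\times)^2\times\ZZ_{p^2}^\times,\mathbb G_m)^{\textup{rig}}$ (or a suitable variant). Inside $\omega^+|_{\mathcal U}$ the dual Hodge--Tate map of $H_2/H_1$ cuts out a sub-line-bundle, and inside $\omega^-|_{\mathcal U}$ the one of $H_1$ does the same; these provide a partial filtration together with generators up to a controlled $p$-adic error. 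Following the strategy of \cite{AIP,AIP2}, I would then construct Banach sheaves $\omega^{w,\dagger}$ as spaces of $w$-homogeneous functions on the torsor of these generators, depending $p$-adically on $w \in \mathcal W$; at classical weights $w=(k_1,k_2;k_3)$ these sheaves contain the classical automorphic sheaf as the subsheaf of polynomial sections, producing the usual comparison between classical and overconvergent forms.

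Once the sheaves are in place, the Hecke operators outside $p$ act in the obvious way on cohomology with cuspidal support, and at $p$ the operator $U_p$ is defined via correspondences on $\overline{X}_{K I_p}$ built from subgroups dynamically related to the canonical filtration; the key compactness statement is that $U_p$ improves overconvergence, sending sections on $\mathcal U$ to sections on a strictly larger neighborhood, hence $U_p$ is a compact operator on the Banach module $M^\dagger(w)$ of cuspidal overconvergent locally analytic forms of weight $w$. One then feeds $(M^\dagger(w),U_p,\mathcal H)$ into the Buzzard--Coleman eigenvariety machine (as axiomatized in \cite{AIP}, suitably adapted to Banach sheaves via a vanishing-of-higher-cohomology argument on the affinoid pieces of an admissible covering of $\mathcal U$) to obtain the rigid space $\mathcal E \to \mathcal W$. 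Equidimensionality of dimension $3$ follows from the flatness side of the machine and the fact that $\mathcal W$ is $3$-dimensional; the set $\mathcal Z$ of classical points is then constructed by taking classical arithmetic weights of sufficiently regular type and invoking a small-slope classicality criterion.

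The main obstacles, as compared to \cite{AIP,AIP2,Bra}, are twofold. First, \textbf{constructing the Banach sheaves} requires making genuine use of the $2$-step filtration: one must carefully check that the torsor of pseudo-generators in $\omega^+$ coming from $H_2/H_1$ glues well with the one in $\omega^-$ coming from $H_1$, that the resulting $\omega^{w,\dagger}$ is an honest Banach sheaf (this is where the vanishing of higher coherent cohomology on the strict neighborhoods, in the style of the Kiehl--Bartenwerfer theory used in \cite{AIP2}, has to be reproved in the $\mu$-ordinary context). Second, \textbf{the $U_p$ dynamics} are more subtle than in the ordinary case: one needs to show that iterating $U_p$ pushes any point towards the $\mu$-ordinary locus in the sense that the Hodge invariants measuring the distance to $\overline{X}_K^{\mu\textup{-ord}}$ strictly decrease, so that $U_p$ is a completely continuous endomorphism improving the radius of overconvergence. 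Classicality of small-slope forms at classical weights, via a BGG-type resolution or a direct cohomological argument on the fibration in balls, is the last non-trivial input; density of $\mathcal Z$ in $\mathcal E$ then follows from the standard accumulation property of the eigenvariety machine.
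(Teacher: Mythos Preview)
Your overall strategy is correct and matches the paper's: canonical filtration from \cite{Her2} on strict neighborhoods of the $\mu$-ordinary locus, torsors built from the Hodge--Tate map, Banach sheaves $\omega_w^{\kappa\dag}$ interpolating the classical ones, a compact $U_p$ improving overconvergence, and Buzzard's eigenvariety machine fed by the resulting projective Banach modules, with classicality via a BGG/Jones-type argument combined with Bijakowski's classicity theorem.

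Two points where your implementation details diverge from the paper. First, the weight space: the maximal torus of $U(2,1)$ over $\QQ_p$ (inert case) has $\ZZ_p$-points $\mathcal O^\times \times \mathcal O^1$, not $(\ZZ_p^\times)^2 \times \ZZ_{p^2}^\times$; so $\mathcal W = \Hom_{\textup{cont}}(\mathcal O^\times \times \mathcal O^1,\mathbb G_m)^{\textup{rig}}$, which is still $3$-dimensional. Second, and more substantively, the projectivity argument is not carried out via a Kiehl--Bartenwerfer acyclicity on an affinoid cover of the toroidal neighborhood. Instead, the paper pushes everything forward along $\eta:\mathfrak X_1(p^{2m})(v)\to\mathfrak X^*(v)$ to the \emph{minimal} compactification, which is affinoid, and proves $R^q\eta_*\mathcal O(-D)=0$ for $q>0$ by exploiting the very simple boundary structure of the Picard surface: the toroidal boundary is a union of CM elliptic curves, the local charts are $\mathbb G_m$-torsors over them, and the vanishing reduces to ampleness of the line bundles $\mathcal L(k)$ for $k>0$ on an elliptic curve. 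This is where the argument is genuinely easier than in \cite{AIP} (no combinatorics of cone decompositions), and it is the step you gloss over. Your sketch of the sheaf construction also understates a technical wrinkle: one does not work directly with $H_1\subset H_2\subset A[p]$ but with the subgroup $K_m=H_\tau^{2m}+H_{\sigma\tau}^m\subset A[p^{2m}]$, whose dual is trivialised on a cover $\mathfrak X_1(p^{2m})(v)$, and the torsor $\mathfrak{IW}_w^+$ lives over that cover before descending.
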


In order to get the previous result we need to have a control on the global sections of the Banach sheaves mentioned before the theorem. A general strategy to prove such a control result is developed in \cite{AIP} and rely on a vanishing theorem for cuspidal functions on the toroïdal and minimal compactifications of the corresponding Shimura varieties.
This kind of vanishing results have been proven in great generality in \cite{Lan} (though it doesn't apply here directly), but in the simpler case of $U(2,1)$, as the boundary of the toroidal compactification is quite simple, we managed to simplify a part of the argument of \cite{AIP}. In a forthcoming work, we will use this method together with the technics developed in \cite{Her2} to contruct eigenvarieties for more general PEL Shimura datum.

The second part of this article focuses on a very nice application of eigenvarieties to construct Galois extensions in certain Selmer groups. The method follows the strategy initiated by Ribet (\cite{Rib}) in the case of inequal characteristics to prove 
the converse to Herbrand theorem. It was then understood by Mazur-Wiles how to apply this technic in equal caracteristics  using Hida families to prove Iwasawa main conjecture. In his PhD \cite{Bellthesefull}, 
Bellaïche understood that using a certain endoscopic representation together with a generalisation of Ribet's lemma he could produce some extensions of Galois representations, and then how to delete the \textit{wrong} extensions to only keep the one predicted by the Bloch-Kato conjecture. This method was then improved using $p$-adic families and Kisin's result on triangulations of modular forms to 
construct the desired extensions in Selmer groups as in \cite{BC1} for imaginary  quadratic Hecke character and \cite{SU} for modular forms using Saito-Kurokawa lifts to $GSp_4$. 
In the previous constructions, it seemed necessary that the sign at the center of the functionnal equation is -1, in order to get an endoscopic automorphic representation for a bigger group ($U(3)$ in \cite{BC1}, $GSp_4$ in \cite{SU}) related the one we started with. In this article, we study the simplest case with a sign +1.

Let $\chi$ be an algebraic Hecke character of $E$ satisfying the following polarisation,
\[\chi^\perp := (\chi^c)^{-1} = \chi |.|^{-1}\]
and $L(\chi,s)$ its $L$-function. In particular $\chi_{\infty}(z) = z^a\overline z^{1-a}$ for all $z \in \CC^\times$ where $a\in \ZZ$. Denote $\chi_p : G_E \fleche F^\times$, where $F/\QQ_p$ is a finite extension, the associated $p$-adic Galois character, and $H^1_f(E,\chi_p)$ the Selmer group of $\chi_p$, which is the sub-$F$-vector space of $H^1(E,\chi_p)$ generated by the extensions,
\[ 0 \fleche \chi_p \fleche V \fleche 1 \fleche 0,\]
such that for $v \notdivides p$, 
\[ \dim V^{I_v} = 1 + \dim \rho^{I_v},\]
and for $v | p$,
\[ \dim D_{cris,v}(V) := \dim D_{cris}(V_{|G_v}) = 1 + \dim D_{cris,v}(\rho).\]
We will say that such an extension has \textit{good reduction everywhere}. The conjecture of Bloch-Kato predicts the equality $\ord_{s=0}L(\chi,s) = \dim_F H^1_f(E,\chi_p)$, and in particular the following result, due to Rubin \cite{Rub},

\begin{theor}[Rubin] Suppose $p \notdivides |\mathcal O_E^\times|$.
If $L(\chi,0) = 0$ then $H^1_f(E,\chi_p)\neq \{0\}$.
\end{theor}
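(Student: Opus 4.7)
The plan is to deduce this from Rubin's proof of the Iwasawa main conjecture for imaginary quadratic fields, combined with Katz's construction of a two-variable $p$-adic $L$-function interpolating the complex values $L(\chi,0)$ for algebraic Hecke characters $\chi$ of the prescribed polarization type. The guiding principle is that the Euler system of elliptic units bounds Iwasawa-theoretic Selmer groups by $p$-adic $L$-functions, so vanishing of a classical $L$-value at a specialization forces the Selmer group to be nontrivial.

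First I would fix a CM elliptic curve $A/\overline{\QQ}$ with complex multiplication by $\mathcal O_E$ and use it to construct Robert's elliptic units in the ray class fields of $E$; as the conductor varies these assemble into an Euler system in the sense of Kolyvagin. Applying Kolyvagin's descent machinery (in Rubin's refinement adapted to the imaginary quadratic setting) yields the hard divisibility of the main conjecture: the characteristic ideal of the $p$-strict Iwasawa Selmer group over the $\ZZ_p^2$-extension of $E$ divides Katz's $p$-adic $L$-function $L_p^{\mathrm{Katz}}$. The opposite divisibility follows from the analytic class number formula via Iwasawa's theorem, yielding equality of characteristic ideals up to units. Next I would specialize at the arithmetic point of the Iwasawa algebra determined by $\chi$: the polarization $(\chi^c)^{-1}=\chi|.|^{-1}$ together with $\chi_\infty(z)=z^a\overline z^{1-a}$ places $\chi$ in the critical range of Katz's interpolation formula, and the hypothesis $p\nmid |\mathcal O_E^\times|$ guarantees that the interpolation factors (periods and local modifications at $p$) are nonzero. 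Hence $L(\chi,0)=0$ forces $L_p^{\mathrm{Katz}}$ to vanish at this point, and the main conjecture transfers this vanishing to the characteristic ideal, producing a nonzero element in the $\chi$-isotypic component of the Iwasawa Selmer group. A control theorem then identifies this class with a nonzero element of $H^1_f(E,\chi_p)$.

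The main obstacle is the final comparison step: one must verify that the Iwasawa local conditions specialize at $\chi$ exactly to the Bloch-Kato conditions of \emph{good reduction everywhere}. At the inert prime $p$ this amounts to matching the Coleman/dual-exponential local condition with the crystalline condition $\dim D_{\mathrm{cris},v}(V)=1+\dim D_{\mathrm{cris},v}(\chi_p)$, which is classical for algebraic Hecke characters but requires care since $p$ is inert in $E$ (so $E_v/\QQ_p$ is a nontrivial unramified extension and the local Galois module is two-dimensional over $\QQ_p$). Away from $p$ one checks the inertia-invariant conditions at primes ramified in $\chi$, which is routine. These verifications, together with the nonvanishing of the interpolation factors under $p\nmid |\mathcal O_E^\times|$, are carried out in \cite{Rub} and allow the transfer from the Iwasawa-theoretic statement to the Bloch-Kato assertion as formulated here.
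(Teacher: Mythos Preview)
The paper does not give its own proof of this statement: it is quoted as a theorem of Rubin and attributed in one sentence to ``Rubin's work on Iwasawa main conjecture for CM elliptic curves'' whose ``proof uses Euler systems,'' with a citation to \cite{Rub}. Your sketch is a reasonable elaboration of precisely that strategy (elliptic units, Katz's two-variable $p$-adic $L$-function, the main conjecture, and a control theorem), so there is nothing in the paper to compare it against beyond confirming that you have identified the same method the paper alludes to.

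What the paper actually proves is a different theorem: the special case where $\ord_{s=0}L(\chi,s)$ is even (and $p$ is unramified in $E$, $p\nmid\Cond(\chi)$), by a completely unrelated method. Rather than Euler systems, the paper deforms Rogawski's non-tempered automorphic representation $\pi^n(\chi)$ of $U(2,1)$ along a $p$-adic family on the eigenvariety $\mathcal E$, shows the generic Galois representation is irreducible, and produces the desired extension in $H^1_f(E,\chi_p)$ via a Ribet-type lattice argument together with Kisin--Liu results on crystalline periods in families. So your outline and the paper's contribution are complementary, not competing: you are summarizing the cited background result, while the paper's novelty lies in the automorphic construction that follows it.
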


The previous result follows from Rubin's work on Iwasawa main conjecture for CM elliptic curve and its proof uses Euler systems. In particular this construction prove that the Selmer group is non trivial but doesn't really construct a particular extension. For example it is not possible to know if the extension that exists is \textit{geometric} or \textit{automorphic} in nature. 
Another proof of this result (\cite{BC1}) uses families of  modular forms given by the corresponding eigenvarieties, a particular case of transfer as predicted by Langland's philosophy, together with a generalisation of Ribet's "change of lattice" Lemma. More precisely, if $p$ is split in $E$, $p \notdivides \Cond(\chi)$, and the order of vanishing $\ord_{s=0}L(\chi,s)$ is odd, then Bellaïche-Chenevier can construct the predicted extension in $H^1_f(E,\chi_p)$ by deformation of a non-tempered automorphic form $\pi^n(\chi)$ for $U(3)$, the compact at infinity unitary group in three variables.
It is a natural question to ask why this condition of the order of vanishing being odd is necessary. If the order of vanishing is even, following multiplicity results on automorphic representations for unitary groups on three variables of Rogawski (\cite{Rog,Rogbook}), there exists a non tempered automorphic representation $\pi^n(\chi)$ for $U(2,1)$ with Galois representation $\rho_{\pi^n(\chi),p} = 1 \oplus \chi_p \oplus \chi_p^{\perp}$. In this article we check that we can indeed deform this representation such that the associate Galois deformation is generically irreducible, and that we can control the reduction at each place, thus constructing an extension in the Selmer group. More precisely we can reprove the following case of Rubin's result,

\begin{theor}
Let $p$ a prime, unramified in $E$, such that $p \notdivides \Cond(\chi)$. Suppose moreover that $p \neq 2$ if $p$ is inert.
If $L(\chi,0) =0$ and $\ord_{s=0}L(\chi,s)$ is even, then
\[ H^1_f(E,\chi_p) \neq 0.\]
\end{theor}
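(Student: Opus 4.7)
The strategy is to adapt the Bellaïche-Chenevier method of \cite{BC1} to the sign $+1$ case, replacing the eigenvariety of $U(3)$ by the Picard eigenvariety $\mathcal E$ of the previous theorem. By Rogawski's endoscopic classification of the discrete spectrum of $U(2,1)$ (\cite{Rog,Rogbook}), the hypotheses $L(\chi,0)=0$ and $\ord_{s=0}L(\chi,s)$ even (i.e.\ global root number $+1$) ensure the existence of a non-tempered cuspidal automorphic representation $\pi^{n}(\chi)$ of $U(2,1)(\mathbb A_{\QQ})$ whose attached semisimple three-dimensional $p$-adic Galois representation is $1 \oplus \chi_p \oplus \chi_p^\perp$. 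Under the assumptions $p \notdivides \Cond(\chi)$ and $p$ unramified in $E$, $\pi^n(\chi)$ is unramified at $p$, and fixing an appropriate refinement (an ordering of the crystalline Frobenius eigenvalues) produces a point $x \in \mathcal E(\overline{\QQ_p})$ after choosing a suitable tame level.

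Next, one deforms $x$ along $\mathcal E$. Since $\mathcal E$ is equidimensional of dimension three and the classicality criterion of the previous theorem combined with density of classical points implies that tempered classical forms accumulate at $x$ for a well-chosen refinement, interpolating the traces of their Galois representations gives a continuous pseudo-character $T$ on a neighborhood of $x$ with
\[ T(x) \;=\; 1 \,+\, \Tr\chi_p \,+\, \Tr\chi_p^\perp, \]
and its reducibility locus is Zariski-closed of positive codimension, so $T$ is generically irreducible. Applying the generalization of Ribet's lemma from \cite{Bellthesefull,BC1} --- i.e.\ the theory of residually multiplicity-free pseudo-characters and their associated generalized matrix algebras --- yields from $T$ a non-trivial extension class, which after possibly dualizing produces
\[ 0 \fleche \chi_p \fleche V \fleche 1 \fleche 0, \]
with the choice of refinement dictating which of the two possible extensions one extracts.

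It remains to show that $V$ defines a non-zero element of $H^1_f(E,\chi_p)$, i.e.\ has good reduction everywhere. At the finite places $v \notdivides p$ this is standard: the tame level $K$ being neat, the inertia-invariants condition is preserved under the deformation, and the classical argument of \cite{BC1} applies verbatim. The central difficulty lies at the place $v \mid p$. Since $p$ is inert in $E$, the ordinary locus of the Picard variety is empty and $\mathcal E$ has been constructed using the $\mu$-ordinary locus and the two-step canonical filtration of \cite{Her2}, so one cannot directly invoke Kisin's triangulation argument. The plan is to produce, along $\mathcal E$, a triangulation of the $(\varphi,\Gamma)$-module of the universal Galois representation adapted to the refinement, interpolating the crystalline refinements of the classical tempered points, and then to specialize at $x$ to deduce that $V_{|G_v}$ is crystalline with the predicted Hodge-Tate weights. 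Establishing this inert analogue of the Kisin--Bellaïche-Chenevier crystalline period result, and making it compatible with the canonical filtration used in the construction of $\mathcal E$, is the main obstacle; it is precisely here that the new geometric input of the first part of the article must be combined with $p$-adic Hodge-theoretic arguments.
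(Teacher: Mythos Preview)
Your outline captures the broad shape of the argument, but it has a genuine gap at the step where you say ``with the choice of refinement dictating which of the two possible extensions one extracts.'' This is precisely where the inert case diverges from \cite{BC1}. When $p$ is inert, $\pi^n_p(\chi)$ has a \emph{unique} accessible refinement (there is only one non-trivial Weyl element), so there is no freedom to choose a refinement to steer Ribet's lemma toward the desired extension. Even in the split case, applying a Ribet-type argument to a $3$-dimensional pseudo-character with three distinct diagonal characters at $y_0$ does not give you a single extension: it produces a lattice whose reduction $\overline\rho$ may fall into several combinatorial shapes. The paper separates these into case~(a), where $\overline\rho$ has a $2$-dimensional subquotient $r$ with $r^\perp\simeq r$ that is a non-split extension of $\chi_p^\perp$ by $\chi_p$, and case~(b), where one obtains the sought-for extension $r_1$ of $1$ by $\chi_p$. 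One must also first rule out the possibility that $\rho_{K(Y)}$ is reducible with an irreducible $2$-dimensional piece reducing to $\chi_p\oplus\chi_p^\perp$ (case~(2) of Proposition~\ref{propirr}). Cases~(2) and~(a) are eliminated not by a refinement choice but by showing that the resulting extension, after twisting by $(\chi_p^\perp)^{-1}$, would give a non-zero class in $H^1_f(E,\omega_p)$, which vanishes. This elimination step is the heart of the argument and is entirely absent from your proposal.

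Two smaller points. First, good reduction away from $p$ is not quite ``standard'': at primes dividing $\Cond(\chi)$ one needs Bellaïche--Chenevier-type types $(K_\ell,J_\ell)$ (Proposition~\ref{proptypes}) to pin down the inertial behaviour along the family; this is what guarantees $\dim(\rho'\,)^{I_v}\ge 2$ at those places. Second, the crystalline-period input at $p$ that you flag as ``the main obstacle'' is not new: the paper invokes Liu's extension \cite{Liu} of Kisin's argument to $G_K$ for $K/\QQ_p$ unramified, which directly yields $D_{\mathrm{cris},\tau}(\overline\rho)^{\varphi^2=u}\neq 0$ for the relevant Frobenius eigenvalue $u$. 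The genuinely new geometric input (canonical filtration, $\mu$-ordinary eigenvariety) is needed to produce the family and the classical points, not for the $p$-adic Hodge theory itself.
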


In particular we can extend the result of \cite{BC1} when the order of vanishing is even, and also to the case of an inert prime $p$ using the corresponding eigenvariety (in this case the ordinary locus is empty).
An advantage of the construction of the eigenvariety presented here is that if an Hecke eigensystem appears in the classical cuspidal global sections of a coherent automorphic sheaf, then there is an associated point on the eigenvariety. This argument might be more complicated with other constructions, as the representation $\pi^n(\chi)$ is not a regular discrete series (it doesn't even \textit{appear} in the cohomology in middle degree). Another advantage of using coherent cohomology is that we can also deal with the limit case where $\pi^n(\chi)$ does not appears in the etale cohomology\footnote{i.e. when $a \in \{0,1\}$ i.e. $\chi_\infty(z) = z$ or $\chi_\infty(z) = \overline z$} (but it was known to Bellaïche \cite{BellSelmer} how to get this limit case).
Apart from this fact, the deformation when $p$ is split follows the lines of \cite{BC1}, whereas when $p$ is inert the geometry of the eigenvariety is quite different. In particular, there is less \textit{refinements} (and thus only one point on $\mathcal E$ corresponding to $\pi^n(\chi)$ instead of three) and we need a bit more care to isolate the \textit{right} extension. We also need to be slightly more careful with $p$-adic Hodge theory to understand the local-global compatibility at $p$ and a generalisation of Kisin's result on triangulation of refined families as provided by \cite{Liu}. 
Let us also remark that a consequence of this construction and Chenevier's method (\cite{CheJL}) to compare the eigenvarieties for $U(3)$ and $U(2,1)$ (say when $p$ splits) is that the point 
$\pi^n(\chi)$ when the sign at infinity is $+1$ together with its \textit{good} refinement also appears in the eigenvariety of $U(3)$, despite \textit{not} being a classical point for this group.

A lot of the ideas developed in this article extends to more general Shimura varieties, and in particular to Picard modular varieties associated to a general CM field $E$. More precisely, the result on the canonical filtration (Theorem \ref{thrfiltcan}) in this last case remains true as soon as $p \geq 3$. For more general groups the bound on the prime $p$ will grow with the group as predicted in \cite{Her2}. This is also why we can't use $p=2$ in this article. With this restriction on the prime, we should also be able to construct the associated eigenvariety, and we hope to come back on this matter in a following article. For the second part, which deals with the Bloch-Kato conjecture, it is less clear what to expect. Already for $U(2,1)_{E/F}$ with $E$ a CM extension, we can use the work of Rogawski and prove that the Galois representation passing through a particular refinement is irreducible, unfortunately, just as in \cite{BC2}, we can't get rid of the bad extension in $H^1_f(E,\omega_p)$ anymore (see subsection \ref{sect115}). For higher dimensional groups, we could probably have similar results as the one in the book \cite{BC2}, but in addition to the previous remark, results in this case would be conditional to Arthur's conjecture, as in \cite{BC2}.

\subsection*{Acknowledgements} I would like to warmly thank my PhD advisors Laurent Fargues, and even more Vincent Pilloni for explaining me carefully his previous work and suggesting this subject. I would also like to thank Fabrizio Andreatta and Adrian Iovita for their support, remarks and interesting discussions on this subject. It should be clear that this work is a continuations of theirs. I would like to particularly thank Joël Bellaïche and Gaëtan Chenevier for their perfectly written article and book on extensions in Selmer groups (from which my inspiration is easy to feel) but also for very inspiring discussions, encouragements and remarks. 
I would also like to thank Nicolas Bergeron, Stéphane Bijakowski, Jean-François Dat, Guy Henniart, Bruno Klingler, Arthur-César Le Bras, Alberto Minguez, Benoit Stroh, and Éric Urban for very interesting discussions on the subject. I would also like to thank the referee for their remarks which helped improve the exposition.

\section{Shimura datum}
\subsection{Global datum}
Let $E/\QQ$ a quadratic imaginary field and denote $\overline{\bullet}$ the complex conjugation of $E$. Let $(V = E^3,\psi)$ be the hermitian space of dimension 3 over E, of signature (2,1) at infinity given by the matrix \[
J =\left(
\begin{array}{ccc}
  &   & 1  \\
  &  1 &   \\
 1 &   &   
\end{array}
\right).
\]
Let us then denote,
\begin{eqnarray*} G = GU(V,\psi) = GU(2,1)\\ = \{ (g,c(g)) \in \GL(V)\times \mathbb G_{m,\QQ} : \forall x,y \in V, \psi(gx,gy) = c(g)\psi(x,y)\} \subset \GL_V \times \mathbb G_m,\end{eqnarray*}
the reductive group over $\QQ$ of unitary similitudes of $(V,\psi)$. 

Let $p$ be a prime number, unramified in $E$. If $p = v\overline v$ is split in $E$, then,
\[ V \otimes_\QQ \QQ_p = V\otimes_E E_v \oplus \overline{V \otimes_E E_v},\]
where the action of $E_v$ is by $\overline v$ on $\overline{V \otimes_E E_v}$. Moreover, the complex conjugation exchanges ${V \otimes_E E_v}$ and $\overline{V \otimes_E E_v}$.
In particular, $G \otimes_\QQ \QQ_p \simeq \GL(V\otimes_E E_v)\times \mathbb G_m$ (this isomorphism depends on the choice of $v$ over $p$).

We will be particularly interested in the case where $p$ is inert in $E$, the case when $p$ split has been studied before (see for example \cite{Bra}).

\begin{remaen}
We could more generally work in the setting of $(B,\star)$ a simple $E$-algebra of rank 9 with an involution of the second kind, such that $(B\otimes \QQ_p,\star)$ is isomorphic to $(\End(V\otimes \QQ_p), \star_V)$\footnote{Here $\star_V(g) = J{^t\overline g}J$}, and replace $G$ with the group,
\[ G_B = \{ g \in B^\times : g^\star g = c(g) \in \mathbb G_{m,\QQ}\}.\]
The construction of the eigenvarieties in the case where $B$ isn't split is easier as the associated Shimura varieties are compact, but some non-tempered automorphic form, for example the one constructed by Rogawski and studied in \cite{BC1} and the second part of this article will never be automorphic for such non split $B$. 
\end{remaen}

The Shimura datum we consider is given by,
\[ h :
\begin{array}{ccc}
\mathbb S  & \fleche  & G_\RR  \\
z = x+iy  &\longmapsto   &   
\left(
\begin{array}{ccc}
x  &   & iy  \\
  & z  &   \\
iy  &   &   x
\end{array}
\right)
\end{array}
\]

\subsection{Complex Picard modular forms and automorphic forms}
\label{sectaut}

Classically, Picard modular forms are introduced using the unitary group $U(2,1)$, but we can treat the case of $GU(2,1)$ similarly. Let $G(\RR) = GU(2,1)(\RR)$ the group stabilizing (up to scalar) the signature matrix 
$J$
and let,
\[ Y = \{ z = (z_1,z_2) \in \CC^2 : 2\Im(z_1) + |z_2|^2 < 0\},\]
be the symmetric space associated to $G(\RR)$, it is isomorphic to the 2-dimensional complex unit ball
\[ B = \{ (z_1,z_2) \in \CC^2 | |z_1|^2 + |z_2|^2 < 1\}.\]
On $Y$, there is an action of $G(\RR)$ through,
\[ 
\left(
\begin{array}{cc}
 A &  b   \\
 c  &  d  
\end{array}
\right) z = \frac{1}{c\cdot z + d}(Az + b) \in B, \quad A \in M_{2\times2}(\CC).
\]
\begin{remaen}
It is known that $U(2,1)(\RR)$ stabilizes $Y$, and $GU(2,1)(\RR)$ stabilizes $X$ too as if $^t\overline AJA = cJ$ with $c \in \RR^\times$, we get $\overline{\det A}\det A = |\det A|^2 = c^3$ thus $c > 0$.
\end{remaen}

This action is transitive and identifies $Y$ with $G(\RR)/K_\infty$ where $K_\infty = \Stab((i,0)) \subset \{ (A,e) \in GU(2)(\RR) \times GU(1)(\RR) \}$ can be identified with $\{ (A,e) \in GU(2)(\RR) \times \CC^\times : c(A) = N(e)\}$. We denote $(i,0) = x_0$.

The subgroup $K_\infty$ is not compact but can be written $Z(\RR)^0(U(2)(\RR)\times U(1)(\RR))$, with $Z$ the center of $GU(2,1)$. Let $L$ be the $\CC$-points of $K_\infty$. 
Then $L \simeq (\GL_2 \times \GL_1)\times \GL_1(\CC)$ is the Levi of a parabolic $P$ in $\GL_3 \times \GL_1(\CC)$.
For any $\kappa = (k_1,k_2,k_3,r) \in \ZZ^4$ such that $k_1 \geq k_2$, there is an associated (irreducible) representation $S_\kappa(\CC)$ of $P$, of highest weight
\[
\left(
\begin{array}{ccc}
t_1  &   &   \\
  &  t_2 &   \\
  &   &  t_3 
\end{array}
\right), c \in (\GL_2 \times \GL_1)\times \GL_1(\CC) \longmapsto t_1^{k_1}t_2^{k_2}t_3^{k_3}c^r.
\]
$K_\infty$  embeds in $L \subset P$ by $(A,e) \mapsto ((\overline{A},e),N(e))$.

Following \cite{Har,HarVB},\cite{Mil}, such a representation gives $\Omega^\kappa$ a locally free sheaf with $G(\CC)$-action on $G(\CC)/P$, whose structure as sheaf doesn't depend on $r$.
Restricting it to $G(\RR)/K_\infty = Y$ we get a sheaf $\Omega^\kappa$ whose sections over $X$ can be seen as holomorphic functions,
\[ f : G(\RR)/K_\infty \longmapsto S_\kappa(\CC),\]
such that $f(gk) = \rho_\kappa(k)^{-1} f(g)$, for $g \in G(\RR), k \in K_\infty$, which we call (meromorphic at infinity) modular forms of weight $\kappa$. 
In an informal way, the choice of the previous integer $r$ normalizes the action of the Hecke operators and corresponds to normalizing the (norm of the) central character of the modular forms.
We will not use this description of the sheaves, and instead introduced a modular description of these automorphic sheaves.

Fix $\tau_\infty : E \fleche \CC$ an embedding, and $\sigma \neq 1 \in \Gal(E/\QQ)$, thus $\sigma\tau_\infty = \overline{\tau_\infty}$ is the other embedding of $E$. Over $\CC$, for any sufficiently small compact open $K$ in $G(\mathbb A_f)$, the Picard variety $Y_K(\CC)$ of level $K$ can be identified with a (disjoint union of some) quotient of $B = GU(2,1)/K_\infty$, but also with the moduli space parametrizing quadruples $(A,\iota,\lambda,\eta)$ where
$A$ is an abelian scheme of genus 3, $\iota : \mathcal O_E \fleche \End(A)$ is an injection, $\lambda$ is a polarisation for which Rosati involution corresponds to the conjugation $\overline{\cdot}$ on $\mathcal O$, and $\eta$ is a type-$K$-level structure such that the action of $\mathcal O_E$ on the conormal sheaf $\omega_A = e_{A/S}^*\Omega^1_{A/S}$\footnote{$e_{A/S}$ denotes the unit section of $A \fleche S$} decomposes under the embeddings $\tau_\infty,\sigma\tau_\infty$ into two direct factors of respective dimensions 1 and 2. This is done for example explicitly in \cite{dsg}, section 1.2.2 to 1.2.4, and we will be especially interested in the description by "moving lattice" given in 1.2.4. Thus, to every $x = (z_1,z_2) \in B$, we can associate a complex abelian variety,
\[ A_x = \CC^3/L_x,\]
where $L_x$ is the $\mathcal O_E$-module given in \cite{dsg} (1.25), and the action of $\mathcal O_E$ on $A_x$ is given by
\[ a \in \mathcal O_E \longmapsto 
\left(
\begin{array}{ccc}
\overline\tau_\infty(a)  &   &   \\
  & \overline\tau_\infty(a)  &   \\
  &   & {\tau_\infty(a)}
\end{array}
\right) \in M_3(\CC).
\]
There is moreover $\eta_x$ a canonical ($K$-orbit of) level-$N$-structure (for $K(N) \subset K \subset G(\mathbb A_f)$).
Over $Y_K(\CC)$ we thus have a sheaf $\omega_A$ that can be decomposed $\omega_{\tau,A} \oplus \omega_{\sigma\tau,A}$ according to the action of $\mathcal O_E$, and we can 
consider the sheaf \[\omega^\kappa := \Sym^{k_1-k_2}\omega_{\sigma\tau,A}\otimes {\det}^{\otimes k_2}\omega_{\sigma\tau,A} \otimes {\det}^{\otimes k_3}\omega_{\tau,A},\] for $(k_1,k_2,k_3)$ 
a dominant (i.e. $k_1 \geq k_2$) weight. Using the previous description, if we denote $\zeta_1,\zeta_2,\zeta_3$ the coordinates on $\CC^3$, $\omega_{A_x,\sigma\tau}$ is generated by 
$d\zeta_1,d\zeta_2$ and $\omega_{A_x,\tau}$ by $d\zeta_3$.

There is also $X_K(\CC)$ a toroidal compactification of $Y_K(\CC)$, \cite{Lars} and \cite{BellThese}, on which $\omega^\kappa$ extends as $\omega^{\kappa}$ (the \textit{canonical} sheaf of Picard modular forms) and $\omega^{\kappa}(-D)$ (the sheaf of cuspidal forms) where $D$ here is the closed subscheme $X_K \backslash Y_K$ together with its reduced structure. 

\begin{definen}
We call the module $H^0(X_K(\CC),\omega^{\kappa})$ (respectively $H^0(X_K(\CC),\omega^{\kappa}(-D)) =: H^0_{cusp}(X_K(\CC),\omega^\kappa)$) the 
space of (respectively, cuspidal) Picard modular forms of level $K$ and weight $\kappa$. 
\end{definen}

We sometimes say 'classical' if we want to emphasis the difference with overconvergent modular forms defined 
later. Denote also $V^\kappa$ the representation of $\GL_2\times \GL_1$ given by,
\[ (A,e) \longmapsto \Sym^{k_1-k_2}(\overline A)\otimes {\det}^{k_2}\overline A \otimes {\det}^{k_3}e.\]

\begin{definen}
For all $g \in G(\RR) = GU(2,1)(\RR)$, write,
\[
g = \left(
\begin{array}{cc}
 A & b    \\
 c &  d  
\end{array}
\right), A = \left(
\begin{array}{cc}
 a_1 & a_2    \\
 a_3 &  a_4 
\end{array}
\right)
\]
and for $x = (z_1,z_2) \in B$, following \cite{Shi}, define,

\[ \kappa(g,x) = 
\left(
\begin{array}{cc}
\overline{a_1} - \overline{a_3}z_1 & \overline{c_2}z_1 - \overline{c_1}  \\
 \overline{a_3}z_2 -\overline{b_1}   & \overline{d} - \overline{c_2}z_2  
\end{array}
\right), \quad \text{and} \quad j(g,x) = ( cx +  d).\]
Finally, define,
\[ J(g,x) = (\kappa(g,x), j(g,x)) \in \GL_2\times \GL_1(\CC).\]
\end{definen}

The following proposition is well known (see \cite{Hsieh} Lemma 3.7) and probably already in \cite{Shi}, but we rewrite it to fix the notations,
\begin{propen}
There is a bijection between $H^0(Y_K(\CC),\omega^{\kappa})$ and functions $F : B \times G(\mathbb A_{\QQ,f}) \fleche V^\kappa$ such that,
\begin{enumerate}
\item For all $\gamma \in G(\ZZ), F(\gamma x, \gamma k) = J(\gamma,x) \cdot F(x,k)$,
\item For all $k' \in K, F(x,kk') = F(x,k)$,
\end{enumerate}
given by $F(x,k) = f(A_x,\eta_x\circ k^\sigma,(d\zeta_1,d\zeta_2,d\zeta_3))$
\end{propen}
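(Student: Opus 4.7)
The plan is to realize both sides of the claimed bijection as data attached to the complex uniformization
\[ Y_K(\CC) \;\simeq\; G(\QQ)\backslash \bigl(B \times G(\mathbb A_{\QQ,f})\bigr)/K, \]
and then identify a section of $\omega^\kappa$ with its trivialisation in the canonical frame coming from the universal family on $B$. The main steps are as follows.

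First, I would recall the moduli interpretation given earlier in the excerpt: to every $(x,k) \in B \times G(\mathbb A_{\QQ,f})$ one associates the quadruple $(A_x, \iota_x, \lambda_x, \eta_x \circ k^\sigma)$, and two such pairs $(x,k)$ and $(x',k')$ give isomorphic quadruples exactly when they differ by the $G(\QQ) \times K$-action $(\gamma,k_0)\cdot (x,k)=(\gamma x, \gamma k k_0)$. (The statement with $G(\ZZ)$ in place of $G(\QQ)$ corresponds to restricting to the connected component of $B$ containing $x_0$; away from that component one repeats the argument on each component.) Pulling back any section of $\omega^\kappa$ to $B \times G(\mathbb A_{\QQ,f})$ therefore gives a function which is invariant under $K$ and equivariant under $G(\QQ)$ via the cocycle describing the change of trivialising frame.

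Second, I would exhibit the canonical frame. By the explicit description of $L_x \subset \CC^3$, the coordinates $\zeta_1, \zeta_2, \zeta_3$ provide a universal basis of $\omega_{A_x}$ splitting as $(d\zeta_1, d\zeta_2)$ for $\omega_{\sigma\tau,A_x}$ and $d\zeta_3$ for $\omega_{\tau,A_x}$. This trivialises $\omega_{\sigma\tau,A}$ and $\omega_{\tau,A}$ over $B$, hence $\omega^\kappa$, and it identifies the fibre of $\omega^\kappa$ with the space $V^\kappa$ on which $\GL_2 \times \GL_1$ acts via $(A,e)\mapsto \Sym^{k_1-k_2}(\overline A)\otimes \det^{k_2}\overline A \otimes \det^{k_3}e$. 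Sending a section $f$ to the function $F(x,k) := f(A_x,\eta_x\circ k^\sigma,(d\zeta_1,d\zeta_2,d\zeta_3))$ therefore produces a $V^\kappa$-valued function on $B \times G(\mathbb A_{\QQ,f})$, and the $K$-invariance of the level structure translates immediately into condition (2).

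Third, one has to pin down the automorphy factor. For $\gamma \in G$ acting on $B$, the standard Picard uniformization yields a canonical $\mathcal O_E$-linear isomorphism $A_x \overset{\sim}{\to} A_{\gamma x}$ compatible with polarisations and level structures; the induced map on conormal sheaves, expressed in the frames $(d\zeta_1,d\zeta_2,d\zeta_3)_x$ and $(d\zeta_1,d\zeta_2,d\zeta_3)_{\gamma x}$, is given by the inverse transpose (conjugate because the embedding in the moduli description uses $\overline{\tau_\infty}$ on the two-dimensional piece and $\tau_\infty$ on the one-dimensional piece) of the matrix describing the linear action of $\gamma$ on $L_x$. A direct calculation with the block decomposition of $\gamma$ on the two-dimensional factor gives precisely the matrix $\kappa(\gamma,x)$ of the statement, while on the one-dimensional factor one recovers $j(\gamma,x) = cx+d$. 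Equivariance for the $V^\kappa$-valued trivialisation therefore becomes condition (1).

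The main obstacle is this last step: checking that, through the identification provided by $\gamma$, the two canonical frames $(d\zeta_i)_x$ and $(d\zeta_i)_{\gamma x}$ are related by exactly the matrix $J(\gamma,x)=(\kappa(\gamma,x),j(\gamma,x))$ of Shimura. This is the classical automorphy factor for Picard modular forms and the computation is routine but bookkeeping-heavy; one needs to be careful with the conjugation $\sigma$ that appears in the $\mathcal O_E$-action on $\omega_A$ (this is where the $\overline{(\cdot)}$ on the right-hand side of the definition of $V^\kappa$ comes from) and with the normalisation of $\eta_x \circ k^\sigma$. Once the automorphy factor is verified, injectivity of the map $f \mapsto F$ is clear (the frame is nowhere vanishing), and surjectivity follows by inverting the construction: any $F$ satisfying (1) and (2) descends to a section of $\omega^\kappa$ on $G(\QQ)\backslash (B\times G(\mathbb A_{\QQ,f}))/K = Y_K(\CC)$.
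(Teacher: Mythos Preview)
Your overall architecture is correct and matches the paper's: both reduce the bijection to computing how the canonical frame $(d\zeta_1,d\zeta_2,d\zeta_3)$ transforms under the isomorphism $A_x \simeq A_{\gamma x}$, and both identify the resulting cocycle with $J(\gamma,x)$.

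The difference lies in how the two-dimensional piece $\gamma^*(d\zeta_1,d\zeta_2)$ is computed. You propose a direct linear-algebra calculation of the induced map on conormal sheaves from the explicit isomorphism on lattices. The paper instead takes a shortcut: it first cites \cite{dsg} for the one-dimensional factor, $\gamma^* d\zeta_3 = j(\gamma,x)^{-1}\,d\zeta_3$, and then invokes the Kodaira--Spencer isomorphism $\omega_{A_x,\tau}\otimes\omega_{A_x,\sigma\tau}\simeq\Omega^1_B$ to reduce the computation of $\gamma^*(d\zeta_1,d\zeta_2)$ to the (known, from \cite{Shi}) action of $\gamma$ on the base differentials $dz_1,dz_2$. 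Since the action on $dz_1,dz_2$ is $c(\gamma)\,{}^t\kappa(\gamma,x)^{-1}j(\gamma,x)$ and we already know the $j(\gamma,x)^{-1}$ contribution from $d\zeta_3$, one reads off $\gamma^*(d\zeta_1,d\zeta_2)={}^t\kappa(\gamma,x)^{-1}(d\zeta_1,d\zeta_2)$ without ever unpacking the lattice isomorphism on the two-dimensional factor. Your route is more self-contained; the paper's is shorter because it outsources both computations to the literature and ties them together with Kodaira--Spencer.
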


\begin{proof}[Proof]
For all $\gamma \in G(\ZZ)$, there is an isomorphism between $(A_x,\eta_x)$ and $(A_{\gamma x},\eta_{\gamma x} \circ \gamma^\sigma)$, for exemple described in \cite{dsg} 1.2.2 or in \cite{Gordon} which sends $(d\zeta_1,d\zeta_2,d\zeta_3)$ to $\gamma^*(d\zeta_1,d\zeta_2,d\zeta_3) = (\gamma^*(d\zeta_1,d\zeta_2),\gamma^*d\zeta_3)$ as $\gamma$ preserve the action of $\mathcal O_E$. $\gamma^* d\zeta_3$ is calculated in \cite{dsg}, Proposition 1.15, and given by,
\[ \gamma^*d\zeta_3 = j(\gamma,x)^{-1}d\zeta_3.\]
Moreover, by the Kodaira-Spencer isomorphism $\omega_{A_x,\tau}\otimes \omega_{A_x,\sigma\tau} = \Omega^1$ (\cite{dsg} Proposition 1.22 for example), we only need to determine the action of $\gamma$ on $dz_1,dz_2$. But this is done in \cite{Shi}, 1.15 (or an explicite calculation), given by $c(\gamma){^t}\kappa^{-1}(\gamma,x)j(\gamma,x)$, and we get,
\[ \gamma^*(d\zeta_1,d\zeta_2) =  {^t\kappa(\gamma,x)}^{-1}(d\zeta_1,d\zeta_2).\]
Thus, setting $F(x,k) = f(A_x,\eta_x \circ k^\sigma,(d\zeta_1,d\zeta_2,d\zeta_3))$, we get,
\[F(\gamma x, \gamma k) = \Sym^{k_1-k_2}({^t}\kappa(\gamma,x)^{-1})((\det \kappa(\gamma,x)))^{-k_2}j(\gamma,x)^{-k_3} F(x,k).\]
\end{proof}

Thus, to $f \in H^0(Y_K(\CC),\omega^\kappa)$ we can associate a function, $\Phi_f : G(\QQ)\backslash G(\mathbb A) \fleche V^\kappa$, by
\[{\Phi_f}(g) = c(g_\infty)^{-k_1-k_2-k_3}J(g_\infty,x_0)^{-1} \cdot F(g_\infty x_0, g_f),\]
where the action $\cdot$ is the one on $V^\kappa$, and we use the decomposition $g = g_\QQ g_\infty g_f \in G(\QQ)G(\RR)G(\mathbb A_f)$. We can check that this expression 
doesn't depends on the choice in the decomposition. This association commutes with Hecke operators. ${\Phi_f}$ doesn't necessarily have a unitary central character, but it is nevertheless normalized by $\kappa$. Indeed, for $z_\infty \in \CC^\times = Z(\RR)$,
\[{\Phi_f}(z_\infty g) = N(z_\infty)^{-k_1-k_2-k_3}\overline{z_\infty}^{k_1+k_2}{z_\infty}^{k_3}{\Phi_f}(g) = \overline z_\infty^{{-k_3}}{z_\infty}^{-k_1-k_2}{\Phi_f}(g).\]

Let $L : V^\kappa \fleche \CC$ a non zero linear form. Define the injective map of right-$K_\infty$-modules, 
\[ \mathcal L : 
\begin{array}{ccc}
 V^\kappa &  \fleche & Fonct(K_\infty,\CC)  \\
 v & \longmapsto  & L(J(k,x_0)^{-1}v)  
\end{array}
\]

We denote by $L^2_0(G(\QQ)\backslash G(\mathbb A),\CC)$ the set of cuspidal $L^2$-automorphic forms for $G$.
We have the following well known proposition,
\begin{propen}
\label{prop27}
\label{propautheck}
The map $f \mapsto \varphi_f = L\circ \Phi_f$ is an isometry from $H^0_{cusp}(X_K(\CC),\omega^\kappa) = H^0(X_K(\CC),\omega^\kappa(-D))$ to the subspace of $L^2_0(G(\QQ)\backslash G(\mathbb A),\CC)$ of functions $\varphi$, $C^\infty$ in the real variable, such that,
\begin{enumerate}
\item For all $g \in G(\mathbb A)$, the function $\varphi_g : k \in K_\infty \mapsto \varphi(gk)$ is in $\mathcal L(V^\kappa)$, and in particular $\varphi$ is right $K_\infty$-finite,
\item For all $k \in K, \varphi(gk) = \varphi(g)$,
\item For all $X \in \mathfrak p_\CC^-$, $X\varphi = 0$, i.e. $\varphi$ is holomorphic.
\item $\varphi$ is \textit{cuspidal}, i.e. for all unipotent radical $N$ of a proper parabolic $P$ of $G$, 
\[ \int_{N(\QQ)\backslash N(\mathbb A)} \varphi(ng)dn = 0.\] 
\end{enumerate}
This isometry is equivariant under the Hecke action of $\mathcal H^N$ ($K(N) \subset K$).

\end{propen}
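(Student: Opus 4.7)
The plan is to verify properties (1)--(4) directly for $\varphi_f = L \circ \Phi_f$ and then invert the construction. First one checks that $\Phi_f$ is well-defined on $G(\QQ)\backslash G(\mathbb A)$: the ambiguity $(g_\infty, g_f) \mapsto (\gamma^{-1} g_\infty, \gamma^{-1} g_f)$ for $\gamma \in G(\QQ)$ in the decomposition $g = g_\QQ g_\infty g_f$ is absorbed using property (1) of $F$ from the previous proposition, the cocycle identity $J(g_1 g_2, x) = J(g_1, g_2 x) J(g_2, x)$, and the multiplicativity of $c$, the role of the normalisation factor $c(g_\infty)^{-k_1-k_2-k_3}$ being precisely to cancel the remaining similitude twist.

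For property (1), since $K_\infty$ stabilises $x_0$, the cocycle identity gives $J(g_\infty k, x_0) = J(g_\infty, x_0) J(k, x_0)$ for $k \in K_\infty$, hence
\[\Phi_f(gk) = c(k)^{-k_1-k_2-k_3} J(k, x_0)^{-1} \cdot \Phi_f(g);\]
applying $L$ shows that $k \mapsto \varphi_f(gk)$ lies in $\mathcal L(V^\kappa)$, the central character of $V^\kappa$ absorbing the $c(k)$ factor. Property (2) is immediate from property (2) of $F$. For property (3), using the Harish-Chandra decomposition $\mathfrak g_\CC = \mathfrak k_\CC \oplus \mathfrak p_\CC^+ \oplus \mathfrak p_\CC^-$, the holomorphy of $f$ as a section of $\omega^\kappa$ forces $F$ to be holomorphic in $x \in B$, so any $X \in \mathfrak p_\CC^-$ acts on $\Phi_f$ as a $\overline{\partial}$-operator and annihilates it.

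Property (4) is the most delicate and is the main obstacle. The required statement is that a section $f$ of $\omega^\kappa$ on $Y_K$ extends to a section of $\omega^\kappa(-D)$ on $X_K$ precisely when the associated $\varphi_f$ has vanishing constant term along the unipotent radical of every proper $\QQ$-parabolic of $G$. Concretely, near each cusp of $X_K$ the toroidal boundary chart expresses $f$ as a Fourier--Jacobi expansion along the corresponding rational boundary component, and its zeroth Jacobi coefficient corresponds, via the Mellin-type transform implicit in the adelic description, to $\int_{N(\QQ)\backslash N(\mathbb A)} \varphi_f(ng)\,dn$; vanishing along $D$ is therefore equivalent to the vanishing of all such integrals. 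For the Picard surface the boundary of the toroidal compactification is particularly simple (its rational boundary components are zero-dimensional), which keeps the bookkeeping tractable but still requires an explicit comparison of the Siegel-$\Phi$-type operator on each boundary chart with the automorphic constant term.

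For bijectivity, the inverse sends a $\varphi$ satisfying (1)--(4) to the $V^\kappa$-valued function $F(x, k_f) = c(g_\infty)^{k_1+k_2+k_3} J(g_\infty, x_0)\cdot \mathcal L^{-1}\bigl(k \mapsto \varphi(g_\infty k_f k)\bigr)$ for any $g_\infty \in G(\RR)$ with $g_\infty x_0 = x$; this is well-defined by (1), holomorphic in $x$ by (3), of level $K$ by (2), and extends as a section of $\omega^\kappa(-D)$ by (4). The isometry claim compares the Petersson product on the left (from an invariant hermitian metric on $\omega^\kappa$ descending from $V^\kappa$) with the $L^2$-product on the right restricted to the $K_\infty$-isotypic component $\mathcal L(V^\kappa)$; the two agree up to a constant depending on the choice of $L$ and normalisations of Haar measures. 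Hecke equivariance under $\mathcal H^N$ is transparent since both constructions use the same right action of $G(\mathbb A_f)$ on level structures.
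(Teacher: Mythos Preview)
The paper does not actually give a proof of this proposition: it is introduced with the phrase ``We have the following well known proposition'' and stated without argument. Your proposal is therefore not being compared to anything in the paper, but it is a correct and standard sketch of this classical result; the verification of (1)--(3), the Fourier--Jacobi interpretation of (4), the inverse map, and the Petersson/$L^2$ comparison are all the expected ingredients, and nothing is missing at the level of detail appropriate for a ``well known'' fact.
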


Using the previous proposition, to every $f \in H^0_{cusp}(X_K,\omega^\kappa)$, an eigenvector for the Hecke algebra, we will be able to attach a automorphic form $\varphi_f$, and an automorphic representation $\Pi_f$ (with same central character).

\subsection{Local groups}

In this subsection, we describe the local group at an inert prime. Let $p$ be a prime, inert in $E$. 
Let $E_p \supset \QQ_p$ its $p$-adic completion.
Recall that $V\otimes \QQ_p = E_p^3$ and that the hermitian form is given by the matrix,
\[J =
\left(
\begin{array}{ccc}
  &   & 1  \\
  &  1 &   \\
 1 &   &   
\end{array}
\right).
\]

The diagonal maximal torus $T$ of $G_{\QQ_p}$ is isomorphic to $E_p^\times \times E_p^\times$, 
\[ T(\QQ_p) =\left\{
\left(
\begin{array}{ccc}
a  &   &   \\
  & e  &   \\
  &   & N(e)\overline{a}^{-1}  
\end{array}
\right), \quad a,e \in E_p^\times \right\}.
\]
and contains $T^1$, isomorphic to $E_p^\times \times E^1_p$, where $E^1_p = \{ x \in E_p : x\overline x = 1\} = (\mathcal O_{E_p})^{1}$, the torus of $U(E^3,J)$,
\[ T^1(\QQ_p) =\left\{
\left(
\begin{array}{ccc}
a  &   &   \\
  & e  &   \\
  &   &\overline{a}^{-1}  
\end{array}
\right), \quad a \in E_p^\times, e \in E^1_p \right\}.
\]
We also have the Borel subgroups $B = B_{GL_3(E)} \cap G_{\QQ_p}$ of upper-triangular matrices,
\[ B(\QQ_p) = \left\{
\left(
\begin{array}{ccc}
a  & x  & y  \\
  & e  & \overline{x}\overline{a}^{-1}e  \\
  &   & N(e)\overline{a}^{-1}  
\end{array}
\right), \quad a,e,x,y \in E_p^\times \text{ and } \Tr(\overline{a}^{-1}y) =N(a^{-1}x)\right\},\]
and $B^1$ the corresponding Borel for $U(E^3,J)$,
\[ B^1(\QQ_p) = \left\{
\left(
\begin{array}{ccc}
a  & x  & y  \\
  & e  & \overline{x}\overline{a}^{-1}e  \\
  &   & \overline{a}^{-1}  
\end{array}
\right), \quad a,x,y \in E_p^\times, e \in E_p^1 \text{ and } \Tr(\overline{a}^{-1}y) =N(a^{-1}x)\right\}.\]

\section{Weight space}
\label{sectW}

Denote by $\mathcal O = \mathcal O_{E_p}$, and by $T^1(\ZZ_p)$ the torus $\mathcal O^\times \times \mathcal O^1$ over $\ZZ_p$. It is the $\ZZ_p$-points of a maximal torus of $U(2,1)$ and denote by $T = T^1\otimes_{\ZZ_p} \mathcal O$ the split torus over $\Spec(\mathcal O)$. 

\begin{definen}
The weight space $\mathcal W$ is the rigid space over $\QQ_p$ given by $\Hom_{cont}(T^1(\ZZ_p),\mathbb G_m)$. It coincides with Berthelot's rigid fiber \cite{Ber}, of the formal scheme (called the \textit{formal weight space}) given by,
\[ \mathfrak W := \Spf(\ZZ_p[[T^1(\ZZ_p)]]).\]
If $K$ is an extension of $\QQ_p$, the $K$-points  of $\mathcal W$ are given by,
\[ \mathcal W(K) = \Hom_{cont}(\mathcal O^\times \times \mathcal O^1,K^\times).\]
\end{definen}

$\mathcal W$ is isomorphic to a union of $(p+1)(p^2-1)$ open balls of dimension 3 (see Appendix \ref{AppW}, compare with \cite{Urb} section 3.4.2),
\[ \mathcal W \simeq \coprod_{(\mathcal O^\times \times \mathcal O^1)^{tors}} B_3(0,1).\]
There is also a universal character,
\[ \kappa^{un} : T^1(\ZZ_p) \fleche \ZZ_p[[T^1(\ZZ_p)]],\]
which is locally analytic and we can write $\mathcal W = \bigcup_{w>0} \mathcal W(w)$ as an increasing union of affinoids using the analycity radius (see Appendix \ref{AppW}).

\begin{definen}
To $k = (k_1,k_2,k_3) \in \ZZ^3$ is associated a character,
\[
\underline k : \begin{array}{ccc}
\mathcal O^\times \times \mathcal O^1  &  \fleche & \QQ_{p^2}  \\
(x,y)  & \longmapsto  &   (\sigma\tau)(x)^{k_1}\tau(x)^{k_3}(\sigma\tau)(y)^{k_2}.
\end{array}
\]
Characters of this form are called \textit{algebraic}, or classical if moreover $k_1 \geq k_2$. They are analytic and Zariski dense in $\mathcal W$. 
\end{definen}

\section{Induction}
\label{sectBGG}
Set $U = U(2,1)/\ZZ_p$, $T^1$ its maximal torus, $K = \QQ_{p^2}$ and $\mathcal O = \mathcal O_K$. We have $U\otimes_{\ZZ_p} \mathcal O \simeq \GL_3/\mathcal O$, and we 
denote by $T$ its torus, and $\GL_2 \times \GL_1 \subset P$ the Levi of the standard parabolic of $\GL_3/\mathcal O$. Let $T \subset B$ the upper triangular Borel of $\GL_2\times \GL_1$ and $U$ 
its unipotent radical.

\begin{definen}
Let $\kappa \in X^+(T)$, then there exists a (irreducible) algebraic representation of $\GL_2\times\GL_1$ (of highest weight $\kappa$) given by,
\[ V_\kappa = \{ f : GL_2 \times \GL_1 \fleche \mathbb A^1 : f(gtu) = \kappa(t)f(g), t \in T, u \in U\},\]
where $\GL_2\times \GL_1$ acts by translation on the left (i.e. $g f(x) = f(g^{-1}x)$). $V_\kappa$ is called the \textit{algebraic induction of highest weight} $\kappa$ of $\GL_2\times\GL_1$. 
\end{definen}

Let $I = I_1$ be the Iwahori subgroup of $\GL_2(\mathcal O)\times\GL_1(\mathcal O)$, i.e. matrices that are upper-triangular modulo $p$. Let $I_n$ be the subset of matrices in $B$ modulo $p^n$, i.e. of the form
\[
\left(
\begin{array}{ccc}
a  &  b &   \\
p^nc  & d  &   \\
  &   & e  
\end{array}
\right), \quad a,b,c,d \in\mathcal O.
\]
Denote by $N^0$ the opposite unipotent of $U$, and $N_n^0$ the subgroup of $N^0$ of elements reducing to identity modulo $p^n$. These are seen as subgroups of $\GL_2(\mathcal O)\times\GL_1(\mathcal O)$ rather than formal or rigid spaces, but could be identified with $\mathcal O$-points of rigid spaces. Precisely,
\[ N^0 = \left\{
\left(
\begin{array}{ccc}
1  &  0 &   \\
x  & 1 &   \\
  &   & 1  
\end{array}
\right), \quad x \in\mathcal O\right\} \quad \text{and} \quad N^0_n = \left\{
\left(
\begin{array}{ccc}
1  &  0 &   \\
p^nx  & 1 &   \\
  &   & 1  
\end{array}
\right), \quad x \in\mathcal O\right\}.
\]

We identify $N_n^0$ with $p^n\mathcal O \subset  (\mathbb A^1_{\mathcal O})^{an}$. For $\eps >0$; denote by $N_{n,\eps}^0$ the affinoid,
\[ N_{n,\eps}^0 = \bigcup_{x \in p^n\mathcal O} B(x,\eps) \subset (\mathbb A^1_\mathcal O)^{an}.\]
$N_{n,\eps}^0$ is a rigid space. Actually we could define a rigid space $\mathcal N^0$ (which would just be $\mathbb A^1$ here) such that $\mathcal N^0(\mathcal O) = N^0$, and $N_{n,\eps}^0$ is an affinoid in $\mathcal N^0$, and is a neighborhood of $N^0_n$.
For $L$ an extension of $\QQ_p$, denote $\mathcal F^{\eps-an}(N^0_n,L)$ the set of functions $N^0_n \fleche L$ which are restriction of analytic functions on $N^0_{n,\eps}$.
Let $\eps >0$ and $\kappa \in \mathcal W_\eps(L)$ a $\eps$-analytic character, we denote,
\[ V_{\kappa,L}^{\eps-an} = \{ f : I \fleche L : f(ib) = \kappa(b)f(i) \text{ and } f_{N^0} \in \mathcal F^{\eps-an}(N^0,L)\}.\]
We also denote, for $\eps >0$ and $k = \lfloor -\log_p(\eps) \rfloor$,
\[ V_{0,\kappa,L}^{\eps-an} = \{ f : I_{k}\fleche L : f(ib) =  \kappa(b)f(i) \text{ and } f_{N^0_{k}} \in \mathcal F^{\eps-an}(N^0_{k},L)\},\]
where $\lfloor . \rfloor$ denote the previous integer, and,
\[ V_{\kappa,L}^{l-an} = \bigcup_{\eps >0} V_{\kappa,L}^{\eps-an} \quad \text{and}\quad V_{0,\kappa,L}^{l-an} = \bigcup_{\eps>0}V_{0,\kappa,L}^{\eps-an}.\]
Concretely, $V_{0,\kappa,L}^{\eps-an}$ is identified with analytic functions on $B(0,p^{\lfloor - \log_p\eps\rfloor})$ (a ball of dimension 1) .

We can identify $V_{\kappa,L}^{l-an}$ with $\mathcal F^{l-an}(p\mathcal O,L)$ by restricting $f \in V_{\kappa,L}^{l-an}$ to $N^0$. We can also identify $V_{0,\kappa,L}^{l-an}$ to the germ of locally analytic function on 0.

Let \[ \delta = 
\left(
\begin{array}{ccc}
p^{-1}  &   &   \\
  & 1  &   \\
  &   &   1
\end{array}
\right)
\]
which acts on $\GL_2 \times \GL_1$, stabilise the Borel $B(K)$, and define an action on $V_{\kappa,L}$ for $\kappa \in X^+(T)$, via $(\delta \cdot f)(g) = f(\delta g \delta^{-1})$. The action by conjugation of $\delta$ on $I$ does not stabilise it, but it stabilises $N^0$. We can thus set, for $j \in I$, write $j = nb$ the Iwahori decomposition of $j$, and set,
\[ \delta \cdot f(j) = f(\delta n \delta^{-1}b).\]
We can thus make $\delta$ act on $V_{\kappa,L}^{\eps-an}, V_{\kappa,L}^{l-an}, V_{0,\kappa,L}^{\eps-an},V_{0,\kappa,L}^{l-an}$.
Via the identification $V_{\kappa,L}^{\eps-an} \simeq \mathcal F^{l-an}(p\mathcal O,L)$, $\delta \cdot f (z) = f(pz)$. Thus $\delta$ improves the analycity radius. Moreover, its supremum norm is negative.

\begin{propen}
Let $f \in V_{0,\kappa,L}^{l-an}$. Suppose $f$ is of finite slope under the action of $\delta$, i.e. $\delta \cdot f = \lambda f, \lambda \in L^\times$. Then $f$ comes (by restriction) from a (unique) $f \in V_{\kappa,L}^{an}$.
\end{propen}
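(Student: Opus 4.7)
The plan is to use the eigenvalue equation $\delta\cdot f = \lambda f$ iteratively to extend $f$ from a germ at $0$ to a locally analytic function on the full target domain. Under the identifications set up just before the statement, $V_{0,\kappa,L}^{l-an}$ corresponds to germs of locally analytic $L$-valued functions at $0\in N^0\simeq \mathcal O$, while $V_{\kappa,L}^{l-an}$ corresponds to locally analytic functions on $p\mathcal O$. The action of $\delta$ in these coordinates is $(\delta\cdot f)(z)=f(pz)$, so the hypothesis is the functional equation $f(pz)=\lambda f(z)$ on the germ, and iteration gives $f(p^n z)=\lambda^n f(z)$ wherever both sides are defined.

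To construct the extension, I would unwind this functional equation. Concretely, $f$ is analytic on some ball $p^m\mathcal O$; for any $z$ in the prospective target domain, choose $n\geq m$ large enough that $p^n z\in p^m\mathcal O$ lies in the analyticity domain of $f$, and set
\[ \tilde f(z) := \lambda^{-n} f(p^n z). \]
Independence of $n$ is immediate from the eigenvalue relation applied inside the domain of $f$: for $n'>n$, one has $p^{n'}z=p^{n'-n}(p^n z)$, and iterating the functional equation $n'-n$ times (legally, since $p^n z\in p^m\mathcal O$) gives $f(p^{n'}z)=\lambda^{n'-n}f(p^n z)$, so $\lambda^{-n'}f(p^{n'}z)=\lambda^{-n}f(p^n z)$. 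By construction $\tilde f$ restricts to $f$ as a germ at $0$ and satisfies $\delta\cdot \tilde f=\lambda\tilde f$.

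It remains to check that $\tilde f$ genuinely defines an element of $V_{\kappa,L}^{l-an}$, i.e.\ that it is analytic on a fixed rigid-analytic neighborhood. Here I would fix $n$ large enough so that the rigid analytic morphism $[\times p^n]$ sends $N_{0,\eps'}^0$ (for a suitable $\eps'>0$) into the original $\eps$-analyticity neighborhood $N_{m,\eps}^0$ of $p^m\mathcal O$; then $\tilde f=\lambda^{-n}\,f\circ[\times p^n]$ is analytic on $N_{0,\eps'}^0$ by composition, so $\tilde f\in V_{\kappa,L}^{\eps'-an}\subset V_{\kappa,L}^{l-an}$. Uniqueness is forced: if $g\in V_{\kappa,L}^{l-an}$ is also a $\lambda$-eigenvector for $\delta$ agreeing with $f$ as germs at $0$, then the same iterated relation gives $g(z)=\lambda^{-n}g(p^n z)=\lambda^{-n}f(p^n z)=\tilde f(z)$, taking $n$ large enough that $p^n z$ lies in the neighborhood where $g$ coincides with $f$.

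The only genuinely non-formal step is this last propagation of the analyticity radius: one must verify that multiplication by $p^n$ contracts the $\eps'$-neighborhood of the full domain \emph{uniformly} into the fixed $\eps$-neighborhood of $p^m\mathcal O$, which is exactly the statement that $\delta$ improves the analycity radius (already noted in the text, together with the remark about its negative supremum norm). Once this is in hand, everything else is a bookkeeping exercise with the iterated functional equation.
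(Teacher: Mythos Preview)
Your approach is essentially the paper's, just unrolled into a single step: the paper writes $f=\lambda^{-1}\delta f$, observes this improves the analyticity radius by one power of $p$, and iterates until landing in $\mathcal F^{an}(p\mathcal O,L)=V_{\kappa,L}^{an}$; you jump directly to $\tilde f(z)=\lambda^{-n}f(p^nz)$.

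There is one slip in your write-up. You conclude $\tilde f\in V_{\kappa,L}^{\eps'-an}\subset V_{\kappa,L}^{l-an}$, but the statement demands the stronger $\tilde f\in V_{\kappa,L}^{an}$ (globally analytic on the ball $p\mathcal O$, not merely locally analytic). Your argument actually delivers this: the key point, stated in the paper just before the proposition, is that $V_{0,\kappa,L}^{\eps-an}$ is identified with analytic functions on a \emph{single} ball $B(0,p^{m})\simeq p^{m}\mathcal O$. Taking $n=m-1$, the map $z\mapsto p^{m-1}z$ is an analytic morphism $p\mathcal O\to p^{m}\mathcal O$, so $\tilde f=\lambda^{-(m-1)}f(p^{m-1}\,\cdot\,)$ is analytic on all of $p\mathcal O$ by composition. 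The detour through $N^0_{0,\eps'}$ and the ``uniformity'' discussion is unnecessary and weakens your stated conclusion; you should land directly in $\mathcal F^{an}(p\mathcal O,L)$.
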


\begin{proof}[Proof]
$f \in V_{0,\kappa,L}^{p^n-an}$ for a certain $n$, in particular, it defines a function,
\[ f : 
\left(
\begin{array}{ccc}
a  & u &   \\
p^n\mathcal O  &  b &   \\
  &   &   c
\end{array}
\right) = I_n \fleche L
\] which is identified to a function in $\mathcal F^{an}(p^n\mathcal O,L)$. But $f$ is an eigenform for $\delta$ with eigenvalue $\lambda \neq 0 \in L$, thus $f = \lambda^{-1}\delta(f)$.
But if $f = f(z)$, with the identification  to $\mathcal F^{an}(p^n\mathcal O,L)$, $\delta f$ is identified with $f(pz)$, thus $f \in \mathcal F^{an}(p^{n-1}\mathcal O,L)$ i.e. $\delta^{-1}$ strictly increases the analyticity radius, and by iterating, $f \in \mathcal F^{an}(p\mathcal O,L)$, thus $f \in V_{\kappa,L}^{an}$. 
\end{proof}

\begin{propen}
For $\kappa = (k_1,k_2,r) \in X^+(T)$, there is an inclusion,
\[ V_{\kappa,L} \subset V_{\kappa,L}^{an},\]
which under the identification of $V_{\kappa,L}^{an}$ with $\mathcal F^{an}(p\mathcal O,L)$ identifies $V_\kappa$ with polynomial functions of degree less or equal than $k_1-k_2$.
\end{propen}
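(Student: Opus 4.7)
\emph{Proof plan.} The plan is to restrict algebraic functions from $\GL_2\times\GL_1$ to the Iwahori $I$, then identify their image inside $\mathcal F^{an}(p\mathcal O,L)$ with polynomials of degree at most $k_1-k_2$ via an explicit Bruhat decomposition.

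First I would establish the inclusion $V_{\kappa,L}\subset V_{\kappa,L}^{an}$. Take $f\in V_{\kappa,L}$: by definition it is a regular function on $\GL_2\times\GL_1$ satisfying $f(gtu)=\kappa(t)f(g)$ globally, so its restriction to $I$ \emph{a fortiori} satisfies the Iwahori covariance for $b\in I\cap B$. Moreover $f$ restricted to $N^0_1 = I\cap N^0 \simeq p\mathcal O$ is the restriction of a regular function on the affine line $N^0\simeq\mathbb A^1_{\mathcal O}$, hence a polynomial in $x$, and therefore rigid-analytic on $p\mathcal O$.

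The main step is to bound the degree of $P_f(x):=f(n(x))$, where $n(x)=\bigl(\begin{smallmatrix}1&0\\ x&1\end{smallmatrix}\bigr)$; the $\GL_1$-component of $N^0$ is trivial, so it plays no role. For $x\neq0$ I would use the Bruhat decomposition in $\GL_2$,
\[
n(x)\;=\;\begin{pmatrix} 1 & 1/x \\ 0 & 1 \end{pmatrix}\,w\,\begin{pmatrix} x & 0 \\ 0 & -1/x \end{pmatrix}\begin{pmatrix} 1 & 1/x \\ 0 & 1 \end{pmatrix},
\]
where $w=\bigl(\begin{smallmatrix}0&1\\ 1&0\end{smallmatrix}\bigr)$ is the nontrivial Weyl element. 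Applying the right covariance of $f$ under the diagonal torus and the upper unipotent yields
\[
P_f(x)\;=\;(-1)^{k_2}\,x^{k_1-k_2}\,\tilde f(1/x),\qquad \tilde f(y):=f\!\left(\begin{pmatrix} 1 & y \\ 0 & 1 \end{pmatrix} w\right).
\]
Now $\tilde f$ is the restriction of the regular function $f$ to an affine line, so $\tilde f(y)=\sum_i c_i y^i$ is polynomial. Since $P_f(x)$ itself must be polynomial (in particular regular at $x=0$), all coefficients $c_i$ with $i>k_1-k_2$ must vanish, giving $\deg P_f\leq k_1-k_2$.

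Finally, the restriction map $V_{\kappa,L}\to L[x]_{\leq k_1-k_2}$ is injective: if $P_f\equiv 0$, then by right $B$-covariance $f$ vanishes on the big Bruhat cell $N^0\cdot B$, which is Zariski-dense in $\GL_2$, hence $f=0$. A dimension count ($\dim V_{\kappa,L}=k_1-k_2+1$) then promotes this to a bijection onto polynomials of degree $\leq k_1-k_2$, as desired. The main delicacy is bookkeeping of conventions, namely matching the direction of the covariance with the Bruhat factors so that the correct power $x^{k_1-k_2}$ (and not $x^{-(k_1-k_2)}$) is produced; once this is set up, the degree bound is forced by the regularity of $f$ at the identity of $N^0$.
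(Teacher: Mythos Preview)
Your argument is correct. The paper actually states this proposition without proof, presumably treating it as a standard fact about the matrix-coefficient model of the irreducible algebraic representation of $\GL_2$ of highest weight $(k_1,k_2)$; your Bruhat-decomposition computation supplies exactly that standard proof and lines up with the paper's conventions (in particular with the identification $V_{\kappa,L}^{an}\simeq\mathcal F^{an}(p\mathcal O,L)$ via restriction to $N^0_1=I\cap N^0$).
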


\begin{propen}
Let $\kappa = (k_1,k_2,r) \in X^+(T)$. The following sequence is exact,
\[ 0 \fleche V_{\kappa,L} \fleche V_{\kappa,L}^{an} \overset{d_\kappa}{\fleche} V_{(k_2-1,k_1+1,r),L}^{an},\]
where $d_\kappa$ is given by,
\[ f \in V_{\kappa,L}^{an} \longmapsto X^{k_1-k_2+1}f,\]
and \[Xf(g) = \left(\frac{d}{dt}f\left(g
\left(
\begin{array}{ccc}
1  &    &  \\
  -t&   1 & \\
  & & 1
\end{array}
\right)\right)
\right)_{t=0}\]
\end{propen}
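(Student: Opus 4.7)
The plan is to use the identification from the previous proposition: restriction to $N^0$ identifies $V^{an}_{\kappa,L}$ with $\mathcal F^{an}(p\mathcal O,L)$, and sends $V_{\kappa,L}$ to the subspace of polynomials of degree at most $k_1-k_2$. Writing $\tilde f(z) := f(n(z))$, where $n(z) \in N^0$ has lower-left entry $z \in p\mathcal O$, the statement becomes an assertion about analytic functions on a $p$-adic disk.

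A direct computation from the defining formula gives
\[
Xf(n(z)) = \frac{d}{dt}\Big|_{t=0} f(n(z)n(-t)) = \frac{d}{dt}\Big|_{t=0} f(n(z-t)) = -\tilde f\,'(z),
\]
so $X$ corresponds to $-d/dz$, and hence $d_\kappa f$ corresponds to $(-1)^{k_1-k_2+1}\tilde f^{(k_1-k_2+1)}$. I would then verify that $d_\kappa f$ really does lie in $V^{an}_{(k_2-1,k_1+1,r),L}$. For the torus part, the transformation law is immediate: since $\mathrm{Ad}(\mathrm{diag}(a,d))\,e_{21} = (d/a)\,e_{21}$, each application of $X$ twists the torus-weight by the root $\mathrm{diag}(a,d)\mapsto d/a$, and after $k_1-k_2+1$ iterations the weight $(k_1,k_2,r)$ becomes $(k_2-1,k_1+1,r)$. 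Invariance under the upper-unipotent $U$ is the standard BGG-type check: a short Baker--Campbell--Hausdorff calculation shows that for $u=\exp(s\,e_{12})\in U$ one has
\[
 X^n f(iu) = X^n f(i) + s\,n(k_1-k_2-n+1)\,X^{n-1}f(i) + O(s^2),
\]
and the coefficient vanishes precisely at $n = k_1-k_2+1$. This is the dual of the classical fact that $e_{21}^{k_1-k_2+1}$ generates the Verma submodule of $M(k_1,k_2)$ of highest weight $(k_2-1,k_1+1)$.

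With these formal checks done, exactness is elementary. Expanding a section as $\tilde f = \sum_{n \geq 0} a_n z^n$, convergent on $p\mathcal O$, the condition $\tilde f^{(k_1-k_2+1)} \equiv 0$ forces $a_n = 0$ for all $n > k_1-k_2$, i.e.\ $\tilde f$ is a polynomial of degree at most $k_1-k_2$ — precisely the image of $V_{\kappa,L}$. Injectivity of $V_{\kappa,L} \hookrightarrow V^{an}_{\kappa,L}$ is tautological from the identification. The only genuinely delicate step is the unipotent-invariance verification for $X^{k_1-k_2+1}f$; everything else reduces to power-series calculus on a $p$-adic disk.
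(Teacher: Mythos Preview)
Your argument is correct and follows essentially the same route as the paper: verify that $d_\kappa$ lands in the right weight space via the torus and unipotent checks, identify $X$ with (a sign times) $d/dz$ under the restriction-to-$N^0$ isomorphism with $\mathcal F^{an}(p\mathcal O,L)$, and conclude that the kernel consists of polynomials of degree at most $k_1-k_2$, which is precisely $V_{\kappa,L}$. The paper is terser on the unipotent-invariance step (it only alludes to ``the corresponding formula''), whereas you spell out the BCH computation; otherwise the two proofs coincide.
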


\begin{proof}[Proof]
Let us first check that $d_\kappa$ is well defined.
Indeed, using $(k_1-k_2+1)$-times the formula,
\begin{eqnarray*} (Xf)(g\left(
\begin{array}{ccc}
t_1 &   &   \\
  &  t_2& \\
  & & 1
\end{array}
\right)) =  \\
\left(\frac{d}{dt}f\left(g\left(
\begin{array}{ccc}
t_1 &      &\\
  &  t_2& \\
  & & 1
\end{array}
\right)
\left(
\begin{array}{ccc}
1  &   &   \\
  -t&   1& \\
  & & 1
\end{array}
\right)
\left(
\begin{array}{ccc}
t_1^{-1} &    &  \\
  &  t_2^{-1}& \\
  & & 1
\end{array}
\right)\left(
\begin{array}{ccc}
t_1 &  &    \\
  &  t_2&\\
  &  & 1
\end{array}
\right)\right)\right)_{t=0}\\
= \left(\frac{d}{dt}f\left(g
\left(
\begin{array}{ccc}
1  &   &   \\
  -t_2t_1^{-1}t&   1&\\
  & & 1
\end{array}
\right)\right)t_1^{k_1}t_2^{k_2}
\right)_{t=0} = t_2t_1^{-1} (Xf)(g)t_1^{k_1}t_2^{k_2} = t_1^{k_1-1}t_2^{k_2+1} (Xf)(g)
\end{eqnarray*}
(and the corresponding formula for the action of $\left(
\begin{array}{ccc}
1  & u   &  \\
  &   1&\\
  & & 1
\end{array}
\right)$) we deduce that $d_\kappa f$ has the right weight.

We can check (evaluating on $\left(
\begin{array}{ccc}
1  &   &   \\
  u &   1&\\
  & & 1
\end{array}
\right)$) that on $\mathcal F^{an}(p\mathcal O,L)$ $d_\kappa$ correspond to $(\frac{d}{dz})^{k_1-k_2+1}$, where $z$ is the variable on $p\mathcal O$. Thus, using the previous identification with $V_\kappa$ and polynomials of degree less or equal than $k_1-k_2$, we deduce can check that $V_\kappa$ is exactly the kernel of $d_\kappa$.
\end{proof}

\begin{remaen}
A more general version of the previous proposition has been developed by Urban, \cite{Urb} Proposition 3.2.12, or Jones \cite{Jones}.
\end{remaen}

\section{Hasse Invariants and the canonical subgroups}

Let $p$ be a prime. Fix $E \subset \overline{E} \subset \CC$ an algebraic closure of $E$ and fix an isomorphism $\CC \simeq \overline{\QQ_p}$. Call $\tau,\sigma\tau$ the two places of $\overline{\QQ_p}$ that corresponds respectively to $\tau_\infty,\sigma\tau_\infty$ through the previous isomorphism (sometimes if $p$ splits in $E$ we will instead right $v = \tau$ and $\overline v = \sigma\tau$ following the notation of \cite{BC1}).
Suppose now $p$ is inert in $E$. Let us take $K=K^pK_p \subset G(\mathbb A_f)$ a sufficiently small compact open, such that $K_p$ is hyperspecial, and denote $Y = Y_K/\Spec(\mathcal O)$ the integral model of Kottwitz (\cite{Kotjams}) of the Picard Variety associated to the Shimura datum of the first section and the level $K$ (recall $\mathcal O = \mathcal O_{E,p}$). Denote $\mathcal I = \Hom(\mathcal O,\CC_p) = \{\tau,\sigma\tau\}$ the set of embeddings of $\mathcal O$ into an algebraic closure of $\QQ_p$ ($\CC_p = \widehat{\overline{\QQ_p}}$), where $\sigma$ is the Frobenius of $\mathcal O$, which acts transitively on $\mathcal I$, and $\Gal(E/\QQ) = \{\id,\sigma\}$.

Recall the (toroidal compactification of the) Picard modular surface $X = X_K$ is the (compactified) moduli space of principally polarized abelian varieties $\mathcal A \fleche S$ of genus 3, endowed with an action of $\mathcal O_E$, and a certain level structure $K^p$, and such that, up to extending scalars of $S$, we can decompose the conormal sheaf of $\mathcal A$ under the action of $\mathcal O = \mathcal O_{E,p}$,
\[ \omega_\mathcal A = \omega_{\mathcal A,\tau} \oplus \omega_{\mathcal A,\sigma\tau},\]
and we assume $\dim_{\mathcal O_S} \omega_{\mathcal A,\tau} =1$ (and thus $\dim_{\mathcal O_S} \omega_{\mathcal A,\sigma\tau} = 2$). These two sheaves extend from $Y$ to $X$. One reason is that there is a semi-abelian scheme on $X$ together with an action of $\mathcal O_E$, thus its conormal sheaf extend $\omega_A$ to the boundary, and the $\mathcal O_E$ action allows the splitting to make sense on the boundary too (see for example \cite{Lan}, Theorem 6.4.1.1).

\begin{remaen}
If $p$ splits in $E$, there is also a integral model of the Picard Surface, which is above $\Spec(\ZZ_p)$, and it has a similar description (of course in this case $\mathcal O_{E,p} \simeq \ZZ_p\times\ZZ_p$).
\end{remaen}

\begin{remaen}
The case of toroïdal compactifications of the Picard modular surface is particularly simple. Contrary to the case of other (PEL) Shimura varieties, there is a unique and thus canonical choice of a rational polyhedral cone decomposition $\Sigma$, and thus a unique toroïdal compactification. This cone decomposition is the one of $\RR_+$, it is smooth and projective. As a result, the toroïdal compactification is smooth and projective.
\end{remaen}

\subsection{Classical modular sheaves and geometric modular forms}

On $X$, there is a sheaf $\omega$, the conormal sheaf of $\mathcal A$, the universal (semi-)abelian scheme, along its unit section, and $\omega = \omega_\tau \oplus \omega_{\sigma\tau}$.

For any $\kappa = (k_1,k_2,k_3) \in \ZZ^3$ such that $k_1 \geq k_2$, is associated a "classical" modular sheaf, 
\[ \omega^\kappa = \Sym^{k_1-k_2}\omega_{\sigma\tau} \otimes (\det \omega_{\sigma\tau})^{k_2} \otimes \omega_{\tau}^{k_3}.\]
Denote by $\kappa' = (-k_2,-k_1,-k_3)$, this is still a dominant weight, and $\kappa \mapsto \kappa'$ is an involution. There is another way to see the classical modular sheaves.

Denote by $\mathcal T = \Hom_{X,\mathcal O}(\mathcal O_X^2\otimes\mathcal O_X,\omega)$ where $\mathcal O$ acts by $\sigma\tau$ on the first 2-dimensional factor and $\tau$ on the other one. Denote $\mathcal T^\times =  \Isom_{X,\mathcal O}(\mathcal O_X^2\otimes\mathcal O_X,\omega)$, the $\GL_2\times\GL_1$-torsor of trivialisations of $\omega$ as a $\mathcal O$-module. There is an action on $\mathcal T$ of $\GL_2\times\GL_1$ by $g \cdot w = w \circ g^{-1}$.

Denote by $\pi : \mathcal T^\times \fleche X$ the projection. For any dominant $\kappa$ as before, define,
\[ \omega^\kappa = \pi_*\mathcal O_{\mathcal T^\times}[\kappa'],\]
the subsheaf of $\kappa'$-equivariant functions for the action of the upper triangular Borel $B \subset \GL_2\times \GL_1$. As the notation suggests, there is an isomorphism, if $\kappa = (k_1,k_2,k_3)$,
\[ \pi_*\mathcal O_{\mathcal T^\times}[\kappa'] \simeq \Sym^{k_1-k_2}\omega_{\sigma\tau} \otimes (\det \omega_{\sigma\tau})^{k_2} \otimes \omega_{\tau}^{k_3}.\]

\begin{definen}
Recall that $X$ is the (compactified) Picard variety of level $K = K_pK^p$. The global sections $H^0(X,\omega^\kappa)$ is the module of Picard modular forms of level $K$ and weight $\kappa$. If $D$ denotes the (reduced) boundary of $X$, the submodule  $H^0(X,\omega^\kappa(-D))$ is the submodule of Picard cusp-forms.
\end{definen}

In the sequel we will be interested in the case $K_p = I$, the Iwahori subgroup when we speak about modular forms on the rigid space associated to $X$. But for moduli-theoretic reason we will not directly consider an integral model of $X_I^{rig}$, but only an integral model of a particular open subset (the one given by the canonical filtration). Indeed, in this particular setting ($U(2,1)$ with $p$ inert), the canonical filtration identifies a strict neighborhood of the $\mu$-ordinary locus in $X$ with an open in the level $I$-Picard variety.

\begin{remaen}
There is a more general construction of automorphic sheaves $\omega^{k_1,k_2,k_3,r}$ given in \cite{HarVB}, they are independant of $r$ as sheaves on the Picard Variety, only the $G$-equivariant action (and thus the action of the Hecke operators) depends on $r$. We will only use the previous definition of the sheaves. We could get more automorphic forms by twisting by the norm character (which would be equivalent to twist the action of the Hecke operators). 
\end{remaen}

\subsection{Local constructions}

Let $G$ be the $p$-divisible group of the universal abelian scheme over $Y \subset X$. Later we will explain how to extend our construction to all $X$.
$G$ is endowed with an action of $\mathcal O$, and we have that his signature is given by,
\[
\left\{
\begin{array}{cc}
p_\tau = 1  & q_\tau = 2   \\
 p_{\sigma\tau} = 2  & q_{\sigma\tau} = 1   
\end{array}
\right.
\]
which means that if we denote $\omega_G = \omega_{G,\sigma\tau}\oplus \omega_{G,\tau}$, the two pieces have respective dimensions $p_{\sigma\tau} = 2$ and $p_\tau =1$. Moreover $G$ carries a polarisation $\lambda$, such that $\lambda : G \overset{\sim}{\fleche} G^{D,(\sigma)}$ is $\mathcal O$-equivariant.

The main result of \cite{Her1}, see also \cite{GN}, is the following,
\begin{definen}
There exists sections,
\[ \widetilde{\ha_{\sigma\tau}} \in H^0(Y\otimes \mathcal O/p,\det(\omega_{G,\sigma\tau})^{\otimes(p^2-1)}) \quad \text{and} \quad \widetilde{\ha_{\tau}} \in H^0(Y\otimes \mathcal O/p,(\omega_{G,\tau})^{\otimes(p^2-1)}),\]
such that $\widetilde{\ha_{\tau}}$ is given by (the determinant of) $V^2$,
\[ \omega_{G,\tau} \overset{V}{\fleche} \omega_{G,\sigma\tau}^{(p)} \overset{V}{\fleche} \omega_{G,\tau}^{(p^2)},\]
and $\widetilde{\ha_\sigma\tau}$ is given by a division by $p$ on the Dieudonne crystal of $G$ of $V^2$, restricted to a lift of the Hodge Filtration $\omega_{G^D,\sigma\tau}$.
\end{definen}

On $Y \otimes \mathcal O/p$, the sheaves $\omega_{G,\tau}$ and $\omega_\tau$ (respectively $\omega_{G,\sigma\tau}$ and $\omega_{\sigma\tau}$) are isomorphic. In fact, the previous sections extends to all $X$, for example by Koecher's principle, see \cite{LanIMRN} Theorem 8.7.

\begin{remaen}
\begin{enumerate}
\item These sections are Cartier divisors on $X$, i.e. they are invertible on an open and dense subset (cf. \cite{Her1} Proposition 3.22 and \cite{Wed1}).
\item Because of the $\mathcal O$-equivariant isomorphism $\lambda : G \simeq G^{D,(\sigma)}$, and the compatibility of $\widetilde{\ha_\tau}$ with duality (see \cite{Her1}, section 1.10), we deduce that,
\[\widetilde{\ha_\tau}(G) = \widetilde{\ha_\tau}(G^D) = \widetilde{\ha_\tau}(G^{(\sigma)}) = \widetilde{\ha_{\sigma\tau}}(G).\]
Thus, we could use only $\widetilde{\ha_{\sigma\tau}}$ or $\widetilde{\ha_{\tau}}$ and define it in this case without using any crystalline construction. We usually denote by $\widetilde{^{\mu}{\ha}} =  \widetilde{\ha_\tau} \otimes\widetilde{\ha_{\sigma\tau}}$, but because of the this remark, we will only use $\widetilde{\ha_\tau}$ in this article (which is then reduced, see the appendix).
\item We use the notation $\widetilde{\bullet}$ to denote the previous global sections, but if we have $G/\Spec(\mathcal O_{\CC_p}/p)$ a $p$-divisible $\mathcal O$-module of signature $(2,1)$, we will also use the notation $\ha_\tau(G) = v(\widetilde{\ha_\tau(G)})$, where the valuation $v$ on $\mathcal O_{\CC_p}$ is normalized such that $v(p) = 1$ and truncated by 1.
\end{enumerate}
\end{remaen}

\begin{definen}
We denote by $\overline{X}^{ord}$ the $\mu$-ordinary locus of $\overline X = X\otimes_{\mathcal O} \mathcal O/p$, which is $\{ x \in \overline X : \widetilde{\ha_\tau} \text{ is invertible}\}$. It is open and dense (see \cite{Wed1}).
\end{definen}

Let us recall the main theorem of \cite{Her2} in the simple case of Picard varieties. Recall that $p$ still denotes a prime, inert in $E$, and suppose $p > 2$.

\begin{theoren}
\label{thrfiltcan}
Let $n \in \NN^\times$. Let $H/\Spec(\mathcal O_L)$, where $L$ is a valued extension of $\QQ_p$, a truncated $p$-divisible $\mathcal O$-module of level $n+1$ and signature $(p_{\tau} = 1,p_{\sigma\tau} =2)$. Suppose,
\[ \ha_\tau(H) < \frac{1}{4p^{n-1}}.\]
Then there exists a unique filtration (socalled "canonical" of height $n$) of $H[p^n]$,
\[ 0 \subset H_{\tau}^n \subset H_{\sigma\tau}^n \subset H[p^n],\]
by finite flat sub-$\mathcal O$-modules of $H[p^n]$, of $\mathcal O$-heights $n$ and $2n$ respectively.
Moreover,
\[ \deg_{\sigma\tau}(H_{\sigma\tau}^n) + p\deg_{\tau}(H_{\sigma\tau}^n) \geq n(p +2) - \frac{p^{2n}-1}{p^2-1}\ha_\tau(H),\]
and
\[ \deg_{\tau}(H_{\tau}^n) + p\deg_{\sigma\tau}(H_{\tau}^n) \geq n(2p +1) - \frac{p^{2n}-1}{p^2-1}\ha_\tau(H).\]
In particular, the groups $H_\tau^n$ and $H_{\sigma\tau}^n$ are of high degree. In addition, points of $H_\tau^n$ coïncide with the kernel of the Hodge-Tate map 
$\alpha_{H[p^n],\tau,n - \frac{p^{2n}-1}{p^2-1}\ha_\tau(H)}$ and $H_{\sigma\tau}^n$ with the one of $\alpha_{H[p^n],\sigma\tau,n - \frac{p^{2n}-1}{p^2-1}\ha_\tau(H)}$.
They also coïncide with steps of the Harder-Narasihman filtration 
and are compatible with $p^s$-torsion ($s \leq n$) 
and quotients.
\end{theoren}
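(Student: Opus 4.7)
My approach would be to specialize and adapt the general canonical filtration machinery for PEL $p$-divisible groups developed in \cite{Her2} to the present setting, where the $\mathcal O$-Dieudonné module has only two isotypic components indexed by $\tau$ and $\sigma\tau$. The two classical ingredients driving the argument are Fargues' Harder-Narasimhan theory for finite flat group schemes, which provides canonical filtrations by degree together with a uniqueness statement for sufficiently high-degree subgroups, and the theory of the (partial) Hodge-Tate map, whose truncated kernel supplies the candidate canonical subgroup whenever the Hasse invariant is small enough.

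The first step is to treat the level $n=1$ case. The $\mathcal O$-action splits the Hodge-Tate map of $H[p]$ into two isotypic pieces $\alpha_{H[p],\tau}$ and $\alpha_{H[p],\sigma\tau}$, and the Hasse invariant $\ha_\tau(H)$ measures exactly the failure of these maps to be close to isomorphisms on the expected pieces. Using $\ha_\tau(H) < 1/4$, I would build $H_\tau^1$ as the kernel of a suitable truncation of $\alpha_{H[p],\tau}$ at level $1 - \tfrac{p^2+1}{p^2-1}\ha_\tau(H)$, and $H_{\sigma\tau}^1$ via the dual construction applied to $\alpha_{H[p],\sigma\tau}$; the polarization $\lambda : G \simeq G^{D,(\sigma)}$ noted in the earlier remark ensures compatibility of the two constructions and reduces everything to the single invariant $\ha_\tau$. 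A direct valuation computation on the truncated Hodge-Tate map then yields the degree lower bounds at $n=1$.

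The passage to arbitrary $n$ would proceed by induction, exploiting a compatibility of $\ha_\tau$ with the quotient by $H_\tau^1$ of the shape
\[\ha_\tau\bigl(H[p^n]/H_\tau^1\bigr) \leq p^2\,\ha_\tau(H).\]
Iterating $n-1$ times, the hypothesis $\ha_\tau(H) < \tfrac{1}{4p^{n-1}}$ preserves the induction at each stage, and the cumulative loss $\sum_{k=0}^{n-1} p^{2k} = \tfrac{p^{2n}-1}{p^2-1}$ is precisely the factor appearing in the degree bounds of the statement. The level-$n$ pieces are then obtained by pulling back the level-$(n-1)$ canonical filtration of $H[p^n]/H_\tau^1$ and combining it with the level-$1$ piece already produced. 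Uniqueness, identification with steps of the Harder-Narasimhan filtration, and compatibility with $p^s$-torsion and quotients all follow from the degree bound together with Fargues' uniqueness theorem for subgroups of sufficiently high degree, since the degrees we compute lie strictly above the threshold where an HN break must occur.

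The main obstacle, I expect, would be the sharp bookkeeping: tracking the constants through $n$ successive quotients and simultaneously checking at each step that the truncation level of the Hodge-Tate map lands in the range where the kernel is guaranteed to be a finite flat $\mathcal O$-stable subgroup scheme of the correct $\mathcal O$-height, rather than merely a subfunctor. This is where the specific constant $1/(4p^{n-1})$, with the factor $4$ rather than just $p$, ultimately comes from, and it is precisely the calculation carried out in detail in \cite{Her2}; the Picard situation is simpler because there are only two isotypic components and the polarization identifies the two partial Hasse invariants, so one does not have to juggle a large collection of crystalline divisions indexed by an embedding type.
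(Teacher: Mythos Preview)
Your overall framework—canonical subgroups as kernels of truncated Hodge-Tate maps, identified with Harder--Narasimhan breaks via degree bounds—is the right circle of ideas and is indeed what underlies \cite{Her2}. But your route differs from the paper's in two essential respects, and the second one is a genuine gap.

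First, the paper does not argue by induction on $n$ via successive quotients; it invokes Th\'eor\`eme~8.3 and Remarque~8.4 of \cite{Her2} to construct $H_{\sigma\tau}^n$ directly at level $n$. Your inductive step is also imprecise as written: $H[p^n]/H_\tau^1$ is not the $p^{n-1}$-torsion of the $p$-divisible group $G/H_\tau^1$, and the estimate $\ha_\tau(G/H_\tau^1)\le p^2\ha_\tau(G)$ is not the one available—the $p^2$-multiplication of $\ha_\tau$ in the paper is established for quotients by $H_{\sigma\tau}^1$ (\cite{Her2}, Proposition~8.1) or by $K_1=\Ker F^2$, not by $H_\tau^1$.

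Second, and more importantly, you miss the point of the paper's proof beyond the citation. The paper explicitly notes that the bound $\ha_\tau<1/(4p^{n-1})$ is \emph{sharper} than what \cite{Her2} yields directly (which needs $p\ge 5$), and the improvement to $p\ge 3$ comes from a duality trick: one applies the same construction to the Cartier dual to build $D_\tau^n\subset G[p^n]^D$ of $\mathcal O$-height $2n$, and then \emph{defines} $H_\tau^n:=(D_\tau^n)^\perp=(G[p^n]^D/D_\tau^n)^D$. The inclusion $H_\tau^n\subset H_{\sigma\tau}^n$ and the identity $(H_\tau^n)^\perp=H_{\sigma\tau}^n$ then follow from uniqueness of the Harder--Narasimhan filtration. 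Your use of the polarization only reduces $\ha_{\sigma\tau}$ to $\ha_\tau$; it does not supply this construction of $H_\tau^n$, which is what makes the stated bound work.
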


\begin{proof}
This is exactly \cite{Her2} if $p \geq 5$. Here the bounds on $p$ is slightly better than the one of \cite{Her2}. The reason is that we construct $H_{\sigma\tau}^n$ using Théorème 8.3 and Remarque 8.4 of \cite{Her2} (with our bound for $\ha_\tau$). We can then construct $D_\tau^n \subset G[p^n]^D$ of $\mathcal O$-height $2n$ by the same reason. Then setting,
\[ H_\tau^n := (D_\tau^n)^\perp = (G[p^n]^D/D_\tau^n)^D \subset G[p^n],\]
we can have the asserted bound on $\deg_\tau H_\tau^n$. For the assertion on the Kernel of the Hodge-Tate map, we get an inclusion with the bound on the degree. We can then use \cite{Her2} proposition 7.6. Then by remarque 8.4 of \cite{Her2}, these groups are steps of the Harder-Narasihman filtration (the classical one), and thus we get the assertion of the inclusion $H_\tau^n \subset H_{\sigma\tau}^n$. By unicity of the Harder-Narasihman filtration, we get that $(H_\tau^n)^\perp = H_{\sigma\tau}^n$.
\end{proof}

\begin{definen}
Let $H/\Spec(\mathcal O_L)$ as before, with $n = 2m$. Then we can consider inside $H[2m]$ the finite flat subgroup,
\[ K_m = H_{\tau}^{2m} + H_{\sigma\tau}^m.\]
It coïncides, after reduction to $\Spec(\mathcal O_L/\pi_L)$ (the residue field of $L$) with the kernel of $F^{2m}$ of $H[p^{2m}]$ (see \cite{Her2}, section 2.9.1).
\end{definen}

Recall that we denoted $X/\Spec(\mathcal O)$ the (schematic) Picard surface. Denote by $X^{rig}$ the associated rigid space over $E_p$, there is a specialisation map,
\[ \Sp : X^{rig} \fleche \overline{X},\]
and we denote by $X^{ord} \subset X^{rig}$ the open subspace defined by $\Sp^{-1}(\overline{X}^{ord})$.

Let us denote, for $v \in (0,1]$,
\[ X(v) = \{ x \in X^{rig} : \ha_\tau(x) = v(\widetilde{\ha_\tau}(G_x)) < v\} \quad \text{and} \quad X(0) = X^{ord},\]
the strict neighbourhoods of $X^{ord}$. The previous theorem and technics introduced in \cite{FarHN} (see \cite{Her2} section 2.9) implies, if $v \leq \frac{1}{4p^{n-1}}$, that we
have a filtration in families over the rigid space $X(v)$,
\[ 0 \subset H_{\tau}^n \subset H_{\sigma\tau}^n \subset G[p^n].\]
Let us explain how to get this result. On $Y^{rig}(v)$ this is simply \cite{Her2} Théorème 9.1 which is essentially \cite{FarHN} Théorème 4 (again, see proof of Theorem \ref{thrfiltcan} about the bound). The problem is that the $p$-divisible group $G$ doesn't extend to the boundary. But by results of Stroh \cite{Stroh} section 3.1, there exists 
\[  \overline U \fleche X^{rig},\]
which is an etale covering of $X^{rig}$ (actually $\overline U$ is algebraic and exists also integrally) together with $\overline R \overset{p_1}{\underset{p_2}{\rightrightarrows}} \overline U$ etale maps, such that
\[ X^{rig} \simeq [\overline{U}/\overline R].\]
Over $\overline U$ there is a Mumford 1-motive $M = [L \fleche \widetilde G]$ such that $M[p^n] = A[p^n]$ ($A$ is the semi-abelian scheme), and thus there is a canonical $\widetilde G$ semi abelian scheme with an action of $\mathcal O_E$ of (locally) constant toric rank and thus $\widetilde G[p^n]$ is finite flat. Thus applying to $\widetilde G[p^n]$ the results of \cite{Her2}, there exists on $\overline{U}(v) = \overline U \times_X X(v)$ the two groups $H_\tau^n \subset H_{\sigma\tau}^n$. 
Moreover, over $R(v) \overset{p_1}{\underset{p_2}{\rightrightarrows}} U(v)$, we have $pr_1^*H_\star^n = pr_2^*H_\star^n$ as $H_\star^n$ descend to $Y(v)$. Over $\overline R$ we have an isomorphism $pr_1^*G = pr_2^*G$ (\cite{Stroh} 3.1.5), and thus we have over $R(v)$ an isomorphism
\[ G/pr_1^*H_\star^n \overset{\sim}{\fleche} G/pr_2^*H_\star^n.\]
As $\overline R$ is normal (because smooth) and $R$ is dense in $\overline R$, by \cite{Stroh} Théorème 1.1.2 (due to Faltings-Chai) there is an isomorphism over $\overline R$
\[G/pr_1^*H_\star^n \overset{\sim}{\fleche} G/pr_2^*H_\star^n.\]
Thus, by faithfully flat descent, $H_{\tau}^n \subset H_{\sigma\tau}^n$ exists on $X(v)$. In particular, $K_m = H_{\sigma\tau}^m + H_\tau^{2m}$ also exists on $X(v)$.

\textit{A priori}, this filtration does not extend to a formal model of $X(v)$, but as $X/\Spec(\mathcal O)$ is a normal scheme, we will be able to use the following proposition.

\begin{definen}
For $K/\QQ_p$ an extension, define the category $\mathfrak{Adm}$ of admissible $\mathcal O_K$-algebra, i.e. flat quotient of power series ring $\mathcal O_K<<X_1,\dots,X_r>>$ for some $r \in \NN$. Define $\mathfrak{NAdm}$ the sub-category of normal admissible $\mathcal O_K$-algebra.
\end{definen}

\begin{propen}
\label{propKernormal}
Let $m$ be an integer, $S = \Spf R$ a normal formal scheme over $\mathcal O$, and $G \fleche S$ a truncated $p$-divisible $\mathcal O$-module of level $2m+1$ and signature 
$(p_\tau = 1,p_{\sigma\tau} =2)$. Suppose that for all $x \in S^{rig}$, $\ha_\tau(x) < \frac{1}{4p^{2m-1}}$. Then the subgroup 
$K_m := H_{\tau}^{2m} + H_{\sigma\tau}^m \subset G[p^{2m}]$ of $S^{rig}$ entends to $S$.
\end{propen}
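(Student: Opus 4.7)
The plan is to extend $K_m$ from $S^{rig}$ to $S$ by taking the scheme-theoretic closure inside the finite flat group scheme $G[p^{2m}] \to S$, and to invoke the normality of $R$ to guarantee that this closure is itself finite flat.

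Zariski-locally on $S$, I would write $G[p^{2m}] = \Spf A$ with $A$ a finite flat Hopf $R$-algebra. On $S^{rig}$, the discussion preceding the proposition (Theorem \ref{thrfiltcan} applied in the rigid setting, together with the Stroh descent already used for $X(v)$) furnishes the finite flat subgroup $K_m = H_\tau^{2m} + H_{\sigma\tau}^m \subset G[p^{2m}]^{rig}$, corresponding to a Hopf quotient $A[1/p] \twoheadrightarrow B := \mathcal{O}(K_m)$. I would then define the desired extension $\overline{K_m}$ as the scheme-theoretic image of the composite $A \hookrightarrow A[1/p] \twoheadrightarrow B$, i.e.\ set $\bar B := A/\ker(A \to B)$ and $\overline{K_m} := \Spf \bar B \subset G[p^{2m}]$. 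By construction this is a closed formal subscheme of $G[p^{2m}]$ whose rigid generic fiber is exactly $K_m$.

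The key step is to show that $\bar B$ is finite flat over $R$. The quotient $\bar B$ is of finite presentation, embeds as an $R$-submodule of $B$, and is in particular $p$-torsion-free; moreover its generic rank is constant, equal to the $\mathcal O$-order of $K_m$. By the standard Raynaud-type theorem on extending finite flat group schemes over normal bases, the scheme-theoretic closure of a finite flat subgroup from the generic fiber of a normal, flat $\mathcal O$-algebra to the base itself is automatically finite flat; the normality hypothesis is precisely what converts the codimension-one flatness statements (valid over the DVR localizations at height-one primes of $R$, where torsion-free equals flat) into global flatness of $\bar B$. The subgroup scheme structure on $\overline{K_m}$ is then formal: the coproduct, counit, and coinverse on $A$ restrict to $\bar B$ because scheme-theoretic image is compatible with fiber products of $R$-flat formal schemes, and the required factorization is already known after inverting $p$.

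The main obstacle is precisely this flatness of the scheme-theoretic closure, since over a general base ``torsion-free'' is strictly weaker than ``flat''; normality of $R$, together with its $\mathcal O$-flatness, is what makes the Raynaud-style argument go through. Observe finally that the explicit bound $\ha_\tau(x) < 1/(4p^{2m-1})$ plays no role beyond guaranteeing the existence of $K_m$ on $S^{rig}$: once $K_m$ is available on the rigid fiber, extending it to $S$ is a purely algebraic statement about finite flat group schemes over normal admissible $\mathcal O$-algebras.
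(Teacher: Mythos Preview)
Your overall plan—take the scheme-theoretic closure of $K_m$ inside $G[p^{2m}]$ and verify it is finite flat—is exactly the right shape, and it is the shape of the argument in \cite{AIP} that the paper invokes. The gap is in your justification of flatness.

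You assert a ``standard Raynaud-type theorem'' to the effect that over a normal base the schematic closure of a finite flat subgroup from the generic fibre is automatically finite flat, because torsion-freeness gives flatness at height-one primes and normality then propagates this globally. That implication is false: a finite torsion-free module over a normal domain which is locally free at every height-one prime need not be locally free. For instance, over the normal local ring $R=k[[x,y,z]]/(xy-z^2)$ the ideal $(x,z)$ is free at every height-one prime but not at the closed point. So normality alone does not buy you what you claim, and no such Raynaud theorem is available in this generality.

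What the paper (via \cite{AIP}, Proposition~4.1.3) actually uses is the one input you discarded: the identification of $K_m$ with the kernel of $F^{2m}$ on the special fibre, recorded in the definition just before the proposition (from \cite{Her2}, \S2.9.1). This is what controls the special fibre of the closure $\overline{K_m}$: one shows that $\overline{K_m}\otimes R/p$ agrees with $\Ker F^{2m}$, and the latter is finite flat over $R/p$ because Frobenius kernels of truncated Barsotti--Tate groups always are. Then the local flatness criterion along $(p)$—namely that a finite $R$-module $M$ with $p$ a non-zero-divisor on $R$ is flat iff $M$ is $p$-torsion-free and $M/pM$ is $R/p$-flat—finishes the argument. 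Your closing remark that the bound on $\ha_\tau$ plays no role beyond producing $K_m$ on $S^{rig}$ is therefore misleading: that bound is precisely what guarantees the coincidence with $\Ker F^{2m}$, and this coincidence is the substance of the flatness proof.
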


\begin{proof}[Proof]
As we know that $K_m$ coincide with the Kernel of Frobenius on points, this is exactly as \cite{AIP}, proposition 4.1.3.
\end{proof}

\section{Construction of torsors}

\subsection{Hodge-Tate map and image sheaves}

Let $p$ a prime, inert in $E$, and $\mathcal O = \mathcal O_{E,p}$, a degree 2 unramified extension of $\ZZ_p$. Let $K$ be a valued extension of $E_p$. Let $m \in \NN^\times$ and $v < \frac{1}{4p^{2m-1}}$.
Let $S = \Spec(R)$ where $R$ is an object of $\mathfrak{NAdm}/\mathcal O_K$, and $G \fleche S$ a truncated $p$-divisible $\mathcal O$-module of level $2m$ and signature,
\[
\left\{
\begin{array}{cc}
p_\tau = 1  & q_\tau = 2   \\
 p_{\sigma\tau} = 2  & q_{\sigma\tau} = 1   
\end{array}
\right.
\]
where $\tau : \mathcal O \fleche \mathcal O_S$ is the fixed embedding. Suppose moreover that for all $x \in S^{rig}$, $\ha_\tau(x) \leq v$. According to the previous section, there 
exists on $S^{rig}$ a filtration of $G[p^{2m}]$ by finite flat $\mathcal O$-modules,
\[ 0 \subset H_{\tau}^{2m} \subset H_{\sigma\tau}^{2m} \subset G[p^{2m}],\]
of $\mathcal O$-heights $2m$ and $4m$ respectively.
Moreover, we have on $S$ a subgroup $K_m \subset G[p^{2m}]$, finite flat of $\mathcal O$-height $3m$, etale-locally isomorphic (on $S^{rig}$) to 
$\mathcal O/p^{2m}\mathcal O \oplus\mathcal O/p^m\mathcal O$, and on $S^{rig}$, $K_m = H_{\tau}^{2m} + H_{\sigma\tau}^{2m}[p^m]$.

\begin{propen}
\label{prop61}
Let $w_\tau,w_{\sigma\tau} \in v(\mathcal O_K)$ such that $w_{\sigma\tau} < m-\frac{p^{2m}-1}{p^2-1}v$ and $w_{\tau} < 2m - \frac{p^{4m}-1}{p^2-1}v$.
Then, the morphism of sheaves on $S$ $\pi : \omega_G \fleche \omega_{K_m}$, induce by the inclusion $K_m \subset G$, induces isomorphisms,
\[ \pi_\tau : \omega_{G,\tau,w_\tau} \overset{\sim}{\fleche} \omega_{K_m,\tau,w_\tau} \quad \text{and}\quad \pi_{\sigma\tau} : \omega_{G,\sigma\tau,w_{\sigma\tau}} \overset{\sim}{\fleche} \omega_{K_m,\sigma\tau,w_{\sigma\tau}}.\]
\end{propen}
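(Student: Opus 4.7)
The plan is to follow the Andreatta--Iovita--Pilloni template: combine the tautological short exact sequence of conormal sheaves attached to $K_m \hookrightarrow G[p^{2m}]$ with the degree and Hodge--Tate estimates provided by Theorem \ref{thrfiltcan}. Decomposing by the idempotents of $\mathcal{O} \otimes R$, for each $\star \in \{\tau, \sigma\tau\}$, we obtain an exact sequence
\[
0 \longrightarrow \omega_{G/K_m, \star} \longrightarrow \omega_{G, \star}/p^{2m}\omega_{G,\star} \longrightarrow \omega_{K_m, \star} \longrightarrow 0.
\]
Since $G/K_m$ remains a truncated Barsotti--Tate $\mathcal{O}$-module of the same signature, the first term is locally free of the same rank as $\omega_{G, \star}$, and the assertion that $\pi_\star$ is an isomorphism modulo $p^{w_\star}$ amounts to showing the inclusion $\omega_{G/K_m, \star} \subset p^{w_\star}\omega_{G, \star}$.

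The next step is to extract lower bounds on degrees. Since $H_\tau^{2m} \subset K_m$ and $\deg_{\sigma\tau} H_\tau^{2m} \leq q_\tau \cdot 2m = 4m$, the first inequality of Theorem \ref{thrfiltcan} yields
\[
\deg_\tau K_m \geq \deg_\tau H_\tau^{2m} \geq 2m - \tfrac{p^{4m}-1}{p^2-1}v > w_\tau,
\]
and symmetrically, from $H_{\sigma\tau}^m \subset K_m$, $\deg_\tau H_{\sigma\tau}^m \leq p_\tau \cdot m = m$, and the second inequality of the theorem, one gets $\deg_{\sigma\tau} K_m \geq m - \tfrac{p^{2m}-1}{p^2-1}v > w_{\sigma\tau}$. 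For the $\tau$-component this is already enough: in a local trivialization $\omega_{G, \tau} \simeq R$, the submodule $\omega_{G/K_m, \tau}$ is principal, generated by an element of valuation $\deg_\tau K_m > w_\tau$, hence contained in $p^{w_\tau}\omega_{G, \tau}$; this gives $\pi_\tau$.

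The main obstacle is the $\sigma\tau$-component, where $\omega_{G, \sigma\tau}$ has rank~$2$. The elementary divisors $v_1 \leq v_2$ of $\omega_{G/K_m, \sigma\tau} \hookrightarrow \omega_{G, \sigma\tau}$ satisfy $v_1 + v_2 = \deg_{\sigma\tau} K_m$, so a lower bound on the total degree only controls the larger divisor $v_2$, whereas the desired inclusion requires the smaller $v_1 \geq w_{\sigma\tau}$. To force this, one exploits the Hodge--Tate description in Theorem \ref{thrfiltcan}: $H_{\sigma\tau}^m$ is identified with the kernel of the truncated Hodge--Tate map $\alpha_{G[p^m], \sigma\tau, m - C_m v}$, and arguing as in \cite{Her2} (generalizing the $\GL_2$ situation of \cite{Far2}) the image of the Hodge--Tate map attached to $K_m^D$ generates $\omega_{G, \sigma\tau}$ modulo $p^{w_{\sigma\tau}}$, which forces both elementary divisors to exceed $w_{\sigma\tau}$. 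Finally, all of this has been written pointwise on $S^{rig}$, and the passage to a statement over the normal formal scheme $S$ follows by the normality argument already used for Proposition \ref{propKernormal}, exactly as in \cite{AIP}.
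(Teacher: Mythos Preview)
Your setup has a concrete error and a genuine gap, and the paper's route avoids both by a simpler trick.

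First, the error: $G[p^{2m}]/K_m$ is \emph{not} a truncated Barsotti--Tate $\mathcal O$-module. Since $K_m = H_\tau^{2m} + H_{\sigma\tau}^m$ is \'etale-locally $\mathcal O/p^{2m}\oplus\mathcal O/p^m$, the quotient is \'etale-locally $\mathcal O/p^m\oplus\mathcal O/p^{2m}$, which is not killed by a single power of $p$ in the right way and carries no truncated BT structure. So your claim that $\omega_{G/K_m,\star}$ is locally free of the same rank as $\omega_{G,\star}$ is unjustified, and the elementary-divisor framing you build on it is on shaky ground.

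Second, the gap: for the $\sigma\tau$-component you correctly see that a lower bound on $\deg_{\sigma\tau}K_m$ only controls $v_1+v_2$, not the smaller divisor $v_1$. Your proposed fix, that ``the image of the Hodge--Tate map attached to $K_m^D$ generates $\omega_{G,\sigma\tau}$ modulo $p^{w_{\sigma\tau}}$'', is essentially the statement of Proposition~\ref{prop62} combined with Proposition~\ref{prop61} itself; invoking it here is circular, and the reference to \cite{Her2} does not give this directly.

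The paper sidesteps both issues by never looking at $G/K_m$. Instead it uses the factorizations $H_{\sigma\tau}^m \subset K_m \subset G[p^m]$ and $H_\tau^{2m}\subset K_m\subset G[p^{2m}]$. The degree estimates of Theorem~\ref{thrfiltcan} bound $\deg_{\sigma\tau}(G[p^m]/H_{\sigma\tau}^m)$ and $\deg_\tau(G[p^{2m}]/H_\tau^{2m})$ from above (not the degrees of $K_m$ from below), which over a valuation ring gives directly that the composites
\[
\omega_{G,\sigma\tau,w_{\sigma\tau}}\twoheadrightarrow \omega_{K_m,\sigma\tau,w_{\sigma\tau}}\twoheadrightarrow \omega_{H_{\sigma\tau}^m,\sigma\tau,w_{\sigma\tau}},
\qquad
\omega_{G,\tau,w_\tau}\twoheadrightarrow \omega_{K_m,\tau,w_\tau}\twoheadrightarrow \omega_{H_\tau^{2m},\tau,w_\tau}
\]
are isomorphisms. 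Since both arrows in each composite are surjective, the first arrow is an isomorphism in each case, and no elementary-divisor analysis is needed. The reduction from normal $R$ to the pointwise case is then exactly the normality argument you cite at the end.
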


\begin{proof}[Proof]
If $G/\Spec(\mathcal O_C)$ ($C$ a complete algebraically closed extension of $\QQ_p$), the degrees of the canonical filtration of $G$ assure that,
\[ \deg_{\sigma\tau}(G[p^{m}]/H_{\sigma\tau}^{m}) \geq \frac{p^{2m}-1}{p^2-1}v \quad \text{and} \quad \deg_{\tau}(G[p^{2m}]/H_{\tau}^{2m}) \geq \frac{p^{4m}-1}{p^2-1}v,\]
and there is thus an isomorphism,
\[ \omega_{G[p^m],\tau,w_\tau} \overset{\sim}{\fleche} \omega_{H_\tau^n,\tau,w_\tau},\]
and also for $\sigma\tau$ and $G[p^{2m}]$. But there are inclusions $H_{\sigma\tau}^m=H_{\sigma\tau}^{2m}[p^m] \subset K_m \subset G$ and $H_{\tau}^{2m} \subset K_m \subset G$ such that the composite,
\[ \omega_{G,\sigma\tau,w_{\sigma\tau}}\fleche \omega_{K_m,\sigma\tau,w_{\sigma\tau}} \fleche \omega_{H_{\sigma\tau}^m,\sigma\tau,w_{\sigma\tau}},\]
is an isomorphism, which implies that the first one is. The same reasoning applies for $\tau$. We can thus conclude for $S$ as in \cite{AIP} proposition 4.2.1 : Up to reducing $R$ 
we can suppose $\omega_G$ is a free $R/p^{2m+1}$-module, and look at the surjection 
$\alpha_{\sigma\tau} : R^2 \twoheadrightarrow \omega_{G,\sigma\tau} \twoheadrightarrow \omega_{K_m,\sigma\tau,w_{\sigma\tau}}$, 
it is enough to prove that for any $(x_1,x_2)$ in $\ker \alpha_{\sigma\tau}$ we have $x_i \in p^{w_\tau}R$, but as $R$ is normal, it suffice to do it for $R_\mathfrak p$, and even for 
$\widehat{R_\mathfrak p}$, for all codimension 1 prime ideal $\mathfrak p$ that contains $(p)$. But now $\widehat{R_\mathfrak p}$ is a complete, discrete valuation ring of 
mixed characteristic, and this reduce to the preceding assertion.
\end{proof}

\begin{propen}
\label{prop62}
Suppose there is an isomorphism $K_m^D(R) \simeq \mathcal O/p^m\mathcal O \oplus \mathcal O/p^{2m}\mathcal O$. Then the cokernel of the $\sigma\tau$-Hodge-Tate map,
\[ \HT_{K_m^D,\sigma\tau} \otimes 1 : K_m^D(R)[p^m]\otimes_\mathcal O R \fleche \omega_{K_m,\sigma\tau},\]
is killed by $p^{\frac{p+v}{p^2-1}}$, and the cokernel of the $\tau$-Hodge-Tate map,
\[ \HT_{K_m^D,\tau} \otimes 1 : K_m^D(R)[p^m]\otimes_\mathcal O R \fleche \omega_{K_m,\tau},\]
is killed by $p^{\frac{v}{p^2-1}}$.
\end{propen}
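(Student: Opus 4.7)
I would adapt the strategy of \cite{AIP}, Proposition 4.2.2, to the present $\mathcal{O}$-equivariant setting, treating the two embeddings $\tau$ and $\sigma\tau$ separately. First, exactly as in the proof of Proposition \ref{prop61} and using that $R$ is normal, the two assertions reduce to the case where $R = \mathcal{O}_C$ for $C$ a complete algebraically closed extension of $E_p$: since the Hodge--Tate maps are $R$-linear between finitely presented modules, it suffices to check that the cokernel is annihilated by the stated power of $p$ after localising and completing at each codimension-one prime of $R$ containing $p$.

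Over $\mathcal{O}_C$ the next step is to compute the partial degrees of $K_m$ (and, dually, of $K_m^D$). Using $K_m = H_{\tau}^{2m} + H_{\sigma\tau}^{2m}[p^m]$ and the additivity of partial degrees along the short exact sequence
\[ 0 \longrightarrow H_{\tau}^{2m} \cap H_{\sigma\tau}^{2m}[p^m] \longrightarrow H_{\tau}^{2m} \oplus H_{\sigma\tau}^{2m}[p^m] \longrightarrow K_m \longrightarrow 0, \]
together with the partial-degree inequalities of Theorem \ref{thrfiltcan} applied to $H_\tau^{2m}$ and to $H_{\sigma\tau}^{2m}[p^m] = H_{\sigma\tau}^{m}$, one obtains lower bounds for $\deg_\tau(K_m)$ and $\deg_{\sigma\tau}(K_m)$. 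Dualising (so that $\deg_\star + \deg_\star(\cdot^D)$ equals the maximal possible partial degree) these translate into upper bounds for the partial defects $\delta_\star$ of $K_m^D$. The key numerical content is that the $\tau$-defect of $K_m^D$ is bounded by $v$, while the $\sigma\tau$-defect is bounded by $p + v$; the additional $p$ records the fact that $K_m$ contains only the $p^m$-torsion of $H_{\sigma\tau}^{2m}$, so that the quotient $H_{\sigma\tau}^{2m}/K_m$ of $\mathcal{O}$-height $m$ contributes one full unit of $\sigma\tau$-degree in the maximal case, whence the extra summand $p$.

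Finally, I would invoke the $\mathcal{O}$-equivariant refinement of Fargues' Hodge--Tate estimate (compare \cite{FarHN}): for a finite flat $\mathcal{O}$-module $H/\mathcal{O}_C$ and for each embedding $\star \in \{\tau, \sigma\tau\}$, the cokernel of the partial Hodge--Tate map
\[ \HT_{H^D, \star} \otimes 1 : H^D(\mathcal{O}_C) \otimes_{\mathcal{O}} \mathcal{O}_C \longrightarrow \omega_{H, \star} \]
is annihilated by $p^{\delta_\star / (p^2 - 1)}$, where $\delta_\star$ is the partial-degree defect computed above. The appearance of $p^2 - 1$ rather than the usual $p - 1$ in the denominator is a direct consequence of $\mathcal{O}/p \simeq \mathbb{F}_{p^2}$: the relevant Frobenius acting on the units of the residue field is $x \mapsto x^{p^2}$. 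Combining this estimate with the defect bounds of the previous step yields exactly the exponents $v/(p^2-1)$ and $(p+v)/(p^2-1)$ stated in the proposition.

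\textbf{Main obstacle.} The delicate point is the bookkeeping in the defect computation: one must identify precisely the ``maximal'' partial degree of $K_m$ along each embedding (which is determined by the $R$-ranks of $\omega_{K_m, \tau}$ and $\omega_{K_m, \sigma\tau}$) and propagate the inequalities of Theorem \ref{thrfiltcan} through the above exact sequence so that the constants come out to exactly $v$ and $p + v$ without loss. Once this is achieved, the application of Fargues' $\mathcal{O}$-equivariant estimate is essentially formal.
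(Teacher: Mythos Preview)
Your reduction from a general normal $R$ to $\mathcal O_C$ via Fitting ideals and codimension-one primes is exactly the paper's second step, so that part is fine.

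Over $\mathcal O_C$, however, you take a different and more laborious route than the paper. The paper does \emph{not} compute the partial degrees of $K_m$ at all. Instead it uses Proposition~\ref{prop61}: since $\omega_{K_m,\star,w}\simeq\omega_{G,\star,w}$ for $w$ up to (roughly) $1-v$, and since the claimed exponent $\frac{p+v}{p^2-1}$ satisfies $\frac{p+v}{p^2-1}<1-v$ already for $m=1$, one can replace the Hodge--Tate map of $K_m$ by that of $G$ itself, for which the cokernel bound is a direct citation of \cite{Her2}, Th\'eor\`eme~6.10~(2). No defect computation for $K_m$ is needed.

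Your direct approach has two soft spots. First, Theorem~\ref{thrfiltcan} only gives you the combinations $\deg_{\sigma\tau}+p\deg_\tau$ and $\deg_\tau+p\deg_{\sigma\tau}$, not the individual partial degrees; extracting the separate defects ``exactly $v$'' and ``exactly $p+v$'' from these, and from your exact sequence (with $H_\tau^{2m}\cap H_{\sigma\tau}^m=H_\tau^m$), requires further input that you have not supplied. Second, the ``$\mathcal O$-equivariant refinement of Fargues' estimate'' with exponent $\delta_\star/(p^2-1)$ is not in \cite{FarHN}; it is precisely the content of results in \cite{Her2} such as Th\'eor\`eme~6.10, and needs to be cited as such rather than invoked by analogy. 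Both gaps are fixable, but the paper's shortcut via Proposition~\ref{prop61} avoids them entirely.
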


\begin{proof}[Proof]
This is true for $G/\Spec(\mathcal O_C)$ by, \cite{Her2} Théorème 6.10 (2),
with the previous proposition (because $\frac{p+v}{p^2-1} < 1 - v$, already for $m=1$).
For a general normal $R$, we can reduce to previous case (see also \cite{AIP} proposition 4.2.2) : up to reduce $\Spec(R)$, we have a diagram,
\begin{center}
\begin{tikzpicture}[description/.style={fill=white,inner sep=2pt}] 
\matrix (m) [matrix of math nodes, row sep=3em, column sep=2.5em, text height=1.5ex, text depth=0.25ex] at (0,0)
{ 
R^2 & & R^2 \\
K_n^D(R)[p^n]\otimes_\mathcal O R & &  \omega_{K_n,\sigma\tau}\\
};
\path[->,font=\scriptsize] 
(m-1-1) edge node[auto] {$\gamma$} (m-1-3)
(m-2-1) edge node[auto] {$\HT_{\sigma\tau} \otimes 1$} (m-2-3)
;
\path[->>,font=\scriptsize] 
(m-1-1) edge node[auto] {$$} (m-2-1)
(m-1-3) edge node[auto] {$$} (m-2-3)
;
\end{tikzpicture}
\end{center}
and $\Fitt^1(\gamma)$ (which is just a determinant here) annihilates the cokernel of $\gamma$, and it suffices to prove that $p^{\frac{p+v}{p^2-1}} \in \Fitt^1(\gamma)$. But as $R$ is normal, it suffice to prove that 
$p^{\frac{p+v}{p^2-1}} \in \Fitt^1(\gamma)R_\mathfrak p$ for every codimension 1 prime ideal $\mathfrak p$ that contains $(p)$. But by the previous case, we can conclude. The same works for $\tau$.
\end{proof}

\begin{propen}
\label{prop63}
Suppose we have an isomorphism $K_m^D(R) \simeq \mathcal O/p^m\mathcal O \oplus \mathcal O/p^{2m}\mathcal O$. Then there exists on $S = \Spec R$ locally free subsheaves 
$\mathcal F_{\sigma\tau}, \mathcal F_{\tau}$ of $\omega_{G,\sigma\tau}$ and $\omega_{G,\tau}$ respectively, of ranks 2 and 1, which contains 
$p^{\frac{p+v}{p^2-1}}\omega_{G,\sigma\tau}$ and $p^{\frac{v}{p^2-1}}\omega_{G,\tau}$, and which are equipped, for all $w_{\sigma\tau} < m - \frac{p^{2m}-1}{p^2-1}v$ and $w_{\tau} < 2m - \frac{p^{4m}-1}{p^2-1}v$, with maps,
\[ \HT_{\sigma\tau,w_{\sigma\tau}} : K_m^D(R) \fleche \mathcal F_{\sigma\tau} \otimes_R R_{w_{\sigma\tau}}, \quad \text{and} \quad \HT_{\tau,w_{\tau}} : K_m^D(R) \fleche \mathcal F_{\tau}\otimes_R R_{w_{\tau}},\]
which are surjective after tensoring $K_m^D(R)$ with $R$ over $\mathcal O$.

More precisely, via the projection,
\[ K_m^D(R) \twoheadrightarrow (H_{\tau}^{2m})^D(R_K),\]
we have induced isomorphisms,
\[ \HT_{\sigma\tau,w_{\sigma\tau}} : K_m^D(R_K) \otimes_\mathcal O R_{w_{\sigma\tau}} \fleche \mathcal F_{\sigma\tau} \otimes_R R_{w_{\sigma\tau}},\]
and 
\[ \HT_{\tau,w_{\tau}} : (H_{\tau}^{2m})^D(R_K) \otimes_\mathcal O R_{w_{\tau}} \fleche \mathcal F_{\tau}\otimes_R R_{w_{\tau}}.\]

\end{propen}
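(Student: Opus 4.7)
The plan is to adapt the method of \cite{AIP}, Proposition~4.3.1, to our two-step canonical filtration setting, using Propositions~\ref{prop61} and~\ref{prop62} as the main technical inputs. I start by fixing truncation levels $w_{\sigma\tau}, w_\tau$ in the ranges allowed by Proposition~\ref{prop61}, further arranged so that $w_{\sigma\tau}>\frac{p+v}{p^2-1}$ and $w_\tau>\frac{v}{p^2-1}$; this is possible by the hypotheses on $v$ and $m$. The natural projections $\pi_{\sigma\tau}\colon \omega_{G,\sigma\tau}\twoheadrightarrow \omega_{K_m,\sigma\tau}$ and $\pi_\tau\colon \omega_{G,\tau}\twoheadrightarrow \omega_{K_m,\tau}$ then become isomorphisms modulo $p^{w_{\sigma\tau}}$ and $p^{w_\tau}$ respectively by Proposition~\ref{prop61}. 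I \emph{define}
\[ \mathcal F_{\sigma\tau} := \pi_{\sigma\tau}^{-1}\bigl(\mathrm{Im}(\HT_{K_m^D,\sigma\tau}\otimes 1)\bigr) \subset \omega_{G,\sigma\tau}, \qquad \mathcal F_{\tau} := \pi_\tau^{-1}\bigl(\mathrm{Im}(\HT_{K_m^D,\tau}\otimes 1)\bigr) \subset \omega_{G,\tau}. \]
Since $\pi_{\sigma\tau}$ and $\pi_\tau$ are surjective, the quotients $\omega_{G,\bullet}/\mathcal F_\bullet$ identify with $\omega_{K_m,\bullet}/\mathrm{Im}(\HT_{K_m^D,\bullet})$, which are killed by $p^{(p+v)/(p^2-1)}$ (resp.\ $p^{v/(p^2-1)}$) by Proposition~\ref{prop62}; this immediately gives the asserted inclusions of $p^{(p+v)/(p^2-1)}\omega_{G,\sigma\tau}$ in $\mathcal F_{\sigma\tau}$ and of $p^{v/(p^2-1)}\omega_{G,\tau}$ in $\mathcal F_{\tau}$.

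Local freeness of $\mathcal F_{\sigma\tau}$ and $\mathcal F_{\tau}$, of ranks $2$ and $1$, is then verified by the same reduction-to-DVRs argument that was already used in the proofs of Propositions~\ref{prop61} and~\ref{prop62}: since $R$ is normal it suffices to check the statement after passing to the completion $\widehat{R_{\mathfrak p}}$ at every height one prime $\mathfrak p\supset(p)$, which is a complete DVR in mixed characteristic. Over such a DVR the hypothesis $K_m^D(R)\simeq \mathcal O/p^m\oplus \mathcal O/p^{2m}$, combined with the explicit structure of the Hodge--Tate map recalled from \cite{Her2}, allows one to exhibit explicit generators of $\mathcal F_\bullet$ and read off local freeness of the asserted rank.

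Next, the twisted Hodge--Tate maps follow by combining the tautological inclusion $\mathrm{Im}(\HT_{K_m^D,\bullet})\subset \pi_\bullet(\mathcal F_\bullet)$ with the inverse of the isomorphism $\pi_{\bullet,w_\bullet}$ of Proposition~\ref{prop61}, producing
\[ \HT_{\bullet,w_\bullet}\colon K_m^D(R)\fleche \mathcal F_{\bullet}\otimes_R R_{w_\bullet}. \]
Surjectivity after tensoring $K_m^D(R)$ with $R$ over $\mathcal O$ is then tautological from the definition of $\mathcal F_\bullet$ as a preimage. For the refined isomorphism statements, on the $\sigma\tau$-side, after extending scalars of $K_m^D$ to $R_K$ both source and target are locally free of rank $2$ over $R_{w_{\sigma\tau}}$, and a surjection between such modules of the same rank is automatically an isomorphism. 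For the $\tau$-side the key remaining point is that $\HT_{K_m^D,\tau}$ vanishes modulo $p^{w_\tau}$ on the kernel of the projection $K_m^D(R)\twoheadrightarrow (H_\tau^{2m})^D(R_K)$; this produces the desired factorisation, after which the same rank comparison for locally free $R_{w_\tau}$-modules of rank $1$ gives the asserted isomorphism.

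The main obstacle is precisely this last vanishing assertion, namely showing that the $\tau$-component of the Hodge--Tate map of the finite flat group $H_{\sigma\tau}^m/H_\tau^m$ (the kernel in question, via the identification $K_m/H_\tau^{2m}\simeq H_{\sigma\tau}^m/H_\tau^m$) is annihilated by a power of $p$ strictly larger than the allowed values of $w_\tau$. This requires a careful accounting of the degrees of the canonical filtration via Theorem~\ref{thrfiltcan}, parallel in spirit to the calculation already performed in Proposition~\ref{prop62}. Steps one through three, by contrast, are essentially formal manipulations with Propositions~\ref{prop61} and~\ref{prop62} combined with the standard normality/DVR reduction already exploited in their proofs.
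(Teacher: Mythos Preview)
Your overall plan matches the paper's: both follow the template of \cite{AIP}, Proposition~4.3.1, defining $\mathcal F_\bullet$ as the preimage in $\omega_{G,\bullet}$ of the image of the Hodge--Tate map, reading off the containment $p^c\omega_{G,\bullet}\subset\mathcal F_\bullet$ from Proposition~\ref{prop62}, and producing the maps $\HT_{\bullet,w_\bullet}$ via Proposition~\ref{prop61}.

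The one point of divergence is the factorisation of $\HT_{\tau,w_\tau}$ through $(H_\tau^{2m})^D$, which you single out as the ``main obstacle'' and propose to resolve by a fresh degree computation for $H_{\sigma\tau}^m/H_\tau^m$. The paper's route is shorter: it invokes the part of Theorem~\ref{thrfiltcan} asserting that (over $\Spec(\mathcal O_C)$) the steps of the canonical filtration are precisely the kernels of the relevant Hodge--Tate maps modulo the stated power of $p$. This yields the factorisation immediately at every codimension-one prime $\mathfrak p\supset(p)$ of $R$; since $R$ is normal the factorisation then holds over all of $R$ and is surjective, and as source and target are free $R_{w_\tau}$-modules of rank~$1$ it is an isomorphism. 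So the step you flag as hardest is in fact already contained in Theorem~\ref{thrfiltcan} and requires no further degree bookkeeping; your proposed computation would presumably also work, but it is unnecessary.
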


\begin{proof}[Proof]
This is the same construction as \cite{AIP} proposition 4.3.1. To check the assertion about the isomorphism with $H_{\tau}^{2m}$, it suffices to show that the map $\HT_{\tau,w_{\tau}}$ factors, but it is true over $R_\mathfrak p$ (as the canonical filtration is given by kernels of Hodge-Tate maps) for every codimension 1 ideal $\mathfrak p$, and it is moreover surjective, so it globally factors and is globally surjective, but the two free $R_{w_{\tau}}$-modules are free of the same rank 1, so it is an isomorphism.
\end{proof}

Moreover the construction of the sheaves $\mathcal F$ is functorial in the following sense,

\begin{propen}
\label{prop64}
Suppose given $G,G'$ two truncated $p$-divisible $\mathcal O$-module such that for all $x \in S^{rig}$, \[\ha_\tau(G_x), \ha_\tau(G'_x) < v,\] and an $\mathcal O_E$-isogeny,
\[ \phi : G \fleche G'.\]
Assume moreover that we are given trivialisations of the points of $K_m^D(G)$ and $K_m^D(G')$. Then $\phi^*$ induces maps
\[ \phi^*_\tau : \mathcal F_\tau' \fleche \mathcal F_{\tau}'\quad\text{and}\quad \phi^*_{\sigma\tau} : \mathcal F_{\sigma\tau}' \fleche \mathcal F_{\sigma\tau},\]
that are compatible with inclusion in $\omega$, reduction modulo $p^w$ and the Hodge-Tate maps of $K_m^D$.
\end{propen}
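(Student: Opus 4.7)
The plan is to deduce the result from two natural functorialities: that of the canonical filtration $K_m$ under $\mathcal O_E$-isogenies, and that of the Hodge-Tate map under morphisms of finite flat group schemes. Granted these, the statement will follow from the explicit characterisation of $\mathcal F_\star$ in Proposition \ref{prop63}.

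First I would establish that $\phi(K_m(G)) \subset K_m(G')$. By Theorem \ref{thrfiltcan}, the subgroups $H_\tau^{2m}(G) \subset H_{\sigma\tau}^{2m}(G) \subset G[p^{2m}]$, and similarly for $G'$, coincide with steps of the ($\mathcal O_E$-equivariant) Harder-Narasimhan filtration of the finite flat group schemes $G[p^{2m}]$ and $G'[p^{2m}]$. The Harder-Narasimhan filtration is functorial under morphisms of finite flat group schemes in the sense that the image of a step of slope $\geq \lambda$ is contained in the corresponding step of the target. Applying this to the restriction of $\phi$ to $p^{2m}$-torsion yields $\phi(H_\tau^{2m}(G)) \subset H_\tau^{2m}(G')$ and $\phi(H_{\sigma\tau}^m(G)) \subset H_{\sigma\tau}^m(G')$, whence $\phi(K_m(G)) \subset K_m(G')$. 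Dualising gives $\phi^D \colon K_m^D(G')(R) \to K_m^D(G)(R)$ on $R$-points, compatible with the chosen trivialisations.

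Next I would invoke naturality of the Hodge-Tate map. The pullback $\phi^* \colon \omega_{G'} \to \omega_G$ fits, through the inclusion $\phi(K_m(G)) \subset K_m(G')$, into a commutative square together with the induced map $\omega_{K_m(G'),\star} \to \omega_{K_m(G),\star}$, which is itself intertwined with $\phi^D \otimes 1$ via the Hodge-Tate maps of $K_m^D(G')$ and $K_m^D(G)$. Combining this with the identifications $\omega_{G,\star,w_\star} \simeq \omega_{K_m,\star,w_\star}$ of Proposition \ref{prop61} (which are themselves natural in $\mathcal O_E$-isogenies) and with the characterisation of $\mathcal F_\star$ in Proposition \ref{prop63} as the subsheaf of $\omega_{G,\star}$ cut out, modulo $p^{w_\star}$, by the image of $\HT_{\star,w_\star}$, one obtains that $\phi^*$ sends $\mathcal F'_\star$ into $\mathcal F_\star$, and that the resulting map is automatically compatible with the inclusions into $\omega$, with reductions modulo $p^{w_\star}$, and with the Hodge-Tate maps.

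The hard part will be the first step: verifying that the Harder-Narasimhan slopes relevant for Theorem \ref{thrfiltcan} behave correctly under $\phi$, and that the uniqueness clause in that theorem can be applied simultaneously to $G$ and $G'$. The common Hasse bound $\ha_\tau(G), \ha_\tau(G') < v$ is precisely what ensures this. As in the proofs of Propositions \ref{prop62} and \ref{prop63}, the whole argument should be carried out by reducing, via normality of $R$, to complete discrete valuation rings, where the results of \cite{Her2} on $p$-divisible $\mathcal O$-modules apply pointwise and the naturalities invoked above are classical.
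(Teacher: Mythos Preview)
Your proposal is correct and follows essentially the same approach as the paper: first use functoriality of the Harder--Narasimhan filtration to get $\phi(K_m(G)) \subset K_m(G')$ (since $H_\tau^{2m}$ and $H_{\sigma\tau}^m$ are HN breakpoints), then conclude from the characterisation of $\mathcal F_\star$ as the subsheaf of $\omega_{G,\star}$ generated modulo $p^{w_\star}$ by the image of $\HT_\star$. The paper's proof is terser and does not spell out the reduction to complete DVRs, but the substance is the same.
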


\begin{proof}[Proof]
Once we know that $\phi$ will send $K_m^D(G)$ inside $K_m^D(G')$  
this is straightforward as $\mathcal F_?$ corresponds to sections of $\omega_{G,?}$ that are modulo $p^{w_?}$ generated by the image of $\HT_?$.
But $K_m$ is generated by the subgroup $H_{\sigma\tau}^{m}$ and $H_{\tau}^{2m}$ each being a breakpoint of the Harder-Narasihman filtration $\HN_{\sigma\tau}(G[p^{m}])$ and 
$\HN_\tau(G[p^{2m}])$ respectively, and thus by functoriality of these filtrations, $\phi$ sends each subgroup for $G$ inside the one for $G'$ and thus sends $K_m(G)$ inside 
$K_m(G')$.
\end{proof}

\begin{remaen}
Strictly speaking, we can't apply this section for $S^{rig} \subset X(v)$ an affinoid. The reason is that even if $H_{\sigma\tau}^n,H_\tau^n$ descend to $X(v)$, it is not the case of $\widetilde G[p^n]$. But we can apply the results of this section with $\widetilde G[p^n]$ for $S^{rig}$ an affinoid of $\overline U$. As $\omega_{\widetilde G} \simeq  \omega \times_{X(v)} \overline{U}(v)$, it is enough to check proposition \ref{prop61} and \ref{prop62} over $\overline{U}/\Spec(\mathcal O)$. Thus proposition \ref{prop63} and \ref{prop64} remain true over $S^{rig}$ and affinoid of $X(v)$ (for proposition \ref{prop64} we will restrict anyway to the open Picard variety, but it would remain true looking at an isogeny of semi-abelian schemes).
\end{remaen}

\subsection{The torsors}
To simplifiy the notations, fix $w = w_\tau = w_{\sigma\tau} < m - \frac{p^{2m}-1}{p^2-1}v$ to use the previous propositions.
Let $R \in \mathcal O_K-\mathfrak{NAdm}$ and $S = \Spf(R)$. In rigid fiber, we have a subgroup of $K_m[p^m]/S^{rig}$, $H_{\tau}^m \subset K_m[p^m]$ which induces a filtration,
\[ 0 \subset (H_{\sigma\tau}^m/H_{\tau}^m)^D(R_K) \subset K_m^D(R),\]
of cokernel isomorphic to $(H_{\tau}^m)^D(R_K)$.

Suppose we are given a trivialisation,
\[ \psi : \mathcal O/p^m\mathcal O \oplus \mathcal O/p^{2m}\mathcal O \simeq K_m^D(R),\]
which induces trivialisations (first coordinate and quotient),
\[ \psi_{\sigma\tau} : (H_{\sigma\tau}^m/H_{\tau}^m)^D(R_K) \simeq \mathcal O/p^m\mathcal O \quad \text{and} \quad 
\psi_{\tau} : (H_{\tau}^{2m})^D(R_K) \simeq \mathcal O/p^{2m}\mathcal O.\]

Let $\mathcal Gr_{\sigma\tau} \fleche S$ be the Grassmanian of locally direct factor sheaves of rank 1, $\Fil^1\mathcal F_{\sigma\tau} \subset \mathcal F_{\sigma\tau}$. 
Let $\mathcal Gr_{\sigma\tau}^+ \fleche \mathcal Gr_{\sigma\tau}$ the $\mathbb G_m^2$-torsor of trivialisations of $\Fil^1\mathcal F_{\sigma\tau}$ and $\mathcal F_{\sigma\tau}/\Fil^1\mathcal F_{\sigma\tau}$. Let also $\mathcal Gr_{\tau}^+ \fleche S$ the $\mathbb G_m$-torsor of trivialisations of $\mathcal F_{\tau}$.

\begin{definen}
We say that a $A$-point of $\mathcal Gr_{\sigma\tau}$ (respectively $\mathcal Gr_{\sigma\tau}^+$ or $\mathcal Gr_{\tau}^+$), 
\[ \Fil^1(\mathcal F_{\sigma\tau}\otimes A) \quad (\text{respectively } (\Fil^1(\mathcal F_{\sigma\tau}\otimes A),P_1^{\sigma\tau},P_2^{\sigma\tau}) \text{ or } P^{\tau})\] is $w$-compatible with $\psi_\tau, \psi_{\sigma\tau}$ if respectively,
\begin{enumerate}
\item $\Fil^1(\mathcal F_{\sigma\tau}\otimes A) \otimes_R R_w = \HT_{\sigma\tau,w}((H_{\sigma\tau}^m/H_{\tau}^m)^D(R_K)\otimes_\mathcal O R_w)\otimes_R A$,
\item $P_1^{\sigma\tau} \otimes_R R_w = \HT_{\sigma\tau,w} \circ(\psi_{\sigma\tau} \otimes_\mathcal O R_w)\otimes_R A$,
\item $P_2^{\sigma\tau} \otimes_R R_w = \HT_{\sigma\tau,w} \circ(\psi_{\tau} \otimes_\mathcal O R_w)\otimes_R A$,
\item $P^{\tau} \otimes_R R_w = \HT_{\tau,w} \circ (\psi_{\tau} \otimes_\mathcal O R_w)\otimes_R A.$
\end{enumerate}
\end{definen}

We can define the functors,
\[
\mathfrak{IW}_{\sigma\tau,w} :
\begin{array}{ccc}
R-\mathfrak{Adm} & \fleche  & SET  \\
 A & \longmapsto  & \{ w-\text{compatible } \Fil^1 (\mathcal F_{\sigma\tau} \otimes_R A) \in \mathcal Gr_{\sigma\tau}(A)\},
\end{array}
\]
\[
\mathfrak{IW}_{\sigma\tau,w}^+ :
\begin{array}{ccc}
R-\mathfrak{Adm} & \fleche  & SET  \\
 A & \longmapsto  & \{ w-\text{compatible } (\Fil^1 (\mathcal F_{\sigma\tau} \otimes_R A),P_1^\tau,P_2^\tau) \in \mathcal Gr_{\sigma\tau}^+(A)\},
\end{array}
\]
\[
\mathfrak{IW}_{\tau,w}^+ :
\begin{array}{ccc}
R-\mathfrak{Adm} & \fleche  & SET  \\
 A & \longmapsto  & \{ w-\text{compatible } P^{\tau} \in \mathcal Gr_{\tau}^+(A)\}.
\end{array}
\]
The previous functors are representable by formal schemes, affine over $S = \Spf(R)$, and locally isomorphic to,
\[
\left(
\begin{array}{cc}
  1 &      \\
  p^w \mathfrak B(0,1) &     1 
\end{array}
\right) \times_{\Spf(\mathcal O_K)}\Spf(R) \quad \text{for } \mathfrak{IW}_{\sigma\tau,w}, \quad  1+ p^w \mathfrak B(0,1)  \quad \text{for }  \mathfrak{IW}_{\tau,w}^+
\] where $\mathfrak B(0,1) = \Spf(\mathcal O_K<T>)$ is the formal unit 1-dimensionnal ball, and
\[\left(
\begin{array}{cc}
  1 +  p^w \mathfrak B(0,1)  &     \\
  p^w \mathfrak B(0,1) &  1+  p^w \mathfrak B(0,1) 
\end{array}
\right) \times_{\Spf(O_K)}\Spf(R) \quad \text{for } \mathfrak{IW}_{\sigma\tau,w}^+ 
\]
We also define $\mathfrak{IW}_w^+ = \mathfrak{IW}_{\tau,w}^+ \times_S \mathfrak{IW}_{\sigma\tau,w}^+$. The previous constructions are independent of $n = 2m$ (because $\mathcal F_\tau,\mathcal F_{\sigma\tau}$ are).

Let $T^1 = \Res_{\mathcal O/\ZZ_p} \mathbb G_m \times U(1)$ the torus of $U(2,1)$ over $\ZZ_p$ whose $\ZZ_p$-points are $\mathcal O^\times \times \mathcal O^1$. Its scalar extension $T = T^1 \otimes_{\ZZ_p} \mathcal O$ is isomorphic to $\mathbb G_m^3$, and $\mathcal Gr^+ = \mathcal Gr^+_\tau \times \mathcal Gr_{\sigma\tau}^+ \fleche \mathcal Gr =  \mathcal Gr_\tau$ is a $T$-torsor.
Denote $\mathfrak T \fleche \Spf(\mathcal O)$ the formal completion of $T$ along its special fiber, and $\mathfrak T_w$ the torus defined by,
\[ \mathfrak T_w(A) = \Ker(\mathfrak T(A) \fleche \mathfrak T(A/p^wA).\]
Then $\mathfrak{IW}_w^+ \fleche \mathfrak{IW}_{\sigma\tau,w}$ is a $\mathfrak T_w$-torsor.

Denote by $\mathcal{IW}_{\sigma\tau,w},\mathcal{IW}_{\tau,w}^+,\mathcal{IW}_{\sigma\tau,w}^+,\mathcal{IW}_w^+,\mathcal T$ the generic fibers of the previous formal schemes.

\section{The Picard surface and overconvergent automorphic sheaves}

\subsection{Constructing automorphic sheaves}

Let us consider the datum $(E,V,\psi,\mathcal O_E,\Lambda = \mathcal O_E^3,h)$ the PEL datum introduced in section 2. Let $p$ be a prime, inert in $E$ and $G$ the reductive group associated over $\ZZ_p$. We fix $K^p$ a compact open subgroup of $G(\mathbb A_f^p)$ sufficiently small and $\mathfrak C = G(\ZZ_p)$ an hyperspecial subgroup at $p$.
Let $X = X_{K^p\mathfrak C}/\Spec(\mathcal O)$ the (integral) Picard variety associated to the previous datum (cf. \cite{Kotjams},\cite{Lan},\cite{LRZ}).

Let $K/\mathcal O[1/p]$ be a finite extension (that we will choose sufficiently large) and still denote $X = X_{\mathcal O_K} = X\times_{\Spec\mathcal O} \Spec(\mathcal O_K)$.

Denote by $A$ the universal semi-abelian scheme, $X^{rig}$ the rigid fiber of $X$, $X^{ord}$ the ordinary locus and for $v \in v(K)$, $X(v)$ the rigid-analytic open $\{x \in X^{rig} : \ha_\tau(x) < v\}$. Denote also $\mathfrak X \fleche \Spf(\mathcal O_K)$ the formal completion of $X$ along its special fiber, $\widetilde{\mathfrak X(v)}$ the admissible blow up of $\mathfrak X$ along the ideal $(\widetilde{\ha_\tau},p^v)$ and $\mathfrak X(v)$ its open subscheme where $(\widetilde{\ha_\tau},p^v)$ is generated by $\widetilde{\ha_\tau}$.

\begin{lemmen}
The formal scheme $\mathfrak X(v)$ is normal.
\end{lemmen}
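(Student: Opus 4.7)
The plan is to check normality locally on $\mathfrak X$ and reduce to a standard normality criterion for admissible blow-up charts.

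First, I would pick an affine open $\Spf R \subset \mathfrak X$; since $X$ is smooth and projective over $\Spec \mathcal O$ (as recalled in the remark on toroidal compactifications of the Picard surface), $R$ is a normal noetherian $p$-adically complete flat $\mathcal O_K$-algebra. By the explicit description of admissible blow-ups, the chart of $\mathfrak X(v)$ lying over $\Spf R$ has coordinate ring
$$A := \bigl(R\langle Z\rangle/(\widetilde{\ha_\tau}\, Z - p^v)\bigr)^{\mathrm{tf}},$$
where $(\cdot)^{\mathrm{tf}}$ denotes quotient by the $p$-power torsion, and it suffices to show that $A$ is normal.

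My next step is to apply the standard normality criterion for such blow-up charts (compare \cite{AIP}, Proposition 3.2.1): if $R$ is a normal noetherian $p$-adically complete flat $\mathcal O_K$-algebra and $f \in R$ is a non-zero-divisor with $R/fR$ reduced, then $(R\langle Z\rangle/(fZ - p^v))^{\mathrm{tf}}$ is normal. Two facts about $f = \widetilde{\ha_\tau}$ are needed, both recorded in the remark following its construction: firstly, $\widetilde{\ha_\tau}$ is a non-zero-divisor, because it cuts out a Cartier divisor (it is invertible on the dense $\mu$-ordinary locus); secondly, $\widetilde{\ha_\tau}$ is reduced, meaning $R/(\widetilde{\ha_\tau})$ is reduced, as asserted in the same remark and proved in the appendix.

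The content of the criterion itself is that $A[1/p] = R[1/p][1/\widetilde{\ha_\tau}]$ is a localisation of the normal ring $R[1/p]$, hence normal; while the reducedness of $R/(\widetilde{\ha_\tau})$ controls the special fibre enough to verify Serre's conditions $R_1 + S_2$ on $A$. Glueing over a cover of $\mathfrak X$ then yields normality of $\mathfrak X(v)$. The main obstacle is really the verification of the normality criterion itself, which is a routine but careful commutative-algebra argument whose crucial input is the reducedness of the Hasse invariant $\widetilde{\ha_\tau}$; once this is in hand, everything else is bookkeeping.
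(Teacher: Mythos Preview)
Your proof is correct and uses the same essential ingredients as the paper --- normality coming from smoothness, together with the reducedness of $\widetilde{\ha_\tau}$ --- but the packaging differs slightly. You work locally on $\mathfrak X$, write down the explicit blow-up chart $A = (R\langle Z\rangle/(\widetilde{\ha_\tau}Z - p^v))^{\mathrm{tf}}$, and invoke the ready-made criterion from \cite{AIP} (normal $R$ with $R/fR$ reduced implies the chart is normal). The paper instead works directly with the local rings $A$ of $\mathfrak X(v)$ and proves from scratch a self-contained abstract lemma: for any $A \in \mathcal O_K\text{-}\mathfrak{Adm}$, if $A_K$ is normal and $A/\pi_K$ is reduced then $A$ is normal; the two hypotheses are supplied by smoothness of the rigid fibre $X(v)$ and reducedness of $\widetilde{\ha_\tau}$, respectively. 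The paper's lemma is slightly more general (it makes no reference to a blow-up presentation) and its proof is a short direct valuation argument rather than an appeal to Serre's $R_1+S_2$; your route has the advantage of citing an existing reference and making the blow-up structure explicit. Either way the crucial input is the same: the reducedness of the $\mu$-ordinary Hasse invariant.
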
 

\begin{proof}[Proof]
As $X(v)$ is smooth, thus normal, and $\widetilde{\ha_\tau}$ is reduced, this follow from the 
\begin{lemmen}
Let $A\in \mathcal O_K-\mathfrak{Adm}$ such that $A_K$ is normal and $A/\pi_K$ is reduced. Then $A$ is normal.
\end{lemmen}

\begin{proof}[Proof]
For $R$ a (noetherian) ring denote $R^{norm}$ its integral closure in its total ring of fractions. Denote, for all $x \in A_K$,
\[ v_A(x) = \sup \{ n \in \ZZ : \pi_K^{-n}x \in A\} \quad \text{and} \quad |x|_A = \pi_K^{-v_A(x)}.\]
Then we can check that $|f + g|_A \leq \sup(|f|_A,|g|_A)$ and that $|f^n|_A = |f|_A^n$. Indeed, $\pi_K^{-v_A(f)}f \in A\backslash \pi_K A$. Thus, as $A/\pi_KA$ is reduced, 
$(\pi_K^{-v_A(f)}f)^n \in A \backslash \pi_KA$ and thus $v_A(f^n) = n v_A(f)$. For this norm, we have that \[A = \{x \in A_K : |x|_A \leq1\}.\]
Now let us verify that $A$ is normal. Let $x \in A^{norm}$, in particular, $x\in A_K^{norm}$ but as $A_K$ is normal, $x \in A_K$.
Now write, for $a_i \in A, 0 \leq i \leq n$ and $a_n =1$
\[ x^n + a_{n-1}x^{n-1} +\dots+ a_1x = -a_0.\]
Then $\sup_{1 \leq i \leq n} |x^ia_{n-i}|_A = |a_0|_A \leq 1$, thus $|x^n|_A = |x|_A^n \leq 1$, and $|x|_A \leq 1$. Thus $x \in A$.
\end{proof}

\end{proof}

By the previous sections, we have on $X(v)$ a filtration of $G$ by finite flat $\mathcal O$-modules,
\[ 0 \subset H_{\tau}^{2m} \subset H_{\sigma\tau}^{2m} \subset G[p^{2m}],\]
locally isomorphic to $\mathcal O/p^{2m}\mathcal O$ and $(\mathcal O/p^{2m}\mathcal O)^2$.
Moreover, the subgroup $K_m = H_{\tau}^{2m} + H_{\sigma\tau}^{2m}[p^m]$ extend to $\mathfrak X(v)$ by proposition \ref{propKernormal}, and over $X(v)$ is locally isomorphic to
\[ \mathcal O/p^{2m}\mathcal O \oplus \mathcal O/p^{m}\mathcal O.\]

\begin{definen}
We denote,
\[ X_1(p^{2m})(v) = \Isom_{X(v),pol}(K_m^D,\mathcal O/p^{m}\mathcal O\oplus \mathcal O/p^{2m}\mathcal O),\]
where the condition \textit{pol} means that we are looking at isomorphisms $\psi = (\psi_1,\psi_2)$ which induces an isomorphism "in first coordinate",
\[\psi_{1,1} = (\psi_1)_{|(H_{\sigma\tau}^{m}/H_{\tau}^{m})^D} : (H_{\sigma\tau}^{m}/H_{\tau}^{m})^D \simeq  \mathcal O/p^{m}\mathcal O,\]
such that $(\psi_{1,1})^D = ((\psi_{1,1})^{(\sigma)})^{-1}$, and such that the quotient morphism,
\[ \psi_{2,1} = \psi_1/(\psi_1)_{|(H_{\sigma\tau}^{m}/H_{\tau}^{m})^D} : (H_{\tau}^{2m})^D \fleche \mathcal O/p^{m}\mathcal O,\]
is zero.
\end{definen}

\begin{remaen}
The map $\psi_{1,1}$ is automatically an isomorphism. Moreover,
\[(\psi_{1,1})^D : \mathcal O/p^{m}\mathcal O \fleche H_{\sigma\tau}^{m}/H_{\tau}^{m} \overset{\lambda}{\simeq} (H_{\sigma\tau}^{m}/H_{\tau}^{m})^{D,(\sigma)},\]
where the last morphism is induced by $\lambda$, the polarisation of $A$.
\end{remaen}

Denote by $B_n$ the subgroup of $\GL(\mathcal O/p^{m}\mathcal O \oplus \mathcal O/p^{2m}\mathcal O))$ of matrices,
\[
\left(
\begin{array}{cc}
a  & p^mb   \\
0 & d   \\
\end{array}
\right)
\]
such that $a^{-1} = a^{(\sigma)}$ i.e. $a \in (\mathcal O/p^m\mathcal O)^1$. We can map $\mathcal O^\times \times \mathcal O^1$ (diagonally) to $B_n$.
\[ B_n \simeq 
\left(
\begin{array}{cc}
(\mathcal O/p^m\mathcal O)^1  &  \mathcal O/p^m\mathcal O    \\  
  &  (\mathcal O/p^{2m}\mathcal O)^\times
\end{array}
\right).
\]
Denote also, 
\[B_\infty(\ZZ_p) = \left(
\begin{array}{cc}
\mathcal O^1  &  \mathcal O \\  
 0 &  \mathcal O^\times
\end{array}
\right)
\]
which surjects to $B_n$ and that we can embed into $GL_2\times GL_1$ (even in its upper triangular borel) via,
\[
\left(
\begin{array}{cc}
a &  b \\  
 0 & d
\end{array}
\right)
\longmapsto \left(
\begin{array}{ccc}
\sigma\tau(a)  & \sigma\tau(b) &   \\
 &  \sigma\tau(d) &   \\
  &   &   \tau(d)
\end{array}
\right).\]
We denote by $\psi_\tau$ and $\psi_{\sigma\tau}$ the inverses of the induced isomorphisms,
\[ \psi_{\sigma\tau} : \mathcal O/p^m\mathcal O \simeq (H_{\sigma\tau}^m/H_{\tau}^m)^D,\]
and the quotient,
\[ \psi_{\tau} = \psi^{-1}/\psi_{\sigma\tau} : \mathcal O/p^{2n}\mathcal O \simeq (H_{\tau}^{2m})^D.\]
We also denote $\mathfrak X_1(p^{2m})(v)$ the normalisation of $\mathfrak X(v)$ in $X_1(p^{2m})(v)$.
Over $\mathfrak X_1(p^{2m})(v)$, we have by the previous section locally free subsheaves of $\mathcal O_{\mathfrak X_1(p^{2m})(v)}$-modules $\mathcal F_\tau,\mathcal F_{\sigma\tau}$ of $\omega_{G,\tau}$ and $\omega_{G,\sigma\tau}$ together with morphisms,
\[ \HT_{\tau,w}\circ \psi_{\tau}[p^m] : (\mathcal O/p^{m}\mathcal O)\otimes_{\mathcal O}\mathcal O_{\mathfrak X_{1}(p^{2m})(v)} \overset{\sim}{\fleche} \mathcal F_{\tau} \otimes_{\mathcal O_K}\mathcal O_K/p^w,\]
\[\HT_{\sigma\tau,w}\circ \psi_{\sigma\tau} : (\mathcal O/p^{m}\mathcal O)\otimes_{\mathcal O}\mathcal O_{\mathfrak X_{1}(p^{2m})(v)} \hookrightarrow \mathcal F_{\sigma\tau} \otimes_{\mathcal O_K}\mathcal O_K/p^w,\]
and denote by $\mathcal F_{\sigma\tau,w}^{can}$ the image of the second morphism. It is a locally direct factor of $\mathcal F_{\sigma\tau}\otimes\mathcal O_K/p^w$, and passing through the quotient, we get a map,
\[ \overline{\HT_{\sigma\tau,w}} \circ \psi_{\tau}[p^m] : (\mathcal O/p^m\mathcal O) \otimes_{\mathcal O}\mathcal O_{\mathfrak X_{1}(p^m)(v)} \overset{\sim}{\fleche} (\mathcal F_{\sigma\tau} \otimes_{\mathcal O_K}\mathcal O_K/p^w)/(\mathcal F_{\sigma\tau,w}^{can}),\]
Using the construction of torsors of the previous section, we get a chain of maps,
\[ \mathfrak{IW}^+_w \overset{\pi_1}{\fleche} \mathfrak{IW}_w \overset{\pi_2}{\fleche} \mathfrak X_{1}(p^{2m})(v) \overset{\pi_3}{\fleche} \mathfrak X(v).\]
Moreover, $\pi_1$ is a torsor over the formal torus $\mathfrak T_w$, $\pi_2$ is affine, and we have an action of $\mathcal O^\times \times \mathcal O^1$ and $B_n$ on $\mathfrak X_1(p^{2m})$ over $\mathfrak X(v)$. Denote by $B$ the Borel of $\GL_2\times\GL_1$, $\mathfrak B$ its formal completion along its special fiber, and $\mathfrak B_w$,
\[ \mathfrak B_w(A) = \Ker(\mathfrak B(A) \fleche \mathfrak B(A/p^wA)).\]
We can embed $\mathfrak T$ in $\mathfrak B$ (which induce an embedding $\mathfrak T_w \subset \mathfrak B_w$) and $\mathcal O^\times \times \mathcal O^1$ in $\mathfrak T$, via  
\[(a,b) \in \mathcal O^\times \times \mathcal O^1 \longmapsto 
\left(
\begin{array}{ccc}
 \sigma\tau(b) &   &   \\
  & \sigma\tau(a)  &   \\
  &   &   \tau(a)
\end{array}
\right) \in\mathfrak T.
\]
such that the action of $\mathcal O^\times \times \mathcal O^1$ on $X_1(p^{2m})(v)$ and via $\mathfrak T$ on $\mathcal Gr^+$ preserves $\mathfrak{IW}_w^+$ (over $\mathfrak X(v)$). More generally, the action of $B_\infty(\ZZ_p)$ on $X_1(p^{2m})(v)$ (and thus $\mathfrak X_1(p^{2m})(v)$) and via $\mathfrak B$ on $\mathcal Gr^+$ preserves 
$\mathfrak{IW}_w^+$.

Let $\kappa \in \mathcal W_w(L)$. The character $\kappa : \mathcal O^\times \times \mathcal O^1 \fleche \mathcal O_L^\times$ extends to a caracter,
\[ \kappa : (\mathcal O^\times \times \mathcal O^1)\mathfrak T_w \fleche  \widehat{\mathbb G_m},\]
which can be extended as a character of,
\[ \kappa : (\mathcal O^\times\times \mathcal O^1)\mathfrak B_w \fleche  \widehat{\mathbb G_m},\]
where $\mathfrak U_w \subset \mathfrak B_w$ acts trivially, and even as a character,
\[ \kappa : B(\ZZ_p)\mathfrak B_w \fleche \widehat{\mathbb G_m},\]
where $U(\ZZ_p)\mathfrak U_w$ acts trivially.
Let us denote $\pi = \pi_3\circ\pi_2\circ \pi_1$.

\begin{propen}
The sheaf $\pi_*\mathcal O_{\mathfrak{IW}_w^+}[\kappa]$ is a formal Banach sheaf, in the sense of \cite{AIP} definition A.1.2.1.
\end{propen}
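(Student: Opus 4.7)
Proof proposal: The plan is to verify the axioms of AIP Definition A.1.2.1 locally on an affine cover of $\mathfrak X(v)$ by exploiting the concrete description of $\mathfrak{IW}_w^+$ as an iterated torsor/disc construction. Since $\pi_3$ is finite and the question is local on $\mathfrak X(v)$, it is enough to establish the statement after pullback along $\pi_3$, i.e. to show that $(\pi_2\circ\pi_1)_*\mathcal O_{\mathfrak{IW}_w^+}[\kappa]$ is a formal Banach sheaf on $\mathfrak X_1(p^{2m})(v)$, and then to push forward by the finite map $\pi_3$ (finite pushforward preserves the Banach condition).

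First I would choose an affine cover $\{\mathfrak U = \Spf R\}$ of $\mathfrak X_1(p^{2m})(v)$ small enough that
(i) the locally free sheaves $\mathcal F_\tau,\mathcal F_{\sigma\tau}$ are free over $R$,
(ii) the canonical subsheaves $\mathcal F_{\sigma\tau,w}^{\mathrm{can}}$ and the Hodge--Tate images giving the $w$-compatibility conditions are explicitly generated, and
(iii) the $\mathfrak T_w$-torsor $\pi_1$ trivializes.
Over such a $\mathfrak U$, the explicit description recalled just before the proposition identifies $\mathfrak{IW}_w|_{\mathfrak U}=\mathfrak{IW}_{\sigma\tau,w}|_{\mathfrak U}\simeq \Spf R\langle T\rangle$ (one formal disc coordinate parametrising the choice of $\Fil^1\mathcal F_{\sigma\tau}$), and $\mathfrak{IW}_w^+|_{\mathfrak U}\simeq\mathfrak{IW}_w|_{\mathfrak U}\times_{\Spf\mathcal O_K}\mathfrak T_w$.

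Next I would compute the $\kappa$-isotypic part of $\pi_{1,*}\mathcal O_{\mathfrak{IW}_w^+}$ by a standard ``induction'' computation on a trivial torsor: extending $\kappa$ to a character of $B(\ZZ_p)\mathfrak B_w$ (as noted in the text, with $U(\ZZ_p)\mathfrak U_w$ acting trivially) provides a canonical $\kappa$-equivariant trivializing section $s_\kappa$, and any $\kappa$-equivariant function is uniquely of the form $f\cdot s_\kappa$ for $f\in\mathcal O(\mathfrak{IW}_w|_{\mathfrak U})$. Hence
\[
\pi_{1,*}\mathcal O_{\mathfrak{IW}_w^+}[\kappa]\big|_{\mathfrak U}\simeq\mathcal O_{\mathfrak{IW}_w|_{\mathfrak U}}\cdot s_\kappa\simeq R\langle T\rangle\cdot s_\kappa.
\]
Pushing forward by the affine morphism $\pi_2$ is then trivial, and we obtain, over $\mathfrak U$, that the sheaf is the $p$-adic completion $\widehat{\bigoplus}_{n\geq 0}R\cdot T^n s_\kappa$: a countably generated, $p$-adically complete, flat (hence $p$-torsion free) $R$-module. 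This is exactly the local shape of a formal Banach module.

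Finally I would check the AIP axioms: $p$-adic completeness of $\pi_*\mathcal O_{\mathfrak{IW}_w^+}[\kappa]$ follows from completeness of $\mathfrak{IW}_w^+$ and commutation of $\pi_*$ with the inverse limit over powers of $p$; quasi-coherence of each quotient mod $p^n$ follows because locally the sheaf is the obvious quasi-coherent $\mathcal O_{\mathfrak X(v)}/p^n$-module $\widehat{\bigoplus}_n(R/p^nR)T^n s_\kappa$; $p$-torsion freeness is preserved since $\kappa$-invariants inside a flat module are flat; and the required local countable free presentation has just been exhibited. The compatibility on overlaps is handled by the cocycle of the trivialization of the torsor combined with $\kappa$ — this is the only mildly delicate point, but the extension of $\kappa$ to $B(\ZZ_p)\mathfrak B_w$ fixed earlier makes the gluing data well-defined. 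The main obstacle would be to get the descent along the finite map $\pi_3$ correctly, but since $\pi_3$ is finite and our local model is a completed free module of countable rank over $R$, the pushforward remains of the same shape up to possibly enlarging the index set, which is allowed by A.1.2.1.
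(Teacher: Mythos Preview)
Your overall strategy coincides with the paper's d\'evissage, but there is a genuine confusion in how you handle the group actions, and this makes the argument as written incorrect at the key step.

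The issue is your claim that it suffices to show $(\pi_2\circ\pi_1)_*\mathcal O_{\mathfrak{IW}_w^+}[\kappa]$ is a formal Banach sheaf on $\mathfrak X_1(p^{2m})(v)$ and then push forward by $\pi_3$. The full character $\kappa$ lives on $(\mathcal O^\times\times\mathcal O^1)\mathfrak T_w$ (extended to $B(\ZZ_p)\mathfrak B_w$), and the discrete part of this action moves the level structure $\psi$: it acts on $\mathfrak{IW}_w^+$ only \emph{over} $\mathfrak X(v)$, not over $\mathfrak X_1(p^{2m})(v)$. So ``$[\kappa]$'' does not make sense before pushing down through $\pi_3$, and your trivializing section $s_\kappa$ cannot exist as described. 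What does make sense over $\mathfrak X_1(p^{2m})(v)$ is the isotypic part for $\kappa^0:=\kappa|_{\mathfrak T_w}$, since $\mathfrak T_w$ acts on the fibres of the torsor $\pi_1$. The paper's proof separates these two pieces cleanly: first $(\pi_1)_*\mathcal O_{\mathfrak{IW}_w^+}[\kappa^0]$ is an invertible sheaf on $\mathfrak{IW}_w$ (your local computation $R\langle T\rangle\cdot s_{\kappa^0}$ is correct for \emph{this} sheaf); then pushing forward by the affine $\pi_2$ gives a formal Banach sheaf on $\mathfrak X_1(p^{2m})(v)$; finally one pushes forward by $\pi_3$, twists by $-\kappa$, and takes invariants under the \emph{finite} group $B_n=B_\infty(\ZZ_p)/p^n$ to recover $\pi_*\mathcal O_{\mathfrak{IW}_w^+}[\kappa]$.

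Your last paragraph also glosses over this final step: it is not simply that ``finite pushforward preserves the Banach condition'', but rather that one must take $B_n$-invariants after the pushforward and twist. That this preserves the formal Banach property is where the finiteness of $B_n$ is actually used. Once you replace $\kappa$ by $\kappa^0$ in the intermediate computation and add the $B_n$-invariants step at the end, your argument becomes exactly the paper's.
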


\begin{proof}[Proof]
We can use the same devissage as presented in \cite{AIP} : denote $\kappa^{0}$ the restriction of $\kappa$ to $\mathfrak T_w$. Then $(\pi_1)_*\mathcal O_{\mathfrak{IW}_w^+}[\kappa^{0}]$ is an invertible sheaf on $\mathfrak{IW}_w$. Its pushforward via $\pi_2$ is then a formal Banach sheaf because $\pi_2$ is affine, and pushing through $\pi_3$ and taking invariants over $B_\infty(\ZZ_p)/p^n = B_n, \pi_*\mathcal O_{\mathfrak{IW}_w^+}[\kappa]$ is a formal Banach sheaf.
\end{proof}

\begin{definen}
We call $\mathfrak w_w^{\kappa\dag} := \pi_*\mathcal O_{\mathfrak{IW}_w^+}[\kappa]$ the sheaf of $v$-overconvergent $w$-analytic modular forms of weight $\kappa$.
The space of integral $v$-overconvergent, $w$-analytic modular forms of weight $\kappa$ and level (outside $p$) $K^p$, for the group $G$ is,
\[ M_w^{\kappa\dag}(\mathfrak X(v)) = H^0(\mathfrak X(v),\mathfrak w_w^{\kappa\dag}).\]
\end{definen}

\begin{remaen}
Unfortunately it doesn't seem clear how to define an involution $\kappa \mapsto \kappa'$ on all $\mathcal W$ which extends the one on classical weights, and thus we only get that classical modular forms of (classical, integral) weight $\kappa$ embed in overconvergent forms of weight $\kappa'$...
\end{remaen}

\subsection{Changing the analytic radius}

Let $m - \frac{p^{2m}-1}{p^2-1}v > w' > w$ and $\kappa \in \mathfrak W_w(L)$, and thus $\kappa \in \mathfrak W_{w'}(L)$. There is a natural inclusion,
\[ \mathfrak{IW}_{w'}^+ \hookrightarrow \mathfrak{IW}_w^+,\]
compatible with the action of $(\mathcal O^\times \times \mathcal O^1)\mathfrak B_w$. This induces a map $\mathfrak w_w^{\kappa\dag} \fleche \mathfrak w_{w'}^{\kappa\dag}$ and thus a map,
\[ M^{\dag\kappa}_{w}(\mathfrak X(v)) \fleche M^{\dag\kappa}_{w'}(\mathfrak X(v)).\]
 
\begin{definen}
The space of integral overconvergent locally analytic Picard modular forms of weight $\kappa$, and level (outside $p$) $K^p$, is,
\[ M_\kappa^\dag(\mathfrak X) = \varinjlim_{v \rightarrow 0, w\rightarrow \infty} M_w^{\kappa\dag}(\mathfrak X(v)).\]
\end{definen}

\subsection{Classical and Overconvergent forms in rigid fiber.}

Denote  $\mathfrak X_{Iw^+(p^{2m})}(v)$ the quotient of $\mathfrak X_1(p^{2m})(v)$ by $\tilde{U_m} \subset B_m$, which is isomorphic to,
\[
\left(
\begin{array}{cc}
1   &  \mathcal O/p^m\mathcal O  \\
  &   1 + p^m\mathcal O/p^{2m}\mathcal O
\end{array}
\right)\subset \left(
\begin{array}{cc}
(\mathcal O/p^m\mathcal O)^\times   &  \mathcal O/p^m\mathcal O  \\
  &   (\mathcal O/p^{2m}\mathcal O)^\times
\end{array}
\right).
\]
Let also denote $X_{Iw^+(p^{2m})}(v)$ the corresponding rigid space. Over the scheme $X$, we have the locally free sheaf $\omega_A = \omega_{A,\tau} \oplus \omega_{A,\sigma\tau}$, which is locally isomorphic to $\mathcal O_X \oplus \mathcal O_X^2$, with the corresponding action of $\mathcal O$.
Denote by $\mathcal T$ the scheme $Hom_{X,\mathcal O}(\mathcal O_X^2\oplus\mathcal O_X,\omega_G)$ of trivialisation of $\omega_G$ as a $\mathcal O_X \otimes_{\ZZ_p}\mathcal O = \mathcal O_X \oplus \mathcal O_X$-sheaf, denote $\mathcal T^\times$ its subsheaf of isomorphisms, it is a $\GL_2\times\GL_1$-torsor over $X$, where $g \in \GL_2\times \GL_1$ acts on $\mathcal T^\times$ by $g \cdot \phi = \phi \circ g^{-1}$. For $\kappa \in X^+(T)$ a classical weight, denote by $\omega^\kappa$ the sheaf $\pi_*\mathcal O_{\mathcal T^\times}[\kappa']$, where $\pi : \mathcal T^\times \fleche X$ is the projection and $\kappa \fleche \kappa'$ the involution on classical weights. In down-to-earth terms, $\kappa = (k_1,k_2,l)$ where $k_1 \geq k_2$ and
\[ \omega^\kappa = \Sym^{k_1-k_2}\omega_{G,\sigma\tau} \otimes (\det \omega_{G,\sigma\tau})^{\otimes k_2}\otimes (\det \omega_{G,\tau})^{\otimes l}.\]

We have defined $X(v)$, which is the rigid fiber of $\mathfrak X(v)$. Denote by $\mathcal T_{an},\mathcal T_{an}^\times, (\GL_2\times\GL_1)_{an}$ the analytification of the schemes $\mathcal T,\mathcal T^\times, \GL_g$, and $\mathcal T_{rig},\mathcal T_{rig}^\times, (\GL_2\times\GL_1)_{rig}$ Raynaud's rigid fiber of the completion along the special fibers of the same schemes.
As $\mathcal T^\times/B$ is complete, $\mathcal T_{an}^\times/B_{an} = \mathcal T_{rig}^\times/B_{rig}$, over which there is the diagram,
\begin{center}
\begin{tikzpicture}[description/.style={fill=white,inner sep=2pt}] 
\matrix (m) [matrix of math nodes, row sep=3em, column sep=2.5em, text height=1.5ex, text depth=0.25ex] at (0,0)
{ 
\mathcal T_{rig}^\times/U_{rig} & & \mathcal T_{an}^\times/U_{an} \\
 & \mathcal T_{rig}^\times/B_{rig} & \\
 };
\path[->,font=\scriptsize] 
(m-1-1) edge node[auto] {$$} (m-1-3)
(m-1-1) edge node[auto,left] {$f$} (m-2-2)
(m-1-3) edge node[auto] {$g$} (m-2-2)
;
\end{tikzpicture}
\end{center}
where $f$ is a torsor over $U_{rig}/B_{rig} = T_{rig}$ (the torus, not to be mistaken with $\mathcal T_{rig}$) and $g$ a torsor over $T_{an}$ (same remark).

\begin{definen}
Let $\kappa \in \mathcal W_w(K)$. We denote by $\omega_w^{\kappa\dag}$ the rigid fiber of $\mathfrak w_w^{\kappa\dag}$ on $X(v)$. It exists by \cite{AIP} Proposition A.2.2.4. It is called the sheaf of $w$-analytic overconvergent modular forms of weight $\kappa$. The space of $v$-overconvergent, $w$-analytic modular forms of weight $\kappa$ is the space,
\[ H^0(X(v),\omega_w^{\kappa\dag}).\]
The space of locally analytic overconvergent Picard modular forms of weight $\kappa$ (and level $K^p$) is the space,
\[ M_\kappa^\dag(X) = \varinjlim_{v\rightarrow 0,w\rightarrow \infty} H^0(X(v),\omega_w^{\kappa\dag}).\]
\end{definen}

The injection of $\mathcal O_{\mathfrak X(v)}$-modules $\mathcal F_\tau \oplus \mathcal F_{\sigma\tau} \subset \omega_{A} = \omega_{A,\tau}\oplus \omega_{A,\sigma\tau}$ is an isomorphism in generic fiber, and this induces an open immersion,
\[ \mathcal{IW}_w \hookrightarrow \mathcal T_{rig}^\times/B_{rig} \times_{X(v)} X_1(p^{2m})(v).\]
We also have an open immersion,
\[ \mathcal{IW}_w^+ \hookrightarrow \mathcal T_{an}^\times/U_{an} \times_{X(v)} X_1(p^{2m})(v).\]

The action of $B_n$ on $\mathfrak X_1(p^{2m})(v)$ (or $X_1(p^{2m})(v)$) lift to an action on $\mathfrak{IW}_w$ (or $\mathcal{IW}_w$) because being $w$-compatible for $\Fil^1\mathcal F_\tau$ only depend on the trivialisation of $K_n^D$ modulo $B_n$. Similarly the action of $\tilde{U_n}$ lifts to $\mathfrak{IW}_w^+$ and $\mathcal{IW}_w^+$.
We can thus define $\mathcal{IW}_w^0$ and $\mathcal{IW}_w^{+,0}$ the respective quotients of $\mathcal{IW}_w$ and $\mathcal{IW}_w^+$ by $B_n$ and $\tilde{U_n}$, which induces open immersions,
\[ \mathcal{IW}_w^0\hookrightarrow \mathcal T_{rig}^\times/B_{rig} \times_{X(v)} X(v) \quad \text{and} \quad  \mathcal{IW}_w^{+,0}\hookrightarrow \mathcal T_{an}^\times/U_{an} \times_{X(v)} X_{Iw^+(p^{2m})}(v).\]

\begin{propen}
Suppose $w > m-1 + {\frac{p+v}{p^2-1}}$. Then there are embeddings 
\[ \mathcal{IW}_w^{0} \subset (\mathcal T^{an}/B)_{X(v)}\quad \text{and} \quad h :\mathcal{IW}_w^{0,+} \subset (\mathcal T^{an}/U)_{X(v)}.\]
\end{propen}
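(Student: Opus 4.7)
The idea for both embeddings is that the data of the overconvergent torsor is determined by its image in the analytic flag bundle $(\mathcal T^{an}/B)_{X(v)}$, resp.\ $(\mathcal T^{an}/U)_{X(v)}$, once one passes to the rigid fibre. For the first embedding, observe that the defining condition of $\mathcal{IW}_w$ --- that $\Fil^1\mathcal F_{\sigma\tau}\otimes R_w$ equals $\HT_{\sigma\tau,w}((H_{\sigma\tau}^m/H_\tau^m)^D\otimes R_w)$ --- does not involve the trivialisation $\psi$ at all: the canonical subgroup quotient $(H_{\sigma\tau}^m/H_\tau^m)^D$ exists as a finite flat group scheme on $X(v)$ by Theorem~\ref{thrfiltcan} and Proposition~\ref{propKernormal}. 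Hence $\mathcal{IW}_w \simeq \mathcal{IW}_w^0 \times_{X(v)} X_1(p^{2m})(v)$, with $\mathcal{IW}_w^0$ naturally an open subspace of the Grassmannian $\mathcal Gr_{\sigma\tau}$ over $X(v)$. In the rigid fibre $\mathcal F_{\sigma\tau}=\omega_{G,\sigma\tau}$, so $\mathcal Gr_{\sigma\tau}$ is an open of the rigid Grassmannian of line quotients of $\omega_{G,\sigma\tau}$, which is precisely $(\mathcal T^{an}/B)_{X(v)}$: this uses that $\mathcal T^\times/B$ is proper over $X$, so its analytification and the rigid fibre of its formal completion coincide.

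For the second embedding, the candidate $h$ is the composite of the known open immersion $\mathcal{IW}_w^{0,+}\hookrightarrow (\mathcal T^{an}/U)\times_{X(v)} X_{Iw^+(p^{2m})}(v)$ with the forgetful projection $X_{Iw^+(p^{2m})}(v)\to X(v)$. The crucial point is injectivity: the trivialisations $(P^\tau,P_1^{\sigma\tau},P_2^{\sigma\tau})$, viewed in the rigid fibre as trivialisations of graded pieces of $\omega_{G,\tau}\oplus\omega_{G,\sigma\tau}$ with respect to the flag, must determine the class $[\psi]\in X_{Iw^+(p^{2m})}(v)$. If two classes $[\psi_1],[\psi_2]$ over the same point of $X(v)$ are both compatible with the same $P$-data, then the $w$-compatibility condition forces $\HT_{?,w}\circ\psi_{1,?}\equiv \HT_{?,w}\circ\psi_{2,?}\pmod{p^w}$ inside $\mathcal F_?$; by Proposition~\ref{prop63}, $\HT_?$ is an isomorphism onto $\mathcal F_?$ modulo $p^w$, hence $\psi_1\equiv \psi_2\pmod{p^w}$ in $K_m^D(R_K)$. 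Reading the compatibility through the inclusion $\mathcal F_?\subset \omega_{G,?}$ (whose cokernel is annihilated by $p^{(p+v)/(p^2-1)}$, resp.\ $p^{v/(p^2-1)}$, by Proposition~\ref{prop62}) costs exactly this factor of precision; the hypothesis $w>m-1+\frac{p+v}{p^2-1}$ is tailored so that the residual precision is still $\geq m$, which is enough to identify $\psi_1$ and $\psi_2$ modulo $\tilde U_m$ and conclude $[\psi_1]=[\psi_2]$.

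The main obstacle is the precision bookkeeping around the Hodge--Tate map: one must carefully distinguish between the $\mathcal F_?$-level statements, where $\HT$ is literally an isomorphism modulo $p^w$, and the $\omega_{G,?}$-level ones, where the cokernel of $\mathcal F_?\hookrightarrow \omega_{G,?}$ must be absorbed in the precision, while checking that everything is controlled uniformly in families using the affinoid reduction of Propositions~\ref{prop61} and \ref{prop62}. Once the pointwise injectivity is established, the promotion to an embedding of rigid-analytic spaces follows routinely: both $\mathcal{IW}_w^{0,+}\to \mathcal{IW}_w^0$ and $(\mathcal T^{an}/U)_{X(v)}\to (\mathcal T^{an}/B)_{X(v)}$ are torsors under the rigid tori $\mathfrak T_w^{rig}$ and $T^{an}$ respectively, and the first embedding reduces the second to an injection of torsor structures, which follows again from the invertibility modulo $p^w$ of the Hodge--Tate map.
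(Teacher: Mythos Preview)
The approach is close in spirit to the paper's, but your precision bookkeeping has a genuine gap. In your second paragraph you first deduce, via the isomorphism $\HT_{?,w}: K_m^D\otimes R_w \xrightarrow{\sim} \mathcal F_?\otimes R_w$ of Proposition~\ref{prop63}, that $\psi_1\equiv\psi_2 \pmod{p^w}$ inside $K_m^D(R_K)$. But recall $w<m$ (indeed $w<m-\frac{p^{2m}-1}{p^2-1}v$), so this congruence does \emph{not} suffice to identify the classes of $\psi_1,\psi_2$ modulo $\tilde U_m$: for instance the first diagonal component of $\tilde U_m$ is trivial, so one would need $\psi_{1,\sigma\tau}=\psi_{2,\sigma\tau}$ exactly in $(\mathcal O/p^m)^1$, not merely modulo $p^w$. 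Your next sentence then tries to buy extra precision by ``reading through $\mathcal F\subset\omega$'', but this is backwards: in the rigid fibre $\mathcal F_?=\omega_{G,?}$, so there is no loss or gain at that inclusion, and certainly no way to promote precision $p^w$ to something stronger. Finally, $w>m-1+\frac{p+v}{p^2-1}$ only gives $w-\frac{p+v}{p^2-1}>m-1$, not $\geq m$.

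What the paper does instead is a direct local computation exploiting the integral structure. Locally over $X(v)$, the map $h$ is
\[
\coprod_{\gamma\in S}(1+p^w\mathfrak B)\, M\,\tilde\gamma \;\longrightarrow\; (\GL_2\times\GL_1)/U_{an},
\]
where $S$ is an explicit set of lifts of $I_n/\tilde U_n$ and $M$ is the matrix of the inclusion $\mathcal F\hookrightarrow\omega$. The cokernel bound of Proposition~\ref{prop62} enters not as a precision \emph{loss} but as a precision \emph{gain}: it yields an integral $M'$ with $M'M=\diag(p^{(p+v)/(p^2-1)}I_2,\,p^{v/(p^2-1)})$, and left-multiplication by $M'$ sends the $\gamma$-ball into $p^c\tilde\gamma + p^w\cdot(\text{integral})$. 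Since distinct representatives $\tilde\gamma,\tilde\gamma'$ differ in at least one entry by something of valuation $\leq m-1$ (the critical case being the lower-left entries $p[c]$ with $c\in\mathcal O/p^{m-1}$), the hypothesis $w-c>m-1$ separates the images and gives injectivity of $M'\circ h$, hence of $h$. Your argument could be repaired along these lines, but the step you are missing is precisely this construction and use of the ``almost-inverse'' $M'$.
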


\begin{proof}[Proof]
Let $S$ be a set of representatives in $I_\infty \simeq 
\left(
\begin{array}{cc}
\mathcal O^1  & \mathcal O   \\
 p\mathcal O & \mathcal O^\times     
\end{array}
\right)
$ of $I_n/\widetilde{U_n}$ which we can suppose of the form,
\[ 
\left(
\begin{array}{cc}
 [b] &     \\
p[c] & [a]     
\end{array}
\right), \quad a \in (\mathcal O/p^m)^\times, b \in (\mathcal O^1/p^m), c \in \mathcal O/p^{m-1}.\]
Here, $[.]$ denote any lift.
Then $h$ is locally (over $X(v)$) isomorphic to,
\[ h : \coprod_{\gamma \in S} 
\left(
\begin{array}{ccc}
1+p^wB(0,1)  &   &   \\
p^wB(0,1)  &1+p^wB(0,1)    &   \\
  &   &   1+p^wB(0,1) 
\end{array}
\right) M \widetilde{\gamma} \fleche (\GL_2\times\GL_1/U)_{an}
\]
where $M$ is the matrix which is locally given by the Hodge-Tate map, and thus correspond to the inclusion $\mathcal F_\tau \oplus \mathcal F_{\sigma\tau} \subset \omega_\tau \oplus \omega_{\sigma\tau}$, and if $\gamma \in S$, then $\widetilde{\gamma}$ is given by,
\[ \widetilde \gamma = 
\left(
\begin{array}{ccc}
\sigma\tau(b)  &   &   \\
p \sigma\tau(c) &  \sigma\tau(a) &   \\
  &   &   \tau(a)
\end{array}
\right), \quad \text{if} \quad \gamma =\left(
\begin{array}{cc}
 b &     \\
pc & a     
\end{array}
\right)
\]
But there exists $M'$ with integral coefficients such that $M'M = 
\left(
\begin{array}{cc}
 p^{\frac{p+v}{p^2-1}}I_2 &      \\
  &  p^{\frac{v}{p^2-1}}  
\end{array}
\right)$, and it is easily checked that $M' \circ h$ is then injective if $w > m-1 +\frac{p+v}{p^2-1}$. The proof for the other embedding is similar (and easier).\end{proof}

\begin{remaen}
Of course not every $w$ satisfies $m-1+\frac{p+v}{p^2-1} < w < m-\frac{p^{2m}-1}{p^2-1}v$ for some $m$. But every $w$-analytic character $\kappa$ is $w'$-analytic for all $w' > w$, and in particular we can choose a $w'$ which satisfies the previous inequalities, a priori for another $m$ (for each choice of $w'$ there is a unique such $m$).
\end{remaen}

From now on, we suppose that we have fixed $m$, and we suppose that $m-1+\frac{p+v}{p^2-1} < w < m-\frac{p^{2m}-1}{p^2-1}v$.

We could have defined $\omega_w^{\kappa\dag}$ directly, by $g_*\mathcal O_{\mathcal{IW}_w^{0,+}}[\kappa]$ where $g$ is the composite,
\[ \mathcal{IW}_w^{0,+} \fleche \mathcal{IW}_w^0 \fleche X(v) \]

as shown by the next proposition. Remark that $X(v) \subset X_{Iw(p)}(v)$ via the canonical filtration of level 1.

\begin{propen}
The sheaf $\omega_w^{\kappa\dag}$ (defined as the rigid fiber of $\mathfrak w_w^{\kappa\dag}$) is isomorphic to $g_*\mathcal O_{\mathcal{IW}_w^{0,+}}[\kappa]$.
\end{propen}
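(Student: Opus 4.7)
The plan is to verify that the construction of $\mathfrak w_w^{\kappa\dag}$ commutes stage-by-stage with passage to the rigid fiber. Recall from the previous proposition that $\mathfrak w_w^{\kappa\dag}$ is obtained by first forming $(\pi_1)_* \mathcal O_{\mathfrak{IW}_w^+}[\kappa^0]$, an invertible sheaf on $\mathfrak{IW}_w$ (using that $\pi_1$ is a $\mathfrak T_w$-torsor and $\kappa^0 = \kappa|_{\mathfrak T_w}$), then pushing along the affine $\pi_2$, and finally pushing along the finite $\pi_3$ while taking $\kappa$-equivariant sections for the residual action. All three maps are affine (or finite), so the rigid fiber of each pushforward is the pushforward along the corresponding rigid map; moreover, taking eigensections under an affine group scheme action commutes with rigid analytification. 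Together with the description of rigid fibers of formal Banach sheaves from \cite{AIP} A.2.2.4, this implies that the rigid fiber of $\mathfrak w_w^{\kappa\dag}$ identifies with the $\kappa$-equivariant subsheaf of $g'_*\mathcal O_{\mathcal{IW}_w^+}$, where $g' : \mathcal{IW}_w^+ \fleche X(v)$ is the composite rigid morphism and the equivariance is under the full group $B_\infty(\ZZ_p)\mathfrak B_w$.

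The next step is to match this with $g_*\mathcal O_{\mathcal{IW}_w^{0,+}}[\kappa]$. Since $\mathcal{IW}_w^{0,+} = \mathcal{IW}_w^+/\tilde{U_n}$ and $g$ factors $g'$ through this quotient, there is a tautological map between the two candidate sheaves; the issue is to check it is an isomorphism. In one direction, any $\tilde{U_n}$-invariant section on $\mathcal{IW}_w^{0,+}$ that is $\kappa$-equivariant for the residual $B_n/\tilde{U_n}$-action automatically lifts to a $\kappa$-equivariant section on $\mathcal{IW}_w^+$. In the other direction, I would verify that $\kappa$-equivariance already forces $\tilde{U_n}$-invariance: the unipotent part of $\tilde{U_n}$ lies in $U(\ZZ_p)$, on which $\kappa$ is trivial by its very definition, while the ``diagonal'' piece $1 + p^m\mathcal O/p^{2m}$ lifts into $\mathfrak T_w(\mathcal O)$ for $w \leq m$, so the $\kappa$-equivariance on it is already encoded in the $\kappa^0$-twist performed at the first stage.

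The main subtlety is bookkeeping: one must check carefully that the overlapping actions of $\mathfrak T_w$ (via the torsor structure), of $B_\infty(\ZZ_p)$ (via the trivialisation of $K_m^D$), and of $\mathfrak B_w$ (via the formal completion of the Borel) agree on their intersections, and that $\kappa$ extends coherently to $B(\ZZ_p)\mathfrak B_w$ as described in the text. Once these compatibilities are in place, the identification is essentially formal and parallels the analogous statements in \cite{AIP}.
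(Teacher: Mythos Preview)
Your proposal is correct and follows essentially the same route as the paper: identify the rigid fiber of $\mathfrak w_w^{\kappa\dag}$ with the $\kappa$-equivariant sections of the pushforward from $\mathcal{IW}_w^+$, and then observe that the $\widetilde{U_n}$-action on $(\pi_2\circ\pi_1)_*\mathcal O_{\mathcal{IW}_w^+}[\kappa^0]$ is already trivial, so everything descends to $\mathcal{IW}_w^{0,+}$. Your write-up is in fact more explicit than the paper's on the key point, namely why $\widetilde{U_n}$ acts trivially: you separate the unipotent part (killed because $\kappa$ is trivial on $U(\ZZ_p)$) from the diagonal piece $1+p^m\mathcal O/p^{2m}\mathcal O$ (which lands in $\mathfrak T_w$ since $w<m$, hence is already absorbed by the $\kappa^0$-twist), whereas the paper simply asserts triviality in one line.
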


\begin{proof}[Proof]
In the rigid setting, we did a quotient by $\widetilde{U_n}$ to get $\mathcal{IW}^{0,+}_w$. But $\omega_w^{\kappa\dag}$ is constructed as 
$((\pi_2\circ\pi_1)_*\mathcal O_{\mathcal{IW}^+}[\kappa^{0}])(-\kappa)^{B_n}$, and the action of $\widetilde{U_n}$ on $(\pi_2\circ\pi_1)_*\mathcal O_{\mathcal{IW}^+}[\kappa^{0}]$ is trivial and it thus descends to $X_{Iw^+(p^{2m})}(v)$ and is isomorphic to the $\kappa^0$-variant vectors in the pushforward of $\mathcal O_{\mathcal{IW}_w^{0,+}}$.

\end{proof}

\begin{propen}
For $\kappa \in X_+(T^0)$ and $\omega >0$, there is a restriction map,
\[ \omega_{X(v)}^\kappa \hookrightarrow \omega_w^{\kappa'\dag},\]
induced by the inclusion $\mathcal{IW}_w^{0,+} \subset (\mathcal T^{an}/U)$. Moreover, locally for the etale topology, this inclusion is isomorphic to the following composition,
\[ V_{\kappa'} \hookrightarrow V_{\kappa'}^{w-an} \overset{res_0}{\fleche} V_{0,\kappa'}^{w-an}.\]
\end{propen}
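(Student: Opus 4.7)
The strategy is to identify both sheaves as pushforwards of the structure sheaf with a suitable equivariance, and then to derive the map directly from the open immersion of the preceding proposition. Recall that $\omega_{X(v)}^\kappa$ is, by construction, the $\kappa'$-equivariant part $h_*\mathcal O_{\mathcal T^{an}/U}[\kappa']$ (this is the analytification of the identity $\omega^\kappa=\pi_*\mathcal O_{\mathcal T^\times}[\kappa']$ from Section 5.1), while by the previous proposition the overconvergent sheaf $\omega_w^{\kappa'\dag}$ is the $\kappa'$-equivariant part $g_*\mathcal O_{\mathcal{IW}_w^{0,+}}[\kappa']$. The embedding $h:\mathcal{IW}_w^{0,+}\hookrightarrow (\mathcal T^{an}/U)_{X(v)}$ is equivariant for the inclusion $B(\ZZ_p)\mathfrak B_w\subset B$, so restriction of sections along $h$ sends $B$-equivariant (hence in particular $B(\ZZ_p)\mathfrak B_w$-equivariant) functions of weight $\kappa'$ to functions of weight $\kappa'$, producing the desired map $\omega_{X(v)}^\kappa\hookrightarrow\omega_w^{\kappa'\dag}$.

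For injectivity, I would argue that $g:\mathcal{IW}_w^{0,+}\to X(v)$ is surjective (it is the composition of the open immersion with the natural projection, and its image covers $X(v)$ via the Hodge–Tate construction), and that $\omega_{X(v)}^\kappa$ is a locally free coherent sheaf of finite rank $k_1-k_2+1$. Then restricting an algebraic section of such a vector bundle to a rigid open subspace whose image is Zariski-dense in each fiber cannot kill it: working on a local trivialization of $\omega_{X(v)}^\kappa$ reduces the assertion to the fact that a nonzero polynomial on $\GL_2\times\GL_1/U$ restricts nontrivially to any affinoid open, hence in particular to the $w$-analytic neighborhood carved out by $\mathcal{IW}_w^{0,+}$.

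For the local identification, I would pick an étale cover of $X(v)$ which simultaneously trivializes the $\GL_2\times\GL_1$-torsor $\mathcal T^\times$ and the level structure on $K_m^D$. On such a patch, $(\mathcal T^{an}/U)_{X(v)}$ becomes the analytification of $\GL_2\times\GL_1/U$ and the $\kappa'$-equivariant functions identify with the algebraic induction $V_{\kappa'}=\mathrm{Ind}_B^{\GL_2\times\GL_1}\kappa'$. On the same patch, using the explicit local description of $h$ in the previous proposition---namely $\coprod_{\gamma\in S}(\mathbf 1+p^w\mathfrak B(0,1))\,M\widetilde\gamma$---the subspace $\mathcal{IW}_w^{0,+}$ restricts to a single (after choosing the trivialization consistently with the identity coset) $w$-analytic ball in $N^0$ around the identity, which is exactly the object defining $V_{0,\kappa'}^{w-an}$. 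Under these identifications, $h^*$ becomes ``restrict a polynomial on $\GL_2\times\GL_1$ to the $w$-analytic ball near the identity in the Iwahori'', which is manifestly the composition $V_{\kappa'}\hookrightarrow V_{\kappa'}^{w-an}\xrightarrow{\res_0} V_{0,\kappa'}^{w-an}$ of the excerpt.

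The main obstacle is bookkeeping rather than mathematics: one has to align the involution $\kappa\leftrightarrow\kappa'$ (which swaps ``highest weight'' and ``Borel equivariance'' in the two conventions for classical and overconvergent sheaves), track the coset decomposition of $\mathcal{IW}_w^{0,+}$ modulo $\widetilde U_n$, and absorb the Hodge–Tate matrix $M$ into the étale trivialization so that the local model simplifies to the identity embedding of a $w$-analytic ball. Once these identifications are set up coherently, both assertions reduce to unwinding definitions.
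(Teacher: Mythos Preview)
Your proposal is correct and follows essentially the same approach as the paper: derive the map from restriction along the open immersion $\mathcal{IW}_w^{0,+}\hookrightarrow \mathcal T^{an}/U$, then trivialize \'etale-locally to identify the source with $V_{\kappa'}$ and the target with $V_{0,\kappa'}^{w-an}$. The paper's proof differs only in that it writes out explicitly the ``trivial extension'' step---using the algebraicity of $\kappa'=(k_1,k_2,k_3)$ and the Iwahori decomposition to pass from a $T(\ZZ_p)\mathfrak T_w$-equivariant function on the small balls to a $\kappa'$-equivariant function on all of $I_{p^w}$, via the formula $f(nb)=\kappa'(b)f(n)$---whereas you fold this into ``bookkeeping''. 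Your separate injectivity argument is fine but unnecessary: once the local identification is made, injectivity follows immediately since a polynomial of bounded degree vanishing on an open ball is zero.
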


\begin{proof}[Proof]
Locally for the etale topology, $\omega^{\kappa}$ is identified with algebraic function on $\GL_2\times\GL_1$ which are invariant by $U$ and varies as $\kappa'$ under the action of $T$, i.e. to $V_{\kappa'}$. But a function $f \in \omega_w^{\kappa'\dag}$ is locally identified with a function, 
\[ f : 
\{ \left(
\begin{array}{ccc}
\tau(a)(1+p^wB(0,1) )  &  &   \\
p^w B(0,1)  &  \tau(b)(1+p^wB(0,1))  &   \\
  &   &   \sigma\tau(b)(1+p^wB(0,1) )
\end{array}
\right) ,  a \in \mathcal O^1, b \in \mathcal O^\times \}\fleche L,
\]
which varies as $\kappa'$ under the action on the right of $T(\ZZ_p)\mathfrak T_w$. As $\kappa' = (k_1,k_2,k_3) \in X_+(T)$ we can extend $f$ to a $\kappa'$-varying function on
 \[I_{p^w} = \{\left(
\begin{array}{ccc}
\mathbb G_m  & B(0,1) &   \\
p^w B(0,1)  &  \mathbb G_m  &   \\
  &   &   \mathbb G_m 
\end{array}
\right)\},\]
extending it "trivially" ; i.e.
\begin{eqnarray*} f \left(\left(
\begin{array}{ccc}
x  & u &   \\
p^w z  &  y  &   \\
  &   &   t 
\end{array}
\right)\right) &= f\left(\left(
\begin{array}{ccc}
1  & 0 &   \\
p^w zx^{-1}  &1  &   \\
  &   &   1
  \end{array}
\right)\left(
\begin{array}{ccc}
x & u &   \\
0  &  y - p^w z u  &   \\
  &   &   t 
\end{array}
\right)\right) \\ & = x^{k_1}( y - p^w z u )^{k_2}t^{k_3}f\left(\left(
\begin{array}{ccc}
1  & 0 &   \\
p^w zx^{-1}  &1  &   \\
  &   &   1
  \end{array}
\right)\right).\end{eqnarray*}
Under this identification, locally for the etale topology $\omega_w^{\kappa'\dag}$ is identified with $V_{0,\kappa'}^{w-an}$.
\end{proof}

\section{Hecke Operators, Classicity}
\label{sectHecke} 
 As explained in \cite{AIP} and \cite{Bra}, it is not possible to find a toroidal compactification for more general PEL Shimura varieties (already for $\GSp_4$) that is preserved with all the Hecke correspondances, but this can be overcome by looking at \textit{bounded} sections on the open variety. For the Picard modular variety, there is only one choice of a toroïdal compactification, and thus this problem doesn't appear, but we will keep the general strategy (and thus we won't have to check that the correspondances extend to the boundary).
 Thus, instead we will define Hecke operators on the open Picard Variety $\mathcal Y_{Iw(p)}$ of Iwahori level, and as bounded sections on the open variety extend automatically to the compatification (see \cite{AIP} theorem 5.5.1, proposition 5.5.2, which follows from a theorem of Lutkebohmert), we show that Hecke operators send bounded functions to bounded functions, and thus induces operators on overconvergent locally analytic modular forms.

\subsection{Hecke operators outside $p$}

These operators have been defined already in \cite{Bra}, section 4. We explain their definition quickly, and refer to \cite{Bra} (see also \cite{AIP} section 6.1) for the details.
Let $\ell \neq p$ be an integer, and suppose $\ell\notdivides N$, the set of places where $K_v$ is not maximal.
Let $\gamma \in G(\QQ_\ell) \cap \End{\mathcal O_{E,\ell}}(\mathcal O_{E,\ell}^3)\times\QQ_\ell^\times$, and consider,
\[ C_\gamma  \rightrightarrows \mathcal Y_{Iw(p)},\]
the moduli space of isogeny $f : A_1 \fleche A_2$ such that,
\begin{enumerate}
\item $f$ is $\mathcal O_E$-linear, and of degree a power of $\ell$.
\item $f$ is compatible with polarisations, i.e. $f^*\lambda_2$ is a multiple of $\lambda_1$.
\item $f$ is compatible with the $K^p$-level structure (at places that divides $N$) (remark that $f$ is an isomorphism on $T_q(A_i)$ when $q \neq \ell$ is a prime).
\item $f$ is compatible with the filtration given by the Iwahori structure at $p$.
\item The type of $f$ is given by the double class $G(\ZZ_\ell)\gamma G(\ZZ_\ell)$.
\end{enumerate}

\begin{remaen}
The space $C_\gamma$ doesn't depends on $\gamma$, only on the double class $G(\ZZ_\ell) \gamma G(\ZZ_\ell)$.

We could similarly define $C_\gamma$ without Iwahori level at $p$ (i.e. for $\mathcal Y_{G(\ZZ_p)K^p}$) without the condition that $f$ is compatible with the filtration given by the Iwahori structure at $p$. In our case, this Iwahori structure at $p$ will always be the canonical one, and thus $f$ is automatically compatible as it sends the canonical filtration of $A_1$ in the one of $A_2$. 
\end{remaen}

Denote by $p_i : C_\gamma \fleche \mathcal Y$ the two (finite) maps that sends $f$ to $A_i$. Denote by $C_\gamma(p^n)$ the fiber product with $p_1$ of $C_\gamma$ 
with $\mathcal Y_1(p^n)(v) \fleche \mathcal Y(v) \subset_{s} \mathcal Y_{Iw(p)}(v)$,
where $s$ is the canonical filtration of $A[p]$. Denote by $f$ the universal isogeny over $G_\gamma(p^n)$. It induces an $\mathcal O$-linear isomorphism,
\[ p_2^*(\mathcal F_\tau \oplus \mathcal F_{\sigma\tau}) \fleche p_1^*(\mathcal F_\tau \oplus \mathcal F_{\sigma\tau}).\]
In particular, we get a $B(\ZZ_p)\mathfrak B_w$-equivariant isomorphism,
\[ f^* : p_2^*\widetilde{\mathcal{IW}}^+_{w,|\mathcal Y_1(p^n)(v)} \overset{\sim}{\fleche} p_1^*\widetilde{\mathcal{IW}}^+_{w,|\mathcal Y_1(p^n)(v)}.\]
We can thus form the composite morphism,
\begin{eqnarray*}
H^0(\mathcal Y_1(p^n)(v),\mathcal O_{\widetilde{\mathcal{IW}}^+_w}) \overset{p_2^*}{\fleche} H^0(C_\gamma(p^n)(v),p^*_2\mathcal O_{\widetilde{\mathcal{IW}}^+_w}) \overset{(f^*)^{-1}}{\fleche} H^0(C_\gamma(p^n)(v),p^*_1\mathcal O_{\widetilde{\mathcal{IW}}^+_w}) \\ \overset{Tr(p_1)}{\fleche} H^0(\mathcal Y_1(p^n)(v),\mathcal O_{\widetilde{\mathcal{IW}}^+_w}).\end{eqnarray*}
 As $f^*$ is an isomorphism, it sends bounded functions to bounded functions, and we can make the following definition,
 
\begin{definen}
Let $\kappa \in \mathcal W_w(K)$ a weight. We define the Hecke operator,
\[ T_\gamma : M_{v,w}^{\kappa\dag} \fleche M_{v,w}^{\kappa\dag},\]
as the restriction of the previous operator to the bounded, $\kappa$-equivariant sections under the action of $B(\ZZ_p)\mathfrak B_w$. It induces an operator,
\[ T_\gamma : M^{\kappa\dag} \fleche M^{\kappa\dag}.\]
\end{definen}

\begin{definen}
Define $\mathcal H$ to be the commutative $\ZZ$ -algebra generated by all operators $T_\gamma$ for all $\ell \notdivides Np$ and all double classes $\gamma$. These operators commute on overconvergent forms, and thus $\mathcal H$ acts on them.
\end{definen}

\subsection{Hecke operator at $p$}

Here we stress that as in the previous sections, the prime $p$ is inert in $E$. We will  define a first Hecke operator at $p$, $U_p$.
Define $C$ the moduli space over $K$ which parametrizes data $(A,\lambda,i,\eta,L)$ where $(A,\lambda,i,\eta) \in X_K(v)$ and $L\subset A[p^2]$ is a totally isotropic $\mathcal O$-module for $\lambda$ (i.e. $L \subset L^\perp := (G[p^2]/L)^D \subset G[p^2]^D \overset{\lambda}{\simeq} G[p^2]$) of $\mathcal O$-height $p^3$ (and thus $L = L^\perp$) such that
\[ 
L[p] \oplus H_{\tau}^1 = A[p] \quad \text{and}\quad pL \oplus H_{\sigma\tau}^1 = A[p].\]
As remarked by Bijakowski in \cite{Bijmu}, the second condition is implied by the first one and the isotropy condition.
We then define two projections,
\[ p_1,p_2 : C \fleche X_K,\]
where $p_1$ is the forgetful map which sends $(A,\lambda,i,\eta,L)$ to $(A,\lambda,i,\eta,L)$ and $p_2$ sends $(A,\lambda,i,\eta,L)$ to $(A/L,\lambda',i',\eta')$. To compare the correspondance with the canonical filtration we will need the following lemma.

\begin{lemmen}
Let $p > 2$ and $G$ be a $p$-divisible $\mathcal O$-module of unitary type and signature $(2,1)$. Let $H$ be a sub-$\mathcal O$-module of $p$-torsion and of $\mathcal O$-height 1.
Then the two following assertions are equivalent,
\begin{enumerate}
\item $\Deg_{\tau}(H) > 1+p - \frac{1}{2}$,
\item $\ha_\tau(G) < \frac{1}{2}$ and $H$ is the canonical subgroup of $G[p]$ associated to $\tau$.
\end{enumerate}
Let $H$ be a sub-$\mathcal O$-module of $p$-torsion and of $\mathcal O$-height 2.
Then the two following assertions are equivalent,
\begin{enumerate}
\item $\Deg_{\sigma\tau}(H) > p + 2 - \frac{1}{2}$,
\item $\ha_\tau(G) < \frac{1}{2}$ and $H$ is the canonical subgroup of $G[p]$ associated to $\sigma\tau$.
\end{enumerate}
In both cases we can be more precise : if $v = 1 + p - \Deg_{\tau}(H)$ (respectively $2 + p - \Deg_{\sigma\tau}(H)$) then $\ha_\tau(G) \leq v$.
\end{lemmen}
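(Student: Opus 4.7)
The plan is to derive both equivalences from Theorem \ref{thrfiltcan} (which gives the canonical filtration together with explicit lower bounds on its degrees) combined with a Harder--Narasimhan uniqueness argument for sub-$\mathcal{O}$-modules of $G[p]$ of large degree, in the spirit of Fargues' treatment of the classical canonical subgroup.

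For the direct implication (canonical $\Rightarrow$ large degree) I would simply specialise the inequalities of Theorem \ref{thrfiltcan} at $n=1$. Since $p>2$, the bound on $\ha_\tau$ required to apply the theorem is compatible with the range $\ha_\tau(G)<\frac{1}{2}$ under consideration (the remark in the proof of that theorem, together with the sharper bounds at level $n=1$ from \cite{Her2}, covers exactly this range), and the stated degree inequalities give
\[ \Deg_\tau(H_\tau^1)\;\geq\;1+p-\ha_\tau(G)\;>\;1+p-\tfrac{1}{2},\qquad \Deg_{\sigma\tau}(H_{\sigma\tau}^1)\;\geq\;2+p-\ha_\tau(G)\;>\;2+p-\tfrac{1}{2}. \]
Reading these inequalities in reverse gives at the same time the refined claim at the end of the lemma: if one sets $v:=1+p-\Deg_\tau(H)$ (respectively $v:=2+p-\Deg_{\sigma\tau}(H)$) when $H$ is known to be canonical, then $\ha_\tau(G)\leq v$.

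For the converse, fix $H \subset G[p]$ of $\mathcal{O}$-height $1$ with $\Deg_\tau(H) > 1+p-\frac{1}{2}$. I would first prove \emph{uniqueness} of such an $H$: if $H'\neq H$ were another one, the sum $H+H'\subset G[p]$ would be of $\mathcal{O}$-height at least $2$, and additivity of the degree along the exact sequence $0 \to H\cap H' \to H\oplus H' \to H+H' \to 0$, combined with the standing bounds on $\Deg_\tau$ for $\mathcal{O}$-height $2$ subgroups of $G[p]$, forces $\Deg_\tau(H+H')$ to exceed its maximum value -- a contradiction (the hypothesis $p>2$ enters here to control the contribution of $H\cap H'$). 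Next, the hypothesis $\Deg_\tau(H)>1+p-\frac{1}{2}$ itself must force $\ha_\tau(G)<\frac{1}{2}$, for otherwise the Harder--Narasimhan polygon of $G[p]$ could not accommodate a breakpoint of such large degree; this uses the interpretation of $\ha_\tau$ via the Newton/HN polygon recorded in the appendix. Granted $\ha_\tau(G)<\frac{1}{2}$, the forward direction produces $H_\tau^1$ with $\Deg_\tau(H_\tau^1)>1+p-\frac{1}{2}$, and uniqueness yields $H=H_\tau^1$; the refined inequality $\ha_\tau(G)\leq 1+p-\Deg_\tau(H)$ then comes from the forward direction applied to $H_\tau^1$.

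The case of $\mathcal{O}$-height $2$ is handled by exactly the same argument after applying Cartier duality together with the polarisation $\lambda: G\simeq G^{D,(\sigma)}$, which interchanges $H_\tau^n$ and $H_{\sigma\tau}^n$ (cf.\ the compatibility with duality at the end of Theorem \ref{thrfiltcan}). The main obstacle is the uniqueness step above; once it is in hand, the rest is a formal reading of Theorem \ref{thrfiltcan} forward and backwards.
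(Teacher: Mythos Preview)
Your forward direction $(2)\Rightarrow(1)$ is fine and matches the paper. The gap is entirely in the converse, and specifically in the step you flag as ``the main obstacle'' but then dispatch too quickly.

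The claim you need is: the degree hypothesis on $H$ alone forces $\ha_\tau(G)\leq v$ (with $v=1+p-\Deg_\tau(H)$, resp.\ $v=2+p-\Deg_{\sigma\tau}(H)$). You try to obtain this by ``reading the inequalities of Theorem~\ref{thrfiltcan} in reverse'', but that theorem only gives a \emph{lower} bound $\Deg(H^1_\bullet)\geq \text{(max)}-\ha_\tau(G)$, which rearranges to $\ha_\tau(G)\geq v$, the opposite inequality. Likewise, your appeal to ``the interpretation of $\ha_\tau$ via the Newton/HN polygon recorded in the appendix'' points to nothing: the appendices of this paper treat the weight space, reducedness of $\ha_\tau$, and the devissage of the Banach sheaves; none of them gives a polygon bound of the shape you need. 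So as written you have no argument for the crucial inequality $\ha_\tau(G)\leq v$, and without it neither the existence of the canonical filtration nor your uniqueness step can be invoked.

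The paper proves $\ha_\tau(G)\leq v$ by a direct Hodge--Tate computation, not by HN combinatorics. In the $\sigma\tau$-case one sets $E=G[p]/H$; the degree hypothesis forces $\deg_{\sigma\tau}(H^D)<v$, so $\omega_{G^D,\sigma\tau,\eps}\simeq\omega_{E^D,\sigma\tau,\eps}$ for $\eps>1-v$, and hence the cokernels of $\alpha_{G[p],\sigma\tau,\eps}\otimes 1$ and $\alpha_{E,\sigma\tau,\eps}\otimes 1$ have the same degree. The latter is $\frac{1}{p^2-1}\Deg_{\sigma\tau}(E)$ since $E$ is a Raynaud $(p,\dots,p)$ group, while the former is controlled by $\ha_{\sigma\tau}(G)$ via the explicit description of the image of $\alpha_{G[p],\sigma\tau}$ (\cite{Her2}, Proposition~5.25). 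Equating the two yields $\min(\ha_\tau(G),\tfrac{p^2-1}{p^2})\leq v$, and since $v<\tfrac12<1-\tfrac{1}{p^2}$ this gives $\ha_\tau(G)\leq v$. Once this is in hand, the canonical filtration of $G[p]$ exists, and $H$ coincides with the relevant step because both are breakpoints of the (classical) Harder--Narasimhan filtration of $G[p]$; your separate ``sum $H+H'$'' uniqueness argument is then unnecessary.
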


\begin{proof}[Proof]
In both cases we only need to prove that the first assumption implies the second, by the existence theorem of the canonical filtration $(\ha_\tau(G) = \ha_{\sigma\tau}(G))$.
Moreover, we only have to prove that $\ha_\tau(G) < \frac{1}{2}$, because then $G[p]$ will have a canonical filtration, and in both cases, $H$ will be a group of this filtration because it will correspond to a break-point of the Harder-Narasihman filtration (for the classical degree, as we only care about filtration in $G[p]$). Let us do the second case, as it is the most difficult one (the first case can be treated similarly, even using only technics introduced in \cite{Far2}). Let $v = 2 + p - \Deg_{\sigma\tau}(H)$. We can check that $\deg_{\sigma\tau}(H) > 2-v$, thus $\deg_{\sigma\tau}(H^D) < v$, and thus for all 
$\eps > 1-v$, if $E = G[p]/H$,
\[ \omega_{G^D,\sigma\tau,\eps} \simeq \omega_{E^D,\sigma\tau,\eps}.\]
But then the cokernel of $\alpha_{E,\sigma\tau,\eps}\otimes 1$ is of degree $\frac{1}{p^2-1}\Deg_{\sigma\tau}(E)$ ($E$ is a Raynaud subgroup of type (p...p)), and the following square is commutative,
\begin{center}
\begin{tikzpicture}[description/.style={fill=white,inner sep=2pt}] 
\matrix (m) [matrix of math nodes, row sep=3em, column sep=2.5em, text height=1.5ex, text depth=0.25ex] at (0,0)
{ 
G[p](\mathcal O_K) & & E(\mathcal O_K) \\
\omega_{G[p]^D,\sigma\tau,\eps} &  &\omega_{E^D,\sigma\tau,\eps} \\
 };

\path[->,font=\scriptsize] 
(m-1-1) edge node[auto] {$$} (m-1-3)
(m-1-3) edge node[auto] {$\alpha_{E,\sigma\tau,\eps}$} (m-2-3)
(m-1-1) edge node[auto] {$\alpha_{G,\sigma\tau,\eps}$} (m-2-1)
(m-2-1) edge node[auto] {$\sim$} (m-2-3);
\end{tikzpicture}
\end{center}
thus in particular $\deg \Coker(\alpha_{E,\sigma\tau,\eps}\otimes1) = \deg \Coker(\alpha_{G[p],\sigma\tau,\eps}\otimes1)$. But according to proposition 5.25 of \cite{Her2}, we can check that the image of $\alpha_{G[p],\sigma\tau}$ is always included inside $up^{\frac{\ha_{\sigma\tau}(G)}{p^2-1}}\mathbb F_{p^2} + p^{\frac{1}{p^2}}\mathcal O_C/p \subset \omega_{G[p]^D,\sigma\tau} \simeq \mathcal O_C/p$ for some $u \in \mathcal O_C^\times$. Rewriting the inequality with $\Deg_{\sigma\tau}(E) = \Deg_{\sigma\tau}(G[p]) - \Deg_{\sigma\tau}(H)$ we get,
\[\min(\ha_\tau(G),\frac{p^2-1}{p^2}) \leq 2 + p - \Deg_{\sigma\tau}(H) = v,\] but as $v < \frac{1}{2} <1 - \frac{1}{p^2}$, we get $\ha_\tau(G) \leq v$.
\end{proof}

Thus, we can deduce the following,

\begin{lemmen}
Let $(A,\lambda,i,\eta,L)$ as before with corresponding $(A,\lambda,i,\eta) \in \mathfrak X(v)$ and $v < \frac{1}{2(p^2+1)}$. Then $A/L \in \mathfrak X(v)$, and $A[p^2]/L$ coincides with 
the group $K_1(A/L)$.  
\end{lemmen}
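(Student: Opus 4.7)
The plan is to identify the canonical filtration of $A/L$ explicitly in terms of the isogeny $\pi : A \to A/L$, and then to verify the identifications via the degree criterion of the previous lemma. The starting observation is that the hypothesis $L[p] \oplus H_\tau^1(A) = A[p]$, combined with the compatibility $H_\tau^n(A)[p] = H_\tau^1(A)$ coming from Theorem~\ref{thrfiltcan}, gives $(H_\tau^n(A) \cap L)[p] = H_\tau^1(A) \cap L[p] = 0$ for $n = 1, 2$. Since $L$ is killed by $p^2$, any finite flat subgroup $M \subset L$ with $M[p] = 0$ satisfies $pM \subset M[p] = 0$, hence $M = 0$; therefore $H_\tau^n(A) \cap L = 0$. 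Consequently $\pi$ restricts to a closed immersion on $H_\tau^n(A)$, and matching $\mathcal O$-heights yields isomorphisms of finite flat $\mathcal O$-group schemes $H_\tau^n(A) \xrightarrow{\sim} \pi(H_\tau^n(A))$, preserving both $\tau$- and $\sigma\tau$-degrees.

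I then apply Theorem~\ref{thrfiltcan} to $A$ at $n = 2$ to obtain $\deg_\tau(H_\tau^2(A)) + p \deg_{\sigma\tau}(H_\tau^2(A)) \geq 2(2p+1) - (p^2+1)v$, which transfers verbatim to $\pi(H_\tau^2(A))$ by the isomorphism above. The hypothesis $v < \frac{1}{2(p^2+1)}$ is precisely what makes this bound exceed the critical threshold $2(2p+1) - \frac{1}{2}$ appearing in an $n=2$ analogue of the previous lemma, whose proof follows the same Hodge--Tate/Verschiebung scheme as the $n=1$ case stated above. This analogue then forces $\pi(H_\tau^2(A)) = H_\tau^2(A/L)$ and, in its quantitative form, $\ha_\tau(A/L) \leq v$, establishing $A/L \in \mathfrak X(v)$. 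Applying Theorem~\ref{thrfiltcan} now to $A/L$ provides its canonical filtration; a parallel argument—using the second hypothesis $pL \oplus H_{\sigma\tau}^1(A) = A[p]$, together with the Weil-pairing isomorphism $A[p^2]/L \cong L^D$ coming from $L = L^\perp$, to control $\Deg_{\sigma\tau}((A[p^2]/L)[p])$—identifies $(A[p^2]/L)[p] = H_{\sigma\tau}^1(A/L)$.

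Finally, both $H_\tau^2(A/L) = \pi(H_\tau^2(A))$ and $H_{\sigma\tau}^1(A/L) = (A[p^2]/L)[p]$ lie inside $A[p^2]/L$, and their intersection is exactly $\pi(H_\tau^1(A))$ of $\mathcal O$-height 1: if $x \in H_\tau^2(A)$ satisfies $\pi(x) \in (A[p^2]/L)[p]$ then $px \in L \cap A[p] = L[p]$, and $L[p] \cap H_\tau^1(A) = 0$ forces $px = 0$, so $x \in H_\tau^1(A)$. Therefore $K_1(A/L) = H_\tau^2(A/L) + H_{\sigma\tau}^1(A/L)$ has $\mathcal O$-height $2 + 2 - 1 = 3$, matching that of $A[p^2]/L$, and the inclusion $K_1(A/L) \subset A[p^2]/L$ is thus an equality. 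The main obstacle is extracting from the $n=2$ degree bound not merely the qualitative $\ha_\tau(A/L) < \frac{1}{2}$ but the sharp inequality $\ha_\tau(A/L) \leq v$; this relies on the quantitative refinement visible in the proof of the previous lemma (the closing remark ``if $v = 1 + p - \Deg_\tau(H)$ then $\ha_\tau(G) \leq v$''), and the explicit bound $v < \frac{1}{2(p^2+1)}$ is dictated precisely by the factor $\frac{p^{2n}-1}{p^2-1} = p^2 + 1$ at $n = 2$.
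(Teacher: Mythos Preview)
Your approach diverges from the paper's and contains a genuine gap in the step where you deduce $\ha_\tau(A/L)\leq v$.

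You transfer the Theorem~\ref{thrfiltcan} bound at $n=2$,
\[
\deg_\tau(H_\tau^2(A))+p\deg_{\sigma\tau}(H_\tau^2(A))\;\geq\;2(2p+1)-(p^2+1)v,
\]
to $\pi(H_\tau^2(A))$, and then invoke an ``$n=2$ analogue'' of the previous lemma. But even if such an analogue were available (it is not proven in the paper, and the $n=1$ argument relies on the quotient being a Raynaud $(p\ldots p)$-group, a structure that does not persist at level $p^2$), its quantitative form would read $\ha_\tau(G)\leq (\text{max})-\Deg(H)$, exactly as at $n=1$. With the degree input above, the right-hand side is only bounded by $(p^2+1)v$, not by $v$. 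So you would obtain $\ha_\tau(A/L)\leq (p^2+1)v$, which does \emph{not} give $A/L\in\mathfrak X(v)$. The bound $v<\tfrac{1}{2(p^2+1)}$ makes this less than $\tfrac{1}{2}$, enough for the canonical filtration of $A/L$ to exist, but not enough for the stated conclusion.

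The paper avoids this loss by working at $n=1$ first: the hypothesis $L[p]\oplus H_\tau^1(A)=A[p]$ makes $H_\tau^1(A)\to A[p]/L[p]$ a generic isomorphism, so $\Deg_\tau(A[p]/L[p])\geq\Deg_\tau(H_\tau^1(A))>1+p-v$, and the quantitative $n=1$ lemma gives the sharp $\ha_\tau(A/L)\leq v$ together with $A[p]/L[p]=H_\tau^1(A/L)$. For the identification $A[p^2]/L=K_1(A/L)$ the paper does not attempt to pin down $H_\tau^2(A/L)$ or $H_{\sigma\tau}^1(A/L)$ separately. Instead it estimates the \emph{total} degree: using the generic isomorphism $H_{\sigma\tau}^2(A)/H_{\sigma\tau}^1(A)\to (A[p^2]/L)/(A[p]/L[p])$ and the bound $\ha_{\sigma\tau}(A/H_{\sigma\tau}^1)\leq p^2\ha_{\sigma\tau}(A)$, it obtains $\deg(A[p^2]/L)>5-(p^2+1)v$; the same lower bound holds for $\deg K_1(A/L)$. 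Since both are subgroups of $(A/L)[p^2]$ of $\mathcal O$-height $3$, Bijakowski's proposition (\cite{Her2}, Proposition~A.2) forces them to coincide once $2(p^2+1)v\leq 1$, which is exactly the hypothesis. This bypasses entirely the need for any higher-level analogue of the previous lemma.
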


\begin{proof}[Proof]
By hypothesis on $L$, the map
\[ H_{\tau}^1 \fleche A[p]/L[p],\]
is an isomorphism on generic fiber, thus $\Deg_{\tau}(A[p]/L[p]) \geq \Deg_{\tau}(H^1_{\tau}) > 1 + p - v$. Thus by the previous lemma, we get that,
$\ha_\tau(A/L) \leq v$ and moreover $A[p]/L[p]$ coincide with the first canonical subgroup associated to $\tau$. Moreover, we deduce that $\deg A[p]/L[p] \geq 2 - v$.
Now consider the composite map,
\[ H^2_{\sigma\tau} \fleche A[p^2]/L \fleche (A[p^2]/L)/(A[p]/L[p]) = Q.\]
Because $H^1_{\sigma\tau}$ is sent inside $A[p]/L[p]$, we get the factorisation,
\[ H^2_{\sigma\tau}/H^1_{\sigma\tau} \fleche Q.\]
This is a generic isomorphism by the second hypothesis on $L$, and thus $\Deg_{\sigma\tau}(Q) \geq \Deg_{\sigma\tau}(H_{\sigma\tau}^2/H_{\sigma\tau}^1)$. But by construction, $H_{\sigma\tau}^2/H_{\sigma\tau}^1$ is the canonical subgroup (for $\sigma\tau$) of $A/H_{\sigma\tau}^1$ and thus $\Deg_{\sigma\tau}(H_{\sigma\tau}^2/H_{\sigma\tau}^1) \geq p+2 - \ha_{\sigma\tau}(A/H_{\sigma\tau}^1)$, and $\ha_{\sigma\tau}(A/H_{\sigma\tau}^1) \leq p^2\ha_{\sigma\tau}(A)$ (this is \cite{Her2} proposition 8.1), and this implies that $\deg Q > 3 - p^2v$. Using the exact sequence,
\[ 0 \fleche A[p]/L[p] \fleche A[p^2]/L \fleche Q \fleche 0,\]
we get that $\deg A[p^2]/L > 5 - p^2v - v$. A similar argument also shows that $\deg K_1(A/L) \geq 5 - (p^2+1)\ha_\tau(A/L)$. But using Bijakowski's proposition recalled in \cite{Her2} Proposition A.2, we get that if $2(p^2+1)v \leq 1$, then $A[p^2]/L = K_1(A/L)$.
\end{proof}

\begin{lemmen}
Suppose $v < \frac{1}{2p^4}$. Let $G/\Spec(\mathcal O_C)$ be a $p$-divisible group such that $\ha_\tau(G) < v$, then $K_1 \subset G[p^2]$ coincides over $\mathcal O_C/p^{\frac{1}{2p^2}}$ with $\Ker F^2 \subset G[p^2]$. In particular, \[\ha_\tau(G/K_1) = p^2\ha_\tau(G).\]
\end{lemmen}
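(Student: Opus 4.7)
The plan is to upgrade the residue-field equality $K_1 = \Ker F^2$ already recorded in the definition of $K_m$ to an equality modulo $p^{1/(2p^2)}$ by a degree comparison, since both $K_1$ and $\Ker F^2$ are finite flat closed subgroup schemes of $G[p^2]$ of $\mathcal{O}$-height $3$ (order $p^6$), and the second has the maximal possible degree.

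First I would lower-bound $\deg(K_1)$. Using the compatibility of the canonical filtration with $p^s$-torsion from Theorem $\ref{thrfiltcan}$ (so $H_\tau^2[p]=H_\tau^1$), the intersection $H_\tau^2 \cap H_{\sigma\tau}^1$ is contained in $H_\tau^2[p] = H_\tau^1 \subset H_{\sigma\tau}^1$ and therefore equals $H_\tau^1$. The short exact sequence
\[
0 \to H_\tau^1 \to H_\tau^2 \oplus H_{\sigma\tau}^1 \to K_1 \to 0
\]
gives $\deg(K_1) = \deg(H_\tau^2) + \deg(H_{\sigma\tau}^1) - \deg(H_\tau^1)$. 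Plugging the explicit degree inequalities of Theorem $\ref{thrfiltcan}$ (for $n=1,2$) together with $\ha_\tau(G) < v < 1/(2p^4)$ yields a lower bound of the form $\deg(K_1) \geq 6 - C(p)\, v$ with $C(p)$ an explicit polynomial expression. Since $\deg(\Ker F^2) = 2\dim G = 6$ is the maximum, and $K_1$ and $\Ker F^2$ coincide on the residue field, the principle of \cite{Far2,FarHN} — that two finite flat subgroups of $G[p^2]$ of the same order agreeing in the special fiber must coincide on $\Spec(\mathcal{O}_C/p^{1-\delta})$ with $\delta$ bounded by $\deg(\Ker F^2)-\deg(K_1)$ — forces $K_1 \equiv \Ker F^2 \pmod{p^{1/(2p^2)}}$ thanks to the assumption $v < 1/(2p^4)$.

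For the Hasse invariant statement, since $K_1 \equiv \Ker F^2 \pmod{p^{1/(2p^2)}}$, there is an isomorphism $G/K_1 \simeq G/\Ker F^2 = G^{(p^2)}$ modulo $p^{1/(2p^2)}$. As $\widetilde{\ha_\tau}$ is (up to unit) a determinant of $V^2$ between $\tau$-pieces of the Dieudonné crystal, its behaviour under Frobenius twist yields $\widetilde{\ha_\tau}(G^{(p^2)}) \equiv \widetilde{\ha_\tau}(G)^{p^2}$ up to units, hence $\ha_\tau(G^{(p^2)}) = p^2 \ha_\tau(G)$. The two quantities $\ha_\tau(G/K_1)$ and $p^2 \ha_\tau(G)$ are both less than $p^2 v < 1/(2p^2)$, so the congruence modulo $p^{1/(2p^2)}$ induced by the isomorphism of groups promotes to genuine equality.

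The hard part is tracking numerical constants in the degree estimate for $K_1$. The bounds in Theorem $\ref{thrfiltcan}$ are weighted sums $\deg_\tau + p\deg_{\sigma\tau}$ (and the symmetric form), and one must either invoke the dual filtration via the polarization to disentangle individual components, or combine the inequalities for $n=1,2$ with the max-degree constraints on subgroups of the $\mu$-ordinary type, to reach an unweighted lower bound $\deg(K_1) \geq 6 - C(p)\,v$ sharp enough that $v<1/(2p^4)$ produces the precision $1/(2p^2)$. Once that numerical optimization is carried out, the comparison and the Hasse invariant identity follow formally.
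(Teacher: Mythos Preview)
Your approach is genuinely different from the paper's, and the core step has a gap.

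\textbf{What the paper does.} The paper does \emph{not} work pointwise over $\mathcal O_C$ via degree estimates. It passes to the moduli side. Using the (nontrivial) fact that $\widetilde{\ha_\tau}$ is a \emph{reduced} Cartier divisor on the stack $\mathcal{BT}^{\mathcal O}_{(2,1),\mathrm{pol}}$, one chooses a totally ramified extension $K/\QQ_{p^2}$ with $v(\pi_K)=\tfrac{1}{2p^2}$ and forms $\mathfrak X(v)$ for $v=\tfrac{1}{2p^2}$. Smoothness of the stack plus reducedness of $\ha_\tau$ force the special fibre $\mathfrak X(v)\otimes \mathcal O_K/\pi_K$ to be \emph{reduced}. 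Over this reduced scheme the two finite flat subgroup schemes $K_1$ and $\Ker F^2$ (the former extended by normality, Proposition~\ref{propKernormal}) agree on every point; on a reduced base that forces equality of closed subschemes. Pulling back along any $\mathcal O_C$-point gives the equality modulo $p^{1/(2p^2)}$. The Hasse-invariant identity is then deduced exactly as you do, comparing $\widetilde{\ha_{\sigma\tau}}(G/K_1)$ with $\widetilde{\ha_{\sigma\tau}}(G)^{\otimes p^2}$ and using $p^2\ha_\tau(G)<\tfrac{1}{2p^2}$ to turn the truncated equality into a genuine one.

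\textbf{Where your argument has a gap.} The ``principle of \cite{Far2,FarHN}'' you invoke --- that two finite flat subgroups of the same order agreeing on the residue field must coincide over $\mathcal O_C/p^{1-\delta}$ with $\delta$ controlled by the degree defect --- is not a result stated in those references, and I do not see how to extract it from Harder--Narasimhan theory alone. Fargues' results give \emph{uniqueness} of a high-degree subgroup (it is a break of the HN filtration), not a quantitative congruence to $\Ker F^n$ over a thickening. The known congruences of canonical subgroups with Frobenius kernels (e.g.\ $C_n\equiv \Ker F^n \bmod p^{1-?}$) are proved by explicit Raynaud/Oort--Tate style computations or, as the paper does here, by a reducedness argument on the moduli space; they are not formal consequences of a degree gap. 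So even after the (admittedly fiddly) disentangling of the $(p,1)$-weighted degree bounds in Theorem~\ref{thrfiltcan}, you would still be missing the key bridge. Your treatment of the second assertion ($\ha_\tau(G/K_1)=p^2\ha_\tau(G)$) is fine and matches the paper once the first part is in hand.
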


\begin{proof}[Proof]
This is Appendix \ref{AppA}.
\end{proof}

\begin{propen}
Let $v < \frac{1}{2p^4}$. The Hecke correspondance $U_p$ defined by the two previous maps preserves $X(v)$. More precisely, if $y \in p_2(p_1^{-1}(\{x\}))$ where $x \in X(v)$, then $y \in X(v/p^2)$.
\end{propen}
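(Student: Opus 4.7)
The plan is to combine the two preceding lemmas to compare $\ha_\tau(A/L)$ with $\ha_\tau(A)$ through a factor of $p^2$ coming from the Frobenius-like nature of $K_1$.

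First, since $v < \frac{1}{2p^4} \leq \frac{1}{2(p^2+1)}$ (as $p > 2$), the first lemma applies to $(A,L)$ in the correspondence. It gives two crucial facts: namely that $A/L$ still lies in $\mathfrak X(v)$, so in particular $\ha_\tau(A/L) \leq v$, and that the finite flat subgroup $A[p^2]/L \subset A/L$ coincides with the canonical subgroup $K_1(A/L)$ associated to the $\mathcal O$-module structure.

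Next, since $\ha_\tau(A/L) \leq v < \frac{1}{2p^4}$, the hypothesis of the second lemma is satisfied for $G = A/L$ (after base changing to $\Spec(\mathcal O_C)$ at any point). That lemma yields the key identity
\[
\ha_\tau\!\bigl((A/L)/K_1(A/L)\bigr) = p^2\,\ha_\tau(A/L).
\]
Using the identification $K_1(A/L) = A[p^2]/L$ from the previous step, I can rewrite the quotient as
\[
(A/L)/K_1(A/L) \;=\; (A/L)\big/(A[p^2]/L) \;\simeq\; A/A[p^2] \;\xrightarrow{\ \sim\ }\; A,
\]
where the last isomorphism is multiplication by $p^2$ and is $\mathcal O_E$-equivariant, hence preserves $\ha_\tau$. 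Combining the two relations,
\[
p^2\,\ha_\tau(A/L) \;=\; \ha_\tau\!\bigl((A/L)/K_1(A/L)\bigr) \;=\; \ha_\tau(A) \;<\; v,
\]
so $\ha_\tau(A/L) < v/p^2$ as required, and in particular $y = (A/L, \dots) \in X(v)$.

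The only potentially delicate point is making sure that the two preparatory lemmas really do apply at every geometric point of $p_1^{-1}(\{x\})$: the first lemma needs $v < \frac{1}{2(p^2+1)}$, the second needs $\ha_\tau(A/L) < \frac{1}{2p^4}$. The first is guaranteed by the hypothesis $v < \frac{1}{2p^4}$; for the second one must invoke the first lemma's conclusion $\ha_\tau(A/L) \leq v$ before applying the Frobenius computation. Apart from this bookkeeping with the Hasse invariant bounds, the argument is a direct concatenation of the two lemmas together with the elementary identification $(A/L)/(A[p^2]/L) \simeq A$.
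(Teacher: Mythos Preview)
Your proof is correct and follows exactly the same route as the paper's: apply the first lemma to get $A[p^2]/L = K_1(A/L)$ with $\ha_\tau(A/L) \leq v$, apply the second lemma to $G = A/L$, and use the identification $(A/L)/(A[p^2]/L) \simeq A$ to conclude. The only cosmetic slip is the parenthetical ``(as $p>2$)'': the inequality $\tfrac{1}{2p^4} \leq \tfrac{1}{2(p^2+1)}$ holds for every prime $p$, so no hypothesis on $p$ is needed there.
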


\begin{proof}[Proof]
This is the two previous lemmas as $(A/L)/(A[p^2]/L) = A$.
\end{proof}

Denote the universal isogeny over $C$ by,
\[ \pi : \mathcal A \fleche \mathcal A/L,\]
which induces maps $\omega_{A/L,\tau} \overset{\pi^\star_\tau}{\fleche} \omega_{A,\tau}$ and $\omega_{A/L,\sigma\tau} \overset{\pi^\star_{\sigma\tau}}{\fleche} \omega_{A,\sigma\tau}.$
We define \[\widetilde{\pi^\star} : p_2^*\mathcal T^\times_{an} \fleche p_1^*\mathcal T^\times_{an}\] with $\widetilde{\pi^\star}  = \widetilde{\pi^\star}_\tau \oplus\widetilde{\pi^\star}_{\sigma\tau}$ by $\widetilde{\pi^\star}_{\tau}={\pi^\star}_{\tau}$ and $\widetilde{\pi^\star}_{\sigma\tau}$ sends a basis $(e_1,e_2)$ of $\omega_{A/L,\sigma\tau}$ to $(\frac{1}{p}\pi^\star e_1,\pi^\star e_2)$. This is an isomorphism.

We will need to slightly change the objects as in \cite{AIP}, Proposition 6.2.2.2.
\begin{defin}
Denote by $w_0 = m - \frac{p^{2m}-1}{p^2-1}v$ and for $\underline w = (w_{1,1},w_{2,1},w_{2,2},w_\sigma)$, define $\mathcal{IW}^{0,+}_{\underline w}$ as  the subspace of $\mathcal T^\times/U_{an}$ (over $X_1(p^n)(v)$) of points for a finite extention $L$ of $K$ consisting of $(A,\psi_N,\Fil_{\sigma\tau},P_1^{\sigma\tau},P_2^{\sigma\tau},P^{\tau})$ such that there exists a polarised trivialisation $\psi$ of $K_m^D$ satisfying,
\begin{enumerate}
\item $\Fil_{\sigma\tau}$ is $(w_0,\psi)$-compatible with $H_{\tau}^m$,
\item $P_1^{\sigma\tau} = a_{1,1}\HT_{\sigma\tau,w_0}(\psi(e_1)) + a_{2,1}\HT_{\sigma\tau,w_0}(\psi_{e_2}) \pmod{p^{w_0}\mathcal F_{\sigma\tau}}$,
\item $P_2^{\sigma\tau} = a_{2,2}\HT_{\sigma\tau,w_0}(\psi(e_2)) \pmod{p^{w_0}\mathcal F_{\sigma\tau} + \Fil_{\sigma\tau}}$,
\item $P_\tau = t\HT_{\tau,w_0}(\psi(e_2)) \pmod{p^{w_0}\mathcal F_{\tau}}$,
\end{enumerate}
where $a_{1,1} \in B(1,p^{w_{1,1}}), a_{2,2} \in B(1,p^{w_{2,2}}), t \in B(1,p^{w_{\sigma}}),a_{2,1} \in B(0,p^{w_{2,1}})$.
\end{defin}

Let $\underline w$ as before, with $w_{2,1},w_{2,2} < m-1 - \frac{p^{2m}-1}{p-1}v$. Denote $\underline w' = (w_{1,1},w_{2,1}+1,w_{2,2},w_\sigma)$.

\begin{propen}
The quotient map,
\[\widetilde{\pi^\star}^{-1} : p_1^*\mathcal T^\times_{an}/U_{an} \fleche p_2^*\mathcal T^\times_{an}/U_{an}\]
sends $p_1^*\mathcal{IW}_{\underline w}^{0,+}$ to $p_2^*\mathcal{IW}_{\underline w'}^{0,+}$ (i.e. improves the analycity radius)
\end{propen}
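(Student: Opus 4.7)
The plan is to adapt the analogous computation from \cite{AIP} Proposition 6.2.2.2 to the inert Picard setting. All arguments are made \'etale locally over $C(p^n)$ so that torsors trivialize and bases may be chosen. The first step is to transfer the polarized trivialization: let $(A, \psi_N, \Fil_{\sigma\tau}, P_1^{\sigma\tau}, P_2^{\sigma\tau}, P^\tau)$ be a point of $p_1^*\mathcal{IW}^{0,+}_{\underline w}$ witnessed by some $\psi$ of $K_m^D(A)$ with coordinates $(a_{1,1}, a_{2,1}, a_{2,2}, t)$. The hypotheses $L[p] \oplus H^1_\tau(A) = A[p]$ and $pL \oplus H^1_{\sigma\tau}(A) = A[p]$ in the definition of $C$, combined with the preceding lemma placing $A/L$ in $\mathfrak X(v)$, ensure that $\pi$ sends the canonical filtration of $A$ onto that of $A/L$. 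Dualizing yields an induced polarized trivialization $\psi'$ of $K_m^D(A/L)$, which is the candidate witness used for the image point in $p_2^*\mathcal{IW}^{0,+}_{\underline w'}$.

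Next I would compute the matrix of $\widetilde{\pi^\star}_{\sigma\tau}$ in Hodge-Tate bases. Choose lifts $(e_1, e_2)$ and $(f_1, f_2)$ of $\HT_{\sigma\tau, w_0} \circ \psi'$ and $\HT_{\sigma\tau, w_0} \circ \psi$ to $\mathcal F_{A/L, \sigma\tau}$ and $\mathcal F_{A, \sigma\tau}$, with $e_1, f_1$ in the canonical direction. By the commutativity $\pi^\star \circ \HT_{A/L} = \HT_A \circ \pi^D$ and the preceding step, the matrix of $\pi^\star$ in these bases is congruent modulo $p^{w_0}$ to one in which the canonical/canonical entry is $p$ times a unit (because $\pi^D$ is divisible by $p$ on the canonical $\sigma\tau$-component of $K_m^D$), the non-canonical/non-canonical entry is a unit, and the off-diagonal entries lie in $p\mathcal O$. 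Dividing the first column by $p$ as in the definition of $\widetilde{\pi^\star}$, one finds that $\widetilde{\pi^\star}_{\sigma\tau}$ is integral and its matrix is $\mathrm{Id} + p \cdot M'$ for some integral $M'$, modulo $p^{w_0}$. A similar but easier argument shows $\widetilde{\pi^\star}_\tau$ is congruent to $1$ modulo $p^{w_0}$.

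Finally I would translate this into the coordinate transformation $(a_{1,1}, a_{2,1}, a_{2,2}, t) \mapsto (a_{1,1}', a_{2,1}', a_{2,2}', t')$. Inverting the matrix of the previous step, the diagonal entries of $\widetilde{\pi^\star}_{\sigma\tau}^{-1}$ are congruent to $1$ modulo $p^{w_0}$ while the off-diagonal entries are divisible by $p$. Because $w_0$ exceeds each $w_\bullet$ and $a_{1,1} \in \mathcal O^\times$, the scalars $a_{1,1}', a_{2,2}', t'$ remain in $B(1, p^{w_{1,1}})$, $B(1, p^{w_{2,2}})$, $B(1, p^{w_\sigma})$ respectively, the subspace $\Fil_{\sigma\tau}$ is $(w_0, \psi')$-compatible with $H^m_\tau(A/L)$, and the off-diagonal coordinate transforms as
\[ a_{2,1}' \equiv a_{2,1} - p \gamma \cdot a_{1,1} \pmod{p^{w_0}} \]
for some integral $\gamma$, hence lies in $B(0, p^{w_{2,1}+1})$ using the bound $w_{2,1}+1 \leq w_0$ guaranteed by $w_{2,1} < m-1-\frac{p^{2m}-1}{p-1}v$.

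The main obstacle is the middle step: establishing the precise shape of $\pi^\star$ on Hodge-Tate bases, in particular showing that the factor of $p$ appears not only in the canonical/canonical entry but also in all off-diagonal entries, which is what makes the normalization $\widetilde{\pi^\star}$ produce both an integral isomorphism \emph{and} the claimed improvement in the $(2,1)$ direction. This rests on a careful combined use of Propositions~\ref{prop62}--\ref{prop63} for the Hodge-Tate approximations together with the compatibility of the canonical filtrations under $\pi$ for the divisibility properties.
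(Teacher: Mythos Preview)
Your argument has a genuine gap in the final step. The transformation formula you arrive at,
\[ a_{2,1}' \equiv a_{2,1} - p\gamma\, a_{1,1} \pmod{p^{w_0}}, \]
does \emph{not} yield $a_{2,1}' \in B(0,p^{w_{2,1}+1})$: from $v(a_{2,1}) \geq w_{2,1}$ and $v(p\gamma a_{1,1}) \geq 1$ you only obtain $v(a_{2,1}') \geq \min(w_{2,1},1)$, which is in general strictly smaller than $w_{2,1}+1$. Even granting your (acknowledged as unproven) claim that the matrix of $\widetilde{\pi^\star}_{\sigma\tau}$ is $\Id + pM'$ modulo $p^{w_0}$, this would only show that $\widetilde{\pi^\star}^{-1}$ \emph{preserves} $\mathcal{IW}^{0,+}_{\underline w}$, not that it improves the radius in the $(2,1)$-direction. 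The error is structural: you are treating the coordinate change as left-multiplication by a matrix close to the identity, but the normalisation in the definition of $\widetilde{\pi^\star}$ introduces a right-multiplication as well, and it is the resulting \emph{conjugation} that produces the extra factor of $p$ on the off-diagonal entry.

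The paper's argument makes this transparent by a sharper choice of bases. One chooses trivialisations $(e_1,e_2)$ of $K_m^D(A)$ and $(e_1',e_2')$ of $K_m^D(A/L)$ so that $\pi^D$ is \emph{exactly} $\mathrm{diag}(p,1)$; in particular $p\,\HT_{\sigma\tau}(e_1) = \pi^\star\HT'_{\sigma\tau}(e_1')$ and $\HT_{\sigma\tau}(e_2) = \pi^\star\HT'_{\sigma\tau}(e_2')$. Writing $w_1 = \frac{1}{p}\pi^\star w_1'$ (the normalisation) and multiplying through by $p$ then gives directly
\[ w_1' \equiv a_{1,1}\HT'_{\sigma\tau}(e_1') + p\,a_{2,1}\HT'_{\sigma\tau}(e_2') \pmod{p^{w_0}\mathcal F'}, \]
using $p\mathcal F \subset \pi^\star\mathcal F'$. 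So the actual transformation is $(a_{1,1},a_{2,1}) \mapsto (a_{1,1},\,p\,a_{2,1})$, from which $a_{2,1}' \in B(0,p^{w_{2,1}+1})$ is immediate. The bound $w_{2,2} < w_0 - 1$ is used only to absorb the loss of one power of $p$ in the error term for $w_2'$, not for the improvement itself.
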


\begin{proof}[Proof]
Let $x = (A,\psi_N,L)$ be a point of $C$. Let $(e_1,e_2)$ be a basis of $K_m^D$ ($p^m\mathcal O/p^{2m}\mathcal O e_1 \oplus \mathcal O/p^{2m}\mathcal Oe_2 = K_n^D$) and denote by $(e_1',e_2')$ a similar basis for $A/L$ such that if $x_1,x_2$ and $x_1',x_2'$ denote the dual basis then $\pi^D : K_m^{',D} \fleche K_m^D$ in these basis is given by
\[
\left(
\begin{array}{cc}
 p  &   \\
  &  1  
\end{array}
\right).
\]
Let $(\Fil',w') \in p_2^*\mathcal T_{an}^\times/U_{an}$. As $\pi^D$ is a generic isomorphism on the multiplicative part, it in enough to check the proposition on $\mathcal F_{\sigma\tau}$.
Suppose $\widetilde{\pi}^*(\Fil',w') = (\Fil,w) \in p_1^*\mathcal{IW}^{0,+}_{\underline w}$, which means (on the $\sigma\tau$-factor),
\[ \frac{1}{p}\pi^*w_1' \in a_{1,1}\HT_{\sigma\tau,w}(e_1) + a_{2,1}\HT_{\sigma\tau,w}(e_2) + p^{w_0}\mathcal F_{\sigma\tau},\]
\[ \pi^*w_2' \in a_{2,2}\HT_{\sigma\tau,w}(e_2) + p^{w_0}\mathcal F_{\sigma\tau} + \Fil^1.\]
But then,
\[ \pi^*w_1' \in pa_{1,1}\HT_{\sigma\tau,w}(e_1) + pa_{2,1}\HT_{\sigma\tau,w}(e_2) + p^{w_0+1}\mathcal F_{\sigma\tau} = a_{1,1}\HT_{\sigma\tau,w}(\pi^De_1') + pa_{2,1}\HT_{\sigma\tau,w}(\pi^De_2') + p^{w_0+1}\mathcal F_{\sigma\tau},\]
and thus, as $p\mathcal F \subset\pi^*\mathcal F'$,
\[w_1' \in a_{1,1}\HT_{w,\sigma\tau}(e_1') + pa_{2,2}\HT_{w,\sigma\tau}(e_2') + p^{w_0}\mathcal F',\]
\[w_2' \in a_{2,2}\HT_{w,\sigma\tau}(e_2') + \Fil' + p^{w_0-1}\mathcal F'.\]
As $w_{2,2} \leq w_0 - 1$ we get the result.
\end{proof}

Suppose $v < \frac{1}{2(p^2+1)}$, and define ${\omega}^{\kappa\dag}_{\underline w'} = g_*\mathcal{IW}_{\underline w'}^{0,+}[\kappa]$. Suppose $w < m-1-\frac{p^{2m}-1}{p^2-1}v$.
We can then look at the following composition,
\[ H^0(\mathcal Y(v/p^2),{\omega}^{\kappa\dag}_{\underline w'}) \overset{p_2^\star}{\fleche} H^0(C,p_2^*\omega^{\kappa\dag}_w) \overset{(\widetilde{\pi}^\star)^{-1}}{\fleche} H^0(C,p_1^*\omega^{\kappa\dag}_w) \overset{\frac{1}{p^3}\Tr_{p_1}}{\fleche} H^0(\mathcal Y(v),\omega^{\kappa\dag}_w),\]
where $w'_{2,1} = w +1$  (remark that if $\kappa$ is $w+1$-analytic, there is an isomorphism between $g_*\mathcal{IW}_{\underline w'}^{0,+}[\kappa]$ and 
$\omega^{\kappa\dag}_{w_{2,1}}$).

\begin{remaen}
The normalisation of the Trace map is the same as in \cite{Bij}, the renormalisation of $\pi^\star$ giving a factor $p^{-k_2}$ on algebraic weights.
\end{remaen}

\begin{definen}
Suppose $v < \frac{1}{2(p^2+1)}$.
The operator $U_p$ is defined as the previous composition on bounded functions precomposed by $H^0(\mathcal Y(v),\omega_w^{\kappa\dag})\hookrightarrow H^0(\mathcal Y(v/p^2),\omega_{\underline w'}^{\kappa\dag})$. In particular it is compact as 
$H^0(\mathcal Y(v),\omega_w^{\kappa\dag})\hookrightarrow H^0(\mathcal Y(v/p^2),\omega_{\underline w'}^{\kappa\dag})$ is.
\end{definen}

\begin{propen}
Let $L$ be a finite extension of $K$, and $x,y \in X(v)(L)$ such that $y \in p_2(p_1^{-1}(x))$, and let $\kappa$ be a $\omega$-analytic character.
Then $U_p$ is identified with $\delta$, i.e. there is a commutative diagram,
\begin{center}
\begin{tikzpicture}[description/.style={fill=white,inner sep=2pt}] 
\matrix (m) [matrix of math nodes, row sep=3em, column sep=2.5em, text height=1.5ex, text depth=0.25ex] at (0,0)
{ 
(\omega_w^{\kappa'\dag})_y & &(\omega_w^{\kappa'\dag})_x\\
V_{0,\kappa',L}^{w-an} & & V_{0,\kappa',L}^{w-an} \\
 };
\path[->,font=\scriptsize] 
(m-1-1) edge node[auto] {$(\widetilde{\pi}^\star)^{-1}$} (m-1-3)
(m-2-1) edge node[auto] {$\delta$} (m-2-3)
(m-1-1) edge node[auto] {$$} (m-2-1)
(m-1-3) edge node[auto] {$$} (m-2-3)
;
\end{tikzpicture}
\end{center}
\end{propen}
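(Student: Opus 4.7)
The plan is to reduce the verification to the explicit coordinate computation from the preceding proposition, combined with the local identification $\omega_w^{\kappa'\dag}\simeq V_{0,\kappa',L}^{w-an}$ obtained in the last proposition of the previous subsection.

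Work étale-locally above a point $c\in C$ with $p_1(c)=x$ and $p_2(c)=y$. Choose compatible bases $(e_1,e_2)$ of $K_m^D(A)$ and $(e_1',e_2')$ of $K_m^D(A/L)$ with $\pi^D(e_1')=pe_1$ and $\pi^D(e_2')=e_2$, and take their Hodge--Tate images as canonical trivialisations of $\mathcal F_{\sigma\tau}$ (and, via the $e_2$-component, of $\mathcal F_\tau$). In these coordinates, the fiber of $\mathcal{IW}_w^{0,+}$ above a point is parametrised, as in the preceding proposition, by $(a_{1,1},a_{2,1},a_{2,2},t)$ with $a_{1,1},a_{2,2},t\in 1+p^wB(0,1)$ and $a_{2,1}\in B(0,p^w)$, and the identification $(\omega_w^{\kappa'\dag})_{\mathrm{stalk}}\simeq V_{0,\kappa',L}^{w-an}$ sends a $\kappa'$-equivariant function to its restriction to the lower-unipotent coordinate $z:=a_{2,1}$.

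By the preceding proposition, applied in exactly this frame, the isomorphism $\widetilde{\pi}^\star:p_2^*\mathcal T^\times_{an}\to p_1^*\mathcal T^\times_{an}$ sends the $A/L$-point of parameters $(a_{1,1},pa_{2,1},a_{2,2},t)$ to the $A$-point of parameters $(a_{1,1},a_{2,1},a_{2,2},t)$. Indeed, the diagonal $\sigma\tau$-parameters $a_{1,1},a_{2,2}$ and the $\tau$-parameter $t$ are preserved, since $\widetilde{\pi}^\star_\tau=\pi^\star_\tau$ is an isomorphism on Hodge--Tate images and since the Hodge--Tate covariance $\pi^\star\circ\HT=\HT\circ\pi^D$ together with the renormalisation $\widetilde{\pi}^\star_{\sigma\tau}=\pi^\star_{\sigma\tau}\circ\mathrm{diag}(p^{-1},1)$ make the diagonal action trivial; whereas the off-diagonal coordinate is scaled by $p$ on the $A/L$-side. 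Equivalently, $(\widetilde{\pi}^\star)^{-1}$ sends the $A$-side coordinate $z$ to the $A/L$-side coordinate $pz$, and pullback along $(\widetilde{\pi}^\star)^{-1}$ of a function $g\in V_{0,\kappa',L}^{w-an}$ on the $A/L$-side yields the function $z\mapsto g(pz)$ on the $A$-side.

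By Section~\ref{sectBGG}, the action of $\delta=\mathrm{diag}(p^{-1},1,1)$ on $V_{0,\kappa',L}^{w-an}\simeq\mathcal F^{an}(p^w\mathcal O,L)$ is precisely the substitution $f(z)\mapsto f(pz)$, obtained by conjugating the lower-unipotent by $\delta$, which sends its off-diagonal entry $z$ to $pz$. This matches the computation above and yields the commutativity of the diagram. The only subtlety is bookkeeping the renormalisation $\frac{1}{p}$ in the definition of $\widetilde{\pi}^\star_{\sigma\tau}$: on Hodge--Tate frames the unrenormalised $\pi^\star_{\sigma\tau}$ would act by $\mathrm{diag}(p,1)$, and it is precisely this renormalisation that converts that action into the $\delta$-scaling on the unipotent coordinate.
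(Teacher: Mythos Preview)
Your proof is correct and follows precisely the approach the paper leaves implicit: the proposition is stated without proof in the paper, being an immediate corollary of the coordinate computation in the preceding proposition together with the \'etale-local identification $\omega_w^{\kappa'\dag}\simeq V_{0,\kappa',L}^{w-an}$. Your reconstruction of that argument, tracking that $(\widetilde{\pi}^\star)^{-1}$ scales the lower-unipotent coordinate $a_{2,1}$ by $p$ and that this is exactly the action of $\delta$ on $V_{0,\kappa',L}^{w-an}$, is the intended one.
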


We can also define an operator $S_p$, by considering the two maps,
\[ p_1,p_2 : X_{Iw(p)} \fleche X_{Iw(p)}\]
defined by $p_1 = \id$ and $p_2(A,\Fil(A[p])) = (A/A[p],p^{-1}\Fil(A[p])/A[p])$.
The map $p_2$ correspond to multiplication by $p$ on $\omega$ and the universal map $\pi : A \fleche A/A[p]$ over $X_{Iw(p)}$ induces a map,
\[\pi^* = p_2^*\mathcal T_{an}^\times \fleche \mathcal T_{an}^\times.\]
Define $\widetilde{\pi^*} = \frac{1}{p}\pi^*$ and consider the operator,
\[ H^0(X_{Iw(p)},\omega_w^{\kappa\dag}) \overset{p_2^*}{\fleche} H^0(X_{Iw(p)},p_2^*\omega_w^{\kappa\dag}) \overset{\widetilde{\pi^*}^{-1}}{\fleche} H^0(X_{Iw(p)},\omega_w^{\kappa\dag}).\]
The map $\pi^*$ preserves the Hasse invariant (as $A[p^\infty]/A[p] \simeq A[p^\infty]$) and sends the canonical filtration if it exists to itself (if $v < \frac{1}{4p}$).
In concrete terms, on the classical sheaf $\omega^{\kappa'} \subset \omega^{\kappa\dag}$, if we write $\kappa' = (k_1,k_2,k_3) \in \ZZ^3$ (and thus $\kappa = (-k_2,-k_1,-k_3)$)
the previous composition corresponds to a normalisation by $p^{-k_1 - k_2 - k_3}$ of the map that sends $f(A,dz_i) \mapsto f(A,pdz_i)$.

\begin{definen}
Define also the Hecke operator $S_p$ to be the previous composition. $S_p$ is invertible as $p$ is invertible in $\mathcal T^\times$.

We define the Atkin-Lehner algebra at $p$ as $\mathcal A(p) = \ZZ[1/p][U_p,S_p^{\pm 1}]$. It acts on the space of (classical) modular forms too.
\end{definen}

Classically it is also possible to define geometrically operators $U_p$ and $S_p$ at $p$ on classical modular forms of Iwahori level at $p$, and they obviously coïncide with ours through the inclusion of classical forms to overconvergent ones.
It is actually proven in \cite{Bijmu} that these Hecke operators preserve a strict neighborhood of the canonical-$\mu$-ordinary locus of $X_I$, given in terms of the degree.

\begin{remaen}
Because of the normalisation, the definition of the Hecke operator slightly differs with the one by convolution on automorphic forms. The reason is that the Hodge-Tate or automorphic weights do not vary continuously in families. This is already the case in other constructions. Let us be more specific. Let $f \in H^0(X_{Iw,K},\omega^{\kappa})$ a classical automorphic form of weight $\kappa = (k_1,k_2,k_3)$ and Iwahori level at $p$. To $f$, as explained in proposition \ref{propautheck} is associated a (non-scalar) automorphic form 
$\Phi_f$ (and a scalar one $\varphi_f$ whose Hecke eigenvalues are the same as the one of $\Phi_f$).
The Hecke action on $f$ and ${\Phi_f}$ is equivariant for the classical (i.e. non renormalised) action at $p$, more precisely at $p$ if we denote $S_p$ and $U_p$ 
the previous (normalised operators) the classical one are $S_p^{class} = p^{|k|}S_p$ and $U_p^{class} = p^{k_2}U_p$. The operators $S_p^{class}$ and $U_p^{class}$ 
corresponds to the two matrices,
\[
\left(
\begin{array}{ccc}
p  &   &   \\
  & p  &   \\
  &   &   p
\end{array}
\right) \quad \text{and} \quad \left(
\begin{array}{ccc}
p^2  &   &   \\
  & p  &   \\
  &   &   1
\end{array}
\right) \in GU(2,1)(\QQ_p).
\]
Their similitude factor is in both cases $p^2 = N(p)$.
 Let $f \in H^0(X_{Iw,K},\omega^\kappa)$ be a classical eigenform that is proper for the Hecke operator $U_p$ and $S_p$, of respective eigenvalues $\mu,\lambda$, then $\varphi_f$ has eigenvalues for the corresponding (non-normalised) Hecke operators at $p$, $p^{k_2} \mu$ and $p^{k_1+k_2+k_3}\lambda$.

\end{remaen}

\subsection{Remarks on the operators on the split case}
\label{Heckenormsplit}

When $p$ splits in $E$, the eigenvariety for $U(2,1)_{E}$ is a particular case of Brasca's construction (see \cite{Bra}). Unfortunately as noted by Brasca, there is a slight issue with 
the normalisation of the Hecke operators at $p$ constructed in section 4.2.2. of \cite{Bra}, where there should be a normalisation in families that depends on the weights, as in \cite{Bijmu} section 
2.3.1 for classical sheaves (without this normalisation Hecke operators do not vary in family). More explicitely on the split Picard case, we have 4 Hecke operators at $p$ 
(Bijakowski only consider two of them, which are relevant for classicity), denoted $U_i, i = 0,\dots,3$, following \cite{Bijmu}, section 2.3.1 (allowing $i =0$ and $i = 3$).
 The normalisations are the following on classical weights,
 \begin{eqnarray*}
  U_0 &= &\frac{1}{p^{k_3}}{U_0^c} \\
 U_1 &= &{U_1^c} \\
  U_2 &= &\frac{1}{p^{k_2}}{U_2^c} \\
 U_3 &= &\frac{1}{p^{k_1+k_2}}{U_3^c} \\
 \end{eqnarray*}
where $U_i^c$ denotes the classical Hecke operators, and we choose a splitting of the universal $p$-divisible group $A[p^\infty] = A[v^\infty] \times A[\overline{v}^\infty]$ and $A[\overline{v}^\infty] = A[{v}^\infty]^D$, where $v$ coincide with $\tau_\infty$ through the fixed isomorphism $\CC \simeq \overline{\QQ_p}$. Thus  $G=A[v^\infty]$ has height 3 and dimension 1, and modular forms of weight $\kappa = (k_1 \geq k_2,k_3) \in \ZZ^3_{dom}$ are sections of,
\[ \Sym^{k_1-k_2}\omega_{G^D} \otimes (\det \omega_{G^D})^{k_2} \otimes \omega_{G}^{\otimes k_3}.\]

\subsection{Classicity results}
In this section, we will prove a classicity result. As in \cite{AIP}, this is realised in two steps. First show that a section in $M_\kappa^\dag$ is actually a section of $\omega^{\kappa'}$ over $X(v)$ (this is called a result of classicity at the level of sheaves), then we show that this section extends to all $X_{Iw}$, but this is done in \cite{Bijmu}.

If $n$ is big enough, there is an action of $I_{p^n} \subset \GL_2 \times \GL_1$ on $\mathcal{IW}_w^{0,+}$ which can be derived as an action of $U(\mathfrak g)$ on $\mathcal O_{\mathcal{IW}^{0,+}_w}$ denoted by $\star$. As in section \ref{sectBGG}, let $\kappa = (k_1,k_2,r)$ be a classical weight, and we denote by $d_\kappa$ the map,
\[ f \in \mathcal O_{\mathcal{IW}^{0,+}_w} \mapsto X^{k_1-k_2+1} \star f,\]
which sends $\omega_w^{\kappa\dag}$ to $\omega_w^{(k_2-1,k_1+1,r)\dag}$.

\begin{propen}
Let $\kappa = (k_1,k_2,r)$ be a classical weight. There is an exact sequence of sheaves on $X(v)$,
\[ 0 \fleche \omega^{\kappa'} \fleche \omega_w^{\kappa\dag} \overset{d_\kappa}{\fleche} \omega_w^{(k_2-1,k_1+1,r) \dag}.\]
\end{propen}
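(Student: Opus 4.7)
The plan is to reduce the statement to the local, Lie-algebraic calculation already carried out in Section \ref{sectBGG}, transported to the coherent sheaves $\omega_w^{\kappa\dag}$ via the étale-local identification with $V_{0,\kappa}^{w-an}$ established in the previous subsection.

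First, I would check that $d_\kappa$ is well-defined as a morphism of sheaves $\omega_w^{\kappa\dag}\fleche \omega_w^{(k_2-1,k_1+1,r)\dag}$. The operator $X\star$ is derived from the action of $I_{p^n}\subset \GL_2\times\GL_1$ on $\mathcal{IW}_w^{0,+}$, so $X^{k_1-k_2+1}\star$ is a well-defined differential operator on $\mathcal O_{\mathcal{IW}_w^{0,+}}$. Transposing verbatim the torus-conjugation computation in the proof of the last proposition of Section \ref{sectBGG} shows that $X^{k_1-k_2+1}\star f$ transforms under the weight $(k_2-1,k_1+1,r)$ of $B(\ZZ_p)\mathfrak B_w$, hence indeed defines a section of $\omega_w^{(k_2-1,k_1+1,r)\dag}$.

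Second, the embedding $\omega^{\kappa'}\hookrightarrow \omega_w^{\kappa\dag}$ is provided by the restriction map of the previous proposition, applied with $\kappa'$ in place of $\kappa$ (note that the involution $\kappa\mapsto \kappa'$ preserves dominance and is idempotent on classical weights, so $(\kappa')'=\kappa$). Étale-locally this embedding is the composition $V_\kappa\hookrightarrow V_\kappa^{w-an}\overset{res_0}{\fleche} V_{0,\kappa}^{w-an}$, and under the identification $V_\kappa^{w-an}\simeq \mathcal F^{w-an}(p\mathcal O,L)$ by restriction to the cell $N^0$, the subspace $V_\kappa$ corresponds to polynomials in the coordinate $z$ of degree at most $k_1-k_2$, while $X\star$ corresponds to the derivation $d/dz$. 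Hence $d_\kappa$ kills the image of $\omega^{\kappa'}$.

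For exactness, the issue is to show that the kernel of $d_\kappa$ is no larger than $\omega^{\kappa'}$. This is again checked étale-locally: the kernel of $(d/dz)^{k_1-k_2+1}$ on germs of locally analytic functions at $0$ (i.e.\ on $V_{0,\kappa}^{w-an}$) is exactly the space of polynomials of degree at most $k_1-k_2$, which is precisely $V_\kappa$. The main obstacle (and the only nontrivial point) is to make rigorous the compatibility between the globally defined operator $X\star$ on $\mathcal O_{\mathcal{IW}_w^{0,+}}$ and the concrete derivation $(d/dz)^{k_1-k_2+1}$ on $\mathcal F^{w-an}(p\mathcal O,L)$ under the local trivialization. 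This should follow from the fact that the Hodge--Tate map identifies the affine cell $N^0 \subset I$ with $p\mathcal O$ and that $X$ is the infinitesimal generator of translation along this coordinate, in complete analogy with the computation performed for $V_{\kappa,L}^{an}$ in Section \ref{sectBGG}. Once this compatibility is verified, the proposition is a direct translation of the exact sequence $0\fleche V_{\kappa,L}\fleche V_{\kappa,L}^{an}\overset{d_\kappa}{\fleche} V_{(k_2-1,k_1+1,r),L}^{an}$ into the language of coherent sheaves.
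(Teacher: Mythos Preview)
Your proposal is correct and follows essentially the same approach as the paper: reduce \'etale-locally to the exact sequence $0\to V_{\kappa}\to V_{0,\kappa}^{w-an}\to V_{0,(k_2-1,k_1+1,r)}^{w-an}$ of Section~\ref{sectBGG}. The paper's own proof is even terser---it simply refers to \cite{AIP} Proposition 7.2.1 and observes that, because $V_{0,\kappa}^{w-an}$ is identified with analytic functions on a \emph{single} ball, no extra hypothesis on $w$ is needed (the kernel of $(d/dz)^{k_1-k_2+1}$ on convergent power series on one disc is exactly polynomials of degree $\leq k_1-k_2$; this is where Jones's result \cite{Jones} is invoked, though in this one-variable situation it is elementary).
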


\begin{proof}[Proof]
This is exactly as in \cite{AIP} Proposition 7.2.1 (we don't need assumption on $w$ as $V_{\kappa,L}^{0,w-an}$ is isomorphic to analytic functions on 1 ball only and Jones theorem applies, \cite{Jones}).
\end{proof}

\begin{propen}
\label{proclasssheaf}
\label{thrclassfaisc}
On $\omega_w^{\kappa\dag}$ we have the following commutativity,
\[ U_p\circ d_\kappa = p^{-k_1+k_2 -1}d_\kappa \circ U_p.\]
In particular, if  $H^0(X(v),\omega_w^{\kappa\dag})^{<k_1-k_2+1}$ denotes the union of generalised eigenspaces for eigenvalues of slope smaller than $k_1-k_2+1$, and $f \in H^0(X(v),\omega_w^{\kappa\dag})^{<k_1-k_2+1}$, then $f \in H^0(X(v),\omega^{\kappa'})$.
\end{propen}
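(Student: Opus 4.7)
My plan is in two steps, mirroring the structure of the statement.

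First I would prove the commutation relation by a purely local computation on stalks. By the proposition identifying $U_p$ with the action of $\delta = \mathrm{diag}(p^{-1},1,1)$ on the stalk $(\omega_w^{\kappa\dag})_x \simeq V_{0,\kappa,L}^{w-an}$, it suffices to check that, as operators on $V_{0,\kappa,L}^{w-an}$ (and its enlargements), one has $\delta \circ X = p^{-1}\, X \circ \delta$. From the definition
\[
(Xf)(g) = \left(\frac{d}{dt} f\!\left(g\begin{pmatrix}1 & & \\ -t & 1 & \\ & & 1\end{pmatrix}\right)\right)_{t=0}
\]
together with $\delta^{-1}E_{21}\delta = p^{-1}E_{21}$ and the change of variable $t\mapsto pt$, one gets $\delta\cdot(Xf) = p^{-1} X(\delta\cdot f)$. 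Iterating $(k_1-k_2+1)$ times gives $\delta \circ X^{k_1-k_2+1} = p^{-(k_1-k_2+1)}\, X^{k_1-k_2+1}\circ \delta$, i.e. $U_p\circ d_\kappa = p^{-k_1+k_2-1}\, d_\kappa\circ U_p$ on stalks, hence globally on $\omega_w^{\kappa\dag}$.

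Next I would deduce the classicity at the level of sheaves. Let $f \in H^0(X(v),\omega_w^{\kappa\dag})^{<k_1-k_2+1}$, belonging to the generalised eigenspace for some $\lambda\in\overline{\mathbb{Q}_p}$ with $v(\lambda) < k_1-k_2+1$. The commutation relation proved above gives
\[
(U_p - p^{-(k_1-k_2+1)}\lambda)^N\, d_\kappa f \;=\; p^{-N(k_1-k_2+1)}\, d_\kappa\bigl((U_p-\lambda)^N f\bigr) \;=\; 0,
\]
so $d_\kappa f$ lies in the generalised $U_p$-eigenspace of $H^0(X(v),\omega_w^{(k_2-1,k_1+1,r)\dag})$ for the eigenvalue $p^{-(k_1-k_2+1)}\lambda$, whose slope $v(\lambda)-(k_1-k_2+1)$ is strictly negative.

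To conclude it is enough to show that $U_p$ has only non-negative slopes on $H^0(X(v),\omega_w^{(k_2-1,k_1+1,r)\dag})$. This is where the normalisation of $U_p$ enters: the factor $\frac{1}{p^3}\mathrm{Tr}_{p_1}$, combined with the renormalised $\widetilde{\pi^\star}_{\sigma\tau}$, was set up precisely so that $U_p$ sends integral sections $H^0(\mathfrak X(v),\mathfrak w_w^{(k_2-1,k_1+1,r)\dag})$ to integral sections, making $U_p$ a compact operator of operator norm $\le 1$ on the associated orthonormalisable $\QQ_p$-Banach module. In particular, every nonzero eigenvalue of $U_p$ has slope $\ge 0$, forcing $d_\kappa f = 0$. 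By the exactness of $0\to\omega^{\kappa'}\to\omega_w^{\kappa\dag}\xrightarrow{d_\kappa}\omega_w^{(k_2-1,k_1+1,r)\dag}$ proved just above, this shows $f\in H^0(X(v),\omega^{\kappa'})$, as claimed. The only subtle point in the argument is the integrality / operator-norm control for $U_p$ on the target; the first commutation part is then elementary and the rest is a formal spectral-theory conclusion.
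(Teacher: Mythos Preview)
Your proof is correct and follows essentially the same approach as the paper: reduce the commutation relation to the local computation $\delta\circ X = p^{-1}X\circ\delta$ on stalks, then use the exact sequence together with the fact that $U_p$ cannot have eigenvalues of negative slope. One small difference worth noting: you justify the slope bound on $U_p$ by asserting that the normalisation makes $U_p$ preserve the integral sheaf $\mathfrak w_w^{(k_2-1,k_1+1,r)\dag}$, whereas the paper argues more directly that, \'etale-locally, $U_p$ is identified with $\delta$ acting by $f(z)\mapsto f(pz)$, which visibly has supremum norm $\le 1$. The paper's route is slightly cleaner here since it avoids having to check that the $\frac{1}{p^3}\Tr_{p_1}$ normalisation genuinely lands in the formal model (a point the paper does not verify explicitly); your conclusion is of course the same, but if you want to keep your phrasing you should either supply that integrality check or fall back on the local $\delta$ description as the paper does.
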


\begin{proof}[Proof]
We can work etale-locally in which case by the previous results on $\omega_w^{\kappa\dag}$ locally the first part reduces to section \ref{sectBGG}.
Now, if $f$ is a generalised eigenvector for $U_p$ of eigenvalue $\lambda$ of slope (strictly) smaller than $k_1-k_2+1$, then $d_\kappa f$ is generalised eigenvector of slope 
 $\lambda p^{-k_1+k_2-1}$ 
which is of negative valuation, but this is impossible as $U_p$ (and etale-locally $\delta$) if of norm strictly less than 1. Thus $d_\kappa f = 0$ and $f$ is a section 
of $\omega^\kappa$.
\end{proof}

The previous result is sometimes referred to as a classicity at the level of sheaves. Moreover, we have the following classicity result of S.Bijakowski, \cite{Bijmu}

\begin{theoren}[Bijakowski]
\label{thrclassbij}
\label{thrbij}
Let $f$ be an overconvergent section of the sheaf $\omega^\kappa$, $\kappa = (k_1 \geq k_2,k_3)$, which is proper for $U_p$ of eigenvalue $\alpha$. Then if,
\[ 3 + v(\alpha) < k_2 + k_3,\]
then $f$ is a classical form of weight $\kappa$ and level $K^pI$.
\end{theoren}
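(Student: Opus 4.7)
The strategy is analytic continuation in the style of Buzzard--Kassaei, adapted to the Picard surface with inert $p$. The form $f$ is given as a section of $\omega^\kappa$ on the strict neighborhood $X(v)$ of the $\mu$-ordinary locus, and the goal is to extend it to a section on all of $X_{Iw(p)}$; classicity then follows from rigid GAGA on the toroidal compactification (which for Picard type is smooth and projective, as noted earlier). The main mechanism is the eigenvalue relation $f = \alpha^{-n} U_p^n f$, which one exploits to \emph{define} $f(y)$ for $y \in X_{Iw(p)} \setminus X(v)$ as a limit of the right-hand side.

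Concretely, $(U_p f)(y)$ is a sum over the lattices $L \subset A_y[p^2]$ in the fiber $p_1^{-1}(y)$; each $L$ produces an isogeny $A_y \to A_y/L$, and $f$ is evaluated at $A_y/L$ after the twist by $(\widetilde{\pi^\star})^{-1}$. Iterating, $U_p^n f(y)$ becomes a sum over chains of length $n$ of such lattices. I would stratify these chains according to whether the chosen lattice at each step makes the quotient \emph{more} $\mu$-ordinary (``good'') or leaves it at the same Hasse stratum (``bad''). The lemmas preceding the definition of $U_p$ encode the key quantitative control on good steps: the Hasse invariant $\ha_\tau$ drops by a definite factor (up to $p^2$), so after a bounded number of good steps the chain enters $X(v)$ where the value of $f$ is known. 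Chains that accumulate many bad steps must then be controlled by a slope estimate.

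The critical quantitative input is the weight factor picked up along each type of branch. The renormalization built into $\widetilde{\pi^\star}$---combining the factor $\tfrac{1}{p}$ on one basis vector of $\omega_{\sigma\tau}$ (producing $p^{-k_2}$ on $\omega^\kappa$, as visible in the comparison $U_p^{class} = p^{k_2} U_p$ of the last remark), the contribution $p^{-k_3}$ from the $\omega_\tau$-direction along bad isogenies, and the overall $\tfrac{1}{p^3}\Tr_{p_1}$---yields a total damping of order $p^{-(k_2+k_3-3)}$ per bad step. Consequently the bad contributions to $\alpha^{-n} U_p^n f(y)$ are bounded in sup-norm by $p^{\,n(v(\alpha) - (k_2+k_3) + 3)}$, which tends to zero precisely when $3 + v(\alpha) < k_2 + k_3$; a standard rigid-analytic gluing argument then assembles the limit into a section $\tilde f$ on $X_{Iw(p)}$ agreeing with $f$ on $X(v)$.

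The main obstacle is the detailed dynamical and combinatorial bookkeeping across the non-$\mu$-ordinary locus: one must stratify $X_{Iw(p)}$ by the Hasse invariant and by the relative position of the Iwahori filtration with respect to the canonical filtration (especially near the supersingular locus, where the canonical filtration degenerates), enumerate the lattices $L$ available in each stratum, and verify that every chain of $U_p$-iterates either reaches $X(v)$ after finitely many good steps or accumulates enough bad steps to be summable. This is precisely the content of Bijakowski's analysis in \cite{Bijmu}; once the extension is in hand, rigid GAGA on the smooth projective toroidal compactification produces the classical modular form.
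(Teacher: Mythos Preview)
The paper does not give its own proof of this statement: it is stated as a result of Bijakowski and attributed directly to \cite{Bijmu}, with no argument supplied beyond the citation. So there is no ``paper's own proof'' to compare against.

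That said, your sketch is a fair outline of the method Bijakowski actually uses in \cite{Bijmu}: analytic continuation in the Buzzard--Kassaei style, using the Hecke correspondence $U_p$ to push the form from the $\mu$-ordinary locus across the Hasse strata, with the decay of ``bad'' branches governed by the weight-dependent normalisation factors and the slope hypothesis. The dynamical analysis of the correspondence on the non-$\mu$-ordinary locus (stratifying by degree/Hasse invariant, tracking the relative position of the Iwahori filtration, and bounding the contribution of each type of isogeny) is exactly the substance of \cite{Bijmu}, and you correctly identify this as the non-trivial part. One small point: the precise accounting that produces the bound $3 + v(\alpha) < k_2 + k_3$ in Bijakowski's work comes from a careful estimate of the degree of the pullback $\pi^\star$ on $\omega^\kappa$ over each stratum, not quite from the heuristic factorisation you give; but the shape of your argument and the role of the slope condition are right.
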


\section{Constructing the eigenvariety}

In this section we will construct the eigenvariety associated to the algebra $\mathcal H\otimes \mathcal A(p)$ and the spaces of overconvergent modular forms $M_\kappa^\dag$. In order to do this, we will use Buzzard's construction of eigenvarieties, and we need to show that the spaces $M_\kappa^\dag$ (and a bit more) are projective.
The method of proof follows closely the lines of \cite{AIP}, but as this case is simpler (because the toroïdal compactification is) we chose to write the argument in details.

\subsection{Projection to the minimal compactification}

\begin{definen}
Let $X^*$ be the minimal compactification of $Y$ as a (projective) scheme over $\Spec(\mathcal O)$. There is a map 
\[ \eta : X \fleche X^*,\]
from the toroidal to the minimal compactification. Denote $X^*_{rig}$ the rigid fiber and $X^*(v)$ the image of $X(v)$ in $X^*_{rig}$. If $v \in \QQ$ this is an affinoid as $X^*_{ord}$ is $(\det \omega$ is ample on the minimal compactification).
Denote also by $D$ the boundary in the toroïdal compactification $X$, and by abuse of notation in $X_1(p^{2m})$ and $X_1(p^{2m})(v)$.
\end{definen}

The idea to check that our spaces of cuspidal overconvergent modular forms are projective, is to push the sheaves to $X^*(v)$ which is affinoid and use the dévissage of \cite{AIP} Proposition A.1.2.2. But we need to show that the pushforward of the family of sheaves $\mathfrak w_w^{\kappa^{0,un}}(-D)$ is a small Banach sheaf.
In order to do this, we will do as in \cite{AIP} and prove that the pushforward of the trivial sheaf has no higher cohomology, and we will need to calculate this locally.

\subsubsection{Description of the toroïdal compactification}

Let $V = \mathcal O_E^3$ with the hermitian form $<,>$ chosen in the datum. For all totally isotropic factor $V'$, we denote $C(V/(V')^\perp)$ the cone of symmetric hermitian semi-definite positive forms on $(V/(V')^\perp)  \otimes_\ZZ \RR$ with rational radical. Denote by $\mathfrak C$ the set of such $V'$, and 
\[ \mathcal C = \coprod_{V' \in \mathfrak C \text{ non zero}} C(V/(V')^\perp).\]

\begin{remaen}
The subspaces $V'$ are of dimension 1 (if non zero), and $C(V/(V')^\perp) \simeq \RR_+$.
\end{remaen}

Fix $\psi_N$ level $N$ structure,
\[ \psi_N : (\mathcal O_E/N\mathcal O_E)^3\simeq V/NV\]
 and $\psi$ of level $p^{2m}$,
\[ \psi : \mathcal O_E/p^{2m}e_1 \oplus p^{m}\mathcal O_E/p^{2m}e_2 \subset V/p^{2m}V.\]

Let $\Gamma \subset G(\ZZ)$ be the congruence subgroup fixing the level outside $p$, and $\Gamma_1(p^{2m})$ fixing $\psi_N$ and $\psi$. Suppose that $N$ is big enough so that $\Gamma$ is neat.
Fix $\mathcal S$ a polyedral decomposition of $\mathcal C$ which is $\Gamma$-admissible : on each $C(V/(V')^\perp) = \RR_+$ there is a unique polyedral decomposition and thus there is a unique decomposition $\mathcal S$ and it is automatically $\Gamma$ (or $\Gamma_1(p^{2m})$)-admissible, smooth, and projective.

Recall the local charts of the toroïdal compactification $X$.
For each $V' \in \mathfrak C$ non zero, we have a diagram,
\begin{center}
\begin{tikzpicture}[description/.style={fill=white,inner sep=2pt}] 
\matrix (m) [matrix of math nodes, row sep=3em, column sep=2.5em, text height=1.5ex, text depth=0.25ex] at (0,0)
{ 
\mathcal M_{V'}&  & \mathcal M_{V',\sigma}  \\
&\mathcal B_{V'} & \\
& Y_{E} & \\
 };

\path[->,font=\scriptsize] 
(m-1-1) edge node[auto] {$$} (m-1-3)
(m-1-1) edge node[auto] {$$} (m-2-2)
(m-1-3) edge node[auto] {$$} (m-2-2)
(m-2-2) edge node[auto] {$$} (m-3-2);
\end{tikzpicture}
\end{center}
where $Y_{E}$ is the moduli space of elliptic curves with complex multiplication by $\mathcal O_E$ of principal level $N$ structure, 
denote by $\mathcal E$ the universal elliptic curve, then $\mathcal B_{V'} = Ext^1(\mathcal E,\mathbb G_m \otimes O_E)$ is isogenous to $^t\mathcal E$, and is a $\mathbb G_m$-torsor, $\mathcal M_{V'} \fleche \mathcal B_{V'}$ is a $\mathbb G_m$-torsor, it is the moduli space of principally polarised 1-motives, with $\psi_N$-level structure, and $\mathcal M_{V'} \fleche \mathcal M_{V',\sigma}$ is an affine toroidal embedding associated to the cone decomposition of $C(V/(V')^\perp)$, locally isomorphic over $\mathcal B_{V'}$ to $\mathbb G_m \subset \mathbb G_a$.

Over $\mathcal B_{V'}$ we have a semi-abelian scheme of constant toric rank,
\[ 0 \fleche \mathbb G_m \otimes_\ZZ \mathcal O_E \fleche \widetilde G_{V'} \fleche \mathcal E \fleche 0.\]
Denote by $Z_{V'}$ the closed stratum of $\mathcal M_{V',\sigma}$.

Recall that $X$ is the toroïdal compactification of our moduli space $Y$ (it is unique as the polyedral decomposition $\mathcal S$ is), as defined in \cite{Lars} or \cite{Lan} in full generality, and $X^*$ is the minimal compactification. $X$ is proper and smooth and $X^*$ is proper. Moreover we have a (proper) map,
\[ \eta : X \fleche X^*.\]
Moreover, $\eta$ is the identity on $Y$. As sets, $X^*$ is a union of $Y$ to which we glue points corresponding to elliptic curves with complex multiplication, one for each component of $D$, the boundary of $X$, and over each $x \in X^* \backslash Y$, $\eta^{-1}(x)$ is a CM elliptic curve.

Denote by $\widehat{\mathcal M_{V',\sigma}}$ the completion of $\mathcal M_{V',\sigma}$ along the closed stratum $Z_{V'}$. On $X$ there is a stratification indexed
by $\mathfrak C/\Gamma$ (the open subset $Y$ corresponds to $V' = \{0\}$). For all non zero $V'$, the completion of $X$ along the $V'$ stratum is isomorphic to 
$\widehat{\mathcal M_{V',\sigma}}$, as $\Gamma_{V'}$, the stabilizer of $V'$, is trivial : $V' \simeq \mathcal O_E$ so $\Gamma_{V'} \subset \mathcal O_E^\times$, which is finite as $E$ is quadratic imaginary, and thus because $\Gamma$ is neat, $\Gamma_{V'} = \{1\}$.

As the Hasse invariant on the special fiber of $X$ is defined as the one of the abelian part of the sem-abelian scheme, we can identify it with the same one on $\mathcal M_{V',\sigma}$, which comes from the special fiber of $Y_{V'} \simeq Y_E$.
Denote by $\mathcal Y$, $\mathcal X$ the formal completions of $Y,X$ along the special fiber. We have defined $\mathfrak X(v) \fleche \mathfrak X$ as an open subset of a blow up, denote by $\mathfrak Y(v)$ the inverse image of $\mathfrak Y$, and we will describe its boundary locally. Denote
\begin{itemize}
\item $\mathfrak Y_{E}$ the formal completion along $p$ of $Y_{V'}$.
\item $\mathfrak Y_E(v)$ the open subset of $\mathcal Y_E(v)$ along $I = (p^v,\Ha_\tau)$ where $I$ is generated by $\Ha_\tau$, but as every $CM$ elliptic curve is $\mu$-ordinary, $\mathfrak Y_E(v) = \mathfrak Y_E$.
\item $\mathfrak B_{V'}$ the formal completion of $\mathcal B_{V'}$.
\item We define similarly, $\mathfrak M_{V'},\mathfrak M_{V',\sigma},\mathfrak Z_{V'}$. 
\end{itemize}

\begin{propen}
The formal scheme $\mathfrak X(v)$ has a stratification indexed by $\mathfrak C/\Gamma$, and the stratum corresponding to $V'$ is isomorphic to 
$\mathfrak Z_{V'}$ if $V'$ is non zero, and $\mathfrak Y(v)$ if $V' = \{0\}$. For all non zero $V' \in \mathfrak C$ the completion of $\mathfrak X(v)$ along the $V'$ stratum is isomorphic to $\widehat{\mathfrak M_{V',\sigma}}$ (completion along $\mathfrak Z_{V'}$).
\end{propen}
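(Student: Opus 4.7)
The strategy is to transport the known stratification of $\mathfrak X$ through the admissible blow-up defining $\mathfrak X(v)$, exploiting the fact that the Hasse invariant $\widetilde{\ha_\tau}$ is invertible along the boundary.

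\emph{Step 1 (stratification of $\mathfrak X$).} By the general theory of toroidal compactifications for PEL Shimura varieties (see Larsen, Bellaïche's thesis, or Lan), $\mathfrak X$ itself admits a stratification indexed by $\mathfrak C/\Gamma$, whose $V'$-stratum is isomorphic to $\mathfrak Z_{V'}$ for non-zero $V'$ and to $\mathfrak Y$ for $V' = \{0\}$. Since the neatness hypothesis on $\Gamma$ forces $\Gamma_{V'} = \{1\}$ for every non-zero $V' \in \mathfrak C$ (as recalled just above the proposition), no quotient by $\Gamma_{V'}$ enters, and the completion of $\mathfrak X$ along the $V'$-stratum is canonically isomorphic to $\widehat{\mathfrak M_{V',\sigma}}$.

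\emph{Step 2 (behaviour of $\widetilde{\ha_\tau}$ on the boundary).} The Hasse invariant $\widetilde{\ha_\tau}$ is defined from the $\mathcal O$-decomposition of $\omega_G$ via the operator $V^2$, and by Koecher's principle extends to $\mathfrak X$. Along a boundary chart $\widehat{\mathfrak M_{V',\sigma}}$, the $p$-divisible group of the universal semi-abelian scheme $\widetilde G_{V'}$ is an extension of $\mathcal E[p^\infty]$ by a torus with $\mathcal O_E$-action; on the toric part the operator $V^2$ is invertible and the $\mathcal O$-decomposition of $\omega_{\widetilde G_{V'}}$ restricted to the $\tau$-component comes entirely from $\omega_\mathcal E$. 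Hence $\widetilde{\ha_\tau}$ pulls back, over the boundary, to the Hasse invariant of the pulled-back elliptic curve $\mathcal E/\mathfrak Y_E$. As recorded in the bullet before the statement, every elliptic curve with CM by $\mathcal O_E$ (with $p$ inert) is $\mu$-ordinary, so this pulled-back Hasse invariant is everywhere invertible on $\mathfrak Y_E$, hence on the boundary charts of $\mathfrak X$.

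\emph{Step 3 (the blow-up is an isomorphism near the boundary).} On a neighbourhood of the boundary, $\widetilde{\ha_\tau}$ is a unit and therefore generates the ideal $(p^v,\widetilde{\ha_\tau})$. Consequently the admissible blow-up $\widetilde{\mathfrak X(v)} \to \mathfrak X$ restricts to an isomorphism over this neighbourhood, and the neighbourhood is entirely contained in the open subscheme $\mathfrak X(v)$ where this ideal is generated by $\widetilde{\ha_\tau}$. In particular, each formal completion $\widehat{\mathfrak M_{V',\sigma}}$ of $\mathfrak X$ along a non-zero boundary stratum lifts untouched into $\mathfrak X(v)$.

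\emph{Step 4 (conclusion).} Pulling the stratification of $\mathfrak X$ back to $\mathfrak X(v)$: the non-zero strata $\mathfrak Z_{V'}$ survive unchanged (Step 3), so the $V'$-stratum of $\mathfrak X(v)$ is $\mathfrak Z_{V'}$ with formal completion $\widehat{\mathfrak M_{V',\sigma}}$; the open stratum $\mathfrak Y \subset \mathfrak X$ is replaced by its intersection with $\mathfrak X(v)$, which by definition is $\mathfrak Y(v)$. This gives exactly the claimed stratification.

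The main delicate point is Step 2: one must verify carefully that the construction of $\widetilde{\ha_\tau}$ on the extension of the $p$-divisible group of a semi-abelian scheme only depends, on the $\tau$-component, on the abelian part, and that the CM elliptic curves appearing as abelian parts at the boundary are $\mu$-ordinary (equivalently, that their $\tau$-Hasse invariant does not vanish). The rest of the argument is then a formal consequence of the definition of $\mathfrak X(v)$ as an open in a blow-up along an ideal that becomes trivial on the locus in question.
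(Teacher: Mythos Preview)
Your proof is correct and follows essentially the same approach as the paper's: pull back the stratification from $\mathfrak X$, use that the boundary lies in the $\mu$-ordinary locus (since the Hasse invariant there comes from the abelian part, a CM elliptic curve), so the blow-up is an isomorphism near the boundary, and identify the open stratum with $\mathfrak Y(v)$ by definition. The paper's proof is much terser but records exactly these points.
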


\begin{proof}[Proof]
We complete and pullback the stratification of $X$. The analogous result on $\mathfrak X$ is true since we can invert the completion along $p$ and the stratum.
If $V' \neq 0$ it is simply that the boundary of $\mathfrak X$ is inside the $\mu$-ordinary locus. For $V'= 0$  the stratum is the pull back of $\mathfrak Y$ inside $\mathfrak X(v)$, i.e. $\mathfrak Y(v)$.
\end{proof}

We used the space $\mathfrak X_1(p^{2m})$ in the previous sections, we would like to describe its boundary.

Let $\mathfrak C'$ be the subset of $V' \in \mathfrak C$ such that $\im (\psi) \subset (V')^\perp/p^{2m}(V')^\perp$. The (unique) polyhedral decomposition previously considered induces also a (unique) polyhedral decomposition on
\[ \coprod_{V' \in \mathfrak C' \text{ non zero}} C(V/(V')^\perp),\]
which is $\Gamma_1(p^{2m})$ admissible.

For $V' \in \mathfrak C'$ non zero, decompose,
\[ 0 \fleche V'/p^{2m} \fleche (V')^\perp/p^{2m} \fleche (V')^\perp/(V' + p^{2m} (V')^\perp)\fleche 0,\]
and denote $W$ the image in $ (V')^\perp/(V' + p^{2m} (V')^\perp)$ of $\psi(\mathcal O/p^{2m} \oplus p^m\mathcal O/p^m)$.
This is isomorphic to $\mathcal O/p^m$. Indeed, as $(V')^\perp$ contains $e_1,p^me_2$ modulo $p^{2m}$, $(V')^\perp/p^{2m} = \mathcal O/p^{2m}(e_1,e_2)$.
Then $\overline{V'} = V'/p^{2m}$ is totally isotropic inside, i.e. generated by $ae_1 + be_2$ where $p^m | b$ (totally isotropic) and $a \in \mathcal O^\times$ (direct factor).
Thus the image of $\psi$ is generated in $(V')^\perp/(V' + p^{2m} (V')^\perp)$ by the image of $e_1 = a^{-1}be_2$ which is $p^m$-torsion.

We denote by,
\begin{enumerate}
\item $\mathcal Y_{V'}$ the rigid fiber of $\mathfrak Y_{V'}$,
\item $H_{m,V'}$ the canonical subgroup of level  $m$ of the universal elliptic scheme $\mathfrak E_{V'}$ over $\mathfrak Y_{V'}$,
\item $\mathcal Y_1(p^m)_{V'}$ the torsor $\Isom_{\mathcal Y_{V'}}((H_{m,V'})^D,W^\vee)$, and $\psi_{V'}$ the universal isomorphism,
\item $\mathfrak Y_1(p^m)$ the normalisation of $\mathfrak Y_{V'}$ in $\mathcal Y_1(p^m)_{V'}$, 
\item There is an isogeny $i : \mathfrak B_{V'} \fleche {\mathfrak E}_{V'}$, and if we denote $i' : {\mathfrak E_{V'}} \fleche  {\mathfrak E_{V'}}/H_{m,V'}$, set,
\[ \mathfrak B_1(p^m)_{V'}(v) = \mathfrak B_{V'}\times_{i,{\mathfrak E},i'} {\mathfrak E_{V'}}/H_{m,V'}.\]
\item Denote $\mathfrak M_1(p^m)_{V'}, \mathfrak M_1(p^m)_{V',\sigma}, \mathfrak Z_1(p^m)_{V',\sigma}$ the fibered products of the corresponding formal schemes with $\mathfrak B_1(p^m)_{V'}$ over $\mathfrak B_{V'}$.
\end{enumerate}

\begin{propen}
The formal scheme $\mathfrak X_1(p^{2m})(v)$ has a stratification indexed by $\mathfrak C'/\Gamma_1(p^{2m})$, for all non zero $V'$, the completion of  $\mathfrak X_1(p^{2m})(v)$ along the $V'$-stratum is isomorphic to the completion $\widehat{\mathfrak M_1}(p^{2m})_{V',\sigma}$ along $\mathfrak Z_1(p^{2m})_{V'}$.
\end{propen}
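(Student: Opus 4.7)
The strategy is to pull back the stratification of $\mathfrak X(v)$ obtained in the previous proposition through the normalization map $\pi:\mathfrak X_1(p^{2m})(v)\fleche \mathfrak X(v)$ and, at each non-zero $V'$-stratum, to identify the induced trivialization of $K_m^D$ with the moduli data defining $\mathfrak M_1(p^{2m})_{V',\sigma}$. Over the completion $\widehat{\mathfrak M_{V',\sigma}}$ there is an extension $0\fleche \mathbb G_m\otimes_\ZZ\mathcal O_E \fleche \widetilde G \fleche \mathcal E\fleche 0$, where $\mathcal E$ is an ordinary CM elliptic curve (of signature $(0,1)$, so that the total signature is $(1,2)$). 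The toric part being multiplicative of maximal degree, the functorial characterization of the canonical filtration in Theorem \ref{thrfiltcan} (via the Harder--Narasimhan filtration, compatible with extensions of constant toric rank) identifies $H_\tau^{2m}=(\mathbb G_m\otimes\mathcal O_E)[p^{2m}]_\tau$, and $H_{\sigma\tau}^{2m}/H_\tau^{2m}$ as the sum of $(\mathbb G_m\otimes\mathcal O_E)[p^{2m}]_{\sigma\tau}$ with the level-$2m$ canonical subgroup of $\mathcal E$. In particular
\[ K_m = (\mathbb G_m\otimes\mathcal O_E)[p^{2m}]_\tau + (\mathbb G_m\otimes\mathcal O_E)[p^m]_{\sigma\tau} + H_m, \]
where $H_m$ is the canonical subgroup of level $m$ of $\mathcal E$.

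Next, under this decomposition a polarized trivialization
\[ \psi:\mathcal O/p^{2m}e_1 \oplus p^m\mathcal O/p^{2m}e_2 \hookrightarrow K_m^D \]
splits into two pieces. The component on the toric part determines a totally isotropic rank-one $\mathcal O_E$-submodule $V'/p^{2m}(V')\subset V/p^{2m}V$ with $\im(\psi)\subset (V')^\perp/p^{2m}(V')^\perp$; this is precisely the condition defining $\mathfrak C'$, and the polarization requirements $(\psi_{1,1})^D=((\psi_{1,1})^{(\sigma)})^{-1}$ and $\psi_{2,1}=0$ correspond to the self-duality of the toric piece under $\lambda$. The remaining quotient component yields a level-$p^m$ trivialization $(H_m)^D\simeq W^\vee$, which is exactly the moduli datum cut out by $\mathcal Y_1(p^m)_{V'}$.

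Quotienting by $\Gamma_1(p^{2m})$ (whose stabilizer $\Gamma_{V'}$ is trivial by neatness, as in the previous proposition) produces the stratification indexed by $\mathfrak C'/\Gamma_1(p^{2m})$. For the identification of the formal completion, the trivialization of $(H_m)^D$ gives rise by normalization to $\mathfrak Y_1(p^m)_{V'}$, which through the isogeny $i:\mathfrak B_{V'}\fleche \mathfrak E_{V'}$ and the canonical isogeny $i':\mathfrak E_{V'}\fleche \mathfrak E_{V'}/H_{m,V'}$ produces the fibered product $\mathfrak B_1(p^m)_{V'}$, and then by further fibered product $\mathfrak M_1(p^{2m})_{V',\sigma}$. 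Since $\mathfrak X_1(p^{2m})(v)$ is defined by normalization in the rigid fiber and $\mathfrak M_{V',\sigma}$ is normal (Lemma above: $\mathfrak X(v)$ is normal and the local charts inherit normality), normalization commutes with the formal completion along the closed stratum, which identifies the completion of $\mathfrak X_1(p^{2m})(v)$ along the $V'$-stratum with $\widehat{\mathfrak M_1(p^{2m})_{V',\sigma}}$.

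The main obstacle is the bookkeeping of Step 2: one must check that on a strict neighborhood of the boundary the canonical filtration of $\widetilde G[p^{2m}]$ really splits through the toric--abelian extension, and that what survives on the $\mathcal E$ factor is exactly the canonical subgroup of the ordinary CM elliptic curve. This relies on the Harder--Narasimhan/Hodge--Tate characterizations in Theorem \ref{thrfiltcan} together with the fact that the toric part contributes full degree, so that the degree inequalities force the decomposition; the compatibility condition cutting out $\mathfrak C'$ then falls out of the dualization of this splitting and the polarization-compatibility built into the definition of $X_1(p^{2m})(v)$.
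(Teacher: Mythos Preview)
Your overall strategy is sound and converges with the paper's at the key step: both arguments ultimately rest on the fact that $\mathfrak X_1(p^{2m})(v)$ is \emph{defined} as the normalisation of $\mathfrak X(v)$ in its rigid fibre $X_1(p^{2m})(v)$, that the local boundary charts $\mathfrak M_{V',\sigma}$ are normal, and that normalisation commutes with formal completion along the closed stratum. The paper's proof is much terser than yours: it simply says the statement is ``known in rigid fibre, with the same construction,'' observes that the charts $\mathfrak M_1(p^{2m})_{V',\sigma}$ are normal and hence coincide with the normalisation of $\mathfrak M_{V',\sigma}$ in $\mathcal M_1(p^{2m})_{V',\sigma}$, and then concludes by the normalisation/completion compatibility. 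Everything you do in your Steps~2--3 is what the paper is packaging inside ``known in rigid fibre.''

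Two remarks on the explicit moduli bookkeeping you carry out. First, the notation $(\mathbb G_m\otimes\mathcal O_E)[p^{2m}]_\tau$ and $(\mathbb G_m\otimes\mathcal O_E)[p^{m}]_{\sigma\tau}$ is not meaningful as written: the $\tau/\sigma\tau$ decomposition applies to the conormal sheaf, not to the finite flat group scheme itself. What you want is that the full toric $p^{2m}$-torsion $(\mathbb G_m\otimes\mathcal O_E)[p^{2m}]$, which is multiplicative of maximal degree and $\mathcal O$-height $2m$, is exactly $H_\tau^{2m}$ (no subscript needed); then $H_{\sigma\tau}^m/H_\tau^m$ is the canonical subgroup $H_m$ of the ordinary CM elliptic curve $\mathcal E$, so that $K_m$ sits in the extension of $H_m$ by the toric $p^{2m}$-torsion. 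Second, your ``main obstacle'' paragraph is right to flag the HN/degree argument, but once you use that the toric piece is of maximal degree it is the maximal destabilising sub, so the identification is immediate and does not require the more delicate Hodge--Tate input. With these corrections your explicit route is fine; the paper simply sidesteps it by invoking the rigid result and normality.
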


\begin{proof}[Proof]
This is known in rigid fiber, with the same construction, but the previous local charts are normal, and thus coincide with the normalisation in their rigid fiber 
of the level $\Gamma$-charts. Thus $\mathfrak M_{1}(p^{2m})_{V',\sigma}$ is the normalisation of $\mathfrak M_{V',\sigma}$ in 
$\mathcal M_{1}(p^{2m})_{V',\sigma}$. But the completion of $\mathfrak M_{1}(p^{2m})_{V',\sigma}$ along $\mathfrak Z_{1}(p^{2m})_{V',\sigma}$ 
coincides with the normalisation of $\widehat{\mathfrak M_{V',\sigma}}^{V'} = \widehat{\mathfrak X(v)}^{V'}$ inside 
$\mathcal M_1(p^{2m})_{V',\sigma} = \widehat{X_1(p^n)(v)}^{V'}$.
 
\end{proof}

\subsection{Minimal compactification}

Let $X^*$ be the minimal compactification of $Y$ of level $\Gamma$. As a topological space, it corresponds to adding a finite set of points to $Y$, corresponding 
to CM elliptic curves. $X^*$ is also stratified by $\mathfrak C/\Gamma$. Let $\overline x \in X^*\backslash Y$ a geometric point of the boundary, it corresponds to a point $x \in Y_E$.

Using the previous description of $X$, we can describe the local rings of $X^*$.
Let $V' \in \mathfrak C$ non zero. Over $\mathcal B_{V'}$, $\mathcal M_{V'}$ is an affine $\mathbb G_m$-torsor, and we can thus write,
\[ \mathcal M_{V'} = \Spec_{B_V}\mathcal L,\]
where $\mathcal L$ is a quasi-coherent $\mathcal O_{\mathcal B_{V'}}$-algebra endowed with an action of $\mathbb G_m$, that can be decomposed,
\[ \mathcal L = \bigoplus_{k \in \ZZ} \mathcal L(k).\]
For all $k$, $\mathcal L(k)$ is locally free of rank 1 over $\mathcal B_{V'}$.
Denote $\widehat{B_{V';\overline x}}$ the completion of $\mathcal B_{V'}$ along the fiber over $\overline x$. We have the
\begin{propen}
$X^*$ is stratified by $\mathfrak C/\Gamma$ and $\eta : X \fleche X^*$ is compatible with stratifications. Moreover, for all $V'$, $X_{V'}^*$ is isomorphic to $Y_{V'}$ and for all $\overline{x} \in X_{V'}^\star$ a geometric point,
\[ \widehat{\mathcal O_{X^*,\overline x}} = \prod_{k \in \ZZ} H^0(\widehat{B_{V',\overline x}},\mathcal L(k)),\]
where $\widehat{\mathcal O_{X^*,\overline x}}$ is the completion of the strict henselisation of $\mathcal O_{X^*}$ at $\overline x$.
\end{propen}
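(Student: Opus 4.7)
The plan is to prove this in two stages: first the stratification compatibility, then the local ring description.

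For the stratification, the boundary $D$ of $X$ decomposes as a disjoint union of irreducible components indexed by $\mathfrak C/\Gamma$: since each cone $C(V/(V')^\perp) \simeq \RR_+$ admits only the trivial cone decomposition, and since the stabilizers $\Gamma_{V'}$ are trivial by neatness of $\Gamma$ (as observed in the excerpt), each class in $\mathfrak C/\Gamma$ contributes exactly one boundary component, locally identified with $Z_{V'}$. The map $\eta : X \fleche X^*$ to the minimal compactification contracts each such component to the underlying abelian part, namely the corresponding CM elliptic curve in $Y_E = Y_{V'}$. Since $Y_E$ is zero-dimensional (as $E$ is imaginary quadratic), this produces a finite collection of points, giving the stratification $X^*_{V'} \simeq Y_{V'}$ compatibly with $\eta$.

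For the local ring description, the key input is the identity $\eta_* \mathcal O_X = \mathcal O_{X^*}$. This holds because $X^*$ coincides with the Stein factorization of $\eta$: concretely, $X^*$ is the Proj of the graded ring $\bigoplus_k H^0(X, (\det\omega)^{\otimes k})$ with $\det\omega$ ample on it, and sections pull back bijectively along $\eta$. Given this, Grothendieck's theorem on formal functions applied to the proper morphism $\eta$ yields
\[ \widehat{\mathcal O_{X^*,\bar x}} \;\simeq\; \varprojlim_n H^0\bigl(\eta^{-1}(\bar x)_n,\mathcal O_X\bigr) \;=\; H^0\bigl(\widehat{X}_{\eta^{-1}(\bar x)},\mathcal O_X\bigr), \]
so it suffices to describe the formal completion of $X$ along the fiber $\eta^{-1}(\bar x)$ and compute its global sections.

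To finish, I use the local charts recalled earlier: the completion of $X$ along $Z_{V'}$ is $\widehat{\mathcal M_{V',\sigma}}$. Further base-changing through $\widehat{B_{V',\bar x}} \to \mathcal B_{V'}$ realizes $\widehat{X}_{\eta^{-1}(\bar x)}$ as the formal completion of $\mathcal M_{V',\sigma} \times_{\mathcal B_{V'}} \widehat{B_{V',\bar x}}$ along the zero section $Z_{V'} \times_{\mathcal B_{V'}} \widehat{B_{V',\bar x}}$. Because $\mathcal M_{V',\sigma}$ is relatively affine over $\mathcal B_{V'}$, equal to $\Spec_{\mathcal B_{V'}}\bigoplus_k \mathcal L(k)$ with $Z_{V'}$ cut out by the ideal of positive-degree elements, the $I$-adic completion turns the direct sum into the product, and taking $H^0$ over $\widehat{B_{V',\bar x}}$ — which commutes with this product because each $\mathcal L(k)$ is coherent — delivers the claimed formula. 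The main technical obstacle is verifying $\eta_* \mathcal O_X = \mathcal O_{X^*}$ as sheaves of rings (standard but nontrivial, relying on the Proj construction of the minimal compactification); once this is granted, the remainder is a formal consequence of the explicit toroidal local structure and the formal functions theorem.
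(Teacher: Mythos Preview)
Your proof is correct and follows the same strategy as the paper: establish $\eta_*\mathcal O_X = \mathcal O_{X^*}$, apply the theorem on formal functions, then read off the answer from the explicit local charts $\widehat{\mathcal M_{V',\sigma}}$. Two small remarks. First, the paper justifies $\eta_*\mathcal O_X = \mathcal O_{X^*}$ more directly: since $\eta$ is proper with connected fibers and $X^*$ is \emph{normal}, this is immediate (what the paper calls ``Serre's theorem on global sections of the structure sheaf on proper schemes''), so you need not invoke the Proj description. Second, your computation via $\mathcal M_{V',\sigma} = \Spec_{\mathcal B_{V'}}\bigoplus_{k\geq 0}\mathcal L(k)$ and $I$-adic completion naturally produces $\prod_{k\geq 0}H^0(\widehat{B_{V',\bar x}},\mathcal L(k))$ rather than the stated $\prod_{k\in\ZZ}$; these agree because $\mathcal L(k)$ has negative degree on the (proper) elliptic curve $\widehat{B_{V',\bar x}}$ for $k<0$, hence no global sections, but you should say so.
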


\begin{proof}[Proof]
This is Serre's theorem on global sections of the structure sheaf on proper schemes (as $\eta$ is proper and $X^*$ is normal), the theorem of formal functions and the previous description of $X$.
\end{proof}

The Hasse invariant $\ha_\tau$ descends to the special fiber of $X^*$ and we can thus define $\mathfrak X^*$ the formal completion of $X^*$ along its special fiber and $\mathfrak X^*(v)$ the normalisation of the open subspace of the blow up of $X^*$ along $(p^v,\ha_\tau)$ where this ideal is generated by $\ha_\tau$.

\begin{propen}
For all $V' \in \mathfrak C$ the $V'$-strata of $\mathfrak X^*(v)$ is $\mathfrak Y_{V'}(v)$ (and $Y_E$ if $V'$ is non zero).
\end{propen}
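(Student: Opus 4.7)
The plan is to transport the stratification of $X^*$ from the preceding proposition through the operations that define $\mathfrak X^*(v)$: formal completion along the special fiber, normalised admissible blow-up along $(p^v,\ha_\tau)$, and passage to the open subscheme where this ideal is generated by $\ha_\tau$.

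First I would observe that formal completion along the special fiber is compatible with the stratification of $X^*$, so the $V'$-stratum of $\mathfrak X^*$ is identified with $\mathfrak Y_{V'}$; this is $\mathfrak Y$ when $V'=0$ and $\mathfrak Y_E$ when $V'\neq 0$. For $V'=0$ the corresponding stratum of $\mathfrak X^*(v)$ is, by the very definition of $\mathfrak Y(v)$ given earlier in the section, the pullback of $\mathfrak X^*(v)\to \mathfrak X^*$ along $\mathfrak Y\hookrightarrow \mathfrak X^*$, so it equals $\mathfrak Y(v)=\mathfrak Y_{V'}(v)$.

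For $V'\neq 0$ the key point (already used in the analogous statement for $\mathfrak X(v)$) is that $\ha_\tau$ on $\mathfrak X^*$ is defined via the abelian part of the universal semi-abelian scheme, hence its restriction to the boundary stratum $\mathfrak Y_{V'}\simeq \mathfrak Y_E$ agrees with the Hasse invariant of the universal CM elliptic curve over $\mathfrak Y_E$. Since $p$ is unramified in the quadratic imaginary field $E$, every elliptic curve with CM by $\mathcal O_E$ has ordinary reduction at $p$, so $\ha_\tau$ is invertible on the special fiber of $\mathfrak Y_E$. Consequently $(p^v,\ha_\tau)=(\ha_\tau)$ on a neighbourhood of $\mathfrak Y_E$, the admissible blow-up is an isomorphism there, normalisation is trivial (since $\mathfrak Y_E$, being \'etale over $\Spf \mathcal O$, is already normal), and the open subscheme where the ideal is generated by $\ha_\tau$ is all of $\mathfrak Y_E=Y_E=\mathfrak Y_{V'}(v)$.

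The only step requiring some care is the commutation of the normalised admissible blow-up with restriction to each stratum, but this is painless here because the ideal being blown up is already principal on every stratum: tautologically on the open stratum (where the assertion reduces to the definition of $\mathfrak Y(v)$), and because $\ha_\tau$ is invertible on each boundary stratum as just explained. Hence the blow-up restricts trivially on the boundary and the description reduces to the two elementary cases handled above.
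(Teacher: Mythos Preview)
Your proposal is correct and follows the same approach as the paper: the case $V'=0$ is tautological by the definition of $\mathfrak Y(v)$, and for $V'\neq 0$ the boundary lies in the $\mu$-ordinary locus (since CM elliptic curves are $\mu$-ordinary), so the blow-up along $(p^v,\ha_\tau)$ is an isomorphism there. The paper's proof says exactly this in two lines; you have simply unpacked the reasoning, in particular the commutation of the blow-up with restriction to strata.
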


\begin{proof}[Proof]
This is known before the blow up, and thus for $V'$ non zero as the boundary is contained in the $\mu$-ordinary locus. But for $V' = 0$ this is tautologic.
\end{proof}

\subsection{Higher Cohomology and projectivity of the space of overconvergent automorphic forms}

We will look at the following diagram,

\begin{center}
\begin{tikzpicture}[description/.style={fill=white,inner sep=2pt}] 
\matrix (m) [matrix of math nodes, row sep=3em, column sep=2.5em, text height=1.5ex, text depth=0.25ex] at (0,0)
{ 
\mathfrak X_{1}(p^n)(v) &  & \mathfrak X(v)  \\
&\mathfrak X^*(v) & \\
 };

\path[->,font=\scriptsize] 
(m-1-1) edge node[auto] {$\pi_4$} (m-1-3)
(m-1-1) edge node[auto] {$\eta$} (m-2-2)
(m-1-3) edge node[auto] {$\pi$} (m-2-2);
\end{tikzpicture}
\end{center}

\begin{propen}
Let $D$ be the boundary of $\mathfrak X_1(p^{2m})(v)$. Then for all $q>1$,
\[ R^q \eta_*\mathcal O_{\mathfrak X_1(p^{2m})(v)}(-D) = 0.\]
\end{propen}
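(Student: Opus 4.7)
The strategy is to factor the map $\eta = \pi \circ \pi_4$ as in the displayed diagram and to argue by fibre dimension. First, the map $\pi_4 : \mathfrak X_1(p^{2m})(v) \to \mathfrak X(v)$ is finite: by construction it is the normalisation of $\mathfrak X(v)$ (which is normal by the Lemma above) in the finite étale cover $\mathcal X_1(p^{2m})(v) \to \mathcal X(v)$ obtained on the rigid fibre from the polarised level-$p^{2m}$ trivialisation of $K_m^D$. Hence $R^q(\pi_4)_\ast = 0$ for all $q \geq 1$, and the Leray spectral sequence gives
\[ R^q\eta_\ast \mathcal O_{\mathfrak X_1(p^{2m})(v)}(-D) \;\simeq\; R^q\pi_\ast\bigl((\pi_4)_\ast \mathcal O_{\mathfrak X_1(p^{2m})(v)}(-D)\bigr), \]
reducing the problem to the analogous vanishing for $\pi$ applied to a coherent sheaf on $\mathfrak X(v)$.

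Next, I control the fibres of $\pi : \mathfrak X(v) \to \mathfrak X^\ast(v)$. Over the open part $\mathfrak Y(v)$, which maps isomorphically onto the open part of $\mathfrak X^\ast(v)$, the fibres of $\pi$ are zero-dimensional. At a point $\overline x$ on the (zero-dimensional) boundary of $\mathfrak X^\ast(v)$, corresponding to some non-zero $V' \in \mathfrak C/\Gamma$, the stratification proposition above identifies $\pi^{-1}(\overline x)$ with a one-dimensional closed subscheme of the $V'$-stratum of $\mathfrak X(v)$: concretely, it is the fibre over the corresponding CM point of $\mathfrak Y_E$ of the $\mathbb G_m$-torsor $\mathfrak B_{V'}$. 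In particular every fibre of $\pi$ has dimension at most one.

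Since $\pi$ is proper, arising by admissible blow-up and $p$-adic completion from the proper contraction $X \to X^\ast$, Grothendieck's vanishing theorem for proper morphisms in the noetherian formal setting yields $R^q\pi_\ast G = 0$ for every coherent sheaf $G$ on $\mathfrak X(v)$ and every $q$ strictly exceeding the maximum fibre dimension of $\pi$, that is, for every $q \geq 2$. Specialising to $G = (\pi_4)_\ast\mathcal O(-D)$ and combining with step one, we obtain $R^q\eta_\ast \mathcal O(-D) = 0$ for all $q > 1$.

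The main point to verify carefully is the applicability of these two ingredients in the formal/rigid framework: the finiteness of $\pi_4$ relies on the normality of $\mathfrak X(v)$ proved above (so that the normalisation in a finite cover is itself finite), while the fibre-dimension form of Grothendieck vanishing applies to $\pi$ because its properness descends from that of $X \to X^\ast$ under formal completion, exactly as in the AIP framework. Neither ingredient is conceptually subtle here, and indeed the local description of the boundary shows that the finer vanishing $R^q\eta_\ast\mathcal O(-D) = 0$ even for $q = 1$ should be accessible by the AIP computation using the affine charts $\widehat{\mathfrak M_1(p^{2m})_{V',\sigma}}$, but is not needed for the present statement.
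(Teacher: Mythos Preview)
Your argument is correct for the literal statement $q > 1$, but it takes a genuinely different route from the paper and proves a weaker result than what is actually used. The paper does not factor through $\pi_4$ and invoke fibre-dimension vanishing; instead it computes $R^q\eta_*\mathcal O(-D)$ directly at a boundary geometric point $\overline x$ via the theorem on formal functions, identifies the completion along $\eta^{-1}(\overline x)$ with a finite disjoint union of charts $\widehat{\mathfrak M_1(p^{2m})_{V',\sigma}}$, uses that each such chart is affine over the completed elliptic curve $\widehat{\mathfrak B_1(p^m)_{V'}}$ to rewrite the relevant cohomology as $\prod_{k>0} H^q(\widehat{\mathfrak B_1(p^m)_{V'}}, \mathcal L(k))$, and then kills this for every $q > 0$ because each $\mathcal L(k)$ is very ample on an elliptic curve. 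Thus the paper in fact proves $R^q\eta_*\mathcal O(-D) = 0$ for all $q \geq 1$; the ``$q>1$'' in the stated proposition is evidently a typo, and the base-change theorem that immediately follows needs $R^1\eta_* = 0$ on the graded pieces in order to obtain the equality $i'^*(\eta_m)_*\mathfrak w_{w,m}^{\kappa^0\dag}(-D) = (\eta_l)_*\mathfrak w_{w,l}^{\kappa^0\dag}(-D)$. Your Grothendieck-vanishing approach, relying only on properness with one-dimensional fibres, cannot reach $q = 1$ and so does not suffice for the intended application without adding precisely the local computation you mention in your last sentence. (A minor correction: the boundary fibre of $\pi$ is the fibre of $\mathfrak B_{V'} \to \mathfrak Y_E$, an elliptic curve; $\mathfrak B_{V'}$ is an abelian scheme isogenous to $^t\mathcal E$, not a $\mathbb G_m$-torsor---the $\mathbb G_m$-torsor in the picture is $\mathcal M_{V'} \to \mathcal B_{V'}$. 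This does not affect your dimension count.)
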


\begin{proof}[Proof]
It is enough to work locally at $\overline x$ a geometric point of the boundary of $\mathfrak X^*(v)$, and by the theorem of formal functions, 
\[ \widehat{(\eta_*\mathcal O_{\mathfrak X_1(p^{2m})(v)}(-D))_{\overline x}} = H^q(\widehat{\mathfrak X_1(p^{2m})(v)}^{\eta^{-1}(x)},\mathcal O_{\widehat{\mathfrak X_1(p^{2m})(v)}^{\eta^{-1}(x)}}(-D)).\]
We will thus show that the right hand side is zero.
But the completion $\widehat{\mathfrak X_1(p^{2m})(v)}^{\eta^{-1}(x)}$ is isomorphic to a finite disjoint union of  spaces of the form $\widehat{\mathfrak M_1(p^{2m})_{V',\sigma}}^{\overline y}$ for $\overline y$ a geometric point in $Y_E$. Denote by $M_\sigma$ this completed space.
As \[M_\sigma = \Spf_{\widehat{\mathfrak B_1(p^m)_{V'}}}(\widehat{\bigoplus_{k \geq 0}}\mathcal L(k)),\]
and thus the morphism,
\[ M_\sigma \fleche \widehat{\mathfrak B_1(p^m)_{V'}},\]
is affine, we have the equality, \[ H^q(M_\sigma,\mathcal O(-D)) = \prod_{k >0}H^q(\widehat{\mathfrak B_1(p^m)_{V'}},\mathcal L(k)),\]
(the product is over $k >0$ as we take the cohomology in $\mathcal O(-D)$). But for $k >0$, $\mathcal L(k)$ is very ample on the elliptic curve $\mathfrak B_1(p^m)_{V'}$, and thus $H^q(\widehat{\mathfrak B_1(p^m)_{V'}},\mathcal L(k)) = 0$ for all $q >0$.
\end{proof}

\begin{theoren}
For $m \geq l$ two integers, we have the following commutative diagram,
\begin{center}
\begin{tikzpicture}[description/.style={fill=white,inner sep=2pt}] 
\matrix (m) [matrix of math nodes, row sep=3em, column sep=2.5em, text height=1.5ex, text depth=0.25ex] at (0,0)
{ 
X_{1}(p^n)(v)_l &  &X_{1}(p^n)(v)_m  \\
X_*(v)_l& &X_*(v)_m \\
 };

\path[->,font=\scriptsize] 
(m-1-1) edge node[auto] {$i$} (m-1-3)
(m-2-1) edge node[auto] {$i'$} (m-2-3)
(m-1-1) edge node[auto] {$\eta_l$} (m-2-1)
(m-1-3) edge node[auto] {$\eta_m$} (m-2-3);
\end{tikzpicture}
\end{center}
and the following base change property is satisfied,
\[ i'^*\left((\eta_m)_*\mathfrak w_{w,m}^{\kappa^0\dag}(-D)\right) = (\eta_l)_*\mathfrak w_{w,l}^{\kappa^0\dag}(-D).\]
In particular, $\eta_*\mathfrak w_{w}^{\kappa^0\dag}(-D)$ is a small Banach sheaf on $\mathfrak X^*(v)$. The same result is true over $\mathfrak X^*(v) \times \mathfrak W(w)^0$ for
\[ (\eta\times 1)_*\mathfrak w_w^{\kappa^{0,un}\dag}(-D).\]
\end{theoren}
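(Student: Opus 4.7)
The plan is to reduce the whole statement to the vanishing result $R^q\eta_*\mathcal O_{\mathfrak X_1(p^{2m})(v)}(-D)=0$ for $q\geq 1$ that has just been established, using the torsor description of the sheaves $\mathfrak w_w^{\kappa^0\dag}$ to devissage into coherent pieces. This is exactly the strategy of \cite{AIP} Section 8, but here both the minimal compactification and the formula for the sheaf are much simpler, which lets us cut down the argument significantly.

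First, I would recall that by construction $\mathfrak w_w^{\kappa^0\dag}(-D) = \pi_*\mathcal O_{\mathfrak{IW}_w^+}(-D)[\kappa^0]$ and that the chain $\mathfrak{IW}_w^+\to\mathfrak{IW}_w\to\mathfrak X_1(p^{2m})(v)$ is a composition of a $\mathfrak T_w$-torsor (so locally a product of formal unit balls) and an affine morphism. Locally on $\mathfrak X_1(p^{2m})(v)$, the sheaf $\mathfrak w_w^{\kappa^0\dag}$ is therefore identified with the $p$-adic completion of an increasing union of free coherent subsheaves $\mathcal G_r$ (parametrised by the order of expansion along the coordinates of $\mathfrak{IW}_w^+$), each of which is the pullback to $\mathfrak X_1(p^{2m})(v)$ of a free $\mathcal O$-module. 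Twisting by $(-D)$ and applying the projection formula, one obtains coherent subsheaves $\mathcal F_r = \mathcal G_r(-D)$ filtering $\mathfrak w_w^{\kappa^0\dag}(-D)$. By the previous proposition together with the projection formula (applied to the finite morphism $\pi_4 : \mathfrak X_1(p^{2m})(v)\to\mathfrak X(v)$ and then to the normalisation pullback), we get $R^q\eta_*\mathcal F_r = 0$ for all $q\geq 1$ and all $r$.

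Next, this vanishing gives both the base change compatibility and the small Banach sheaf property. For the base change, since $\eta$ is proper and $R^q\eta_*\mathcal F_r=0$ for $q\geq 1$, cohomology and base change in the formal/coherent setting (which is formal once higher cohomology vanishes, applied level by level in the filtration and passing to $p$-adic completion) gives $i'^*(\eta_m)_*(\mathcal F_r\otimes\mathcal O_m) = (\eta_l)_*(\mathcal F_r\otimes\mathcal O_l)$. Taking the filtered colimit in $r$ and then the $p$-adic completion (where the vanishing makes this interchange legal because $\eta_*$ is exact on the filtration) gives the desired equality for the full Banach sheaves. For the small Banach sheaf property, the same filtration by the $\eta_*\mathcal F_r$, which are coherent on $\mathfrak X^*(v)$ (since $\eta$ is proper), exhibits $\eta_*\mathfrak w_w^{\kappa^0\dag}(-D)$ locally on $\mathfrak X^*(v)$ as the $p$-adic completion of a filtered colimit of coherent sheaves whose graded pieces inherit $p$-adic flatness from the torsor description; this is exactly \cite{AIP} Definition A.1.2.1.

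The main technical obstacle is justifying that $\eta_*$ commutes with the $p$-adic completion of the filtered colimit defining $\mathfrak w_w^{\kappa^0\dag}(-D)$, and simultaneously with the base change along the closed immersion $i'$. Both are non-trivial in general for formal Banach sheaves, but both reduce to the vanishing $R^q\eta_*\mathcal F_r(-D)=0$ because this vanishing makes $\eta_*$ exact on the relevant short exact sequences (both the ones giving the filtration and the ones $0\to p^l\to \mathcal O\to\mathcal O_l\to 0$). Finally, the universal character case is handled identically: the construction of $\mathfrak{IW}_w^+$ and $\mathfrak w_w^{\kappa^{0,\mathrm{un}}\dag}$ is the same over $\mathfrak X(v)\times\mathfrak W(w)^0$, and since $\mathfrak W(w)^0$ is affine and plays no role in the vanishing, the whole argument applies verbatim after base change to $\mathfrak W(w)^0$.
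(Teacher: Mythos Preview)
Your overall strategy—dévissage of the Banach sheaf into coherent pieces and then applying the vanishing $R^q\eta_*\mathcal O_{\mathfrak X_1(p^{2m})(v)}(-D)=0$—is the right one, and matches the paper. But there is a genuine gap in the way you implement it.

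You filter $\mathfrak w_w^{\kappa^0\dag}$ integrally by ``degree in the torsor coordinates'' and then treat the resulting $\mathcal G_r$ (and $\mathcal F_r=\mathcal G_r(-D)$) as global coherent subsheaves, even writing ``each of which is the pullback of a free $\mathcal O$-module'' and computing $R^q\eta_*\mathcal F_r$. This is not justified: the local identification $\mathfrak w_w^{\kappa^0\dag}\simeq R\langle\!\langle X_0\rangle\!\rangle$ depends on a choice of lift $f_1^{\sigma\tau},f_2^{\sigma\tau},f^\tau$ of the Hodge--Tate basis, and two such choices are related (Lemma~\ref{lem94}) by $X_0\equiv X_0'+a_3\pmod{p^w}$, which does \emph{not} preserve the degree filtration integrally. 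So the $\mathcal F_r$ do not glue to global coherent sheaves on $\mathfrak X_1(p^{2m})(v)$, and the statements $R^q\eta_*\mathcal F_r=0$ and the subsequent colimit/completion manipulations are not meaningful as written.

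The paper circumvents this by working modulo $\pi$ from the start. One reduces the base-change statement to $l=m-1$, so that the relevant kernel is $\mathfrak w_{w,1}^{\kappa^{0}\dag}$; then Lemma~\ref{lem94} shows that mod $p^w$ (a fortiori mod $\pi$) the change of chart is an affine shift $X_0\mapsto X_0+a_3$, which \emph{does} preserve the degree filtration. This is exactly Corollary~\ref{cor95}: over the special fibre the sheaf $\mathfrak w_{w,1}^{\kappa^{0,un}\dag}$ is globally an inductive limit of coherent sheaves with graded pieces isomorphic to $\mathcal O_{\mathfrak X_1(p^{2m})(v)}/\pi$. The vanishing proposition then applies to each graded piece and gives the base change; the small Banach sheaf conclusion is packaged as \cite{AIP} Proposition~A.1.3.1, whose hypotheses are precisely this mod-$\pi$ dévissage together with properness of $\eta$. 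Your argument becomes correct once you insert this reduction to the special fibre and invoke Corollary~\ref{cor95} rather than an (nonexistent) integral filtration.
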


\begin{proof}[Proof]
We can just restrict to $l = m-1$, but as inductive limit and direct image commute, and as the kernel $\mathfrak w_{w,m}^{\kappa^{0,un}\dag} \fleche \mathfrak w_{w,m-1}^{\kappa^{0,un}\dag}$ is isomorphic to $\mathfrak w_{w,1}^{\kappa^{0,un}\dag}$ which is itself a direct limit of sheaves with graded pieces isomorphic to $\mathcal O_{X_1(p^m)}/\pi$ (see corollary \ref{cor95}) and thus by the previous proposition we have the announced equality. We can thus use \cite{AIP} Proposition A.1.3.1 which proves that $(\eta\times 1)_*\mathfrak w_w^{\kappa^{0,un}\dag}$ is a small formal Banach sheaf (Recall that $\eta$ is proper).
\end{proof}

\begin{propen}
Let $w >0$. Denote $\mathfrak W(w)^0 = \Spf(A)$. Then,
\[ M_{v,w}^{\kappa^{0,un}\dag,cusp} = H^0(X^*(v) \times \mathcal W(w)^0,(\eta\times 1)_*\omega_w^{\kappa^{0,un}\dag}(-D))\]
is a projective $A[1/p]$-Banach module. Moreover the specialisation map, for $\kappa \in \mathcal W(w)^0$,
\[ M_{v,w}^{\kappa^{0,un}\dag,cusp} \fleche H^0(X^*(v),\eta_*\omega_w^{\kappa\dag}(-D)),\]
is surjective.
\end{propen}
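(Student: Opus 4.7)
The plan is to invoke the formalism of small formal Banach sheaves developed in \cite{AIP} Appendix A, once the hypotheses are in place. First I would observe that $X^*(v)$ is affinoid. Indeed, a suitable power of $\det\omega$ is ample on the minimal compactification $X^*$ and carries the Hasse invariant $\widetilde{\ha_\tau}$, so the ordinary locus $X^*_{ord}$ is affinoid; the admissible open $X^*(v)$ is then cut out by the single condition $v(\widetilde{\ha_\tau}) \leq v$ inside a strict neighborhood of $X^*_{ord}$, hence is affinoid as well. Since $\mathcal W(w)^0$ is by construction a (finite union of) closed ball(s), the product $X^*(v) \times \mathcal W(w)^0$ is affinoid, with formal model $\mathfrak X^*(v) \times \mathfrak W(w)^0 = \Spf B$ for a suitable admissible $\mathcal O_K$-algebra $B$.

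Second, the preceding theorem asserts that $\mathscr F := (\eta\times 1)_*\mathfrak w_w^{\kappa^{0,un}\dag}(-D)$ is a small formal Banach sheaf on $\Spf B$. I would apply \cite{AIP} Proposition A.1.2.2 (together with the discussion on property (Pr) in Buzzard's sense): the global sections of a small formal Banach sheaf on an affinoid formal scheme form a $p$-adically complete $B$-module whose generic fiber is a direct summand of an orthonormalizable Banach $A[1/p]$-module. This gives the projectivity of $M_{v,w}^{\kappa^{0,un}\dag,cusp}$.

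For surjectivity of the specialization, the key input is the base-change property $i'^*(\eta_m)_*\mathfrak w_{w,m}^{\kappa^{0,un}\dag}(-D) = (\eta_l)_*\mathfrak w_{w,l}^{\kappa^{0,un}\dag}(-D)$ established in the preceding theorem, together with the vanishing of higher direct images $R^q\eta_*$ for $q>0$ on cuspidal coefficients (the local computation on $\widehat{\mathfrak M_1(p^{2m})_{V',\sigma}}$ using ampleness of $\mathcal L(k)$ on the elliptic part for $k>0$). The devissage of $\mathscr F$ as a filtered colimit of coherent sheaves whose higher cohomology on $\mathfrak X^*(v)$ vanishes — combined with the analogous dévissage in the weight direction — reduces the question to commuting $H^0$ with base change along the closed immersion $\Spf(A/\mathfrak m_\kappa) \hookrightarrow \mathfrak W(w)^0$ for sheaves with no $H^1$, which is standard.

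The main obstacle of the construction — controlling $R^q\eta_*$ for the push-forward from the toroidal to the minimal compactification — was settled in the preceding proposition using the simple structure of the boundary for Picard surfaces. Once that vanishing and the base change statement are in hand, the projectivity and surjectivity assertions are a formal consequence of the general theory of small Banach sheaves on affinoid formal schemes.
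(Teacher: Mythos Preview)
Your overall strategy coincides with the paper's: affinoidness of $X^*(v)$, the small formal Banach sheaf property of $(\eta\times1)_*\mathfrak w_w^{\kappa^{0,un}\dag}(-D)$, and the machinery of \cite{AIP} Appendix A. There is, however, one imprecision that matters.

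You write that ``$\mathfrak X^*(v)\times\mathfrak W(w)^0=\Spf B$'' and then apply Proposition~A.1.2.2 as if to an affine formal scheme. But $\mathfrak X^*(v)$ is defined as the normalisation of an open piece of an admissible blow-up of $\mathfrak X^*$; there is no reason for it to be affine as a formal scheme. Only its rigid generic fibre $X^*(v)$ is affinoid. The paper therefore does not apply A.1.2.2 to the whole space at once: it chooses a finite affine covering $(\mathfrak U_i)$ of $\mathfrak X^*(v)$, shows (via the explicit dévissage of Corollary~\ref{cor95} and topological Nakayama) that each $M_{\underline i,\infty}=H^0(\mathfrak U_{\underline i}\times\mathfrak W(w)^0,\mathscr F)$ is orthonormalisable, and then uses that the \v{C}ech complex becomes exact after inverting $p$ because the \emph{rigid} space is affinoid. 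This yields a finite resolution of $M_{v,w}^{\kappa^{0,un}\dag,cusp}$ by orthonormalisable $A[1/p]$-modules, hence projectivity. Your sketch skips this covering/\v{C}ech step, which is the actual content of how A.1.2.2 is used here.

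For surjectivity of specialisation your sketch is in the right direction but vague. The paper makes it concrete: take the Koszul resolution of $A[1/p]/\mathfrak p_\kappa$, tensor over each $\mathfrak U_{\underline i}$ with the local sections, and form the resulting double complex. Columns are exact (Koszul over a flat module), and rows with $\underline i$ of length $\geq 1$ are exact by the same \v{C}ech acyclicity used above; a diagram chase on the bottom-right square then gives surjectivity of $\pi_\kappa$. Your appeal to ``dévissage in the weight direction'' is morally this Koszul argument, but you should make the double complex explicit rather than invoking a generic base-change principle.
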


\begin{proof}[Proof]
This is proved exactly as in \cite{AIP}, Corollary 8.2.3.2. Let us sketch the ideas. Fix $(\mathfrak U_i)_{1\geq i \geq r}$ a (finite) affine covering of $\mathfrak X^*(v)$, and for $\underline i = (i_1,\dots,i_s) \in \{1,\dots,r\}^s$ denote $\mathfrak U_{\underline i}$ the interesection $\mathfrak U_{i_1} \cap \dots \mathfrak U_{i_s}$.
Then,
\[ M_{\underline i,\infty} = H^0(\mathfrak U_{\underline i}\times \mathfrak W(w)^0,(\eta\times1)_*\mathfrak w_w^{\kappa^{0,un}\dag}(-D)),\]
is isomorphic to the $p$-adic completion of a free $A$-module (i.e. is orthonormalisable). This is essentially Corollary \ref{cor95} and topological Nakayama's lemma.
But then, as $X^*(v)$ is affinoid, the Cech complex after inverting $p$ is exact and thus (\cite{AIP} Theorem A.1.2.2) provides a resolution of $M_{v,w}^{\kappa^{0,un}\dag,cusp}$ by the $M_{\underline i,\infty}[1/p]$, and thus $M_{v,w}^{\kappa^{0,un}\dag,cusp}$ is projective.
For the surjectivity assertion, fix $\mathfrak p_{\kappa}$ the maximal ideal of $A[1/p]$ corresponding to $\kappa$ and consider the Koszul resolution of 
$A[1/p]/\mathfrak p_{\kappa}$. Tensoring this for each $\underline i$ with $\eta_*\mathfrak w^{\kappa^{0,un}\dag}(-D)(\mathfrak U_{\underline i})$ gives a resolution of
 $\eta_*\mathfrak w_w^{\kappa\dag}(-D)(\mathfrak U_{\underline i})$. This gives a double complex where each column (for a fixed index of the Koszul complex) is 
 exact. But each line (for a fixed $\underline i$ non trivial) is also exact by the previous acyclicity, and thus we have the following bottom right square,
\begin{center}
\begin{tikzpicture}[description/.style={fill=white,inner sep=2pt}] 
\matrix (m) [matrix of math nodes, row sep=3em, column sep=2.5em, text height=1.5ex, text depth=0.25ex] at (0,0)
{ 
\prod_{1 \geq i \geq r} \omega^{\kappa^{0,un}\dag}(\mathcal U_i) &  & \prod_{1 \geq i \geq r} \omega^{\kappa\dag}(\mathcal U_i) & 0   \\
\omega^{\kappa^{0,un}\dag}(X^*(v) & &\omega^{\kappa\dag}(X^*(v)) &  \\
 0 & & 0 & \\
 };

\path[->,font=\scriptsize] 
(m-1-1) edge node[auto] {$$} (m-1-3)
(m-1-3) edge node[auto] {$$} (m-1-4)
(m-2-1) edge node[auto] {$\pi_\kappa$} (m-2-3)
(m-1-1) edge node[auto] {$$} (m-2-1)
(m-2-1) edge node[auto] {$$} (m-3-1)
(m-1-3) edge node[auto] {$$} (m-2-3)
(m-2-3) edge node[auto] {$$} (m-3-3)
;
\end{tikzpicture}
\end{center}
which proves that $\pi_\kappa$ is surjective.
\end{proof}

\begin{propen}
Denote $\Spm(B) = \mathcal W(w)$. Then the $B$-module
\[ H^0(\mathcal X(v) \times \mathcal W(w),\omega_w^{\kappa^{un}\dag}(-D))\]
is projective. Moreover, for every $\kappa \in \mathcal W(w)$, the specialisation  map,
\[  H^0(\mathcal X(v) \times \mathcal W(w),\omega_w^{\kappa^{un}\dag}(-D)) \fleche  H^0(\mathcal X(v) ,\omega_w^{\kappa\dag}(-D))\]
is surjective.
\end{propen}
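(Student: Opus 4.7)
The plan is to reduce this statement to the previous proposition by exploiting the decomposition of the weight space into connected components. Recall from Section \ref{sectW} that $\mathcal W$ is a disjoint union of $(p+1)(p^2-1)$ three-dimensional open balls, one for each torsion character of $\mathcal O^\times \times \mathcal O^1$. Intersecting with $\mathcal W(w)$ yields a finite disjoint union $\mathcal W(w) = \coprod_i \mathcal W(w)^0_i$, where each piece $\mathcal W(w)^0_i$ is, up to translation by the corresponding torsion character $\chi_i$, identified with the affinoid $\mathcal W(w)^0$ of the previous proposition. Correspondingly, $B$ decomposes as a finite product $B = \prod_i B_i$ with $B_i = A_i[1/p]$.

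First I would check that the universal character $\kappa^{un}$ on $\mathcal W(w)$ restricts on each component to $\kappa^{0,un} \otimes \chi_i$ for the fixed finite order character $\chi_i$, so that the associated Banach sheaves match up: $\omega_w^{\kappa^{un}\dag}$ restricted to $\mathcal X(v) \times \mathcal W(w)^0_i$ is isomorphic to $\omega_w^{\kappa^{0,un} \otimes \chi_i\dag}$. Twisting by a finite order character only changes the sheaf by a fixed invertible twist (coming from an algebraic character of $T^1(\ZZ_p)$ pulled back through the Iwahori torsor), so the projectivity and base change arguments of the preceding proof go through verbatim on each component.

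Next I would use that global sections over $\mathcal X(v) \times (-)$ commute with disjoint unions in the weight direction, giving a natural identification
\[ H^0(\mathcal X(v) \times \mathcal W(w), \omega_w^{\kappa^{un}\dag}(-D)) = \prod_i H^0(\mathcal X(v) \times \mathcal W(w)^0_i, \omega_w^{\kappa^{un}_i\dag}(-D)).\]
By the previous proposition, each factor is projective over $B_i$; a finite product of projective modules over the factors of a finite product ring is projective over the product, whence projectivity of the left-hand side over $B = \prod_i B_i$.

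For the surjectivity of specialisation, any $\kappa \in \mathcal W(w)$ lies in a unique component $\mathcal W(w)^0_{i(\kappa)}$, and the projection $B \twoheadrightarrow B_{i(\kappa)}$ factorises the specialisation map; hence surjectivity follows at once from the corresponding statement already proved on each component. The argument is entirely a matter of combinatorial bookkeeping of the components; the only point to be careful about is the compatibility of the universal characters under the torsion translations, which is nothing more than the definition of $\mathcal W$ as the union of balls indexed by torsion characters.
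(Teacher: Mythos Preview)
Your decomposition of $\mathcal W(w)$ into components indexed by torsion characters is reasonable bookkeeping, but it does not address the actual content of the proposition. The gap is that you conflate two different sheaves living on two different spaces.

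The previous proposition concerns $M_{v,w}^{\kappa^{0,un}\dag,cusp}$, the global sections of $(\eta\times 1)_*\omega_w^{\kappa^{0,un}\dag}(-D)$ over $X^*(v)\times\mathcal W(w)^0$; here $\omega_w^{\kappa^{0,un}\dag}$ is the sheaf on the \emph{level-$p^{2m}$ cover} $\mathfrak X_1(p^{2m})(v)$, depending only on the restriction $\kappa^0$ to $\mathfrak T_w$. The present proposition, by contrast, is about $\omega_w^{\kappa^{un}\dag}$ on $\mathcal X(v)$, which by construction (Section~7.1 and Appendix~C) is obtained from the former by pushing forward along $\pi_3:\mathfrak X_1(p^{2m})(v)\to\mathfrak X(v)$, twisting the $B_\infty(\ZZ_p)$-action by $-\kappa^{un}$, and \emph{then taking $B_n$-invariants}. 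Your claim that ``twisting by a finite order character only changes the sheaf by a fixed invertible twist'' overlooks this last step: the passage from the upstairs sheaf to the downstairs one is an isotypic projection for the finite group $B_n$, not a line-bundle twist, and the previous proposition says nothing about individual isotypic pieces.

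The paper's argument is exactly this missing step: one identifies $H^0(\mathcal X(v)\times\mathcal W(w),\omega_w^{\kappa^{un}\dag}(-D))$ with $(M_{v,w}^{\kappa^{0,un}\dag,cusp}\otimes_{A[1/p]} B(-\kappa^{un}))^{B_n}$, and then uses that $B_n$ is finite and $B$ has characteristic zero. Averaging over $B_n$ exhibits the invariants as a direct summand of the projective $B$-module $M_{v,w}^{\kappa^{0,un}\dag,cusp}\otimes_{A[1/p]} B$, hence projective; and vanishing of higher $B_n$-cohomology (since $|B_n|$ is invertible) preserves surjectivity of specialisation. Your component decomposition could be used to organise the base change $A[1/p]\to B$, but it cannot replace the finite-group-invariants argument.
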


\begin{proof}[Proof]
We can identify the $B$-module,
\[ M'  = H^0(\mathcal X(v) \times \mathcal W(w),\omega_w^{\kappa^{un}\dag}(-D)),\]
with $(M_{v,w}^{\kappa^{0,un}\dag,cusp}\otimes_{A[1/p]} B(-\kappa^{un}))^{B_n}$. But now $B_n$
is a finite group, and $B$ is of caracteristic zero, thus $M'$ is a direct factor in a projective $B$-module, and is thus surjective.
Moreover, as $B_n$ is finite, the (higher) 
group cohomology vanishes, and the specialisation map stays surjective.
\end{proof}

\subsection{Types}

Let $K_f^p$ be a compact open subgroup of $G(\mathbb A_f^p)$. Let $K_f = K_f^pI$ where $I \subset G(\QQ_p)$ is the Iwahori subgroup.
Fix $(J,V_J)$ a complex continuous irreducible representation of $K_f$, trivial at $p$ and outside a level $N$, it is of finite dimension and finite image, and 
thus defined over a number field.
Denote $K^0 \subset K_f$ its Kernel.

\begin{definen}
The space of Picard modular forms of weight $\kappa$, $v$-overconvergent, $w$-analytic, of type $(K_f,J)$ is,
\[ Hom_{K_f}(J,H^0(\mathcal X(v),\omega_w^{\kappa\dag})).\]
The space of overconvergent locally analytic Picard modular forms of weight $\kappa$ and type $(K_f,J)$ is then,
\[ M_\kappa^{\dag,(K_f,J)} = \Hom_{K_f}(J,\varinjlim_{v \rightarrow 0, w\rightarrow \infty} H^0(\mathcal X(v),\omega_w^{\kappa\dag}).\]
\end{definen}

\begin{remaen}
In the beginning of this section we made the assumption that the level $\Gamma$, outside $p$, is big enough ("neat"). But using the previous definition we can get 
rid of this assumption by taking $K_f^p$ big enough to have the neatness assumption, and take $J$ the trivial representation to descend our families for any level outside $p$, as the following proposition shows.
\end{remaen}

\begin{propen}
The space 
\[ M^{(K,J),\kappa^{un}\dag}_{cusp,v,w} := \Hom_{K_f}(J,H^0(\mathcal X(v)\times \mathcal W(w),\omega_w^{\kappa^{un}\dag}(-D))),\]
is a projective $\mathcal O_{\mathcal W(w)}$-module, and the specialisation map is surjective.
\end{propen}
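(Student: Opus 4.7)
The plan is to deduce the statement from the preceding proposition by the same averaging argument used there to descend from the level $K_f^p$-setting (where $K_f^p$ is chosen sufficiently small/neat so that $\Gamma$ is neat) to a general type. Concretely, choose an auxiliary neat compact open $K^{\prime p}_f\subset K_f^p$, trivial at $p$, such that $K^\prime_f := K^{\prime p}_f I$ is contained in $K^0=\ker J$, and so that all the constructions of the previous sections apply at level $K^\prime_f$. Write $\Delta = K_f/K^\prime_f$, a finite group, through which the $K_f$-action on every object at level $K^\prime_f$ factors.

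By the previous proposition (applied at level $K^\prime_f$), the module
\[
M := H^0(\mathcal X_{K^\prime_f}(v)\times\mathcal W(w),\omega_w^{\kappa^{un}\dag}(-D))
\]
is a projective $B$-module ($\Spm(B)=\mathcal W(w)$), equipped with an action of $\Delta$ commuting with that of $B$, and the specialisation map $M\to H^0(\mathcal X_{K^\prime_f}(v),\omega_w^{\kappa\dag}(-D))$ is surjective and $\Delta$-equivariant. One has a natural identification
\[
M^{(K_f,J),\kappa^{un}\dag}_{cusp,v,w}=\Hom_{K_f}(J,M)=\Hom_\Delta(J,M)=\bigl(J^\vee\otimes_{L}M_L\bigr)^{\Delta},
\]
where $L$ is any number field over which $J$ is defined (recall $J$ has finite image, so it descends), and $M_L := M\otimes_{\QQ_p}L_p$ for a suitable $p$-adic completion, after enlarging the base ring; this is the standard bookkeeping allowing us to work over a ring of characteristic zero containing the values of $J$.

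Now the proof is immediate. Since $J$ is finite dimensional, $J^\vee\otimes M_L$ is projective over $B_L := B\otimes L_p$, as a finite direct sum of copies of the projective module $M_L$. Since $\Delta$ is a finite group acting on a module over a $\QQ$-algebra, the idempotent $e_J := |\Delta|^{-1}\sum_{g\in\Delta} g$ realises $(J^\vee\otimes M_L)^\Delta$ as a direct factor of $J^\vee\otimes M_L$; hence it is projective over $B_L$, and therefore over $\mathcal O_{\mathcal W(w)}$ after the standard identification.

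For the surjectivity of specialisation at a weight $\kappa\in\mathcal W(w)$, the argument is the usual one: the functor $(-)^\Delta$ is exact on $L_p[\Delta]$-modules because $|\Delta|$ is invertible (equivalently, $H^i(\Delta,-)=0$ for $i\geq 1$). Applying $\Hom_\Delta(J,-)=(J^\vee\otimes-)^\Delta$ to the surjection provided by the previous proposition,
\[
M\twoheadrightarrow H^0(\mathcal X_{K^\prime_f}(v),\omega_w^{\kappa\dag}(-D)),
\]
yields the desired surjection
\[
M^{(K_f,J),\kappa^{un}\dag}_{cusp,v,w}\twoheadrightarrow \Hom_{K_f}(J,H^0(\mathcal X_{K^\prime_f}(v),\omega_w^{\kappa\dag}(-D))),
\]
which finishes the proof. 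The only non-routine point is the preliminary check that one can spread out the $K_f$-equivariant structure to level $K^\prime_f$ in a way compatible with the $\mathcal W(w)$-family, but this is precisely what was used (and verified) in the neat case of the previous proposition; the extra layer of a finite group quotient is harmless since we are in characteristic zero.
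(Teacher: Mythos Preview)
Your proposal is correct and follows essentially the same approach as the paper: pass to an auxiliary neat level contained in $\ker J$, use the previous proposition there, then tensor with $J^\vee$ and take invariants under the finite quotient $K_f/K'_f$, noting that in characteristic zero this functor is exact and realizes the result as a direct factor of a projective module. The paper's proof is terser but identical in substance; your added remarks about the field of definition of $J$ and the explicit averaging idempotent are harmless elaborations of the same argument.
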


\begin{proof}[Proof]
Suppose $K(N) = K \subset K^0 = \Ker(J) $ is neat (outside $p$, up to enlarging it). Then we have shown that, in level $K$,
\[ H^0(\mathcal X(v)\times \mathcal W(w),\omega_w^{\kappa^{un}\dag}(-D)),\]
is projective, and that the corresponding specialisation map is surjective. We can thus twist the $K_f/K$ action by $V_J^*$ and take the invariants over $J$; as $K_f/K$ is finite, the space if a direct factor inside $H^0(\mathcal X(v)\times \mathcal W(w),\omega_w^{\kappa^{un}\dag}(-D)) \otimes V_J^*$ and higher cohomology vanishes.
\end{proof}

\begin{remaen}
The same argument applies when $p$ splits in $E$, for the spaces of overconvergent modular forms defined in \cite{Bra}, \cite{AIP}. In particular we can construct families of Picard modular forms with fixed type when $p$ is unramified.
\end{remaen}

\subsection{Eigenvarieties}

\begin{theoren}
\label{thrHecke}
Let $p$ be a prime number, unramified in $E$. Let $\mathcal W$ be the $p$-adic weight space of $U(2,1)$, as defined in section \ref{sectW} when $p$ is inert, it is a disjoint union of 3-dimensional ball over $\QQ_p$ when $p$ splits.
Fix $(K_J,J)$ a type outside $p$, $K \subset \Ker J$ a neat level outside $p$, and let $S$ be the set of places where $K$ is not compact maximal or $p$. 
There exists an equidimensional of dimension 3 eigenvariety $\mathcal E$ and a locally finite map,
\[ w : \mathcal E \fleche \mathcal W,\]
such that for any $\kappa \in \mathcal W$, $w^{-1}(\kappa)$ is in bijection with eigensystems for $\mathbb T^S \otimes_\ZZ \mathcal A(p)$ acting on the space of overconvergent, locally analytic, modular forms of weight $\kappa$ and type-level $(K_J,J)$ (and Iwahori level at $p$), finite slope for $U_p$.
\end{theoren}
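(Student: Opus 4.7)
The strategy is to apply Buzzard's eigenvariety machine (\cite{Bu}) to the projective Banach modules of cuspidal overconvergent locally analytic forms of type $(K_J,J)$ constructed in the previous subsections, with the compact operator $U_p$ and the commutative Hecke algebra $\mathcal{H} \otimes \mathcal{A}(p)$.

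More precisely, I would proceed as follows. First, for every admissible affinoid $\mathcal{W}(w) \subset \mathcal{W}$ and every sufficiently small $v$ with $w$ in the allowed range $m-1+\frac{p+v}{p^2-1} < w < m-\frac{p^{2m}-1}{p^2-1}v$, the space
\[ M^{(K_J,J),\kappa^{un}\dag}_{cusp,v,w} = \Hom_{K_f}\!\bigl(J, H^0(\mathcal X(v)\times \mathcal W(w), \omega_w^{\kappa^{un}\dag}(-D))\bigr) \]
is, by the previous proposition, a projective (hence ON-able up to direct factor) Banach $\mathcal O_{\mathcal W(w)}$-module, and the operator $U_p$ acts on it as a completely continuous operator (because, by construction, it factors through the restriction map $H^0(\mathcal{X}(v),\cdot)\hookrightarrow H^0(\mathcal{X}(v/p^2),\cdot)$, which is compact by the usual potato-in-potato argument). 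One thus obtains a Fredholm determinant
\[ F_w(T) = \det(1 - T U_p \mid M^{(K_J,J),\kappa^{un}\dag}_{cusp,v,w}) \in \mathcal O_{\mathcal W(w)}\{\{T\}\}. \]

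Next, one checks that these Fredholm series are compatible when $w$ varies and when $v$ shrinks: the transition maps are obtained via the restriction map between strict neighborhoods and are equivariant for $U_p$ up to the identification already used in the definition of $U_p$, so the series glue to a global Fredholm series $F \in \mathcal O(\mathcal W)\{\{T\}\}$, whose zero locus defines the spectral variety $\mathcal Z \fleche \mathcal W$. Buzzard's machine then produces an eigenvariety $\mathcal E \fleche \mathcal Z \fleche \mathcal W$ by taking the maximal spectrum of the image of $\mathcal H \otimes \mathcal A(p)$ inside the endomorphisms of the projective module, locally over each affinoid $V \subset \mathcal Z$ such that $V \fleche \mathcal W$ is finite flat. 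The map $\mathcal E \fleche \mathcal W$ is locally finite by construction, and its fiber at $\kappa$ is identified with the systems of Hecke eigenvalues on the finite-slope part of the specialisation $M^{(K_J,J),\kappa\dag}_{cusp} = \varinjlim_{v,w} M^{(K_J,J),\kappa\dag}_{cusp,v,w}$, thanks to the surjectivity of the specialisation map $M^{(K_J,J),\kappa^{un}\dag}_{cusp,v,w} \twoheadrightarrow M^{(K_J,J),\kappa\dag}_{cusp,v,w}$ established just above.

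The remaining point is equidimensionality of dimension $3$. This follows from a general argument of Chenevier (\cite{Che}, or the analogous statement in \cite{BC2}): since $\mathcal W$ is equidimensional of dimension $3$ and smooth (it is a disjoint union of open $3$-balls by the description in section \ref{sectW}), the spectral variety $\mathcal Z$ is equidimensional of dimension $3$, because it is locally cut out in $\mathcal W \times \mathbb A^1$ by a single Fredholm series whose slopes are finite on each affinoid, and the eigenvariety $\mathcal E \fleche \mathcal Z$ is a finite surjective morphism of rigid spaces, hence equidimensional of the same dimension as $\mathcal Z$.

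The main obstacle is really packaged into the earlier parts of the paper: the construction of the projective Banach module of cuspidal forms in families over $\mathcal W(w)$ (which required the canonical filtration, the torsors $\mathfrak{IW}_w^+$, and the vanishing of higher direct images to the minimal compactification), and the compactness/variation of $U_p$ (which required the normalisation $\widetilde{\pi^\star}$ and the control of the analyticity radius shown in the propositions of section \ref{sectHecke}). Once these are in hand, the construction of $\mathcal E$ is formal via Buzzard's machine, and I would only need to carefully check that gluing the local pieces over varying $w$ and $v$ is compatible, which is standard since the transition maps are $\mathcal{H}\otimes\mathcal{A}(p)$-equivariant.
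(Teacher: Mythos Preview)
Your proposal is correct and follows essentially the same approach as the paper: apply Buzzard's eigenvariety machine to the data $(\mathcal W(w), M^{(K_J,J),\kappa^{un}\dag}_{cusp,v,w}, U_p, \mathcal H^{Np}\otimes \mathcal A(p))$ for compatible $v,w$ and glue. The paper's own proof is considerably more terse---it simply cites Brasca for the split case and invokes Buzzard--Coleman's machinery for the inert case---whereas you have spelled out the Fredholm determinant, the gluing, and the equidimensionality argument via Chenevier; these elaborations are all standard and correct.
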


\begin{proof}
If $p$ is split this is a particular case of the main result of \cite{Bra} (taking into account the previous remark and the normalisation of the Hecke operators).
If $p$ is inert, this is a consequence of Buzzard-Coleman's machinery (\cite{Buz}) using for all compatible $v,w$ $(\mathcal W(w),M^{(K,J),\kappa^{un}\dag}_{cusp,v,w},U_p,\mathcal H^{Np}\otimes \mathcal A(p))$ and glueing along $v,w$.
\end{proof}

\subsection{Convention on weights}
\label{normweight}
As in \cite{BC1}, we set as convention the Hodge-Tate weight of the cyclotomic character to be -1. Fix an isomorphism $\CC \simeq \overline{\QQ_p}$ compatible with the inclusions 
$\overline{E} \subset \CC$ that extend $\tau_\infty$ and denote $\tau, \sigma_\tau$ the $p$-adic places at $p$ inert corresponding to $\tau_\infty, c\tau_{\infty}$. If $p$ is split, we will insted call $v,\overline v$ the places corresponding to $\tau_\infty, c\tau_{\infty}$, but in this section we focus on $p$ inert, even if a similar result hold with $v,\overline v$.

Let us recall the different parameters that are associated to an algebraic automorphic representation $\pi$ of $GU(2,1)$ that we will need, following partly \cite{Ski}. There is 
$\lambda = ((\lambda_1,\lambda_2,\lambda_3),\lambda_0)$, the Harrish-Chandra parameter, there is $c = ((c_1,c_2,c_3),c_0)$, the highest weight of the algebraic representation which has the same infinitesimal character as $\pi_\infty$ in the discrete series case and $(c_0,c_0')$ is the parameter at infinity of ${\omega_\pi}^c$ the conjugate of the central character of $\pi$. There is $\kappa = (k_1,k_2,k_3)$ the classical weight such that $\pi_f$ appears in 
$H^0(X_K,\omega^\kappa)$  (if it exists) and there are the Hodge-Tate weights $((h^\tau_1,h^\tau_2,h^\tau_3),(h^{\sigma\tau}_1,h^{\sigma\tau}_2,h^{\sigma\tau}_3))$ of 
the Galois representation of $G_E$ associated to $\pi$ by Blasius-Rogawski or Skinner. Let us explain how they are related in the case of discrete series.

First denote $\rho_n$ the half-sum of the positive non-compact roots and $\rho_c$ the half-sum of the positive compact roots (see \cite{Gold}, Section 5.3).
We have then for $i \geq 1, \lambda_i = (c + \rho_n + \rho_c)_i$, and $(c_0,c_0')$ is the infinite weight of the dual of the central character. 
The calculation of Harris and Goldring gives $(-k_3,k_1,k_2) = \lambda + \rho_n - \rho_c$ (forgetting the $\lambda_0$ factor here, it is because we only considered 3 parameters in the weight space). 
The Hodge-Tate weights of the Galois representation associated to $\pi$ depends of course on the normalisation of the correspondance, but take the one of Skinner, \cite{Ski} 
section 4.2,4.3 and after theorem 10, we get,
\begin{eqnarray*} ((h^\tau_1,h^\tau_2,h^\tau_3),(h^{\sigma\tau}_1,h^{\sigma\tau}_2,h^{\sigma\tau}_3)) = \\ ((-c_0 - c_1,-c_0 - c_2 + 1,-c_0 - c_3 +2),(- c_0' + c_3, -c_0' + c_2 + 1,-c_0' + c_1 + 2)).\end{eqnarray*}

\begin{remaen}
Let $f \in H^0(X,\omega^\kappa)$ be a classical form. To $f$ is associated $\Phi_f$ an automorphic form, with equivariant Hecke action, cf. section 4.2.2, and thus an autormorphic representation $\pi_f$.
\end{remaen}

Before going further, let us remark that a (algebraic) representation $\pi$ of $GU(2,1)$ is equivalent to a pair $(\pi^0,\psi)$ of $\pi^0$ a (algebraic) automorphic representation of $U(2,1)$ (the restriction of $\pi$) and a (algebraic) Hecke character of $GU(1) = Res_{E/\QQ}\mathbb G_m$ (the central caracter of $\pi$) which extend the central character of $\pi^0$ (see section \ref{secttransfert}). To an algebraic (nice) $\pi$ is associated a (non-necessarily polarized) Galois representation $\rho_\pi$, but also a pair $\pi^0,\psi_\pi$, and to $\pi^0$ is associated a polarized Galois representation, which is what we will need. Thus from Skinner's normalisation, removing the central character of $\pi$, we get the following proposition (we could also directly use \cite{CH}).

\begin{propen}
\label{proSki}
Let $\kappa = (k_1,k_2,k_3)  \in \ZZ^3$ and $f = H^0(X,\omega^\kappa)$ which is an eigenvector for the Hecke operators outside p. Denote $|k| = k_1 + k_2 + k_3$.
Let ${\pi}$ be the automorphic representation corresponding to $f$ (i.e. a irreducible factor in the representation generated by ${\Phi_f}$ of section \ref{sectaut}).

Suppose ${\pi}_\infty$ is a (regular) discrete series of Harrish-Chandra parameter $\lambda$, then \[\lambda = ((k_1,k_2-1,1-k_3),|k|)\] (see \cite{Gold} section 5) with $k_1 \geq k_2 > 2-k_3$.
Denote by $\rho_{{\pi},Ski}$ the $p$-adic Galois representation associated to ${\pi}$ by Skinner, \cite{Ski}. Then $\rho_{{\pi},Ski}$ satisfies the following essentially self-polarisation,
\[ \rho_{{\pi},Ski}^c \simeq \rho_{\pi,Ski}^\vee \otimes \eps_{cycl}^{-|k|-2}\otimes \rho_\psi,\]
where $\eps$ is the cyclotomic character, $\psi$ is a finite Hecke character, and if $\omega_\pi$ denote the central character of $\pi$, $\omega_\pi\omega_\pi^c$ is of the form $N^{-|k|}\psi$.

Then, the $\tau$-Hodge-Tate weights of $\rho_{\pi,Ski}$ are,
\[ (k_2+1,2 + k_1, k_3+k_1+k_2 ),\]
and the $\sigma\tau$-Hodge-Tate weights are,
\[ (2 , k_2+k_3,k_1+k_3+1).\]
To $\pi^0 = \pi_{|U(2,1)}$ is associated a polarised continuous Galois representation $\rho_\pi$ verifiying,
\[ \rho_\pi^c \simeq \rho_\pi^\vee,\]
of $\tau$-Hodge-Tate weights $(1-k_3,k_2-1,k_1)$ and (thus) $\sigma\tau$-Hodge-Tate weights $(-k_1,1-k_2,k_3-1)$.

\end{propen}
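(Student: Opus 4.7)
The statement is a bookkeeping consequence of the relations recalled just before it. First I identify $\rho_n$ and $\rho_c$ for $U(2,1)$ with compact subgroup $U(2)\times U(1)$: the unique positive compact root is $e_1-e_2$, while the positive non-compact roots are $e_1-e_3$ and $e_2-e_3$, giving
\[\rho_c = \tfrac{1}{2}(1,-1,0), \qquad \rho_n = \tfrac{1}{2}(1,1,-2)\]
on the three components indexed by $i\geq 1$. Plugging these values into the Harris-Goldring formula $(-k_3,k_1,k_2) = \lambda + \rho_n - \rho_c$ and solving yields $(\lambda_1,\lambda_2,\lambda_3) = (k_1, k_2-1, 1-k_3)$; the coordinate $\lambda_0$ equals $|k|$ by the prescription on the central character. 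The regularity condition $\lambda_1>\lambda_2>\lambda_3$ rewrites as $k_1\geq k_2 > 2-k_3$, giving the first assertion.

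Next, I invert $\lambda = c+\rho_n+\rho_c$ to express $c = (c_1,c_2,c_3)$ in terms of $(k_1,k_2,k_3)$, and determine $(c_0,c_0')$ from the infinity type of $\omega_\pi^c$, which is fixed by $|k|$ together with the action of the center on $\omega^\kappa$. Substituting these into Skinner's formula
\[\bigl((-c_0-c_1,\,-c_0-c_2+1,\,-c_0-c_3+2),\,(-c_0'+c_3,\,-c_0'+c_2+1,\,-c_0'+c_1+2)\bigr)\]
and simplifying produces the stated Hodge-Tate weights of $\rho_{\pi,\mathrm{Ski}}$ at $\tau$ and $\sigma\tau$. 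The essentially self-polarization of $\rho_{\pi,\mathrm{Ski}}$ is Skinner's theorem applied to $\pi^c$, the finite order character $\psi$ being the algebraic part of $\omega_\pi\omega_\pi^c$ after extracting the factor $N^{-|k|}$ (which contributes the twist by $\eps_{cycl}^{-|k|-2}$, the $-2$ coming from the half sum of positive roots of $GL_3$).

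Finally, to pass to the Galois representation $\rho_\pi$ attached to $\pi^0 = \pi_{|U(2,1)}$, I use the fact that $\rho_\pi$ and $\rho_{\pi,\mathrm{Ski}}$ differ by the $p$-adic Galois character associated to the central character $\omega_\pi$ (this is how the extension from $U(2,1)$ to $GU(2,1)$ is encoded Galois-theoretically); its Hodge-Tate weights are read off directly from the infinity type of $\omega_\pi$. Subtracting these shifts produces the symmetric pattern $(1-k_3,k_2-1,k_1)$ at $\tau$ and $(-k_1,1-k_2,k_3-1)$ at $\sigma\tau$, and the conjugate-self-duality $\rho_\pi^c\simeq\rho_\pi^\vee$ follows because $\pi^0$ is automatically polarized on the unitary group. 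The main obstacle is purely bookkeeping: keeping all sign and normalization conventions consistent, in particular the convention that the Hodge-Tate weight of the cyclotomic character is $-1$, and the identification of which $p$-adic place is $\tau$ versus $\sigma\tau$ through the fixed isomorphism $\CC\simeq \overline{\QQ_p}$.
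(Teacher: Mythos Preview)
Your approach is the same as the paper's: cite Goldring for the passage $\kappa\leadsto\lambda$, read off the Harish-Chandra regularity condition, then plug $c=\lambda-\rho_n-\rho_c$ into Skinner's Hodge--Tate formula, and finally untwist by the central character to get the polarised $\rho_\pi$. The paper's proof is in fact shorter than yours: it simply invokes \cite{Gold} Theorem~5.5.1 for $\lambda$, \cite{Ski} (below Theorem~10) for the Hodge--Tate weights, and records the precise twist $\rho_\pi=\rho_{\pi,\mathrm{Ski}}\cdot\rho_{\overline{\omega_\pi}}^{-1}(1)$.

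One genuine bookkeeping inconsistency in your write-up: with your choice $\rho_c=\tfrac12(1,-1,0)$ and $\rho_n=\tfrac12(1,1,-2)$ one computes $\rho_n-\rho_c=(0,1,-1)$, and then the formula $(-k_3,k_1,k_2)=\lambda+\rho_n-\rho_c$ gives $\lambda=(-k_3,\,k_1-1,\,k_2+1)$, \emph{not} $(k_1,k_2-1,1-k_3)$. The mismatch is that the formula $(-k_3,k_1,k_2)=\lambda+\rho_n-\rho_c$ in the paper puts the one-dimensional $\tau$-direction in the first slot, so the compact $U(2)$ sits in coordinates $2,3$ and the compact root is $e_2-e_3$, not $e_1-e_2$. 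Your identification of the compact root assumes the opposite indexing. Since the paper does not compute $\rho_n,\rho_c$ explicitly but defers to Goldring's theorem, this inconsistency would need to be fixed (or the explicit values dropped) for your argument to go through as written. The same caution applies to your remark that ``the $-2$ comes from the half sum of positive roots of $GL_3$'': that half-sum is $(1,0,-1)$, and the shift by $2$ in the polarisation exponent is rather the $n-1$ of the unitary duality for $n=3$ in Skinner's normalisation.
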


\begin{proof}
The calculation of $\lambda$ in terms of $\kappa$ is exactly \cite{Gold} Theorem 5.5.1. Remark also that we can calculate in terms of $\kappa$ which are the discrete series by Harish-Chandra Theorem (\cite{Knapp} Theorem 6.6), and we find,
$k_1 \geq k_2 > 2-k_3$. 
Thus, the calculation of the Hodge-Tate weights of the Galois representations associated to $\pi$ are \cite{Ski}, under Theorem 10, with the previous calculation of $c$ in terms of $\lambda$. The representation $\rho_\pi$ is given by $\rho_{\pi,Ski}\rho_{\overline{\omega_\pi}}^{-1}(1)$. \end{proof}

\begin{remaen}
In terms of the $\tau$-Hodge-Tate weights of $\rho_\pi$, discrete series correspond to $h_1 < h_2 < h_3$.
All of this is coherent with the BGG decomposition, see for example \cite{LanICCM} (2.3), for $\underline c = \mu = (a,b,c)$ a highest weight representation of $G$,
\[ H^2_{dR}(Y,V_\mu^\vee) = H^2(X,\omega^{(-b,-a,c)}) \oplus H^1(X,\omega^{(1-c,-a,b+1)}) \oplus H^0(X,\omega^{(1-c,1-b,a+2)}).\]
\end{remaen}

Denote $\mathcal Z \subset \mathcal E$ the set of characters corresponding to regular (i.e. $w_2(z) \leq w_1(z) < -2-w_3(z) \in \ZZ^3$) classical modular forms (recall that if $f$ is classical of weight $(k_1,k_2,k_3)$, $w(f) = (-k_2,-k_1,-k_3)$). For each $z \in \mathcal Z$, there exists $f$ a classical form, which determines $\Pi$ an automorphic representation of $GU(2,1)$ (generated by $\Phi_f$ defined in subsection \ref{sectaut}). Such a $\Pi$ correspond to a packet, to which by work of Blasius-Rogawski \cite{BR}, Theorem 1.9.1 (see also for generalisation to higher dimension unitary groups and local global compatibilities the work of many authors, in particular \cite{BelComp,CH,Ski,BLGHT}) is associated a number field $E_z$, and compatible system of Galois representations, 
\[ \rho_{z,\lambda} : G_E \fleche \GL_3(E_{z,\lambda}), \forall \lambda \in \Spm(\mathcal O_{E_z}),\]
satisfying local global compatibilities (see for example \cite{Ski}, where the association is normalized by the previous proposition (the Hodge-Tate 
weight of the cyclotomic character being $-1$) and the previous proposition for a normalisation suitable to our needs). In particular, denote $S$ the set of prime of $E$ where $Ker(J)I$ is not hyperspecial, and if $\ell$ a prime under $\lambda$, denote 
$S_\ell$ the set of places of $E$ dividing $\lambda$. Then $\rho_{z,\lambda}$, is unramified outside $SS_\ell$.

We have the classical proposition, which is one reason why eigenvarieties are so usefull (see for example \cite{BC2} proposition 7.5.4),

\begin{propen}
\label{thrT}
Let $p$ be unramified in $E$. To each $z \in \mathcal Z$, denote $\rho_z$ the ($p$-adic) polarized representation associated to $z$ by proposition \ref{proSki}. 
There exists a unique continuous pseudocharacter
\[ T : G_{E,S} \fleche \mathcal O_\mathcal E,\]
such that for all $z \in \mathcal Z$, $T_z = \tr(\rho_{z})$. Moreover the pseudocharacter $T$ satisfies $T^\perp = T$, where $T^\perp(g) = T((\tau g \tau)^{-1})$ for all $g \in G_E$.
\end{propen}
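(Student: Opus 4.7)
The plan is to construct $T$ by interpolating the traces of Frobenius elements $\mathrm{Frob}_\lambda$ (for $\lambda$ a place of $E$ outside $S$) along the eigenvariety $\mathcal E$, and then to invoke Chenevier's theory of pseudocharacters/determinants to extend this to all of $G_{E,S}$. The main input is that $\mathcal Z$ is Zariski-dense in $\mathcal E$ and that each $\rho_z$ for $z\in\mathcal Z$ has dimension $3$, so we can hope to interpolate a $3$-dimensional pseudocharacter.

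\medskip

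First, I would construct, for each rational prime $\ell\notin S\cup\{p\}$ and each place $\lambda\mid\ell$ of $E$, a characteristic polynomial $P_\lambda(X)\in\mathcal O(\mathcal E)[X]$ of degree $3$ whose coefficients are explicit polynomials in the unramified Hecke operators at $\ell$ (via the Satake isomorphism; note that $p$ split or inert in $E$ gives different shapes, but in both cases the Hecke operators at $\ell\notin S$ are contained in $\mathcal H$ and thus map to $\mathcal O(\mathcal E)$ via the eigenvariety construction of Theorem~\ref{thrHecke}). By the local-global compatibility for the Galois representations $\rho_z$ coming from \cite{BR,BelComp,CH,Ski} together with the normalization fixed in Proposition~\ref{proSki}, we have at every $z\in\mathcal Z$ the equality $P_\lambda(X)(z)=\det(X-\rho_z(\mathrm{Frob}_\lambda))$. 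In particular, if $t_\lambda\in\mathcal O(\mathcal E)$ denotes the second coefficient (up to sign), then $t_\lambda(z)=\mathrm{tr}(\rho_z(\mathrm{Frob}_\lambda))$ for all $z\in\mathcal Z$.

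\medskip

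Next, I would define $T(\mathrm{Frob}_\lambda):=t_\lambda$ for all $\lambda\notin S\cup S_p$, and extend this to $G_{E,S}$ as a continuous pseudocharacter. For this the cleanest approach is to use Chenevier's theory of determinants (\cite{CheJL} or the foundational paper): at each classical point $z\in\mathcal Z$ we have a $3$-dimensional determinant $D_z:G_{E,S}\to\overline{\QQ_p}$, and the collection $(D_z)_{z\in\mathcal Z}$ is compatible with the regular functions $t_\lambda$ on the Zariski-dense set $\{\mathrm{Frob}_\lambda\}_{\lambda\notin S\cup S_p}\subset G_{E,S}$ (by Chebotarev). One then checks that this compatible system glues to a continuous $3$-dimensional determinant $D:G_{E,S}\to\mathcal O(\mathcal E)$, whose associated trace function $T:=\mathrm{tr}\,D$ is the desired pseudocharacter. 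Uniqueness follows from density of $\mathcal Z$ and the fact that a continuous pseudocharacter is determined by its values on a dense set of Frobenii.

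\medskip

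Finally, for the polarization $T^\perp=T$: at each $z\in\mathcal Z$, Proposition~\ref{proSki} gives $\rho_z^c\simeq\rho_z^\vee$, whence $\mathrm{tr}(\rho_z(g))=\mathrm{tr}(\rho_z((c g c)^{-1}))$ for every $g\in G_E$ (using $c=\tau$ to conjugate). Thus $T_z=T_z^\perp$ for all $z\in\mathcal Z$, and by density of $\mathcal Z$ in $\mathcal E$ together with continuity, the identity $T=T^\perp$ propagates to all of $\mathcal O(\mathcal E)$. The main obstacle is the interpolation step: verifying that the Hecke-theoretic functions $t_\lambda$ really do glue, over all $\lambda$ simultaneously, into a single pseudocharacter on $G_{E,S}$, as opposed to an unrelated family of traces of Frobenii; this is precisely what the Chenevier determinants formalism achieves (or alternatively, one can argue directly with $n$-tuples of Frobenii and the density of products of Frobenii in $G_{E,S}^n$, as done in \cite{BC2} Prop.~7.5.4).
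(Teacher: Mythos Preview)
Your proposal is correct and follows essentially the same route as the paper: density of $\mathcal Z$ in $\mathcal E$, then an appeal to Chenevier's pseudocharacter/determinant formalism to interpolate, with the polarization identity propagating by density from $\mathcal Z$. The paper's proof is terser --- it simply cites \cite{Che1} Proposition~7.1 directly (noting that the hypothesis $(H)$ there is verified by Frobenius classes) rather than spelling out the Satake/characteristic-polynomial mechanics --- but the underlying argument is the same. One point you assert as ``main input'' without justification is the Zariski-density of $\mathcal Z$ in $\mathcal E$; the paper explicitly records that this follows from the density of very regular weights in $\mathcal W$ together with the two classicity results (Proposition~\ref{proclasssheaf} and Theorem~\ref{thrbij}), and you should include that sentence.
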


\begin{proof}[Proof]
$\mathcal Z$ is dense in $\mathcal E$ by density of very regular weights in $\mathcal W$ and the two classicity results (\ref{proclasssheaf} and \ref{thrbij}).
We only need \cite{Che1} Proposition 7.1 to conclude, the hypothesis $(H)$ there being verified by the Frobenius classes in $S$. The polarisation assumption follows from the case of $z \in \mathcal Z$ by density.
\end{proof}

\section{Application to a conjecture of Bloch-Kato}

Let $E$ be a quadratic imaginary field, and fix an algebraic Hecke character,
\[ \chi : \mathbb A^\times_E/E^\times \fleche \CC^\times,\]
such that, for all $z \in \CC^\times, \chi_{\infty}(z) = z^a\overline{z}^b$, for some $a,b \in \ZZ$. Call $w = -a-b$ the motivic weight of $\chi$. Denote by $\chi_p : G_E \fleche K$, where $K$ is a finite extension of $\QQ_p$, the $p$-adic realisation of $\chi$.

We are interested in the Selmer group $H^1_f(E,\chi_p)$, which parametrises extensions $U$,
\[ 0 \fleche \chi_p \fleche U \fleche 1 \fleche 0,\]
which have good reduction everywhere (\cite{BK,FPR}, \cite{BC2} Chapter 5 and the introduction of this article).

Associated to $\chi$ there is also an $L$-function $L(\chi,s)$, where $s$ is a complex variable, which is a meromorphic function on $\CC$, which satisfies a functional equation,
\[ \Lambda(\chi,s) = \eps(\chi,s)\Lambda(\chi^*(1),-s),\]
where $\chi^*$ is the contragredient representation, $\Lambda(V,s)$ is the completed $L$-function of $V$, a product of $L(V,s)$ by a finite number of $\Gamma$-factors.

The conjecture of Bloch-Kato (more precisely a particular case of)  in this case is the following equality,
\[ \dim H^1_f(E,\chi_p) - \dim (\chi_p)^{G_E} = \ord_{s=0} L(\chi^*(1),s).\]
The conjecture is more generally for a Galois representation $\rho$ of the Galois group $G_F$ of a number field, but in the previous case we have a special case by the 
theorem of Rubin on Iwasawa Main Conjecture for CM elliptic curves,

\begin{conjecture}
Suppose that $\chi$ is polarized and of weight $-1$, i.e.
\[ \chi^\perp = \chi |.|^{-1},\]
where $\chi^\perp(z) = \chi^{-1}(czc)$, and $c \in G_\QQ$ induces the complex conjugation in $E$. 
Then,
\[ \ord_{s = 0}L(\chi,s) \neq 0 \Rightarrow \dim H^1_f(E,\chi_p) \geq 1.\]
\end{conjecture}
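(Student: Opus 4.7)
\medskip

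\noindent\textbf{Proof proposal.} The plan is to prove the conjecture (in the even-vanishing case, which is the one the paper's machinery is designed for; the odd-vanishing case is due to Bella\"iche--Chenevier) by constructing a point on the eigenvariety $\mathcal{E}$ of $U(2,1)$ built in the first part, and deforming it. First I would invoke Rogawski's endoscopic classification: under the polarization hypothesis and when $\mathrm{ord}_{s=0}L(\chi,s)$ is even (i.e.\ sign $+1$), there exists a non-tempered cuspidal automorphic representation $\pi^n(\chi)$ of $U(2,1)$ whose attached polarized Galois representation satisfies $\rho_{\pi^n(\chi),p}^{\mathrm{ss}} \simeq 1 \oplus \chi_p \oplus \chi_p^{\perp}$. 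The point is that $\pi^n(\chi)$ contributes to $H^0_{\mathrm{cusp}}(X_K,\omega^{\kappa})$ for the appropriate classical weight $\kappa$ (including the ``limit'' weights where it is missing from \'etale cohomology), hence by the defining property of $\mathcal{E}$ produces a classical point $x \in \mathcal{Z} \subset \mathcal{E}$ of finite slope for $U_p$ once a refinement of $\pi^n(\chi)_p$ is fixed.

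Next I would deform $x$ in $\mathcal{E}$ and work with the universal pseudocharacter $T\colon G_{E,S} \to \mathcal{O}_{\mathcal{E}}$ produced in Proposition~\ref{thrT}, specialized at the germ of $\mathcal{E}$ at $x$. On a well-chosen smooth irreducible curve $C \subset \mathcal{E}$ through $x$ one shows that the pseudocharacter $T_{|C}$ is generically irreducible (using that $\chi_p \not\simeq 1 \not\simeq \chi_p^{\perp}$ and that the eigenvariety is not locally a product along the endoscopic locus). Applying Bella\"iche--Chenevier's generalization of Ribet's ``change of lattice'' lemma to the residual extension $1 \oplus \chi_p \oplus \chi_p^{\perp}$, one extracts from $T_{|C}$ either a non-split extension of $1$ by $\chi_p$, or of $\chi_p^{\perp}$ by $1$, or of $\chi_p$ by $\chi_p^{\perp}$. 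The first gives the desired class in $\mathrm{Ext}^1_{G_E}(1,\chi_p)$; the other two must be ruled out. For the ``bad'' extensions one uses: (i) the polarization $T = T^{\perp}$ of the pseudocharacter, which forces symmetry of the possible sub-quotient structures, and (ii) the local analysis at $p$ (see below) which pins down the position of $\chi_p$ in the triangulation at $p$, excluding the extensions not involving $\chi_p$ on the correct side.

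The main obstacle, and where genuinely new input is required compared with \cite{BC1}, is the control of the local condition at $p$ --- in particular to show that the extension lies in $H^1_f$ (good reduction) rather than merely $H^1$. Outside $p$ this is essentially formal: $C$ lies in the image of Hecke eigensystems of level prime to $p$ outside a finite set, so the $I_v$-invariants behave correctly by local-global compatibility at $v\nmid p$ applied along $\mathcal{Z}$. At $p$, since $p$ is inert, there is only a single refinement of $\pi^n(\chi)_p$ (contrasting with three in the split case), so one cannot compare extensions coming from different refinements to isolate the crystalline one. Instead one invokes the Kisin--Liu triangulation theorem for refined families (\cite{Liu}) to exhibit a triangulation $(\delta_1,\delta_2,\delta_3)$ of $(\rho_T)_{|G_{E_p}}$ over $C$ whose parameters $\delta_i$ interpolate the refinement of $\pi^n(\chi)$; evaluating at $x$ and using the explicit Hodge--Tate--Sen weights (normalized as in \S\ref{normweight}) together with the weakly admissible filtration on the associated $(\varphi,\Gamma)$-module, one checks that $D_{\mathrm{cris}}$ of the resulting extension has the expected dimension, i.e.\ the extension is crystalline at $p$. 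The hardest technical point is precisely this: showing that the \emph{good} refinement is the one obtained, so that the extracted extension is automatically in $H^1_f$ without any further elimination step; after that, the construction produces the required non-zero class in $H^1_f(E,\chi_p)$.
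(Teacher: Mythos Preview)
The statement you are trying to prove is labeled in the paper as a \emph{conjecture}, not a theorem: the paper does not supply a proof of it. What the paper actually proves is the weaker theorem stated immediately afterwards, which requires in addition that $p$ be unramified in $E$, that $p\nmid\Cond(\chi)$, that $p\neq 2$ when $p$ is inert, and that $\ord_{s=0}L(\chi,s)$ be \emph{even}. Your proposal treats the even case via the paper's machinery and the odd case by citing \cite{BC1}; but \cite{BC1} also imposes $p$ split and $p\nmid\Cond(\chi)$, so even the union of the two does not cover the conjecture as stated. The full conjecture is, as the paper notes, mainly known by Rubin's Euler-system work under $p\nmid|\mathcal O_E^\times|$, not by the deformation method.

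If one reads your proposal instead as an outline of the paper's proof of the \emph{theorem} (even order, $p$ unramified, etc.), then the overall shape is right but the key elimination mechanism is misidentified. You claim that the single refinement in the inert case, together with the triangulation from \cite{Liu}, pins down the extension so that it is ``automatically in $H^1_f$ without any further elimination step.'' This is not what happens. The paper first proves (Proposition~\ref{propirr}) that the family is either generically irreducible or contains an irreducible rank-$2$ piece $r$ with $r_{y_0}^{ss}=\chi\oplus\chi^\perp$; it then \emph{eliminates} the second case by showing it would produce a nonzero class in $H^1_f(E,\omega_p)=\{0\}$. After that, applying Ribet's lemma yields two alternatives (Proposition~\ref{propab}, cases (a) and (b)); case (a), which gives an extension of $\chi^\perp$ by $\chi$, is again \emph{eliminated} via the vanishing $H^1_f(E,\omega_p)=\{0\}$, leaving case (b) and the desired class in $H^1_f(E,\chi_p)$. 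The Kisin--Liu input is used to show crystallinity of the candidate extensions (Lemma~\ref{lemmacrys} and the final lemma), but it does not by itself select the correct extension; the elimination via $H^1_f(E,\omega_p)=0$ is the indispensable step that your outline omits.
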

 
\begin{remaen}
Under the previous polarisation assumption, we have $L(\chi^*(1),s) = L(\chi,s)$. The previous conjecture is mainly known by work of Rubin (\cite{Rub}) if $p \notdivides |\mathcal O_E^\times|$.
\end{remaen}

In the rest of the article, we will prove the following,
\begin{theoren}
Suppose that $\chi$ is polarized and of weight $-1$, i.e.
\[ \chi^\perp = \chi |.|^{-1},\]
where $\chi^\perp(z) = \chi^{-1}(czc)$, and $c \in G_\QQ$ induces the complex conjugation in $E$. 
Suppose that $p$ is unramfied in $E$ and that $p \notdivides \Cond(\chi)$. If $p$ is inert suppose moreover that $p \neq 2$.
Then,
\[ \ord_{s = 0}L(\chi,s) \neq 0 \text{ and } \ord_{s=0}L(\chi,s) \text{ is even} \Rightarrow \dim H^1_f(E,\chi_p) \geq 1.\]
\end{theoren}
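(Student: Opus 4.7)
\medskip

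\noindent\textbf{Proof plan.} The strategy mirrors the Bellaïche–Chenevier method \cite{BC1}, adapted to the positive sign case using the eigenvariety $\mathcal E$ constructed in the first part of the paper. The starting input is Rogawski's classification of the $A$-packets for $U(2,1)$: under the polarisation and parity assumptions on $\ord_{s=0}L(\chi,s)$, one obtains a non-tempered cuspidal automorphic representation $\pi^n(\chi)$ of $U(2,1)$ whose associated (semisimple) Galois representation is $\rho_{\pi^n(\chi)} = 1 \oplus \chi_p \oplus \chi_p^{\perp}$. The plan is to realise $\pi^n(\chi)$ as a classical point $z_0 \in \mathcal E$ (with the \emph{accessible} refinement), deform it in a generically irreducible family over a curve in $\mathcal E$ through $z_0$, and extract from the Galois pseudocharacter $T$ of Proposition \ref{thrT} a non-trivial extension class in $H^1_f(E,\chi_p)$ via Ribet's lemma.

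First, I would check that $\pi^n(\chi)$ contributes to the classical cuspidal coherent cohomology of the relevant automorphic sheaf $\omega^\kappa$, with the appropriate weight $\kappa$ computed from the infinity type $\chi_\infty(z) = z^a \overline z^{1-a}$. Because the construction of $\mathcal E$ here goes through coherent cohomology rather than étale cohomology, any classical Hecke eigensystem appearing in $H^0_{cusp}(X_{Iw},\omega^\kappa)$ gives rise to a point on $\mathcal E$. This gives $z_0 \in \mathcal Z \subset \mathcal E$ over a classical weight. I would then show, using the formulas of subsection \ref{normweight}, that $z_0$ lies on a component of dimension $3$ and that there exists an irreducible affinoid curve $C \subset \mathcal E$ through $z_0$ on which the generic point has irreducible Galois representation (this uses the density of very regular classical points on every component, where Rogawski–Blasius–Rogawski and the local--global compatibility force the Galois representation to be tempered and hence absolutely irreducible; the usual argument via the pseudocharacter's decomposition loci applies).

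Next, over $C$ the pseudocharacter $T$ of Proposition \ref{thrT} is generically irreducible but reduces at $z_0$ to $1 + \chi_p + \chi_p^\perp$. Applying the generalised Ribet lemma of \cite{BC2} (in the form used in \cite{BC1}), the reducibility locus analysis produces a non-split extension either of $\chi_p$ by $1$ or of $1$ by $\chi_p$, i.e.\ a class in $H^1(E,\chi_p)$ or $H^1(E,\chi_p^{-1})$, and the polarisation $T^\perp = T$ of the pseudocharacter lets us, after possibly dualising, produce a class in $H^1(E,\chi_p)$. The key technical step, and the main obstacle, is to show that this class is \emph{crystalline at $p$} (and unramified outside $Np$), i.e.\ lies in $H^1_f(E,\chi_p)$. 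At primes $v \nmid p$ this comes from unramifiedness of $T$ outside $S$ and a standard analysis of the local monodromy filtration. At $v \mid p$, in the split case one essentially follows \cite{BC1} using the three refinements of $\pi^n(\chi)$ to kill the unwanted ``non-geometric'' extensions; in the inert case there is only one point on $\mathcal E$ above $\pi^n(\chi)$, so one must use the triangulation of the refined family on $C$ (Kisin's result in the form extended to the inert/trianguline setting by \cite{Liu}) to show that the restriction of $\rho_T$ to $G_{E_p}$ is trianguline with the expected Hodge–Tate–Sen parameters, and then deduce crystallinity of the extension via a dimension count of crystalline versus trianguline deformations.

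The remaining verification is that the extension we produce is non-zero: this uses that $T$ is generically irreducible (hence not globally equal to $1 + \chi_p + \chi_p^\perp$ on $C$), together with the fact that the ``wrong'' direction of the extension (of $1$ by $\chi_p^\perp$ rather than of $\chi_p$ by $1$) is excluded by the local parameters at $p$ coming from the chosen refinement. Combining, we obtain a non-trivial class in $H^1_f(E,\chi_p)$, proving the theorem. The expected main difficulty is the local analysis at $p$ in the inert case: one refinement instead of three means less flexibility, and one must work harder with Sen theory and the trianguline parameters given by \cite{Liu} to isolate the crystalline extension; this is precisely the point where the argument of \cite{BC1} has to be genuinely modified rather than copied.
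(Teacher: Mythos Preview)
Your overall strategy is correct and matches the paper's, but there are two genuine gaps in the case analysis that you underestimate.

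\textbf{Generic irreducibility is not automatic.} You claim that density of very regular classical points forces the generic Galois representation on the curve $C$ to be tempered ``and hence absolutely irreducible''. This is false for $U(2,1)$: tempered endoscopic points of type $(2,1)$ have reducible $3$-dimensional Galois representation (an irreducible $2$-dimensional piece plus a character). The paper handles this in Proposition~\ref{propirr}: one shows, using weak admissibility and the constancy of the valuation of $F_1$ along $Y$, that either $\rho_{K(Y)}$ is irreducible, or it contains an irreducible $2$-dimensional $r$ with $r_{y_0}^{ss}=\chi\oplus\chi^\perp$. This second case must then be \emph{eliminated} (section~\ref{sect115}) by applying Ribet's lemma to $r$ itself, producing a nonsplit extension of $\chi^\perp$ by $\chi$, showing it is crystalline via the Kisin--Liu argument, and obtaining a contradiction with $H^1_f(E,\omega_p)=0$. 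Only after this does one know $\rho_{K(Y)}$ is irreducible.

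\textbf{The Ribet output has three characters, not two.} Once $\rho_{K(Y)}$ is irreducible with $\rho_{y_0}^{ss}=1\oplus\chi\oplus\chi^\perp$, the generalised Ribet lemma (Proposition~7.1 of \cite{BC1}) does not simply give an extension between $1$ and $\chi$. The paper's Proposition~\ref{propab} gives a dichotomy: either (a) $\overline\rho$ has a $2$-dimensional subquotient $r$ which is a nontrivial extension of $\chi^\perp$ by $\chi$ with $r^\perp\simeq r$, or (b) $\overline\rho$ has the shape $\chi\to 1\to\chi^\perp$ with $r_1$ a nontrivial extension of $1$ by $\chi$. Case (a) is eliminated exactly as case~2 above: one shows $r\otimes(\chi^\perp)^{-1}$ is crystalline (using $D_{crys,\tau}(\overline\rho)^{\varphi^2=\chi^\perp(p)}\neq 0$ from Kisin--Liu and left-exactness of $D_{crys}$), whence it gives a class in $H^1_f(E,\omega_p)=0$, contradiction. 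So the ``wrong'' extensions are not excluded by the refinement alone, but by the vanishing of $H^1_f(E,\omega_p)$; the refinement (via Liu's theorem) is what makes them crystalline so that this vanishing applies. Your description of the crystallinity step as a ``dimension count of crystalline versus trianguline deformations'' is not what is done: the argument is a direct left-exactness computation with $D_{crys,\tau}(-)^{\varphi^2=u}$.
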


\begin{remaen}
The same result but for $\ord_{s=0}L(\chi,s)$ odd (and strictly speaking for $p$ split) is proved in \cite{BC1}, and using the same method with the eigenvariety for the group $U(3)$.
\end{remaen}

\begin{definen}
To stick with notations of \cite{BC1}, denote $k$ the positive odd integer such that $\chi_\infty(z) = z^{\frac{k+1}{2}}\overline{z}^{\frac{1-k}{2}}$ (i.e. $k = 2a-1 = 1-2b$). We suppose $k \geq 1$, i.e. $a \geq 1$ (which we can always suppose up to changing $\chi$ by $\chi^c$, which doesn't change either the $L$-function nor the dimension of the Selmer group).
\end{definen}

\subsection{Endoscopic transfer, after Rogawski}

Let $\chi_0 = \chi |.|^{-1/2}$ the unitary character as in \cite{BC1}. We will define following Rogawski \cite{Rog} an automorphic representation of $U(2,1)$, by constructing it at each place.

\subsubsection{If $\ell$ is split in $E$}

Write $\ell = v\overline v$ and the choice of say $v$ induces an isomorphism $U(2,1)(\QQ_\ell) \overset{i_v}{\simeq} \GL_3(\QQ_\ell)$. Let $P = MN$ be the standard parabolique of $\GL_3(\QQ_p)$ with Levi $M = \GL_2\times \GL_1$. Define,
\[ \widetilde{\chi_{0,\ell}} : 
\left(
\begin{array}{ccc}
 A &      \\
  & b    
\end{array}
\right) \in \GL_2 \times \GL_1 \longmapsto \chi_{0,\ell}(\det A),
\]
trivially extended to $P$, and denote by $\ind-n_P^G(\widetilde{\chi_{0,\ell}})$ the normalised induction of $\widetilde{\chi_{0,\ell}}$. Then set,
\[ \pi_\ell^n(\chi) = i_v^*\ind-n_P^G(\widetilde{\chi_{0,\ell}}).\]
If $\chi_\ell$ is unramified, then so is $\pi^n_\ell(\chi)$. Fix in this case $K_\ell$ a maximal compact subgroup of $U(2,1)(\QQ_\ell)$.

\subsubsection{If $\ell$ is inert or ramified in $E$}

In this case denote $T = \mathcal O_{E_\ell}^\times \times \mathcal O_{E_\ell}^1$ the torus of $U(2,1)(\QQ_\ell)$, and consider the following character of $T$,
\[\widetilde{\chi}_{\ell} : 
\left(
\begin{array}{ccc}
 a &   &   \\
  & b  &   \\
  &   &  \overline{a}^{-1}
\end{array}
\right) \longmapsto \chi_\ell(a),
\]
trivially extended to the Borel $B$ of $U(2,1)(\QQ_\ell)$. Then the normalized induction $\ind-n_B^{U(2,1)(\QQ_\ell)}(\widetilde{\chi_\ell})$ has two Jordan-Holder factors, one which is non tempered that we denote by $\pi^n_\ell(\chi)$ and the other one, which is square integrable, that we denote by $\pi^2_\ell(\chi)$, see \cite{Rog}.

If $\ell$ is inert and $\chi_{0,\ell}$ is unramified, $\pi^n_\ell(\chi)$ is also unramified (Satake) and we can choose $K_\ell$ a maximal compact for which $\pi^n_\ell(\chi)$ has a non zero fixed vector.

If $\ell$ is ramified and $\chi_{0,\ell}$ is unramified, there is two conjugacy classes of maximal compact subgroup, but only one of them, denoted $K_\ell$ (called very special) 
verifies that 
$\pi_\ell^n(\chi)$ has a non-zero fixed vector under $K_\ell$ whereas $\pi^2_\ell(\chi)$ has none. 

\subsubsection{Construction at infinity}

As in the inert case, let $\pi^n_\infty(\chi)$ be the non-tempered Jordan-Holder factor of $\ind-n_B^{U(2,1)(\RR)}(\widetilde{\chi_\infty})$.

Then we have the following proposition, following Rogawski,

\begin{propen}[Rogawski]
\label{proprog}
Suppose $a\geq 1$. Recall that $\ord_{s=0}L(\chi,s)$ is even. Then the representation,
\[ \pi^n(\chi) = \bigotimes'_\ell \pi^n_\ell(\chi) \otimes \pi_\infty^n(\chi),\]
is an automorphic representation of $U(2,1)$. If moreover $L(\chi,0) = 0$, it is a cuspidal representation.
Its Galois representation (associated by the work of \cite{LRZ} or see also \cite{BC1} section 3.2.3 and Proposition 4.1) $\rho_{\pi^n(\chi),p} : G_E \fleche \GL_3(\overline{\QQ_p})$ is,
\[ \rho_{\pi^n(\chi),p} = (1 \oplus \chi_p \oplus \chi_p^\perp).\]
Moreover, its $\tau$-Hodge-Tate weights are $(-\frac{k+1}{2},-\frac{k-1}{2},0) = (-a, 1-a ,0)$.
\end{propen}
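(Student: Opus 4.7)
The proof rests on Rogawski's endoscopic classification of the discrete automorphic spectrum of $U(2,1)$, as developed in \cite{Rog,Rogbook}. The polarization $\chi^\perp = \chi|.|^{-1}$ makes the unitary twist $\chi_0 := \chi|.|^{-1/2}$ conjugate-self-dual, so it furnishes the parameter of an endoscopic $A$-packet whose local constituents are precisely the pairs $\Pi_v(\chi_0) = \{\pi^n_v(\chi),\pi^2_v(\chi)\}$ described above (reducing to a singleton at split places where $\ind_P^G(\widetilde{\chi_{0,v}})$ is irreducible). The first step is Rogawski's multiplicity formula: a member $\bigotimes_v \sigma_v$ with $\sigma_v\in\Pi_v(\chi_0)$ appears in the discrete spectrum of $U(2,1)$ with multiplicity $1$ if and only if $(-1)^{|S|}=\eps(1/2,\chi_0)$, where $S$ is the set of places at which $\sigma_v=\pi^2_v$. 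For $\pi^n(\chi)$ we have $S=\emptyset$, so automorphy reduces to the identity $\eps(1/2,\chi_0)=+1$.

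\textbf{The sign and cuspidality.} The polarization translates into the self-functional equation $\Lambda(\chi,s)=\eps(\chi,s)\Lambda(\chi,-s)$, and the global root number at the centre equals $(-1)^{\ord_{s=0}L(\chi,s)}$ once one accounts for the archimedean $\Gamma$-factors (which, under $\chi^\perp=\chi|.|^{-1}$ and $a\geq 1$, contribute trivially to the sign). Thus the even parity hypothesis gives $\eps(1/2,\chi_0)=+1$, establishing automorphy of $\pi^n(\chi)$. For cuspidality, the non-cuspidal occurrences of $\pi^n(\chi)$ must arise, by Langlands' theory of residual spectra as made explicit by Rogawski, as residues at $s=0$ of the Eisenstein series $\ind_P^G(\widetilde{\chi_0}|\det|^s)$; via the unramified constant term and the standard intertwining operators, this residue is proportional to $L(\chi,0)$. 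Under $L(\chi,0)=0$ the residual contribution therefore vanishes, and the automorphic realisation provided by the first step must lie in the cuspidal spectrum.

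\textbf{Galois representation and weights.} The attachment of $\rho_{\pi^n(\chi),p}$ is due to Blasius--Rogawski \cite{BR} (with the necessary $p$-adic refinements in \cite{Ski,CH}). At any prime $\ell=v\overline v$ split in $E$ and prime to the conductor, the Satake parameter of $\pi^n_\ell(\chi)$ is read off from the induction $\ind_P^G(\widetilde{\chi_{0,\ell}})$ as the unordered triple $(1,\chi_v(\Frob_v),\chi^\perp_v(\Frob_v))$; Chebotarev density then forces $\rho_{\pi^n(\chi),p}\simeq 1\oplus\chi_p\oplus\chi^\perp_p$ globally (see \cite{BC1} \S3.2.3 and Proposition 4.1 for the detailed check). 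The $\tau$-Hodge-Tate weights are then extracted character by character, with the convention of \S\ref{normweight} that the cyclotomic character has weight $-1$: the infinity type $z^a\overline z^{1-a}$ of $\chi$ contributes $\tau$-weight $-a$, the infinity type $z^{a-1}\overline z^{-a}$ of $\chi^\perp$ contributes $\tau$-weight $1-a$, and the trivial character contributes $0$, recovering the stated $(-a,1-a,0)$. The only genuine difficulty is the very first step: Rogawski's multiplicity formula depends on the full stabilisation of the trace formula for $U(2,1)$, which I would quote rather than reprove; special care must be taken to match the sign convention in that multiplicity formula with our choice of polarisation so that the parity of $\ord_{s=0}L(\chi,s)$ indeed controls $\eps(1/2,\chi_0)$.
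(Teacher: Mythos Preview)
The paper does not provide its own proof of this proposition: it is stated as a result attributed to Rogawski, with the references \cite{Rog,Rogbook} (for automorphy and cuspidality) and \cite{LRZ}, \cite{BC1} (for the Galois representation) doing all the work. Your sketch is therefore not being compared against an argument in the paper, but it is a correct and well-organised explanation of \emph{why} the cited results yield the statement.

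Your outline is sound. The reduction to Rogawski's multiplicity formula for the endoscopic $A$-packet $\Pi(\chi_0)$, the identification of the relevant sign with the parity of $\ord_{s=0}L(\chi,s)$ via the functional equation (using that for $a\geq 1$ the archimedean $\Gamma$-factor is holomorphic and nonvanishing at $s=0$), and the residual Eisenstein series argument for cuspidality are exactly the ingredients one extracts from \cite{Rogbook}. The computation of $\rho_{\pi^n(\chi),p}$ by reading off Satake parameters at split unramified primes and invoking Chebotarev, and of the Hodge--Tate weights directly from the infinity types of $1,\chi,\chi^\perp$, is also correct and matches what \cite{BC1} \S3.2.3 does.

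One point deserves slightly more care than you indicate. In Rogawski's multiplicity formula the sign character on the packet is a product over \emph{all} places including $\infty$, and the archimedean packet for $U(2,1)$ (two members $\{\pi^n_\infty,\pi^2_\infty\}$) differs from that for the compact form $U(3)$ (a single finite-dimensional member). It is precisely this archimedean contribution that flips the parity condition relative to the $U(3)$ case of \cite{BC1}: there $\pi^n(\chi)$ is automorphic for odd order of vanishing, here for even. Your closing caveat about matching sign conventions is exactly the right place to absorb this, but when writing it up you should make the archimedean bookkeeping explicit rather than folding it into $|S|$.
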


\subsection{Accessible refinement (at $p$) for $\pi^n(\chi)$}

In order to construct a $p$-adic family of modular forms passing through $\pi^n(\chi)$, we need to construct inside $\pi^n_p(\chi)^I$ a form which is proper for the operator $U_p$ previously defined. Strictly speaking, $U_p$ is defined for $GU(2,1)$, and when $p$ is inert, $U_p$ is associated to the operator of the double Iwahori classe $IU_p^cI$ where,
\[
U_p^c = \left(
\begin{array}{ccc}
p^2 &   &   \\
  &  p &   \\
  &   & 1  
\end{array}
\right)
\] This class is not in $U(2,1)(\QQ_p)$, but $p^{-1}U_p^c$ is.
Fix $T \subset B \subset U(2,1)(\QQ_p)$. As $p$ is unramified, we have as representation of $T/T^0$ ($T^0$ a maximal compact in $T$), for $\pi$ a representation of $U(2,1)(\QQ_p)$ an isomorphism, see \cite{BC1},
\[ \pi^I \simeq (\pi_N)^{T^0}\otimes \delta_B^{-1}.\]
Thus to understand how the double coset operator $U_p^c$ in the Iwahori-Hecke algebra acts, we only need to determine the Jacquet functor $(\pi^n_p(\chi))_N$ as a representation of $T$. If $p$ splits, this is computed in \cite{BC1} (and \cite{BC2} in greater generality), so suppose that $p$ is inert.

\begin{propen}
Let $\widetilde{\chi}$ be the (unramified) character of the torus $T$ of $U(2,1)(\QQ_p)$ defined by,
\[
\left(
\begin{array}{ccc}
 a &   &   \\
  &  e &   \\
  &   & \overline{a}^{-1}  
\end{array}
\right) \longmapsto \chi_p(a).
\]
Denote by $w \in W_{U(2,1)(\QQ_p)} \simeq \ZZ/2\ZZ$ the non trivial element and $\widetilde{\chi}^w$ the corresponding character of $T$ ($\widetilde{\chi}^w = \chi(w\cdot w))$. Then the unique admissible refinement of $\pi^n_p(\chi)$ is given by $\widetilde{\chi}^w$, i.e. $\pi^n_p(\chi)_N = \widetilde{\chi}^w \delta_B^{1/2}$.
\end{propen}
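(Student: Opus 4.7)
The plan is to identify $\pi^n_p(\chi)_N$ by decomposing the Jacquet module of the reducible principal series in which $\pi^n_p(\chi)$ appears. It is cleanest to first work with the normalized Jacquet module $r(\pi)$, related to the paper's $\pi_N$ by $\pi_N = r(\pi)\otimes \delta_B^{1/2}$. By the Bernstein-Zelevinsky geometric lemma,
\[ r\bigl(\ind-n_B^{U(2,1)(\QQ_p)}(\widetilde{\chi}_p)\bigr)^{ss} = \widetilde{\chi} \oplus \widetilde{\chi}^w, \]
where $w$ is the nontrivial element of $W_{U(2,1)(\QQ_p)} \simeq \ZZ/2\ZZ$. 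These two characters are distinct because $\chi_p$ and $\chi_p^c$ have distinct Hodge-Tate weights (coming from $\chi_\infty(z) = z^a\bar z^{1-a}$ with $a \geq 1$). Exactness of $r$ applied to the exact sequence $0 \to \pi^2_p(\chi) \to \ind-n_B^G(\widetilde{\chi}_p) \to \pi^n_p(\chi) \to 0$ (Rogawski's structure: square-integrable subrepresentation, non-tempered Langlands quotient) shows that each of $r(\pi^2_p(\chi))$ and $r(\pi^n_p(\chi))$ is one of $\widetilde{\chi}, \widetilde{\chi}^w$.

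To decide which is which, I apply Casselman's square-integrability criterion to $\pi^2_p(\chi)$: the exponent $\eta = r(\pi^2_p(\chi))$ must satisfy $|\eta(\mathrm{diag}(p,1,p^{-1}))|_p < 1$. Evaluating, $\widetilde{\chi}(\mathrm{diag}(p,1,p^{-1})) = \chi_p(p)$, and the polarization $\chi^\perp = \chi|\cdot|^{-1}$ forces $\chi_p \cdot \chi_p^c$ to be the $p$-adic realization of the norm character; combined with $\chi_p^c(p) = \chi_p(p)$ (since $p \in \QQ_p$ is conjugation-fixed) this pins down $v_p(\chi_p(p)) = 1/2$. Hence $|\widetilde{\chi}(\mathrm{diag}(p,1,p^{-1}))|_p = p^{-1/2} < 1$, satisfying Casselman, while $|\widetilde{\chi}^w(\mathrm{diag}(p,1,p^{-1}))|_p = p^{1/2} > 1$ fails it. Therefore $r(\pi^2_p(\chi)) = \widetilde{\chi}$ (consistent with square-integrability) and $r(\pi^n_p(\chi)) = \widetilde{\chi}^w$. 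Passing from the normalized Jacquet module to the paper's convention via the $\delta_B^{1/2}$-twist yields $\pi^n_p(\chi)_N = \widetilde{\chi}^w \delta_B^{1/2}$, as claimed.

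Uniqueness of the admissible refinement is then automatic: $\dim r(\pi^n_p(\chi)) = 1$ gives $\dim \pi^n_p(\chi)^I = 1$ via $\pi^I \simeq (\pi_N)^{T^0}\otimes \delta_B^{-1}$, so $U_p$ acts by a single scalar, corresponding to the unique refinement $\widetilde{\chi}^w$. The main obstacle will be keeping track of the various normalizations: the paper's $U_p$ is attached to $U_p^c = \mathrm{diag}(p^2, p, 1) \in GU(2,1)(\QQ_p)$, a similitude rather than a $U(2,1)$-element; the relation $\pi^I \simeq (\pi_N)^{T^0}\otimes \delta_B^{-1}$ and the normalized versus unnormalized Jacquet $\delta_B^{1/2}$-twist must be applied consistently; and because $\chi$ is algebraic (not unitary) the slope computation verifying Casselman's criterion must be done carefully. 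The analogous split-$p$ computation in \cite{BC1} provides a useful sanity check.
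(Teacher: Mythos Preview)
Your approach is essentially identical to the paper's: both decompose the Jacquet module of the full principal series via the Bernstein--Zelevinsky geometric lemma and then apply Casselman's square-integrability criterion to $\pi^2_p(\chi)$ to pin down $\widetilde{\chi}$ as its exponent, leaving $\widetilde{\chi}^w$ for $\pi^n_p(\chi)$. The only cosmetic difference is that the paper phrases the key inequality via the unitarity of $\chi_0 = \chi|\cdot|^{-1/2}$ (giving $|\chi(x)|_\CC = |x|^{1/2} < 1$ directly) and uses the complex absolute value throughout, whereas you derive the same slope from the polarization; note that Casselman's criterion is a statement about complex absolute values of exponents, so your notation $|\cdot|_p$ and $v_p$ should be read as archimedean.
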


\begin{proof}
Denote for the proof $G = U(2,1)(\QQ_p)$. According to Rogawski we have $\ind-n_B^G(\widetilde \chi)^{ss} = \{ \pi^n_p,\pi^2_p\}$ and 
$(\ind-n_B^G(\widetilde \chi))_N^{ss} = \{ \widetilde \chi \delta_B^{1/2}, \widetilde\chi^w\delta_B^{1/2}\}$ by Bernstein-Zelevinski's geometric lemma.
Following \cite{BC2}, denote for $\sigma \in W_G$ $S(\widetilde{\chi}^\sigma)$ the unique subrepresentation of $\ind-n_B^G(\widetilde \chi^\sigma)$ (this induction is non split by \cite{Keys} for example). It is also the Jordan-Hölder factor that contains $\widetilde\chi^\sigma\delta_B^{1/2}$ inside its semi-simplified Jacquet functor.
Thus $S(\widetilde \chi) = \pi^2_p$ or $S(\widetilde \chi) = \pi^n_p$. And as changing $\widetilde\chi$ by $\widetilde\chi^w$ exchanges the subrepresentation and the quotient in the induced representation, $S(\widetilde\chi) \neq S(\widetilde\chi^w)$. So the proposition is equivalent to $\pi^2_p = S(\widetilde\chi)$.
Let us remark that it is announced in \cite{Rog}, as $\pi^n_p$ is said to be the Langlands quotient, but let us give an argument for that fact.
We can use Casselman's criterion for $\pi^2_p$ (\cite{Casselman} Theorem 4.4.6). For $A = T^{split} = \mathbb G_m \subset B$, 
\[ A^-\backslash A(\mathcal O)A_\delta = \{\Diag(x,1,x^{-1}) : x \in \ZZ_p \backslash \ZZ_p^\times = p\ZZ_p\},\]
and thus,
\[ \forall x \in p\ZZ_p, |\widetilde{\chi}\delta_B^{1/2}(\Diag(x,1,x^{-1}))| = |\chi(x)| = |\chi_0(x)||x|^{1/2} < 1,\]
as $\chi_0$ is unitary, and thus $\widetilde \chi\delta_B^{1/2}$ is an exposant of $r_B^G(\pi^2)$ and $\pi^2_p \subset \ind-n_B^G(\widetilde \chi)$ i.e.
\[ (\pi^n_p)_N = \widetilde \chi^w\delta_B^{1/2}.\]
\end{proof}

When $p$ is split, the calculation is done in \cite{BC1} and we get the following up to identifying an unramified character of $T(\QQ_p) \simeq \mathbb (\QQ_p^\times)^3$,
\[
\psi : 
\begin{array}{ccc}
 T(\QQ_p) & \fleche  &  \CC \\

\left(
\begin{array}{ccc}
x_1  &   &   \\
  & x_2  &   \\
  &   & x_3  
\end{array}
\right)
  & \longmapsto& \psi_1(x_1)\psi_2(x_2)\psi_3(x_3)   
\end{array}
\]
with the triple $(\psi_1(p),\psi_2(p),\psi_3(p))$.

\begin{propen}[Bellaïche-Chenevier \cite{BC1},\cite{BC2}]
If $p = \overline v v$, the accessible refinements of $\pi^n_p(\chi)$ are given (with identification with $\GL_3(\QQ_p)$ using $v$) by 
\begin{itemize}
\item $\sigma = 1, (1,\chi_{v}^\perp(p),\chi_v(p))$
\item $\sigma = (3,2), (\chi_{v}^\perp(p),1,\chi_v(p))$
\item $\sigma = (3,2,1), (\chi_{v}^\perp(p),\chi_v(p),1).$
\end{itemize}
\end{propen}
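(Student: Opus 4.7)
The plan is to compute the semisimplification of the normalized Jacquet module $r_B^G(\pi^n_p(\chi))$ as a representation of the torus $T$, and to read off the accessible refinements from the unramified characters of $T$ which appear, using the identification $\pi^I \simeq (\pi_N)^{T^0} \otimes \delta_B^{-1}$ recalled earlier in this subsection.

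First I would unwind the definition: under the $v$-identification $i_v : U(2,1)(\QQ_p) \simeq \GL_3(\QQ_p)$, one has $\pi^n_p(\chi) = \ind-n_P^G(\widetilde{\chi_{0,p}})$ with $P = MN$ the standard $(2,1)$-parabolic, Levi $M = \GL_2 \times \GL_1$, and $\widetilde{\chi_{0,p}}(A,b) = \chi_{0,p}(\det A)$. The Bernstein--Zelevinsky geometric lemma applied to the parabolic pair $(P,B)$ then gives
\[ r_B^G(\pi^n_p(\chi))^{\mathrm{ss}} = \bigoplus_{w \in W_M \backslash W} w \cdot r_{B_M}^M(\widetilde{\chi_{0,p}}), \]
where $W = S_3$ and $W_M = \{e,(1,2)\}$. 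Since $\widetilde{\chi_{0,p}}$ is a character of $M$, one has $r_{B_M}^M(\widetilde{\chi_{0,p}}) = \widetilde{\chi_{0,p}}|_T \otimes \delta_{B_M}^{-1/2}$, and a direct computation using $\chi_{0,p} \cdot |\cdot|^{1/2} = \chi_p$ and $\chi_{0,p} \cdot |\cdot|^{-1/2} = \chi_p^\perp$ identifies this with the triple $(\chi_p^\perp, \chi_p, 1)$ of characters of the diagonal torus.

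Second, taking minimal-length coset representatives $\{e, (2,3), (1,2,3)\}$ for $W_M \backslash W$ and applying the Weyl-action convention $(w \cdot \mu)(t) = \mu(w^{-1} t w)$, the three translates are
\[ (\chi_p^\perp, \chi_p, 1), \quad (\chi_p^\perp, 1, \chi_p), \quad (1, \chi_p^\perp, \chi_p), \]
which under $\chi_v = \chi_p$ coincide with the three triples listed in the proposition. These characters of $T$ are pairwise distinct (since $\chi_p \neq \chi_p^\perp$, a consequence of $\chi^\perp = \chi |\cdot|^{-1}$ together with the nontriviality of $\chi$), so they exhaust $r_B^G(\pi^n_p(\chi))^{\mathrm{ss}}$ and, via the identification with $(\pi_N)^{T^0}$, all the accessible refinements.

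The main computational step is the application of the geometric lemma, which is direct and presents no real obstacle. The only bookkeeping subtlety is the $\delta_{B_M}^{-1/2}$-twist that converts $\chi_{0,p}$ into the pair $(\chi_p^\perp, \chi_p)$ appearing in the refinements; this matches the normalization used throughout the paper and, in particular, is consistent with the analogous computation in the inert case carried out just above via Casselman's criterion.
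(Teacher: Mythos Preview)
Your direct computation via the Bernstein--Zelevinsky geometric lemma is correct and yields exactly the three characters listed; the paper's own proof simply identifies $\pi^n_p(\chi)$, after a central twist, with the non-tempered representation treated in \cite{BC2} Lemma~8.2.1 and cites that lemma, so the two arguments are the same in substance (you supply the computation that the paper outsources).

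One small slip worth fixing: your representatives $\{e,(2,3),(1,2,3)\}$ are for $W/W_M$, not $W_M\backslash W$ --- indeed $(2,3)$ and $(1,2,3)=(1,2)(2,3)$ lie in the same right coset $W_M\cdot(2,3)$. The geometric lemma here is naturally indexed by $W/W_M$ (since $W_T=1$ sits on the left), and with your action convention $(w\cdot\mu)(t)=\mu(w^{-1}tw)$ the three translates you wrote down are the correct ones. This does not affect the conclusion, only the labelling of the index set.
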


\begin{proof}
Indeed, the Langlands class associated to $\pi^n_p(\chi)$ is $(\chi_v^\perp(p),1,\chi_v(p))$ which corresponds, up to a twist of the central character by $(\chi_v^{\perp})^{-1}$, to the class $(1, (\chi_v^{\perp}(p))^{-1},|p|)$ which in turn is associated by Satake (up to twist by $\mu^{-1}|.|^{1/2}$) to the unramified induction studied in \cite{BC2} Lemma 8.2.1, $n=1,m=3$ and $\pi = \chi_0^c = \chi_0^{-1}$ (which satisfies the hypothesis of \cite{BC2} 6.9.1). Thus $L(\pi_p|.|^{1/2}) = (\chi^\perp(p))^{-1}$. The refinements are then given by the Lemma 8.2.1. of \cite{BC2}.
\end{proof}

\subsection{Coherent cohomology}

In order to associate to the automorphic representation $\pi^n(\chi)$ a point in the eigenvariety constructed in section \ref{sectHecke}, we need also to show that $\pi^n(\chi)$ appears in the global sections (over $X$) of a coherent automorphic sheaf. The full calculation is made in Appendix \ref{AppB}. Here we give an alternative proof in the case $a >1$, which corresponds to a regular weight, as the case $a=1$ will correspond to a singular weight (and for $a=1$, $\pi^2$ is a non-holomorphic limit of discrete series). Thus suppose $a > 1$.
According to Rogawski \cite{Rogbook} Proposition 15.2.1, the (regular) parameter $\varphi = \varphi(a,b,c) = \varphi(a,a-1,0)$ (see \cite{Rogbook} p176, $\chi$ corresponding to $\chi_\varphi^-$) we already know that,
\[ H^i(\mathfrak g,K,\pi^n_\infty(\chi)\otimes\mathcal F_\varphi^\vee) = \left\{
\begin{array}{cc}
 \CC & \text{if } i = 1,3   \\
  0& \text{otherwise}   
\end{array}
\right.
\]
for $\mathcal F_\varphi$ the representation of $U(2,1)(\RR)$ of highest weight $(a-1,a-1,1)$,
and 
\[ H^i(\mathfrak g,K,\pi^2_\infty(\chi)\otimes\mathcal F_\varphi^\vee) = \left\{
\begin{array}{cc}
 \CC & \text{if } i = 2   \\
  0& \text{otherwise.}   
\end{array}
\right.
\]
In particular the system of Hecke Eigenvalues of $\pi^n(\chi)$ appears in the first intersection cohomology group of a local system associated to $\mathcal F_\varphi^\vee$ (thus coming from a representation of the group $G$), $IH^1(X^{BS},\mathcal F_\varphi^\vee)$, where $X^{BS}$ is the Borel-Serre compactification, or also by a theorem of Borel in $H^1_{dR,c}(Y,(V_\varphi,\nabla))$ the de Rham cohomology with compact support (where $V_\varphi = \mathcal F_\varphi^\vee \otimes \mathcal O_X$ is the associated vector bundle with connection, see \cite{HarCor} 1.4). If our Picard surface were compact, then $IH^1(X^{BS},\mathcal F_\varphi^\vee) = H^1_{et}(X,\mathcal F_\varphi^\vee)$ is just etale cohomology and as $\mathcal F_\phi^\vee$ comes from a representation of our group, using the Hodge-decomposition for $H^1_{et}(X,\mathcal F_\varphi^\vee)$ or for the de Rham cohomology (see \cite{Fal83} or \cite{MM}), we know that there exists a coherent automorphic sheaf $V_\varphi = \mathcal F_\varphi^\vee \otimes \mathcal O_X$ such that,
\[ H_{et}^1(X,\mathcal F_\varphi^\vee) = H^1(X,\mathcal F_\varphi^\vee\otimes \mathcal O_X) \oplus H^0(X,\mathcal F_{\varphi}^\vee \otimes \Omega^1_X).\]

More generally, if $X$ is not compact (which is the case here) we still have a BGG-decomposition for the de Rham cohomology with compact support, see \cite{LanICCM} (2.4),
\[ H^1_{dR,c}(Y,(V_\varphi,\nabla)) = H^1(X^{},\omega^{(1-a,1-a,1)}(-D)) \oplus H^0(X^{},\omega^{(0,1-a,a)}(-D)).\]
We thus need to show that the system of Hecke eigenvalues appears in the last factor.
But, denote $\overline I$ the opposite induced representation $\ind-n_B^{U(2,1)(\RR)}(\chi_\infty^w)$, so that $\pi^n_\infty$ is the subrepresentation of $\overline I$ and $\pi^2$ its quotient. Writing the long exact sequence of $(\mathfrak g,K)$-cohomology associated to 
\[ 0 \fleche \pi^n_\infty \fleche \overline{I} \fleche \pi^2_\infty \fleche 0,\]
we get that $H^1(\mathfrak g,K,\pi^n_\infty \otimes \mathcal F_\varphi^\vee) = H^1(\mathfrak g,K,\overline I\otimes \mathcal F_\varphi^\vee)$.
Using Hodge decomposition for this, we get that,
\[ \Hom_K(\mathfrak p^+\otimes \mathcal F_\varphi,\pi^n_\infty) = \Hom_K(\mathfrak p^+ \otimes \mathcal F_\varphi,\overline I),\]
and using Frobenius reciprocity we can calculate the last term as,
\[ \Hom_{T\cap K}(\mathfrak p^+\otimes \mathcal F_\varphi,(\chi_\infty^w)_{T\cap K}),\]
and as we know $\mathcal F_\varphi$, we can calculate its restriction to $T\cap K$, we get,
\[ (\mathcal F_\varphi)_{T\cap K} = t^ae^{a-1} \oplus \dots \oplus t^{2a-2}e,\]
where,
\[ t^ke^l : 
\left(
\begin{array}{ccc}
t  &   &   \\
  & e  &   \\
  &   & t  
\end{array}
\right) \in T\cap K = U(1)\times U(1) \longmapsto t^ke^l.
\]
We can also explicitely calculate that by conjugacy, $T \cap K$ acts on $\mathfrak p^+$ by $1 \oplus te^{-1}$ and on $\mathfrak p^-$ by $1 \oplus t^{-1}e$. 
\begin{remaen}
\label{remchangecomplexstructure}
This is because of our choice of $h$. If we change $h$ by its conjugate, then the action on $\mathfrak p^+, \mathfrak p^-$ would have been exchanged, and $\pi^n(\chi)$ would be anti-holomorphic (but we could have used $\chi^c$ instead of $\chi$ in this case, $\pi^n(\chi^c)$ would have been holomorphic).
\end{remaen}

As $\chi^w_{T\cap K} = t^{2a-1}$, we get that,
\[\Hom_{K}(\mathfrak p^+\otimes\mathcal F_\varphi, \overline I) = \CC \quad \text{and} \quad\Hom_{K}(\mathfrak p^-\otimes\mathcal F_\varphi, \overline I) = \{0\}.\]

\begin{remaen}
Changing $\chi$ by ${\chi}^c$ inverts the previous result, as predicted by the Hodge structure, so we could have argued without explicitely calculating these spaces.
\end{remaen}

\begin{propen}
\label{propHodge}
If $a > 1$, the Hecke eigensystem corresponding to $\pi^n(\chi)$ appears in $H^0(X,F_{s\cdot(\varphi^\vee)} \otimes \mathcal O_X(-D)) = H^0(X,\omega^{(0,1-a,a)}(-D))$. More generally,  for $a \geq 1$, the Hecke eigenvalues of $\pi^n(\chi)$ appear in the global section of the coherent sheaf $\omega^{(0,1-a,a)}(-D)$.
\end{propen}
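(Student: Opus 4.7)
The proof for $a>1$ is essentially sketched in the discussion immediately preceding the statement, so my plan is to assemble those ingredients cleanly and then explain how to reduce the singular case $a=1$ to Appendix \ref{AppB}.

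First I would fix the setup: $\pi^n(\chi)$ is a cuspidal automorphic representation whose system of Hecke eigenvalues $\lambda_{\pi^n(\chi)}$ occurs in cuspidal automorphic forms for $U(2,1)$ of infinity type $\pi^n_\infty(\chi)$. By Rogawski's computation (Proposition 15.2.1 of \cite{Rogbook}), when $a>1$ the parameter $\varphi=\varphi(a,a-1,0)$ is regular and $H^1(\mathfrak g,K,\pi^n_\infty(\chi)\otimes\mathcal F_\varphi^\vee)=\mathbb{C}$, where $\mathcal F_\varphi$ has highest weight $(a-1,a-1,1)$. By Borel's theorem / Franke's theorem, $\lambda_{\pi^n(\chi)}$ appears in $H^1_{dR,c}(Y,(V_\varphi,\nabla))$. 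The plan is then to apply the BGG decomposition of \cite{LanICCM} (2.4),
\[
H^1_{dR,c}(Y,(V_\varphi,\nabla))\;=\;H^1\!\bigl(X,\omega^{(1-a,1-a,1)}(-D)\bigr)\oplus H^0\!\bigl(X,\omega^{(0,1-a,a)}(-D)\bigr),
\]
and identify the summand in which $\lambda_{\pi^n(\chi)}$ actually lives.

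To isolate the $H^0$ summand, I would argue at the archimedean place. The Hodge filtration on $H^1(\mathfrak g,K,\pi^n_\infty(\chi)\otimes\mathcal F_\varphi^\vee)$ is detected by the decomposition $\mathfrak p_{\mathbb C}=\mathfrak p^+\oplus\mathfrak p^-$, so the claim reduces to showing
\[
\Hom_K\!\bigl(\mathfrak p^-\otimes\mathcal F_\varphi,\pi^n_\infty(\chi)\bigr)\;\ne\;0,\qquad \Hom_K\!\bigl(\mathfrak p^+\otimes\mathcal F_\varphi,\pi^n_\infty(\chi)\bigr)\;=\;0,
\]
with the convention on $h$ fixed in Section 2 (see Remark \ref{remchangecomplexstructure}). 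Using the short exact sequence $0\to\pi^n_\infty(\chi)\to\bar I\to\pi^2_\infty(\chi)\to 0$ and the vanishing of $(\mathfrak g,K)$-cohomology of $\pi^2_\infty(\chi)$ in degree $1$, these $\Hom$'s are computed on $\bar I=\ind_{B}^{U(2,1)(\mathbb R)}\,\widetilde\chi_\infty^{\,w}$. By Frobenius reciprocity they equal $\Hom_{T\cap K}(\mathfrak p^{\pm}\otimes\mathcal F_\varphi,(\widetilde\chi_\infty^{\,w})_{T\cap K})$. Since the restriction of $\mathcal F_\varphi$ to $T\cap K=U(1)\times U(1)$ contains the weights $t^{a+j}e^{a-1-j}$ for $0\le j\le a-1$, and $T\cap K$ acts on $\mathfrak p^{\pm}$ by $1\oplus t^{\mp 1}e^{\pm 1}$, a direct weight-matching against the character $t^{2a-1}$ of $\widetilde\chi_\infty^{\,w}$ shows one $\Hom$ is one-dimensional and the other vanishes. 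Combined with the BGG decomposition this puts $\lambda_{\pi^n(\chi)}$ in $H^0(X,\omega^{(0,1-a,a)}(-D))$, proving the regular case.

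The main obstacle is the singular case $a=1$: here $\varphi$ is no longer regular, $\pi^n_\infty(\chi)$ does not appear in the de Rham (or étale) cohomology of $Y$, and the BGG argument above degenerates. For this limit case I would invoke the explicit archimedean computation carried out in Appendix \ref{AppB}, which constructs directly a nonzero element of $\Hom_K(\mathcal F_\varphi,\pi^n_\infty(\chi))$ with $\varphi$ now of weight $(0,0,1)$, and hence, via the standard modular/automorphic interpretation of $H^0(X,\omega^{(0,0,1)}(-D))$ as the $K$-fixed, $K_\infty$-equivariant, holomorphic cusp forms valued in $V^{(0,0,1)}$, produces the required classical section realizing $\lambda_{\pi^n(\chi)}$. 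Together the two cases give the claim for all $a\ge 1$.
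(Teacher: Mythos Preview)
Your proposal follows essentially the same route as the paper: for $a>1$ you combine Rogawski's $(\mathfrak g,K)$-cohomology computation, the BGG/Hodge decomposition of $H^1_{dR,c}$, and a Frobenius-reciprocity weight computation on $\overline I$ to pin down the $H^0$ summand, and for $a=1$ you defer to Appendix~\ref{AppB}; this is exactly the structure of the paper's argument.

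One point to tighten: with the choice of $h$ fixed in Section~2, the paper computes that $T\cap K$ acts on $\mathfrak p^{+}$ by $1\oplus te^{-1}$ and on $\mathfrak p^{-}$ by $1\oplus t^{-1}e$, and that the weights of $(\mathcal F_\varphi)_{|T\cap K}$ run from $t^{a}e^{a-1}$ to $t^{2a-2}e$ (so $a-1$ terms, not $a$). This yields $\Hom_K(\mathfrak p^{+}\otimes\mathcal F_\varphi,\overline I)=\CC$ and $\Hom_K(\mathfrak p^{-}\otimes\mathcal F_\varphi,\overline I)=0$, the opposite of what you wrote. Since the paper identifies the $H^1(X,\omega^{(1-a,1-a,1)})$ summand with $(\mathfrak p^{-},K)$-cohomology, it is precisely the vanishing on the $\mathfrak p^{-}$ side that forces the class into $H^0$. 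Your labeling of $\mathfrak p^{\pm}$ and the range $0\le j\le a-1$ are swapped relative to the paper's conventions (cf.\ Remark~\ref{remchangecomplexstructure}); the argument is correct once these are aligned.
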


\begin{proof}
When $a > 1$ we know that the Hecke eigensystem corresponding to $\pi^n(\chi)$ appears in the de Rham cohomology associated to $\varphi$. If $Y$ were compact, the proposition would just be Matsushima's formula and the Hodge decomposition (\cite{Yosh} Theorem 4.7) of the first $(\mathfrak g,K)$-cohomology group we calculated above. 
More generally, the previous calculation still shows that the class corresponding to $\pi^n(\chi)$ can't be represented by classes in $H^1(X,\omega^{(1-a,1-a,1)})$ (whose cohomology is calculated by $(\mathfrak p^-,K)$-cohomology) thus by the BGG resolution, it appears in $H^0(X,\omega^{(0,1-a,a)}(-D))$.

For the general case, this is Appendix \ref{AppB} (which proves that the Hecke eigensystem appears in the 0-th coherent cohomology group).

\end{proof}

\subsection{Transfert to $GU(2,1)$}
\label{secttransfert}

From now on, denote by $G = GU(2,1)$ the algebraic group over $\QQ$ of unitary similitudes (relatively to $(E^3,J)$). It is endowed with a morphism $\nu$, and there is an exact sequence,
\[ 0 \fleche G_1 \fleche G \overset{\nu}{\fleche} \mathbb G_m,\]
where $G_1 = U(2,1)$ is the unitary group of $(E^3,J)$.

Let $T = \Res_{E/\QQ}\mathbb G_m$ be the center of $G$, $Nm : T \fleche \mathbb G_m$ the norm morphism, and $T^1$ its Kernel; the center of $G_1$. We have the exact sequence,
\[ 1 \fleche T^1 \fleche T \times G_1 \fleche G,\]
where the first map is given by $\lambda \mapsto (\lambda,\lambda^{-1})$.

Let $\pi_1$ be an automorphic (resp. a smooth admissible local) representation of $G_1(\mathbb A_\QQ)$ (resp. of $G_1(\QQ_p)$ or $G_1(\RR)$), of central character $\chi_1$ of $T^1$. Let $\chi$ be a character of $T$ (local or global) that extends $\chi_1$, we can thus look at the representation,
\[ (z,g) \in T \times G_1 \longmapsto \chi(z)\pi_1(g),\]
of $T\times G_1$. We can check that it factors through the action of $T^1$ and gives a representation of a subgroup of $G$.

\begin{propen}
\label{transfert}
The automorphic representation $\pi^n$ of $U(2,1)$ given in proposition \ref{proprog} has central character $\omega$ equal to
the restriction of $\chi$ to $E^1$. We can extend $\omega$ as an algebraic Hecke character $\widetilde{\omega}$ of $T$ by the algebraic character
$\widetilde\omega = N^{-1} \chi$, where $N$ is the norm of $E$. Thus, there exists an automorphic representation $\widetilde{\pi^n}$ of $G$ such that for $\ell$ a prime, unramified for $\chi_0$, $(\pi^n_\ell)^{K_\ell} = (\widetilde{\pi^n_\ell})^{K_\ell}$ (where $K_\ell \subset G(\QQ_\ell)$ is the hyperspecial – respectively special if $\ell$ ramifies in $E$– subgroup) and the Galois representation associated to $\widetilde{\pi}^n$ by \cite{BR} Theorem 1.9.1 (or  \cite{Ski}) is (with the normalisation of \cite{Ski}),
\[ (1 \oplus\chi \oplus \chi^\perp)\overline{\chi}(-2) = (\overline \chi \oplus \omega_{cycl}\oplus 1)(-2).\]
Moreover $(\pi^n_p)^I = (\widetilde{\pi^n_p})^I$.
\end{propen}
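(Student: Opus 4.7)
The argument splits naturally into three pieces: identifying the central character of $\pi^n$, constructing the extension $\widetilde{\pi^n}$ on $G = GU(2,1)$, and matching the Galois representation.

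First I would verify, place by place, that the central character of $\pi^n$ equals $\chi|_{E^1}$. By Rogawski's local construction recalled above, each $\pi^n_\ell$ is a Jordan--Hölder constituent of a parabolic induction from the character $\widetilde\chi_\ell$ sending $\mathrm{diag}(a,e,\bar a^{-1})$ to $\chi_\ell(a)$ (with the analogous statement at split and archimedean places after the identification $U(2,1)(\QQ_\ell) \simeq \GL_3(\QQ_\ell)$). Evaluating on the central element $\mathrm{diag}(e,e,e)$ with $e \in E^1$ gives $\chi_\ell(e)$, so $\omega = \chi|_{E^1}$ globally. Checking that $\widetilde\omega = N^{-1}\chi$ is a valid extension is immediate: $N \equiv 1$ on $E^1$, hence $\widetilde\omega|_{E^1} = \chi|_{E^1} = \omega$, and $\widetilde\omega$ is algebraic with infinity type $z\mapsto z^{a-1}\bar z^{-a}$.

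Next I would construct $\widetilde{\pi^n}$. The formula $(z,g)\mapsto \widetilde\omega(z)\pi^n(g)$ descends along $T^1\hookrightarrow T\times G_1 \to G$ because on $T^1$ the two factors cancel by the computation of $\omega$, so it defines a smooth automorphic representation of the subgroup $T \cdot G_1 \subset G$. The obstruction to extending to $G$ is the quotient $G/(T \cdot G_1) \simeq \mathbb{G}_m/N(T)$, which is trivial at split places and cyclic of order two at inert places, and which is killed globally by class field theory via the automorphy of $\widetilde\omega$. One then obtains $\widetilde{\pi^n}$ as an automorphic representation of $G(\mathbb A)$ (non-canonically extended at inert places, but with the same $L$-packet), cuspidal whenever $\pi^n$ is cuspidal. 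The two fixed-vector statements, $(\pi^n_\ell)^{K_\ell \cap G_1} = (\widetilde{\pi^n_\ell})^{K_\ell}$ at unramified places and $(\pi^n_p)^{I} = (\widetilde{\pi^n_p})^{I}$ at $p$, then follow from the fact that the relevant hyperspecial/very special maximal compacts and Iwahoris in $G$ factor through the product of their intersections with $T$ and $G_1$.

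Finally, for the Galois representation, I would apply Proposition \ref{proSki} in the converse direction: for an automorphic representation of $G$ with central character $\omega_\pi$ and restriction $\pi^0$ to $G_1$, one has $\rho_{\pi,\mathrm{Ski}} = \rho_{\pi^0}\otimes \rho_{\overline{\omega_\pi}}\otimes \omega_{cycl}^{-1}$. Substituting $\rho_{\pi^n} = 1\oplus \chi\oplus \chi^\perp$ from Proposition \ref{proprog} and $\overline{\widetilde\omega} = N^{-1}\chi^c$, and then using the polarization $\chi^\perp = \chi|.|^{-1}$ (so that $\chi^c = \chi^{-1}|.|$) together with the fact that $N$ corresponds on the Galois side to the cyclotomic character, the product collapses to $(1\oplus \chi \oplus \chi^\perp)\overline\chi(-2) = (\overline\chi \oplus \omega_{cycl} \oplus 1)(-2)$. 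The main obstacle is precisely this last step: tracking the cyclotomic versus class-field-theoretic normalisations and verifying that the specific Tate twist $(-2)$ emerges is the reason the twist by $N^{-1}$ was chosen in $\widetilde\omega$, and careful bookkeeping of conventions is what makes the identity hold on the nose.
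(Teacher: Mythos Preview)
Your computation of the central character and the Galois-representation bookkeeping are fine and match the paper's intent. The differences worth flagging are in the middle two steps.

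For the construction of $\widetilde{\pi^n}$, the paper does not proceed locally and then glue. It invokes \cite{CHT}, Proposition~1.1.4, which uses the global decomposition $G(\mathbb A) = T(\mathbb A)\,G(\QQ)\,G_1(\mathbb A)$ (valid because $3$ is odd) to write down $\widetilde{\pi^n}(z g_\QQ g_1) = \widetilde\omega(z)\pi^n(g_1)$ directly. This gives a \emph{unique} extension once $\widetilde\omega$ is chosen, with no local choices to be made. Your route via the local quotient $G/(T\cdot G_1)$ and ``non-canonical extension at inert places'' is where the argument becomes shaky: if the local extension is genuinely non-canonical you have not specified a single automorphic representation, and appealing to ``the same $L$-packet'' does not pin down the object whose Iwahori invariants you compute in the next paragraph. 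The global decomposition sidesteps this entirely.

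For the equality $(\pi^n_p)^{I} = (\widetilde{\pi^n_p})^{I}$, your claim that the Iwahori ``factors through the product of its intersections with $T$ and $G_1$'' is the right shape, but the paper's proof shows concretely that $I = T(\ZZ_p)\cdot I_1$ with $I_1 = I\cap G_1(\QQ_p)$, and then uses the crucial fact that $\widetilde\omega_p$ is \emph{unramified} (since $p\nmid\Cond(\chi)$), so that the extra $T(\ZZ_p)$-action is trivial on $V_p^{I_1}$. Without invoking unramifiedness of $\widetilde\omega_p$ at $p$, the inclusion $V_p^I \subset V_p^{I_1}$ could be strict; you should make this hypothesis explicit.
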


\begin{proof}
To calculate $\omega$, we only need to look at $\pi^n_p$ for all place $p$, and we can use that $\pi^n_p = \ind-n^{GL_3(\QQ_p)}_P(\chi_0)$ for split $p$'s, and $\pi^n_p \subset \ind-n_B^{U(3)(\QQ_p)}(\widetilde{\chi}^w)$ for $p = \infty$ and inert or ramified $p$'s.

The character $\widetilde\omega$ extends $\omega$. Once we have extended the central character of $\pi^n$, the existence of a $\widetilde \pi^n$ is unique and assured by \cite{CHT} Proposition 1.1.4 (as $2+1 = 3$ is odd). More precisely,
\[ \widetilde\pi^n(zg_\QQ g_1) = \widetilde\omega(z)\pi^n(g_1),\]
where $g = z g_\QQ g_1$ is written following the decomposition $G(\mathbb A) = T(\mathbb A)G(\QQ)G_1(\mathbb A)$.

Denote by $V_p$ the space of $\pi^n_p$ (and thus of $\widetilde \pi^n_p$). And $I \subset GU(2,1)(\QQ_p)$ the Iwahori subgroup, and $I_1$ its intersection with $U(2,1)(\QQ_p)$. As if $M \in I$, then $M \equiv B \pmod p$, up to multiplying by an element of $T \in T(\mathcal O)$, suppose that $TM \equiv U \pmod p$. In this case $c(TM) \equiv 1 \pmod p$, thus, as $p$ is unramified in $E$, there exist $T' \in T(\ZZ_p)$ such that $c(T'TM) =1$ and thus $M = T^{-1}(T'TM) \in T(\ZZ_p)I_1$. Thus, as we can write,
\[
\left(
\begin{array}{ccc}
a  &   &   \\
  & e  &   \\
  &   & N(e)\overline{a}^{-1}  
\end{array}
\right) = \left(
\begin{array}{ccc}
ae^{-1}  &   &   \\
  & 1  &   \\
  &   & \overline{e}\overline{a}^{-1}  
\end{array}
\right)  \left(
\begin{array}{ccc}
e  &   &   \\
  & e  &   \\
  &   & e  
\end{array}
\right) \in I_1 T(\ZZ_p),
\]
we get that,
\[ V_p^I = \{ z \in V_p^{I_1} : \forall \lambda \in T(\QQ_p) \cap I, \widetilde\omega_p(\lambda)z =z \},\]
but as $T(\QQ_p)\cap I = \mathcal O_{E_p^\times}Id$ and $\widetilde w_p$ is unramified, $V_p^I = V_p^{I_1}$. The assertion for $K_\ell$ follows the same lines and is easier.
\end{proof}

\begin{remaen}
We could have lifted the central character of $\pi^n$ simply by $\chi$, in which case the resulting representation would have been a twist of the previous one, but as we only used three variables on the weight space, which means that we don't allow families which are twists by power of the norm of the central character, only one choice of the lift of the central character gives a point in our eigenvariety. We can check that the Hecke eigenvalues of $\widetilde{\pi^n}$ appears in $H^0(X,\omega^\kappa)$, with 
\[ \kappa =   (0,1-a,a).\]
How can we find the power of the norm and the coherent weight ? First, as Hodge-Tate and coherent weights vary continuously on $\mathcal E$, and $\widetilde{\pi^n(\chi)}$ appears as a classical form of $\mathcal E$ (proposition \ref{propH0}), according to proposition \ref{proSki} and proposition \ref{thrT}, the \textit{polarised} Galois representation associated to $\widetilde{\pi^n}(\chi)$ is,
\[ 1 \oplus \chi \oplus \chi^\perp,\]
thus $(k_1,k_2-1,1-k_3) = (-a,0,1-a)$ up to order. This let us 6 possibilities for $\kappa$ :
\begin{enumerate}
\item $(0,1-a,a)$
\item $(0,2-a,a+1)$
\item $(1-a,1-a,1)$
\item $(-a,1-a,a)$
\item $(-a,2-a,1+a)$
\item $(1-a,1,1+a)$
\end{enumerate}
but as for classical points (as $\widetilde{\pi^n}(\chi)$) $k_1 \geq k_2$, and $a \geq 1$, this eliminates the three last possibilities (and the second when $a=1$).
But then, we know that the lowest $K_\infty$-type for $\pi^n(\chi)$ is of dimension $a$ by restriction to $U(2)$ and the calculation of Appendix \ref{AppB}, proposition \ref{propH0}, which makes only the first coherent weight possible when $a >1$. When $a=1$, the first and third weights are the same. 
Another possibility is also to find the infinitesimal character of $\pi^n(\chi)$ (using for example \cite{Knapp} Proposition 8.22), and that $\eta = (-k_3,k_1,k_2)$ is the highest weight character of $V_{\lambda + \rho_n-\rho_c}^\vee$ in the notations of \cite{Gold} (paying attention to the dual). Finally, if $a > 1$, then the BGG decomposition tells us which weight $\kappa$ has to be.
Then to find the corresponding power of the norm, note that $|\kappa|$ must be equal to the opposite of the power of the norm of the central character of $\widetilde{\pi^n(\chi)}$ by the calculation before proposition \ref{prop27} and conventions on weights (see section \ref{normweight} about $(c_0,c_0')$).

\end{remaen}

\subsection{Refinement of representations of $GU(2,1)(\QQ_p)$}
\label{sectrefin}
Let $G = GU(2,1)(\QQ_p)$. Consider in $C_c(I\backslash G/I,\ZZ[1/p])$ the double classes,
\[ 
U_p^c = \left(
\begin{array}{ccc}
p^2  &   &   \\
  & p  &   \\
  &   & 1  
\end{array}
\right)  \quad \text{and} \quad S_p^c = \left(
\begin{array}{ccc}
p  &   &   \\
  & p  &   \\
  &   & p  
\end{array}
\right). 
\]
The caracteristic functions of $S_p^c$ and $U_p^c$ are invertible in $C_c(I\backslash G/I,\ZZ[1/p])$
and denote by $\mathcal A(p)$ the sub-algebra generated by the characteristic functions of $U_p^c,S_p^c$ and their inverses.

\begin{propen}
For $\pi$ a smooth complex representation of $G$, we have a natural $\CC[\mathcal A(p)]$-isomorphism,
\[ \pi^I \fleche (\pi_N)^{T^0} \otimes \delta_B^{-1}.\]
\end{propen}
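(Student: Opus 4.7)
The plan is to prove this as a direct instance of the Borel--Casselman Iwahori--Jacquet isomorphism, with the twist by $\delta_B^{-1}$ emerging from the index of $tN^0 t^{-1}$ in $N^0$ when computing Hecke actions. First I would construct the map $r : \pi^I \to (\pi_N)^{T^0}$ as the restriction to $\pi^I$ of the natural projection $\pi \twoheadrightarrow \pi_N$. Using the Iwahori decomposition $I = \overline{N}^0 \cdot T^0 \cdot N^0$ (with $\overline{N}^0 = I\cap \overline N$, $T^0 = I \cap T$, $N^0 = I \cap N$), the image lands in $(\pi_N)^{T^0}$ because $T^0 \subset I$ acts trivially on $\pi^I$ and the projection to $\pi_N$ is $T$-equivariant.

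Next I would verify bijectivity by the standard averaging argument. For injectivity, if $v \in \pi^I$ maps to $0$ in $\pi_N$ then by smoothness and the definition of $\pi_N$ there exists a compact open $N_1 \supset N^0$ of $N$ with $\int_{N_1} n v\, dn = 0$; combining with the $N^0$-invariance of $v$ forces $v = 0$. For surjectivity, lift $\bar v \in (\pi_N)^{T^0}$ to some $v_0 \in \pi$ and average successively over a sufficiently large compact open subgroup of $\overline N$, then over $T^0$ and $N^0$; the Iwahori decomposition together with the $T^0$-invariance of $\bar v$ guarantees that the output is $I$-fixed and projects to a nonzero scalar multiple of $\bar v$.

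Finally, the essential content is the $\mathcal A(p)$-equivariance, which is where $\delta_B^{-1}$ enters. For $t \in \{U_p^c, S_p^c\}$ one has $tN^0 t^{-1} \subset N^0$ and $t^{-1}\overline{N}^0 t \subset \overline{N}^0$, yielding the coset decomposition
\[ ItI = \bigsqcup_{n \in N^0 / tN^0 t^{-1}} n\,t\,I, \qquad [N^0 : tN^0 t^{-1}] = \delta_B(t)^{-1} \]
(the equality computable from $\det \mathrm{Ad}(t)|_{\Lie(N)}$). For $v \in \pi^I$ this gives $[ItI]\cdot v = \sum_n n\,t\,v$, and since each $n \in N^0 \subset N$ acts trivially on the image in $\pi_N$,
\[ r\bigl([ItI]\cdot v\bigr) = \delta_B(t)^{-1}\,t \cdot r(v), \]
which is precisely the $t$-action on $(\pi_N)^{T^0}\otimes \delta_B^{-1}$. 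Since $[IU_p^c I]$ and $[IS_p^c I]^{\pm 1}$ generate $\mathcal A(p)$, this yields the desired $\CC[\mathcal A(p)]$-isomorphism. The main obstacle is not substantive: the only step requiring care is the normalization $[N^0 : tN^0t^{-1}] = \delta_B(t)^{-1}$, which is standard but convention-dependent and should be checked once by hand on the simple roots of $B$.
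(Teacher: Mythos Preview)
The paper states this proposition without proof, treating it as a standard fact (the Borel--Casselman Iwahori--Jacquet isomorphism); elsewhere it points to \cite{BC1} and \cite{BC2} (e.g.\ Proposition 6.4.3) for the analogous statement. Your argument is exactly the standard proof of this result as found in Casselman's notes: the map is the projection $\pi \twoheadrightarrow \pi_N$ restricted to $\pi^I$, injectivity comes from the averaging characterisation of $\pi(N)$, surjectivity from the Iwahori decomposition, and the Hecke compatibility from the coset decomposition $ItI = \bigsqcup_{n \in N^0/tN^0t^{-1}} ntI$ for $t$ strictly dominant, with $[N^0:tN^0t^{-1}] = \delta_B(t)^{-1}$ producing the twist. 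This is correct and there is nothing to compare, since the paper supplies no argument of its own. The only place where your sketch is thin is the surjectivity step (averaging over $\overline{N}^0$ and checking the image in $\pi_N$ is unchanged requires a small argument using that $t^{-1}\overline{N}^0 t \to 1$ for dominant $t$, or equivalently Casselman's second adjointness), but this is routine and well documented in the references the paper relies on.
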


Let $\pi$ be a smooth admissible representation of $G$, such that $\pi$ is a subquotient of the (normalised) induction of an unramified character $\psi$ of the torus $T$ of $G$.
For example this is the case if $\pi$ is unramified, or if $\pi^I \neq \{0\}$ (by the previous equality and adjonction beetwen Jacquet functor and induction for example).
\begin{definen}
Following \cite{BC2}, an accessible refinement of $\pi$ is a $\sigma \in W$ such that $\psi^\sigma\delta_B^{1/2}$ is a subrepresentation of $\pi_N^{T^0}$ (equivalently if $\psi^\sigma\delta_B^{-1/2}$ appears in $\pi^I$). 
\end{definen}

Another way to see it is that a refinement is an ordering of the eigenvalues of the Frobenius of $LL(\pi)$, the Weil representation associated by local Langlands to $\pi$, and it is accessible if it appears in the previous sense in $\pi^I$ (or $\pi_N^{T^0}$).

For $GU(2,1)$ when $p$ is split, $GU(2,1)(\QQ_p) \simeq \GL_3(\QQ_p)\times \QQ_p^\times$ and $\psi$ is an unramified character of $\QQ_p^4$. The local Langlands representation associated to $\pi = \pi_1 \otimes \psi_4$ in this case is $LL(\pi_1)\otimes \overline{\psi_4}$ which has eigenvalues $(\psi_1(p)\overline\psi_4(p), \psi_2(p)\overline\psi_4(p),\psi_3(p)\overline\psi_4(p))$ and an ordering of these eigenvalues is given by an element of $\mathfrak S_3 = W_{\GL_3} = W_{\GL_3\times\GL_1}$. Of course, a priori not all refinements are accessible ($\pi^n_p$ will be an example).

When $p$ is inert, $W_G \simeq \ZZ/2\ZZ$, and a character of $T = (\Res_{\QQ_{p^2}/\QQ_p}\mathbb G_m)^2$ is given by two characters $(\chi_1,\chi_2)$, by,
\[
\left(
\begin{array}{ccc}
a  &   &   \\
  &  e &   \\
  &   &   N(e)\overline{a}^{-1}
\end{array}
\right), a,e \in \QQ_{p^2}^\times \longmapsto \chi_1(a)\chi_2(e).
\]
The non trivial element $w \in W_G$ acts on the character by $w \cdot (\chi_1,\chi_2) = (\chi_1^{\perp},\chi_2(\chi_1\circ N))$. Thus a refinement in this case is simply given by $1$ 
or $w$.

\begin{remaen}
In terms of Galois representation, the base change morphism from $GU(2,1)$ to $\GL_3\times \GL_1$ sends the (unramified) Satake parameter $\chi_1,\chi_2$ (if $\chi_2$ is unramified, it is trivial on $E^1$) to the parameter 
$((\chi_1,1,\overline\chi_1^{-1}),\chi_1\chi_2)$ (see \cite{BR} Theorem 1.9.1 or \cite{Ski} section 2), whose semi-simple class in $\GL_3$ 
associated by Local Langlands has Frobenius given by 
\[
\left(
\begin{array}{ccc}
\chi_2(p) &   &   \\
  & \chi_1(p)\chi_2(p)  &   \\
  &   &\chi_2(p)) \chi_1^2(p) 
\end{array}
\right)
\]

In the inert case, say $\sigma \in W_G$ is a refinement, then the action of $U_p$ on the $\sigma$-part of $\pi_N^{T^0}$ is given by $\chi_1^\sigma(p)^2\chi_2^\sigma(p)$, the action of $s$ is given by $\chi_1^\sigma(p)\chi_2^\sigma(p)$.
In particular, the action of $\mathcal A(p)$ (through $T/T^0$), and actually of $U_p$ or $u_1 = U_pS_p^{-1}$, on $\pi^I$ determines the refinement.

This is also true (and easier) if $p$ splits.

As we normalized our Galois representation $\rho_\pi$ so that they are polarized, i.e. forgetting the central character, the previous class does not directly relate to the Frobenius eigenvalues of $\rho_\pi$ but rather of the one of $\rho_{\pi,Ski}$. But as the link between both only differ through the central character of $\pi$, it is straightforward that the Frobenius eigenvalues of (a crystalline) $\rho_\pi$ are given by $(\psi_1,1,\psi_1^\perp)$, when $p$ is inert, and $\psi_1$ is given (if unramified) by the action of the Iwahori-Hecke double class
\[
\left(
\begin{array}{ccc}
p &   &   \\
  & 1 &   \\
  &   & p^{-1}
\end{array}
\right)
\]
which corresponds to $U_pS_p^{-1}$ (see next subsection). In the split case, an unramified character of the torus of $\GL_3\times \GL_1$ gives Frobenius eigenvalues $(p\psi_1(p)\psi_4(p),p\psi_2(p)\psi_4(p),p\psi_3(p)\psi_4(p))$ for (crystalline) $\rho_{\pi,Ski}$ and $(\psi_1(p),\psi_2(p),\psi_3(p))$ for $\rho_\pi$, which relates to operators $U_{i-1}/U_3$ (see next subsection). 
\end{remaen}

Thus, using the previous definition of refinement, local global compatibility at $p$, we can associate to $\Pi = \pi_\infty \otimes \bigotimes_\ell \pi_\ell$ an algebraic regular cuspidal automorphic representation of $GU(2,1)$ of level $K^{Np}I$ a representation $\rho_{\pi,p}$ together with an (accessible) ordering of its crystalline-Frobenius eigenvalues for each choice of a character in $\pi_p^{I}$ under $\mathcal A(p)$, such that

\begin{propen}
\label{rafpin}
The automorphic representation $\widetilde{\pi}^n(\chi)$ of $GU(2,1)$ constructed by proposition \ref{transfert} has only one accessible refinement at $p$ if $p$ is inert, it is
given by,
\[ \omega \neq 1 \in W_{G},\]
which correspond to the ordering $((\chi^\perp(p),1,\chi(p)),{\chi(p)})$ or $(1,{\chi(p)},|p|)$.
If $p = \overline{v}v$ is split, there are three accessible refinements, given by,
\begin{itemize}
\item $\sigma = 1, ((1,\chi_{v}^\perp(p),\chi_v(p)),\chi_v(p))$ which corresponds to $(\chi_v(p),1,|p|)$.
\item $\sigma = (3,2), ((\chi_{v}^\perp(p),1,\chi_v(p)),\chi_v(p))$ which corresponds to $(1,\chi_v(p),|p|)$.
\item $\sigma = (3,2,1), ((\chi_{v}^\perp(p),\chi_v(p),1),\chi_v(p))$ which corresponds to $(1,|p|,\chi_v(p))$.
\end{itemize}

We denote $\sigma$ the unique refinement in the inert case, and the refinement denoted $(3,2)$ in the split case.
\end{propen}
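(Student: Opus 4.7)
The plan is to reduce everything to the corresponding statements already established for $U(2,1)(\QQ_p)$, and transfer to $GU(2,1)(\QQ_p)$ via Proposition \ref{transfert}. In both the inert and split cases, the Weyl group of $GU(2,1)(\QQ_p)$ coincides with that of $U(2,1)(\QQ_p)$: it is $\ZZ/2\ZZ$ when $p$ is inert and $\mathfrak S_3$ when $p$ splits (via the isomorphism $GU(2,1)(\QQ_p) \simeq \GL_3(\QQ_p)\times\QQ_p^\times$ attached to a choice of place over $p$). Hence accessible refinements transfer bijectively once we match Iwahori-invariants and Jacquet modules.

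For $p$ inert, the previous proposition shows that the unique admissible refinement of $\pi^n_p(\chi)$ is the non-trivial element $w \in W_{U(2,1)(\QQ_p)}$. By Proposition \ref{transfert} we have $(\widetilde\pi^n_p)^I = (\pi^n_p)^{I_1}$, so through the isomorphism $\pi^I \simeq (\pi_N)^{T^0}\otimes\delta_B^{-1}$ the Jacquet module $(\widetilde\pi^n_p)_N^{T^0}$ is again one-dimensional, supported on the $w$-orbit. The character $\widetilde\chi = (\chi_1,\chi_2)$ of the $GU(2,1)(\QQ_p)$-torus realising it is determined by its restriction to the centre (equal to $\widetilde\omega = N^{-1}\chi$ by Proposition \ref{transfert}) and by its restriction to $T^1$ (equal to the character of $\pi^n_p$); a direct check yields $\widetilde\chi = (\chi_p,N^{-1})$. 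Substituting into the formulas $U_p \leftrightarrow \chi_1^w(p)^2\chi_2^w(p)$, $S_p \leftrightarrow \chi_1^w(p)\chi_2^w(p)$, together with $w\cdot(\chi_1,\chi_2)=(\chi_1^\perp,\chi_2(\chi_1\circ N))$ from the remark preceding the proposition, recovers the ordering $(1,\chi(p),|p|)$. The first tuple $((\chi^\perp(p),1,\chi(p)),\chi(p))$ records the Satake parameter of the base change to $\GL_3(\QQ_p)\times\QQ_p^\times$, which is exactly the Frobenius eigenvalues of $\rho_\pi = 1\oplus\chi_p\oplus\chi_p^\perp$ together with the similitude factor.

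For $p = v\overline v$ split, the three accessible refinements of $\pi^n_p(\chi)$ as a $\GL_3(\QQ_p)$-representation were recalled in the Bellaïche-Chenevier proposition above, corresponding to the three permutations $1, (3\,2), (3\,2\,1)$ of $(1,\chi^\perp_v(p),\chi_v(p))$. Transferring via Proposition \ref{transfert} and the similitude axis immediately gives the three tuples of the statement, with the extra $\chi_v(p)$-coordinate coming from the central-character extension $\widetilde\omega$. The second tuple in each bullet records the corresponding action of the renormalised Atkin-Lehner operators $U_i$ of subsection \ref{Heckenormsplit}, read off directly from the Satake parameter.

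The only real obstacle is bookkeeping: one must track carefully the several normalisations at play (Skinner's versus the polarised Galois representation; classical versus renormalised Hecke operators; the twist by the central character in the transfer $U(2,1)\leadsto GU(2,1)$) to verify that the claimed tuples are exactly correct. The structural statement — that there is exactly one refinement in the inert case and three in the split case, realised by the Weyl elements listed — is immediate from the corresponding $U(2,1)$ results combined with Proposition \ref{transfert}.
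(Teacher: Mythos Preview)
Your approach is correct and essentially the same as the paper's: the paper's proof is a single sentence referring back to the earlier computation of the Jacquet module of $\pi^n_p(\chi)$ (for $U(2,1)$ inert) and the Bella\"iche--Chenevier proposition (for $p$ split), which is exactly what you do in more detail. Your added bookkeeping of the central-character twist via Proposition~\ref{transfert} is a reasonable elaboration of what the paper leaves implicit.
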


\begin{proof}
The action of $u_1$ on $\pi^n(\chi)_p$ as been calculated in a previous section.
\end{proof}

\subsection{Modular and Classical Hecke Operators}
\label{ModClassHecke}
In order to understand how the refinements vary on the Eigenvariety, we need to explicite the link between Hecke operators (at $p$) constructed in section \ref{sectHecke} and 
classical Hecke acting on automorphic forms, as above. Here we work at Iwahori level at $p$ and identify matrices with the corresponding Iwahori double classes. If $p$ is inert in 
$E$, the Atkin-Lehner algebra we consider at $p$ is generated by the two (so-called classical) operators $U_p^c$ and $S_p^c$ described above. If $p$ is split in $E$, we consider 
the Atkin-Lehner algebra $\mathcal A(p)$ of $GL_3(\QQ_p) \times \QQ_p^\times$ (see \cite{BC2} section 6.4.1.), it is generated by the Hecke operators, up to identification of $E \otimes \QQ_p \overset{i_v \times i_{\overline v}}{\simeq} \QQ_p \times \QQ_p$,
\[ (pI_3,I_3),\quad 
(\left(
\begin{array}{ccc}
p  &   &   \\
  & p  &   \\
  &   & 1  
\end{array}
\right),(\left(
\begin{array}{ccc}
p  &   &   \\
  & 1  &   \\
  &   & 1  
\end{array}
\right))
,\quad (\left(
\begin{array}{ccc}
p  &   &   \\
  & 1  &   \\
  &   & 1  
\end{array}
\right),\left(
\begin{array}{ccc}
p  &   &   \\
  & p  &   \\
  &   & 1  
\end{array}
\right)),\quad(I_3,pI_3),\]
that we denote respectively $U_0^c,U_1^c,U_2^c$ and $U_3^c$ ($c$ stands for classical in the sense "not normalized").
If we use $i_v$ to identify $GU(2,1)(\QQ_p)$ with $\GL_3\times GL_1(\QQ_p)$ then this operators are identified respectively with,
\[ (pI_3,p),\quad 
(\left(
\begin{array}{ccc}
p  &   &   \\
  & p  &   \\
  &   & 1  
\end{array}
\right),p)
,\quad (\left(
\begin{array}{ccc}
p  &   &   \\
  & 1  &   \\
  &   & 1  
\end{array}
\right),p),\quad(I_3,p),\]

In section \ref{sectHecke} we defined Hecke operators modularly, $U_p$ and $S_p$ in the inert case, and Brasca or Bijakowski defined $U_0,U_1,U_2,U_3$ in the split case (see remark in subsection \ref{Heckenormsplit}). These Hecke operators have been normalized and correspond to the above Iwahori double classes, so that we have the following result.

Let $\Pi = \pi_\infty \otimes \bigotimes_\ell \pi_\ell$ be an algebraic, regular, cuspidal automorphic representation of $GU(2,1)$ of level $K^{Np}I$ whose Hecke eigenvalues appear in the global sections of a coherent automorphic sheaf (of weight $\kappa$) and  $ f\in \Pi \cap H^0(X_I,\omega^\kappa)$ an eigenform for $\mathcal H = \mathcal H^{Np}\otimes\mathcal A(p)$, such that, if $p$ is inert,
\[ U_p f = p^{-k_2}U_p^c f \quad \text{and} \quad S_p f = p^{-k_1-k_2-k_3} S_p^c f,\]
and if $p$ splits, 
\[ U_0 f = p^{-k_3}U_0^c f \quad U_1 f = U_1^c f \quad U_2 f = p^{-k_2}U_2^cf \quad U_3 f = p^{-k_1-k_2} U_3^c f,\]
where the action of the double classes $U_p^c$, $S_p^c$ and $U_i^c$ is given by convolution on $\pi_p$,

\begin{propen}
\label{prophecke}
Suppose $p$ is inert, $f$ is a classical automorphic form of classical weight $\kappa = (k_1,k_2,k_3)$ of Iwahori level at p (i.e. $f \in H^0(X_{Iw},\omega^{\kappa})$), eigen for the action of 
$\mathcal H\otimes \mathcal A(p)$, and denote $\lambda,\mu$ the eigenvalues of $f$ for $U_p,S_p$ respectively. 

Let ${\Pi}$ be a irreducible factor of the associated automorphic representation (generated by $\Phi_f$).
Then ${\Pi}_p^I \neq \{0\}$ and thus the algebra $\mathcal A(p)$ acts on ${\Pi}_p^I$ with $U_p^c$ of eigenvalue $p^{k_2}\lambda$ and $S_p^c$ of eigenvalue 
$p^{|k|}\mu$.

\end{propen}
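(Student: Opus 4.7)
The plan is to reduce the claim to two essentially independent inputs: (i) the compatibility of the map $f \mapsto \Phi_f$ (Proposition \ref{propautheck}) with the action of Iwahori–Hecke double cosets at $p$ in their classical (unnormalised) form, and (ii) the explicit scaling factors relating the modular operators $U_p,S_p$ defined in Section \ref{sectHecke} to the classical convolution operators $U_p^c, S_p^c$. Given these, the proposition is pure bookkeeping.

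First I would produce $\Phi_f$ from $f$ via Proposition \ref{propautheck}. Since $f \in H^0(X_{Iw},\omega^\kappa)$ is of Iwahori level at $p$, the function $\Phi_f$ is right $I$-invariant at $p$, so its projection to the $\Pi$-isotypic component of the space of cuspidal automorphic forms produces a non-zero vector in $\Pi_p^I$; in particular $\Pi_p^I \neq 0$. The compatibility with $\mathcal H^{Np}$ is part of Proposition \ref{propautheck}, so what remains to check is the action of $\mathcal A(p)$.

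Next I would verify that the \emph{classical} double-coset operators $U_p^c$ and $S_p^c$, when understood geometrically via the isogeny correspondences $C \rightrightarrows \mathcal Y_{Iw(p)}$ (respectively $X_{Iw(p)} \rightrightarrows X_{Iw(p)}$) attached to the double classes, agree with the convolution action on $\Pi_p$ under the isomorphism $f \mapsto \Phi_f$. This is exactly the same mechanism as for the Hecke operators at $\ell \ne p$ treated in Proposition \ref{propautheck}: a Hecke correspondence of type $\gamma$ on the Picard variety computes, after taking traces, the convolution with the characteristic function of $G(\ZZ_p)\gamma G(\ZZ_p)$ on the automorphic side. Here the matrices $U_p^c$ and $S_p^c$ are precisely the double classes realised by the correspondences $C$ and $X_{Iw(p)} \to X_{Iw(p)}$ \emph{without} the renormalisation factors $\widetilde{\pi^\star} = \pi^\star \circ \mathrm{diag}(p^{-1},1)$ on $\omega_{\sigma\tau}$ (for $U_p$) and $\widetilde{\pi^*} = p^{-1}\pi^*$ on $\omega$ (for $S_p$).

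Finally I would compare the two normalisations on a classical form. Tracking the difference between $\pi^\star$ and $\widetilde{\pi^\star}$ on $\omega^\kappa = \mathrm{Sym}^{k_1-k_2}\omega_{\sigma\tau}\otimes(\det\omega_{\sigma\tau})^{\otimes k_2}\otimes\omega_\tau^{\otimes k_3}$, the rescaling of one basis vector of $\omega_{\sigma\tau}$ by $1/p$ contributes a factor $p^{-k_2}$ to the determinant piece (and nothing to the symmetric power, which is weight-homogeneous of weight $0$ for the rescaling), so $U_p = p^{-k_2}U_p^c$ on $H^0(X_{Iw},\omega^\kappa)$; similarly $S_p = p^{-|k|}S_p^c$. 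This is precisely the remark at the end of the subsection defining $U_p$. Consequently $U_p^c f = p^{k_2}\lambda f$ and $S_p^c f = p^{|k|}\mu f$, and transporting through the Hecke-equivariant identification of (i) yields the claimed eigenvalues on $\Pi_p^I$. The only mild subtlety, and the one step I would write out carefully, is the identification of the modular isogeny correspondence $C$ with the convolution by $\mathbf 1_{I U_p^c I}$; once the trace map is normalised as in Section \ref{sectHecke} this is a direct unwinding of definitions.
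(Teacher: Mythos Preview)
Your proposal is correct and follows exactly the paper's own argument: the paper's proof simply records that $f\mapsto\Phi_f$ is Hecke-equivariant for the unnormalised operators $U_p^{c},S_p^{c}$, and that the geometric operators were defined by $U_p=p^{-k_2}U_p^{c}$, $S_p=p^{-|k|}S_p^{c}$. One small imprecision: your parenthetical justification for the $p^{-k_2}$ factor (that the symmetric power is ``weight-homogeneous of weight $0$ for the rescaling'') is not quite right, since $\mathrm{diag}(1/p,1)$ does act nontrivially on $\mathrm{Sym}^{k_1-k_2}$; the clean reason the factor is a scalar $p^{-k_2}$ is that $\widetilde{\pi^\star}$ and $\pi^\star$ differ on the torsor by the action of $\mathrm{diag}(p,1,1)\in T\subset B$, and on $\kappa'$-equivariant functions (with $\kappa'=(-k_2,-k_1,-k_3)$) this acts exactly by $\kappa'(\mathrm{diag}(p,1,1))=p^{-k_2}$.
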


\begin{proof}

To prove the statement, we remark that the association $f \mapsto {\Phi_f}$ is Hecke equivariant for the classical Hecke operators $U_p^{c},S_p^{c}$ acting 
on $f$.
But we defined the Hecke operators $U_p,S_p$ geometrically by $U_p = p^{-k_2}U_p^{c}$ and $S_p = p^{-|k|}S_p^{c}$ to make them vary p-adically. Thus we get the result.
\end{proof}

Using the previous refinements for representations of $GU(2,1)$, we can prove the following result on density of crystalline points on the Eigenvariety $\mathcal E$ of theorem \ref{thrHecke},

\begin{propen}
\label{cryspoint}
Suppose $p$ is inert. Let $x \in \mathcal E(F)$. There exists a neighborhood $V$ of $x$ and a constant $C >0$ such that for all classical points $y \in V$, 

if $|w_2(y) + w_3(y)| > C$,
then $\rho_y$ is cristalline and of Hodge-Tate weights $(1+w_3(y),-1-w_1(y),-w_2(y))$. 

In particular crystalline points are dense in $\mathcal E$ by classicity propostion \ref{thrclassfaisc} (as we can also assume $w_1(y) - w_2(y) > C$) and theorem \ref{thrclassbij} (as we can moreover assume $-w_1(y) - w_3(y) > C$).
\end{propen}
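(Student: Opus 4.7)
The plan is to combine the two classicity results of the previous section with local-global compatibility at the unramified prime $p$, in Skinner's normalisation. Throughout I use the involution between classical weights $\kappa=(k_1,k_2,k_3)$ and eigenvariety weights $w=\kappa'=(-k_2,-k_1,-k_3)$. First I would use that $\mathcal E$ is glued from spectral varieties attached to the characteristic power series of the compact operator $U_p$ acting on the family $M^{(K,J),\kappa^{un}\dag}_{cusp,v,w}$ (Theorem \ref{thrHecke}): the $U_p$-slope $y\mapsto v(U_p(y))$ is locally constant on connected components of the spectral variety, hence bounded on a sufficiently small admissible affinoid neighbourhood $V$ of $x$ by some real number $s$. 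Pick $C > s+3$.

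Next, for a classical point $y\in V$ with $w(y)=(w_1,w_2,w_3)\in\ZZ^3$, corresponding to the modular weight $\kappa=(-w_2,-w_1,-w_3)$, the auxiliary inequalities $w_1-w_2>C$ and $-w_1-w_3>C$ (both of which can be imposed on a Zariski-dense subset of $y$'s compatible with $|w_2+w_3|>C$) translate respectively into the classicity hypotheses $k_1-k_2+1 > s \geq v(U_p(y))$, which triggers Proposition \ref{thrclassfaisc} and descends the overconvergent form to a section of $\omega^{\kappa}$ on $X(v)$, and $k_2+k_3 > s+3 \geq v(U_p(y))+3$, which triggers Theorem \ref{thrbij} and extends this section globally to $H^0(X_{Iw(p)},\omega^{\kappa})$. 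The classical cuspidal eigenform so produced generates a cohomological automorphic representation $\Pi_y$ of $GU(2,1)(\mathbb A)$ with $\Pi_{y,p}^I \neq 0$, hence with $\Pi_{y,p}$ an irreducible subquotient of an unramified principal series of $GU(2,1)(\QQ_p)$.

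Local-global compatibility at the unramified prime $p$ (Blasius-Rogawski, in the normalisation of \cite{Ski}) then gives that $\rho_y$ is crystalline at $p$, with crystalline Frobenius eigenvalues matching the Satake/Langlands parameter of $\Pi_{y,p}$; Proposition \ref{proSki} identifies the $\tau$-Hodge-Tate weights as $(1-k_3,\,k_2-1,\,k_1) = (1+w_3(y),\,-1-w_1(y),\,-w_2(y))$. Density of crystalline classical points in $\mathcal E$ then follows, since integer weights satisfying all three inequalities are Zariski-dense in $\mathcal W$. The main obstacle is the coordinate bookkeeping between $w$ and $\kappa$ together with the verification that the main hypothesis $|w_2+w_3|>C$ — which controls the gap $h_1-h_3 = 1+w_2+w_3$ between extremal Hodge-Tate weights — can be maintained jointly with the two auxiliary inequalities required by the classicity theorems; once this is established, crystallinity and the Hodge-Tate formula are immediate from Skinner's normalisation and the fact that an Iwahori-spherical local component at an unramified $p$ forces the associated Weil-Deligne representation to be unramified.
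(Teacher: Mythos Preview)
There is a genuine gap in your argument at the crucial step. You assert that ``an Iwahori-spherical local component at an unramified $p$ forces the associated Weil-Deligne representation to be unramified''. This is false: the Steinberg representation (and more generally any non-spherical subquotient of an unramified principal series) is Iwahori-spherical, yet its Weil-Deligne representation has nontrivial monodromy $N$. Under local-global compatibility this yields a semistable but non-crystalline $\rho_y$. So knowing only $\Pi_{y,p}^I\neq 0$ gives semistability, not crystallinity.

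The paper closes this gap, and this is precisely where the hypothesis $|w_2(y)+w_3(y)|>C$ enters. Writing $\Pi_{y,p}$ as a subquotient of $\ind_B^{G(\QQ_p)}(\psi)$ with $\psi$ unramified, one invokes Keys' irreducibility criterion (see \cite{Rogbook} 12.2): the induced representation is irreducible as soon as $|\psi_1(p)|\neq p^{\pm 1}$. Now $\psi_1^\sigma(p)$ is read off from the Iwahori-Hecke eigenvalues via the normalisation of Proposition \ref{prophecke}: with $F_1,F_2\in\mathcal O(\mathcal E)^\times$ the eigenvalues of $U_p,S_p$, one has $\psi_1^\sigma(p)=p^{-k_1-k_3}F_1(y)/F_2(y)$. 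Since the valuations of $F_1,F_2$ are locally constant on $\mathcal E$, they are constant on a small enough $V$, and the condition $|\psi_1(p)|\neq p^{\pm1}$ becomes $|k_1+k_3|=|w_2+w_3|>C$ for a suitable $C$. Once $\ind_B(\psi)$ is irreducible, $\Pi_{y,p}$ equals the full unramified principal series, hence is spherical, and then local-global compatibility gives crystallinity. Your choice $C>s+3$ governs only the classicity inequalities $w_1-w_2>C$ and $-w_1-w_3>C$; it has nothing to do with the main hypothesis, whose role you have not identified.
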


\begin{proof} 
Denote by $F_1,F_2$ the two invertible functions of $\mathcal E$ given by the eigenvalues under
\[
U_p = \left(
\begin{array}{ccc}
p^2  &   &   \\
  & p  &   \\
  &   & 1  
\end{array}
\right) \quad \text{and} \quad S_p = \left(
\begin{array}{ccc}
p  &   &   \\
  & p  &   \\
  &   & p  
\end{array}
\right).
\] The valuations of $F_1,F_2$ are locally constant on $\mathcal E$, and thus there exists $V$ a neighbourhood of $x$ where these valuations are constant.
As $y$ corresponds to $f$ a classical form of ($p$-adic) weight $w(y)$ and level $K$ proper under $\mathcal H \otimes \mathcal A(p)$, we can look at $\Pi$ an irreducible component of 
the representation generated by $\Phi_f$, which is thus algebraic, regular, and its associated representation $\rho_y$ doesn't depend on $\Pi$ as it only depends on the eigenvalues 
of $\mathcal H$ on $f$. As $\Pi_p$ the $p$-th component of $\Pi$ is generated by its $I$-invariants, $\Pi_p$ is a subquotient of the induction $\ind-n_B^G(\QQ_p)(\psi)$ for 
some unramified character $\psi$ (prop 6.4.3 of \cite{BC2} and the adjontion property of induction).
We need to show that $\Pi_p$ is unramified, but $\Pi_p$ appears as a subquotient of $\ind_B(\psi)$, which has a unique unramified subquotient, it suffices to prove 
that $\ind_B(\psi)$ is irreducible, which happens in particular when $|\psi_1(p)|\neq p^{\pm1}$ when $p$ is inert (cf. the key result of Keys, see \cite{Rogbook} 12.2).

In the inert case, we have that if $w = (-k_2,-k_1,-k_3)$ i.e. $f$ is of automorphic classical weight $(k_1,k_2,k_3)$,
then by proposition \ref{prophecke}
\[ \psi_1^\sigma(p) = p^{-k_1-k_3}F_1(y)/F_2(y),\]
for a certain choice $\sigma\in W_{GU}$ (see subsection \ref{sectrefin} for example), 
but as the valuations of $F_1,F_2$ are constant on $V$, there is a constant $C$ such that if $|k_1+k_3| > C$, $\Pi_p$ is unramified.
Thus, by local-global compatibility at $p$ for $\Pi$ (cf. \cite{Ski} Theorem B), $\rho_y$ is crystalline.
\end{proof}

\begin{remaen}
In the split case, the same proposition is true under the assumption $\delta(w(y)) := \min_i(|w_i(y)-w_{i+1}(y)|) > C$, as the same proof of proposition 8.2 of \cite{BC1}, together with classicity results of \cite{BPS} and \cite{Bra} Proposition 6.6, Theorem 6.7 applies.
\end{remaen}

\subsection{Types at ramified primes for $\chi$}

In order to control the ramification at $\ell | \Cond(\chi)$, Bellaïche and Chenevier introduced a particular type $(K_0,J_0)$, which we can slightly modify to suit our situation.

\begin{propen}
\label{proptypes}
Let $\ell |\Cond(\chi)$ a prime. There exists a compact subgroup $K_\ell$ of $GU(2,1)(\QQ_\ell)$ and a representation $J_\ell$ of $K_\ell$ such that,
\begin{enumerate}
\item $\Hom_{K_\ell}(J_\ell, \widetilde{\pi^n_\ell(\chi)}\otimes(\chi_{0,\ell}\circ \det)) \neq 0$,
\item For all smooth admissible representation $\pi$ of $GU(2,1)(\QQ_\ell)$ such that $\Hom_{K_\ell}(J_\ell,\pi)\neq 0$ and for all place $v | \ell$, there exist four unramified characters
$\phi_1,\phi_2,\phi_3,\phi_4 : E_v^\times \fleche \CC^\times$ such that, the Langlands semi-simple class in $\GL_3 \times \GL_1$ corresponds to,
\[ L(\pi_{E_v}) = (\phi_1 \oplus \phi_2 \oplus \phi_3\chi_0^{-1},\phi_4\chi_0^{-1})\]
or to the (unpolarized) Langlands class in $\GL_3$,
\[ L(\pi_{E_v}) = \phi_1\overline{\phi_4}\chi_0 \oplus \phi_1\overline{\phi_4}\chi_0 \oplus \phi_3\overline{\phi_4}.\]
\end{enumerate}
\end{propen}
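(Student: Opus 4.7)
\emph{Plan of proof.} I would construct the pair $(K_\ell, J_\ell)$ by adapting the type-theoretic construction of Bellaïche-Chenevier (\cite{BC1} Proposition 7.1, or \cite{BC2} Section 7.8) from $U(3)$ to $GU(2,1)$. The guiding principle is that the ramification of $\widetilde{\pi^n_\ell(\chi)}$ comes entirely from the character $\chi_{0,\ell}$, and the twist by $\chi_{0,\ell} \circ \det$ appearing in condition (1) is calibrated so that the ramification can be absorbed into a character of a sufficiently small open compact subgroup.

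The construction splits into three cases according to the splitting behavior of $\ell$ in $E$. When $\ell = v\overline{v}$ splits, $GU(2,1)(\QQ_\ell) \simeq \GL_3(\QQ_\ell) \times \QQ_\ell^\times$ via $i_v$, and by the explicit description in the proof of Proposition \ref{proprog}, the twisted representation $\widetilde{\pi^n_\ell(\chi)} \otimes (\chi_{0,\ell} \circ \det)$ is a normalized parabolic induction of a character whose ramification is concentrated on specific diagonal entries. I would take $K_\ell$ to be the preimage in the Iwahori of the subgroup cutting out the conductor of $\chi_{0,\ell}$ on these coordinates, and $J_\ell$ the character built from $\chi_{0,\ell}$ there, extended trivially elsewhere. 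When $\ell$ is inert or ramified in $E$, I would exploit the realization of $\pi^n_\ell(\chi)$ as a Jordan-Hölder constituent of $\ind_B^{U(2,1)(\QQ_\ell)}(\widetilde{\chi_\ell})$, taking $K_\ell$ to be the appropriate maximal compact (hyperspecial in the inert case, very special in the ramified case) intersected with a principal congruence subgroup killing $\chi_{0,\ell}$, and extending to $GU(2,1)(\QQ_\ell)$ via the central character $\widetilde{\omega}$ from Proposition \ref{transfert}. In every case, condition (1) is verified by Frobenius reciprocity together with the fact (proved earlier in the article) that $\pi^n_\ell(\chi)^I \neq 0$ at our levels.

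The main obstacle is condition (2), which is a rigidity statement: one must show that the type forces the Langlands semi-simple class of any $\pi$ containing $J_\ell$ to have the precise form displayed, with exactly the indicated Satake parameters twisted by $\chi_0^{-1}$. In the split case, this reduces to a Bushnell-Kutzko style analysis of the Bernstein component of $\GL_3(\QQ_\ell)$ cut out by our type, which can be performed concretely by computing the action of the associated Iwahori-Hecke algebra on $J_\ell$-isotypic vectors and comparing with the inertial support of $\widetilde{\pi^n_\ell(\chi)} \otimes (\chi_{0,\ell} \circ \det)$. In the non-split cases, I would transfer the question to $\GL_3(E_v)$ via (unstable) base change: any $\pi$ containing $J_\ell$ base-changes to a representation lying in the Bernstein component of the base change of $\widetilde{\pi^n_\ell(\chi)} \otimes (\chi_{0,\ell} \circ \det)$, and the two alternative displayed forms for $L(\pi_{E_v})$ reflect precisely the two possibilities for how this base change factors through the Langlands class of $GU(2,1)$ (the first expressing the full $\GL_3 \times \GL_1$ parameter, the second the unpolarized $\GL_3$ parameter obtained after the central twist). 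Verifying that our specific choice of $J_\ell$ is rigid enough to impose this constraint — rather than allowing unwanted ramified twists — is the delicate step, and will require explicit computation with the Iwahori-Hecke modules, proceeding in close analogy with the corresponding argument in \cite{BC2}, Section 7.8.
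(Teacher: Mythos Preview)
Your overall direction is right, but you are proposing to redo work that the paper simply imports as a black box. The paper's proof does not carry out any new Bushnell--Kutzko or Iwahori--Hecke analysis. Instead, it takes the type $(K_\ell^0, J_\ell^0)$ already constructed by Bella\"iche--Chenevier on $U(2,1)(\QQ_\ell)$ and \emph{extends it to} $GU(2,1)(\QQ_\ell)$ via the center. Concretely: in the non-split case one sets $K_\ell = \mathcal O_{E_\ell}^\times K_\ell^0$ and $J_\ell(\lambda M^0) = \chi_\ell(\lambda)^{-2} J_\ell^0(M^0)$; in the split case one enlarges $K_\ell^0$ by the $\GL_1$-factor and lets $J_\ell$ record the central character there. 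Condition (1) then follows from the corresponding statement in \cite{BC1} for $(K_\ell^0,J_\ell^0)$ together with the computation of the central character of $\widetilde{\pi^n_\ell(\chi)}$ from Proposition~\ref{transfert}. Condition (2) follows by restricting any $\pi$ with $\Hom_{K_\ell}(J_\ell,\pi)\neq 0$ back to $U(2,1)(\QQ_\ell)$, applying the \cite{BC1} result (which invokes Blasco \cite{Bla} in the non-split case) to control the $\GL_3$-part of the base change, and reading off the $\GL_1$-part from the action of $K_\ell\cap Z$.

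So the ``delicate step'' you anticipate --- verifying rigidity of the type via explicit Hecke-module computations --- is entirely outsourced to \cite{BC1}; the only new content is the bookkeeping of the central character when passing from $U(2,1)$ to $GU(2,1)$. Your three-case split (split/inert/ramified) is also finer than needed: the paper treats split versus non-split, since the extension-by-center argument works uniformly in the latter case. Your approach would presumably succeed, but at the cost of reproving \cite{BC1}'s type theory inside this paper.
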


\begin{proof}
Let $(K^0_\ell,J_\ell^0)$ be the type defined by Bellaïche and Chenevier in \cite{BC1}.
If $\ell = v_1v_2$ is split, let $K_\ell$ be the subgroup of matrices congruent to
\[
\left(
\begin{array}{ccc}
\star  &  \star & \star  \\
 \star & \star  &  \star \\
  0& 0  &   y
\end{array}
\right),e
\]
modulo $\ell^m$, the $\ell$-adic valuation of $\Cond(\chi)$. Let $J_\ell$ be the representation that sends the matrices in $K_\ell$ to $\chi_{0,v_1}^{-1}(y)\chi_{0,v_1}(\overline e)$. As every matrix in $GU(2,1)(\QQ_\ell) = \GL_3\times \GL_1(\QQ_\ell)$ can be written as $M = \lambda U$ where $U \in U(2,1)(\QQ_\ell) = \GL_3(\QQ_\ell)$ and $\lambda = (1,\lambda)$ is in the center, we can check that $\Hom_{K_\ell}(J_\ell, \widetilde \pi_\ell^n \otimes \chi_{0,v_1}\circ\det^{-1}) \neq 0$.

Now if $\Hom_{K_\ell}(J_\ell,\pi) \neq 0)$ then $\Hom_{K_\ell^0}(J_\ell^0,\pi_{|U(2,1)}) \neq 0$ when restricted to $U(2,1) = \GL_3$ thus by \cite{BC1} we have the conclusion up to a character. But as $K' = (Id\times \GL_1)\cap K_\ell^0 \simeq \mathcal \ZZ_\ell^\times$, $\pi_{|K'} = \chi_{0,v_1}^{-1}\otimes \psi$ where $\psi$ is an unramified character, and thus,
\[ L(\pi_{E,v}) = \phi_1\overline \psi \chi_0\oplus \phi_2\overline \psi \chi_0\oplus \phi_3\overline \psi.\]

If $\ell$ is prime, denote $K_\ell = \mathcal O_{E_\ell}^\times K^0_\ell$ and define $J_\ell$ by,
\[ J_\ell(\lambda M^0) = \chi_\ell(\lambda)^{-2} J_\ell^0(M^0).\]
As $\mathcal O_{E_\ell}^\times \cap K^0 \subset \mathcal O_{E_\ell}^1$, this is well define because the central character of $J^0_\ell$ is up to an unramified character equal to $\chi_\ell^{-2}$. Moreover, $\Hom_{K_\ell}(J_\ell,\widetilde{\pi}^n_\ell(\chi)\otimes(\chi_{0,\ell}\circ \det) \neq 0$ as it is the case for $(K^0_\ell,J^0_\ell)$ by \cite{BC1} which sends back to Blasco \cite{Bla}, and the central character of $\widetilde{\pi^n}_\ell(\chi)$ is equal to $\chi_\ell$ (up to a unramified character).

Conversly, if $\pi$ is a representation of $GU(3)(\QQ_\ell)$ such that $\Hom_{K_\ell}(J_\ell,\pi) \neq 0$ thus $\Hom_{K_\ell^0}(J_\ell^0,\pi_{|U(3)(\QQ_\ell)}) \neq 0$ and thus $L(\pi_{|U(3)}) = \phi_1 \oplus \phi_2 \oplus \phi_3 \chi_0^{-1}$ by \cite{BC1}, and its central character corresponds to $\chi_\ell$ up to an unramified character, and we thus get the result on the Langlands Base change of $\pi$.
\end{proof}

\section{Deformation of $\widetilde{\pi^n}$}

By proposition \ref{proptypes}, we can find for every $\ell | \Cond(\chi)$ $K_\ell$ a subgroup of $GU(2,1)(\QQ_\ell)$ and an irreducible representation $J_\ell$ such that
\[ \Hom_{K_\ell}(J_\ell,\widetilde{\pi^n_\ell}(\chi) \otimes(\chi_{0,\ell} \circ \det)) \neq 0,\]
and for all $\widetilde \pi_\ell$ of type $(K_\ell,J_\ell)$, its base change to $\GL_3(E_v)$, for all $v |\ell$, gives the representation (normalised as in proposition \ref{proSki}, Theorem \ref{thrT})
\[ L(\pi_{\ell,E_v}) = \phi_1\chi_0^{-1} \oplus \phi_2 \oplus \phi_3,\]
where $\phi_j : E_v^\times \fleche \CC^\times$ are unramified characters.

\subsection{Choosing the level}

Up to choosing compatibly places at $\infty$ and embeddings of $\QQ_{p^2}$, we can make $\chi : G_E \fleche \overline{\QQ_p}$, 
the $p$-adic realisation of $\chi$ at $p$, have $\tau$-Hodge-Tate weight $-a = -\frac{k+1}{2}$ and thus $\overline{\chi}$ $\tau$-Hodge-Tate weight $a-1 = \frac{k-1}{2}$.

Let $N = \Cond(\chi)$, suppose $p \neq 2$ if $p$ is inert, $p \not| N$, and is unramified in $E$. Define $K_f = \prod_\ell K_\ell$ by,
\begin{enumerate}
\item If $\ell$ is prime to $pN$, $K_\ell$ is the maximal compact subgroup defined previously such that $\widetilde \pi^n$ as invariants by $K_\ell$ (hyperspecial at unramified $\ell$, very special otherwise)
\item If $\ell = p$, $K_p$ is the Iwahori subgroup of $GU(2,1)(\QQ_p)$.
\item If $\ell | N$, $K_\ell$ is the type as defined before.
\end{enumerate}
We then set,
\[ J = \bigotimes_{\ell | N} J_\ell \otimes (\chi_{0,\ell}\circ\det),\]
as representation of $K_f$.

By construction of $K_f$, 
, 
there is 
$\phi \in \widetilde{\pi^n}(\chi)^{K_f}$, 
an automorphic eigenform for $\mathcal H^{Np}$ and of character under $\mathcal A(p)$ corresponding to the refinement $\sigma$ of proposition \ref{rafpin} and which is associated a classical Picard modular form $f \in H^0(X_I,\omega^\kappa)$ (by proposition \ref{propH0} or proposition \ref{propHodge} if $a >1$) which is an eigenform for $\mathcal A(p)$, whose eigenvalues for $\mathcal A(p)$ corresponds to the 
refinement $\sigma$ too (with the normalisation explained in proposition \ref{prophecke}), and $\kappa = (0,1-a,a).$

Thus, setting $w_0 = (a-1,0,-a)$ (corresponding to automorphic weight $(0,1-a,a)$)
, to $f$ is associated a point $x_0 \in \mathcal E$ such that $w(x_0) = w_0$ and 
$\rho_{x,Ski}^{ss} = \overline{\chi}\eps^{-2}(1 \oplus \chi \oplus \chi^\perp)$,
and with normalisation of proposition \ref{proSki},
\[\rho_{x}^{ss} = 1 \oplus \chi \oplus \chi^\perp,\]which is of (ordered) Hodge-Tate weights $(1-a,-a,0)$.

\subsection{A family passing through $f$}

As we have normalized the pseudocharacter $T$ of proposition \ref{thrT} in order to have the "right" representation at $x_0$ (corresponding to $1\oplus \chi \oplus \chi^\perp$),
the map $w$ from the eigenvariety gives the $p$-adic (automorphic) weight, and not the classical automorphic weight nor the Hodge-Tate weights of $T$, thus we will normalize this map accordingly.

The $\tau$-Hodge-Tate weights of $1 \oplus \chi \oplus \chi^\perp$ are given by $(1-a,-a,0) =: h_0$. 
Let $F/\QQ_p$ be a finite extension such that $f$ is defined over $F$.
\begin{propen}
If $p$ is inert, there exists,
\begin{enumerate}
\item A dimension 1 regular integral affinoid $Y$ over $F$, and $y_0 \in Y(F)$,
\item a semi-simple continuous representation,
\[ \rho_{K(Y)} : \Gal(\overline E/E)_{Np} \fleche \GL_3(K(Y)),\]
satisfying $\rho^\perp_{K(Y)} \simeq \rho_{K(Y)},$ the property (ABS) of \cite{BC1}, and $\tr(\rho_{K(Y)})(\Gal(\overline E/E)) \subset \mathcal O_Y$,
\item A $F$-morphism, $h = (h_1,h_2,h_3) : Y \fleche \mathbb A^3$ such that $h_3 = 0$, $h(y_0) =h_0$.
\item A subset $Z \subset Y(F)$ such that $h(Z) \subset h_0  + (p-1)(p+1)^2\ZZ^{3}_{dom}$ (i.e. the weight are regular)
\item A function $F_1$ in $\mathcal O(Y)^\times$ of constant valuation.
\end{enumerate}
such that,
\begin{enumerate}
\item For every affinoid $\Omega$ containing $y_0$, $\Omega \cap Z$ is Zariski dense in $\Omega$.
\item For all $z \in Z \cup \{y_0\}$ $\rho_z^{ss}$ is the Galois representation associated to a cuspidal (algebraic) automorphic representation $\Pi$ of $GU(2,1)$ such that
\[ Hom_{K_f}(J,\Pi) \neq 0.\]
\item \[\rho_{y_0}^{ss} \simeq 1 \oplus \chi \oplus \chi^\perp.\]
\item For $z \in Z$, $(\rho_z^{ss})_{G_{K}}$ is crystalline of $\tau$-Hodge-Tate weights $h_1(z) < h_2(z) < h_3(z)$, and 
its $\tau$-refinement given by $F_1$ is,
\[ (p^{h_1-h_3(z)}F_1(z),1,p^{h_3-h_1(z)}F_1^{-1}(z)).\]
In particular,
\[ D_{crys}(\rho_z^{ss})_\tau^{\varphi^2 = p^{h_1-h_3}F_1(z)} \neq 0.\]
\item At $y_0$, the refinement is,
\[ (\chi^\perp_p(p),1,\chi_p(p)).\]
\end{enumerate}

\end{propen}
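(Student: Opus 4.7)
The strategy is to cut out a one-parameter family inside the three-dimensional eigenvariety $\mathcal{E}$ through the classical point $x_0$ corresponding to $\widetilde{\pi^n}(\chi)$ and read the desired data off of it.

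The first step is to choose the family so that $h_3 \equiv 0$. By the formula of proposition \ref{cryspoint} the $\tau$-Hodge--Tate weights of a crystalline classical $z$ are $(1+w_3(z),-1-w_1(z),-w_2(z))$, so we restrict to the two-dimensional affine subspace $\mathcal W_0 := \{w_2 = 0\}\subset \mathcal W$, which contains $w_0 = (a-1,0,-a)$. Inside $\mathcal W_0$ we pick a smooth affinoid curve $C$ through $w_0$, chosen generically enough that the intersection of $C$ with the set of integral classical weights satisfying the classicity bounds of propositions \ref{thrclassfaisc} and \ref{thrbij} (in particular, $w_1-w_2$ and $-w_1-w_3$ large enough, plus the regularity bound in proposition \ref{cryspoint}) is Zariski dense in $C$; this is always possible because these conditions cut out the complement of finitely many affine hyperplanes intersected with arithmetic progressions. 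Since $\kappa : \mathcal E \to \mathcal W$ is locally finite and $\mathcal E$ is equidimensional of dimension $3$, the fibre product $\mathcal E \times_{\mathcal W} C$ is of pure dimension $1$ in a neighbourhood of $x_0$. I would pick an irreducible component through $x_0$, shrink to an affinoid neighbourhood, and normalise to obtain a smooth integral one-dimensional affinoid $Y/F$ with a distinguished point $y_0\in Y(F)$ lying over $x_0$; the density statement follows by pulling back along the finite map $Y\to C$.

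Next I would transport the global data. Define $h:Y\to \mathbb A^3$ by $h_1 = 1 + w_3\circ\kappa$, $h_2 = -1 - w_1\circ\kappa$, $h_3 = -w_2\circ\kappa = 0$; by construction $h(y_0)=h_0$. The pseudocharacter $T$ of proposition \ref{thrT} restricts to $\mathcal O_Y$, and as in \cite{BC1} one deduces the existence of a semisimple continuous $\rho_{K(Y)}:G_{E,Np}\to \GL_3(K(Y))$ with $\tr\rho_{K(Y)} = T_{|Y}$, with traces in $\mathcal O_Y$, satisfying $\rho_{K(Y)}^\perp\simeq \rho_{K(Y)}$ from the self-duality of $T$; property (ABS) is obtained by possibly shrinking $Y$ around $y_0$ (the reducibility ideal of the residual pseudocharacter is proper, so generic irreducibility holds as in \cite{BC1}, \S 4). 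The set $Z\subset Y(F)$ is defined as the set of classical regular cuspidal points in $Y$; it is dense by Step 1 together with proposition \ref{cryspoint}, which also supplies the crystallinity and the Hodge--Tate weights $h_1(z)<h_2(z)<h_3(z)$ of $\rho_z^{ss}$ at $z\in Z$. For the refinement, recall that $U_p$ and $S_p$ define invertible analytic functions on $\mathcal E$; I would set
\[ F_1 := p^{\,1 + (w_3 - w_1)\circ\kappa}\,\cdot\,\frac{U_p}{S_p}, \]
normalised so that for each $z\in Z$, $p^{h_1-h_3(z)}F_1(z)$ equals the top Frobenius eigenvalue of the refinement of $\rho_z^{ss}$ corresponding to the Hecke eigensystem at $z$ (this is exactly the compatibility recorded in subsection \ref{ModClassHecke} and proposition \ref{prophecke}). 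Shrinking $Y$ once more if necessary ensures that the invertible function $F_1$ has constant $p$-adic valuation on $Y$. At $y_0$, the unique accessible refinement of $\widetilde{\pi^n_p}(\chi)$ given by proposition \ref{rafpin} yields the Frobenius eigenvalues $(\chi_p^\perp(p),1,\chi_p(p))$, proving the last assertion.

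The principal obstacle is the local structure of $\mathcal E$ at $x_0$: because $x_0$ is non-tempered and non-regular (coming from an endoscopic representation), one cannot appeal to étale-ness of $\kappa$ to cut down a smooth curve. I would overcome this exactly as in the split case of \cite{BC1}: work on the irreducible components through $x_0$ of the one-dimensional fibre product, normalise, and use that the uniqueness of the refinement of $\pi_p^n(\chi)$ in the inert case (which is simpler here than the split case, where three refinements exist) means that there is a single candidate point at each step, so no disambiguation is required. The verification that $Z$ is Zariski dense in every affinoid neighbourhood of $y_0$ follows since $Y\to C$ is finite and the chosen curve $C$ has dense classical regular weights by construction.
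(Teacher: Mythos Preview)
Your overall strategy matches the paper's: cut a one-dimensional locus in weight space through $w_0$, take an irreducible component of the fibre product with $\mathcal E$, normalize via \cite{BC1} Lemma~7.2 to obtain $(Y,y_0,\rho_{K(Y)})$, define $h$ as the shift $(1+w_3,-1-w_1,-w_2)$ of the weight map, take $Z$ to be the points whose weights are classical enough for the classicity and crystallinity results to apply, and extract $F_1$ from the ratio $U_pS_p^{-1}$. The paper in fact makes a specific linear choice for the curve, namely $B=\{w_2=0,\ 2w_1+w_3=a-2\}$ (equivalently $\{h_3=0,\ h_1-2h_2=1+a\}$), rather than a ``generic'' one; your phrasing is awkward here since curves with Zariski-dense integral points are far from generic, but you clearly intend a curve chosen so that the relevant arithmetic progressions are dense, so this is only a matter of presentation.

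There is, however, a genuine error in your definition of $F_1$. You set $F_1:=p^{\,1+(w_3-w_1)\circ\kappa}\,U_pS_p^{-1}$, with a weight-dependent exponent. This is not a well-defined analytic function on $Y$ (the weights are not integer-valued away from classical points, so $p^{w_3-w_1}$ makes no sense), and even formally it would not have constant $p$-adic valuation, since $w_3-w_1$ varies along your curve. The correct, weight-independent definition is
\[
F_1 \;=\; p^{-1}\,U_pS_p^{-1},
\]
which is an honest invertible function on $\mathcal E$. Its valuation is then locally constant simply because $U_p$ and $S_p$ are invertible on $\mathcal E$; this is exactly the observation used in Proposition~\ref{cryspoint}. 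Unwinding the normalisations of Proposition~\ref{prophecke}, one finds (with $w_2=0$, hence $k_1=0$) that $U_pS_p^{-1}$ has eigenvalue $p^{k_1+k_3}\psi_1^\sigma(p)=p^{h_3-h_1+1}\psi_1^\sigma(p)$, so $p^{h_1-h_3}F_1=\psi_1^\sigma(p)$ gives the desired Frobenius eigenvalue, and the refinement at $y_0$ is $(\chi_p^\perp(p),1,\chi_p(p))$ as required.

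A smaller point: property (ABS) is not obtained from generic irreducibility of $\rho_{K(Y)}$ (which is in fact only established later, in Proposition~\ref{propirr}, and need not hold in all cases). It is part of the output of \cite{BC1} Lemma~7.2, which produces the regular curve $Y\to X$ together with a genuine representation lifting the pseudocharacter and satisfying (ABS); you should invoke that lemma directly rather than appealing to irreducibility.
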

\begin{proof}
Recall that $p$ is inert here.
The modular form $f$ corresponds to a point $x_0 \in \mathcal E$, the Eigenvariety defined in Theorem \ref{thrHecke}, associated to the type $(K_f,J)$.
Let $B \subset B(w_0,r) \subset \mathcal W$ be the closed subset defined in the same fashion as in \cite{BC1} by, 
\begin{equation}
\label{eqB}
\left\{
\begin{array}{c}
   w_2 = 0    \\
    2w_1 + w_3 = a-2 
\end{array}
\right.
\end{equation}
Thus $w_0 \in B$. Define $X$ to be an irreducible component of $\mathcal E \otimes_{\mathcal W} B$ containing $x_f$.
We get $w_B : X \fleche B$ which is finite (if $r$ small enough) surjective.
We can thus look at the universal pseudo-character $T$ on $\mathcal E$ and compose it with $\mathcal O_\mathcal E \fleche \mathcal O(X)$.
Applying lemma 7.2 of \cite{BC1}, we get an affinoid $Y$, regular of dimension 1, $y_0 \in Y$ and a finite surjective morphism $m : Y \fleche X$ such that $m(y_0) = x_f$ and there exist a representation
$\rho : G_E \fleche \GL_3(K(Y))$ of trace $G_E \overset{T}{\fleche} \mathcal O_X \fleche \mathcal O_Y$ satisfying (ABS). At $y_0$, the representation $\rho_{y_0}^{ss}$ is given by $1 \oplus \chi \oplus \chi^\perp$.
The map $h$ is given as follow. First, denote $\nu = (\nu_1,\nu_2,\nu_3) = (1+w_3,-1-w_1,-w_2)$ and $h$ is given by composition of with $m$ of the map $\nu$ (the shift of $w$) of $\mathcal E$, it is still finite and surjective on $B$\footnote{We did a slight abuse of notation, it should be $B' \simeq B$, centered in $h_0$, and whose equation are given below}, and for every $y \in Y$ such that $m(y) = x_f$, then $h(y) = h_0$. In terms of automorphic weight $(k_1,k_2,k_3)$ the previous map is given by $(1-k_3,k_2-1,k_1)$, and thus gives the Hodge-Tate weights for regular discrete series. In terms of Hodge-Tate weights, the equations (\ref{eqB}) giving rise to $B$ becomes 
\begin{equation}
\label{eqC}
\left\{
\begin{array}{c}
   h_3=0 \\ h_1 -2 h_2 = 1+a
\end{array}
\right.
\end{equation}

Denote by, 
\begin{eqnarray*} \mathcal Z = \{ \underline h \in B \cap h_0 + (p-1)(p+1)^2\ZZ^{3,dom} : -h_2 > C, h_2-h_1 > C'\\  |h_1-h_3| > C'' 
\},\end{eqnarray*}
where 
$C'' >0$ is bigger than the bound given (up to reducing $r$ and thus $B$) in proposition \ref{cryspoint} for crystallinity, $C'$ is the boung given by classicity theorem \ref{thrclassbij}, and $C$ is the bound given in classicity at the level of sheaves, proposition \ref{proclasssheaf} (remark that $h_3$ is constant). Then $\mathcal Z$ is strongly Zariski dense in $B$.
Then $Z := \kappa^{-1}(\mathcal Z) \subset Y(F)$ contains only classical (and regular) points by proposition \ref{thrclassfaisc} and the classicity result of Bijakowski (theorem \ref{thrclassbij}). Moreover they are all crystalline by proposition \ref{cryspoint}. It is strongly Zariski dense by flatness (thus openness) of $\kappa$.
Let us define $F_1$. The action on a point $x_f$ associated to modular form $f$ – of (classical automorphic) weight $(k_1,k_2,k_3)$ associated to $\pi$ that is a quotient of $\ind_b^{GU(3)}(\psi)$  – of the operator
\[U_pS_p^{-1},\] 
which corresponds up to a normalisation by $\frac{1}{p^{h_1 - h_3 - 1}} = p^{k_1+k_3}$ to the classical Iwahori double coset \[p^{-1}\left(
\begin{array}{ccc}
p^2  &   &   \\
  & p  &   \\
  &   & 1  
\end{array}
\right) = \left(
\begin{array}{ccc}
p &   &   \\
  & 1  &   \\
  &   & p^{-1}  
\end{array}
\right),
\] 
corresponds to
\[ p^{k_1+ k_3}{\psi}_1^\sigma(p),\]
where $\psi = (\psi_1,\psi_2)$ is a character of $(\mathcal O^\times)^2$ and $\sigma$ the refinement of $f$ associated to the action of $\mathcal A(p)$.
Indeed, the eigenvalue of $U_p$ coincide with $p^{-k_2}\psi_1^\sigma(p)^2\psi_2^\sigma$ and the one of $S_p$ with $p^{-k_1-k_2-k_3}\psi_1^\sigma(p)\psi_2^\sigma(p)$.
Thus, $U_pS_p^{-1}$ has eigenvalue $p^{k_1+k_3}\psi_1^\sigma(p) = p^{h_3-h_1}p\psi_1^\sigma(p)$.
Thus, we set $F_1$ the function on $\mathcal E$ given by $p^{-1}U_pS_p^{-1}$. We have that $p^{h_1-h_3}F_1 = \psi^\sigma_1(p)$. 
The property (2) comes from the construction of the eigenvariety $\mathcal E$. Part (3) is the calculation of the Galois representation associated to $\pi^n(\chi)$.  
Part (4) is local-global compatibility at $\ell = p$ (\cite{Ski} as recalled in section \ref{normweight}) and proposition \ref{cryspoint} as the eigenvalues of the crystalline Frobenius $\varphi^2$ coincide with $\psi_i^\sigma(p)$. 

The last assertion is the calculation made in proposition \ref{rafpin}.

\end{proof}

\begin{propen}
If $p =v\overline v$ is split, there exists,
\begin{enumerate}
\item A dimension 1 regular integral affinoid $Y$ over $F$, and $y_0 \in Y(F)$,
\item a semi-simple continuous representation,
\[ \rho_{K(Y)} : \Gal(\overline E/E)_{Np} \fleche \GL_3(K(Y)),\]
satisfying $\rho^\perp_{K(Y)} \simeq \rho_{K(Y)},$ the property (ABS) of \cite{BC1}, and $\tr(\rho_{K(Y)})(\Gal(\overline E/E)) \subset \mathcal O_Y$,
\item A $F$-morphism, $h = (h_1,h_2,h_3) : Y \fleche \mathbb A^3$ such that $h_3 = 0$, $h(y_0) = h_0$.
\item A subset $Z \subset Y(F)$ such that $h(Z) \subset h_0 + (p-1)\ZZ^{3}_{dom}$.
\item Three functions $F_1,F_2,F_3$ in $\mathcal O(Y)$ of constant valuation.
\end{enumerate}
such that,
\begin{enumerate}
\item For every affinoid $\Omega$ containing $y_0$, $\Omega \cap Z$ is Zariski dense in $\Omega$.
\item For all $z \in Z \cup \{y_0\}$ $\rho_z^{ss}$ is the Galois representation associated to a cuspidal (algebraic) automorphic representation $\Pi$ of $GU(2,1)$ such that
\[ Hom_{K_f}(J,\Pi) \neq 0.\]
\item \[\rho_{y_0}^{ss} \simeq 1 \oplus \chi \oplus \chi^\perp.\]
\item For $z \in Z $, $(\rho_z^{ss})_{G_v}$ is crystalline of Hodge-Tate weights $h_1(z) < h_2(z) <h_3(z)$, and 
\[ (p^{h_1(z)}F_1(z),p^{h_2(z)}F_2(z),p^{h_3(z)}F_3(z))\]
is an accessible refinement of $\rho_z^{ss}$.
\item At $y_0$, this refinement is $(\chi_v^\perp(p),1,\chi_v(p))$.

\end{enumerate}

\end{propen}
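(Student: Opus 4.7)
The plan is to mimic the strategy of the inert-case argument given immediately above, adapting it to the geometry of Brasca's eigenvariety at a split prime and to the three accessible refinements available at $p$. First, $f$ together with the choice of the refinement $\sigma = (3,2)$ from proposition \ref{rafpin} gives a point $x_0 \in \mathcal{E}$ over the $p$-adic weight $w_0 = (a-1, 0, -a)$, whose associated polarised Galois pseudocharacter specialises to the trace of $1 \oplus \chi \oplus \chi^\perp$ and whose $\mathcal{A}(p)$-eigensystem encodes the ordering $(\chi_v^\perp(p), 1, \chi_v(p))$ of crystalline Frobenius eigenvalues.

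Next I would cut down to a one-parameter family by intersecting $\mathcal{E}$ with the closed subdisk $B \subset B(w_0, r) \subset \mathcal{W}$ defined by the same equations as in the inert case, namely $w_2 = 0$ and $2w_1 + w_3 = a-2$. Under the Hodge-Tate parametrisation $\nu = (1+w_3, -1-w_1, -w_2)$, these translate to $h_3 = 0$ and $h_1 - 2h_2 = 1+a$, both of which hold at $h_0 = (1-a, -a, 0)$. Taking an irreducible component $X$ of $\mathcal{E} \times_{\mathcal{W}} B$ containing $x_0$ gives a finite surjective $X \to B$ for $r$ small enough, and Lemma 7.2 of \cite{BC1} applied to the restriction to $X$ of the universal pseudocharacter $T$ of proposition \ref{thrT} yields a regular dimension-one affinoid $Y$ with a finite surjective $Y \to X$, a lift $y_0$ of $x_0$, and a semisimple representation $\rho_{K(Y)} : G_{E, Np} \to \GL_3(K(Y))$ of trace $T$ satisfying (ABS), with the trace landing in $\mathcal{O}_Y$; the polarisation $\rho_{K(Y)}^\perp \simeq \rho_{K(Y)}$ transfers from $T^\perp = T$. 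The map $h$ is then $\nu$ composed with $Y \to X \to \mathcal{W}$.

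The three functions $F_1, F_2, F_3$ are to be constructed from the split-case Hecke operators $U_0, U_1, U_2, U_3$ of subsection \ref{Heckenormsplit}. Using the dictionary between normalised and classical Hecke operators (analogous to proposition \ref{prophecke} and the subsection \ref{sectrefin} computation in the split case), I would take appropriate invertible rational monomials in the $U_i$ so that, at any classical crystalline $z$ with Hodge-Tate weights $(h_1(z), h_2(z), h_3(z))$, the quantities $p^{h_i(z)} F_i(z)$ recover respectively the three crystalline Frobenius eigenvalues of $(\rho_z^{ss})_{G_v}$ in the order prescribed by $\sigma = (3,2)$. Since the eigenvalues of the $U_i$ on each irreducible refinement component have locally constant valuation on $\mathcal{E}$, this constancy passes to $Y$ after shrinking if necessary.

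Finally I would take $Z := h^{-1}(\mathcal{Z})$ where $\mathcal{Z} \subset (h_0 + (p-1)\ZZ^3_{\mathrm{dom}}) \cap B$ is cut out by the slope bounds from proposition \ref{proclasssheaf}, Bijakowski's classicity theorem \ref{thrclassbij}, and the split-case form of the crystallinity criterion of proposition \ref{cryspoint} (cf.\ the remark following it). These bounds still leave a strongly Zariski-dense subset of $B$, which pulls back to a strongly Zariski-dense subset of $Y$ by openness of the flat map $Y \to B$. For $z \in Z$, classicity forces the associated form to arise from a cuspidal automorphic representation of type $(K_f, J)$ (imposed by the construction of $\mathcal{E}$), and crystallinity together with the readout of the refinement from the $F_i$ follow from local-global compatibility at $p$ (\cite{Ski}); the refinement at $y_0$ is dictated by proposition \ref{rafpin}. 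The main obstacle is the bookkeeping of normalisation conventions: identifying the precise rational combinations of $U_0, \ldots, U_3$ whose values at $z$ isolate the individual Frobenius eigenvalues of $(\rho_z^{ss})_{G_v}$ in the $(3,2)$ order, and verifying that the equations defining $B$ indeed select the desired Hodge-Tate profile. Everything else transports from the inert case.
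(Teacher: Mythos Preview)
Your approach is essentially identical to the paper's own proof, which likewise cuts along the same linear locus $B$ (written there in classical coordinates as $k_1=0$, $2k_2+k_3=2-a$, equivalent to your equations via $w=(-k_2,-k_1,-k_3)$), applies Lemma~7.2 of \cite{BC1}, and defines $h$ via the same shift $\nu$. Two small points: the paper resolves your ``bookkeeping obstacle'' by setting $F_i = U_{i-1}U_3^{-1}$ for $i=1,2,3$ (with the $U_i$ normalised as in subsection~\ref{Heckenormsplit}), and in the split case the classicity inputs you should invoke are \cite{Bra} Proposition~6.2 for classicity at the level of sheaves and \cite{PSdep} or \cite{BPS} for full classicity, rather than proposition~\ref{proclasssheaf} and theorem~\ref{thrclassbij}, which are stated for the inert case.
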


\begin{proof}
As the proof is almost the same as \cite{BC1} and we chose to detail the inert case, we will just sketch it.
Choose $x_f$ the point in $\mathcal E$ associated to $\pi^n(\chi)$ and the accessible refinement $(\chi_v^\perp(p),1,\chi_v(p))$. Denote by $B\subset \mathcal W$ the closed subset defined as in the inert case by 
\begin{equation}
\label{eqD}
\left\{
\begin{array}{c}
   k_1 = 0    \\
    2k_2+k_3= 2-a  
\end{array}
\right.
\end{equation}
and choose $X$ an irreducible component of $\mathcal E \times_{\mathcal W} B$ containing $x_f$. Apply lemma 7.2 of \cite{BC1}, and get $Y$ regular and $y_0$ and a representation,
\[ \rho : G_{E,Np } \fleche \GL_3(\mathcal O_Y),\]
such that $\rho^\perp = \rho$. Denote $h$ as in the inert case ($\nu = (1-k_3,k_2-1,k_1)$), and idem for $Z$ (classicity at the level of sheaves is given by \cite{Bra}, 6.2, and classicity by Pilloni-Stroh \cite{PSdep} or (in greater generality) \cite{BPS}.). 
The four Hecke operators living on $\mathcal E$, $U_i, i = 0,\dots,3$ are normalized as in \ref{Heckenormsplit}, then set for $i=1,2,3$,
\[ F_i = U_{i-1}U_3^{-1}.\]
By subsection \ref{ModClassHecke}, and local-global compatibility at $v$ (with the fact that $v$ coincide with $\tau_\infty$), $h_i$ are the Hodge-Tate weights of $(\rho_z)_{|G_v}$ and the normalisation of the Hecke Operators recalled in $\ref{ModClassHecke}$ assure that $(p^{h_i}F_i)_i$ is a refinement at $v$ for all classical forms.
\end{proof}

\subsection{Constructing the extension}

\begin{propen}
\label{propirr}
We are in one of the following two cases :
\begin{enumerate}
\item $\rho_{K(Y)}$ is absolutely irreducible.
\item There exists a two dimensional representation $r \subset \rho$ such that $r_{K(Y)}$ is absolutely irreducible and,
\[r_{y_0}^{ss} = 
\left(
\begin{array}{cc}
  \chi     &   \\
  &   \chi^\perp   
\end{array}
\right).
\]
\end{enumerate}
\end{propen}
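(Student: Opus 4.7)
The plan is a case analysis on the decomposition of the semi-simple polarised representation $\rho_{K(Y)}$ into absolutely irreducible summands. By property (ABS) (and semi-simplicity), these summands are defined over $K(Y)$, so we may work over $K(Y)$ throughout. If $\rho_{K(Y)}$ is absolutely irreducible, then conclusion (1) holds. Otherwise the dimensions of the summands are either $(1,1,1)$ or $(1,2)$; call these cases (A) and (B).

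The polarisation $\rho \simeq \rho^\perp$ induces a dimension-preserving permutation on the multiset of isomorphism classes of constituents. Specialising at $y_0$, the constituents become subsums of $\{1, \chi, \chi^\perp\}$; since $\chi\ne\chi^\perp$ (as $\chi^\perp = \chi|\cdot|^{-1}$, so their product has non-trivial infinity type), the trivial character is the unique $\perp$-stable element of this set.

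In case (B), write $\rho_{K(Y)} = \psi \oplus r$ with $\psi$ a character and $r$ a $2$-dimensional absolutely irreducible representation. By a dimension count, $\psi$ cannot be paired by $\perp$ with a constituent of $r$, so $\psi \simeq \psi^\perp$ and $r \simeq r^\perp$. Specialising at $y_0$ then forces $\psi(y_0)=1$, hence $r_{y_0}^{\mathrm{ss}} = \chi \oplus \chi^\perp$, which is exactly conclusion (2).

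The main obstacle is therefore ruling out case (A). Here $\rho_{K(Y)} = \psi_1 \oplus \psi_2 \oplus \psi_3$ where, after relabelling, $\psi_1(y_0) = \chi$, $\psi_2(y_0) = \chi^\perp$, $\psi_3(y_0) = 1$, with $\psi_3$ self-polar and $\psi_2 \simeq \psi_1^\perp$. I would then invoke the density of classical crystalline points $z \in Z$: at each such $z$ the $\psi_i(z)$ are global characters of $G_{E,Np}$, crystalline at $p$ with Hodge--Tate weights forming a permutation of $(h_1(z), h_2(z), 0)$. By class field theory for $E$, such characters form a rigid-analytic space of controlled (in fact small) dimension, whereas the composition $(h_1,h_2): Y \to \mathbb{A}^2$ is non-constant by construction (equation \eqref{eqC}). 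This yields the contradiction: $\psi_3$ is forced to have constant Hodge--Tate weight $0$ and hence to be of finite order, so equal to $1$ after shrinking $Y$; then the remaining relation $\psi_1 \psi_1^{\perp} \equiv 1$ (implied by $\det \rho$ being the character extending $\det(1\oplus\chi\oplus\chi^\perp)$) together with the variation of $h_1$ along $Y$ contradicts the analogous finiteness for $\psi_1$, following the argument of the corresponding step in \cite{BC1}, suitably adapted to the inert case using the unique refinement $\sigma$ of Proposition~\ref{rafpin}.
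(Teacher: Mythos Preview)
Your handling of case~(B) is correct and in fact cleaner than the paper's route: once $r$ is the unique $2$-dimensional irreducible constituent, $\rho\simeq\rho^\perp$ forces $\psi\simeq\psi^\perp$, and since $1$ is the only self-$\perp$ element of $\{1,\chi,\chi^\perp\}$ you obtain $\psi(y_0)=1$ and $r_{y_0}^{\mathrm{ss}}=\chi\oplus\chi^\perp$ immediately. The paper instead first pins down the Hodge--Tate weight of the character summand via weak admissibility and only then invokes polarisation; that detour is what yields the extra remark that case~(2) never occurs when $a\ge 2$ and $p$ splits, but it is not needed for the proposition as stated.

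The genuine gap is your treatment of case~(A). The dimension sketch does not produce a contradiction and contains an error. For $E$ imaginary quadratic the character variety of $G_{E,Np}$ is $2$-dimensional over $\QQ_p$ (cyclotomic and anticyclotomic $\ZZ_p$-extensions), so a one-parameter family of triples $(\psi_1,\psi_1^\perp,\psi_3)$ with the required, varying Sen weights fits without obstruction; the non-constancy of $(h_1,h_2)$ along the curve $Y$ is simply not in tension with this. Your asserted relation $\psi_1\psi_1^{\perp}\equiv 1$ is false: already at $y_0$ one has $\chi\,\chi^\perp=\chi(\chi^c)^{-1}\neq 1$ (compare infinity types), and $\det\rho$ is not constant on $Y$ since its $\tau$-Hodge--Tate weight equals $h_1+h_2$.

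What is missing is precisely the input the paper uses: the \emph{refinement}. For $z\in Z$ the family singles out a crystalline Frobenius eigenvalue of valuation $v(p^{h_1-h_3}F_1(z))=h_1(z)-h_3(z)+a$ in the inert case (resp.\ the triple of valuations $(h_1,h_2+a,h_3-a)$ in the split case), the constant $a$ coming from evaluating $F_1$ (resp.\ the $F_i$) at $y_0$. If $\rho_z^{\mathrm{ss}}$ were a sum of three characters, weak admissibility of each one-dimensional piece forces this valuation to equal one of the Hodge--Tate weights $h_i(z)$; once the $h_i(z)$ are spread farther apart than $a$ (as they are for generic $z\in Z$) this is impossible. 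That numerical mismatch is what actually eliminates case~(A); the argument does not close without the refinement data.
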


\begin{remaen}
As showed by the proof, the second case never happens if $a \geq 2$ and $p$ is split.
\end{remaen}

\begin{proof}
The proof uses similar ideas as in \cite{BC1} Proposition 9.1, but unfortunately in our case the refinement autorises a 2-dimensional subrepresentation and a 
1-dimensional quotient. Let us first suppose $p$ splits, and suppose $\rho_{K(Y)}$ is reducible. As $\rho^\perp \simeq \rho$, we can suppose that there exists a character $\psi \subset \rho$ and a two dimensionnal representation $r$ such that
\[ 0 \fleche \psi \fleche \rho \fleche r\fleche 0.\]
Thus there exists $i$ such that the (generalized) Hodge-Tate weight of $\psi$ at $v$ is $h_i$. Moreover, for all $z \in Z$, by weak admissibility of $\psi$, we must have that there exists a $j$ such that $v(p^{h_j}F_j(z)) = h_i(z)$. As the valuation of $F_j$ is constant on $Y$, we can calculate it at $y_0$ and
\[ \alpha = (v(F_1),v(F_2),v(F_3)) = (0,a,-a).\]
In particular, at $z \in Z$ such that $|h_i(z) - h_j(z)|  > a$ for all $i \neq j$, we find $\alpha_j = 0$ and $i=j = 1$. Thus, by density $\psi$ is of Hodge-Tate weight $h_1$.
In particular, at $y_0$, $\psi$ is of Hodge-Tate weight $h_1(y_0) = 1-a$. 
Now, if $a \neq 1$, $\psi_{y_0} = \chi$, and if $r$ were reducible, then by weak admissibility we would find $i,j \neq 1$ such that $h_i(z) = h_j(z)  + \alpha_j$, for a Zariski dense subset of $z \in Z$, which is absurd. Thus there is a unique sub-quotient of $\rho$ which is of rank 1, it is $\psi$. As $\rho^\perp = \rho$, this means that $\psi^\perp = \psi$, which is impossible as $\psi_{y_0} = \chi$.
If $a=1$ then $\psi_{y_0}$ has $v$-Hodge-Tate weight 0, and if $\psi_{y_0} = \chi$ the same as previously happens, thus suppose that $\psi_{y_0} = 1$. In this case, $r$ is still irreducible but $r_{y_0}^{ss} = \chi \oplus \chi^\perp$.

Now, we focus on $p$ inert, which is similar. Suppose we are not in the case where $\rho_{K(Y)}$ is irreducible. We can thus find a 2-dimensional subrepresentation $r \subset \rho$ (if $r$ is one dimensional, take the quotient and apply $(.)^\perp$, as $\rho^\perp = \rho$).
Suppose that $r$ is reducible. Take $z \in \mathcal Z$, 
as the valuation $\alpha_1$ of $F_1$ is constant, we can calculate it at $y_0$ and we get, from $p^{h_1-h_3}F_1(y_0) = \chi_p^\perp(p)$,
\[ \alpha_1 = a.\]
But if $r_z^{ss}$ is not irreductible, this means, following Rogawski's Classification recalled in \cite{BC1}, section 3.2.3, and the fact that the representations associated by Blasius-Rogawski are irreducible (but not necessarily 3-dimensional), that $z$ is either endoscopic-tempered of type $(1,1,1)$, endoscopic non tempered or stable non tempered.
In the case endoscopic non tempered, looking at the Arthur parameter at infinity, the Hodge-Tate weights verifies $h_1 = h_2$ or $h_2 = h_3$, which is not possible by choice of $Y$ and $Z$. In the stable non tempered case, the Hodge-Tate weights are $(k,k,k)$, which is not allowed in $\mathcal Z$. So we need to check that $z$ is not endoscopic of type (1,1,1).
But in this case, this would mean by weak admissibility for $\rho_z^{ss}$ (which would thus be a totally split sum of three characters) that,
\[ \{h_1-h_3 + \alpha_1,0, h_3-h_1-\alpha_1\} = \{ h_1-h_3,0,h_3-h_1\},\]
but the previous equality is impossible for $-h_1< -a$ (which is the generic situation).
Thus $z \in \mathcal Z$ is endoscopic, tempered, of type (2,1), and $r$ is irreducible. By weak admissibility, and the previous calculations (or because $\rho^\perp= \rho)$), $r_{y_0}^{ss}$ has to be $\chi^\perp\oplus \chi$.

\end{proof}

\subsection{Good reduction outside $p$}

\begin{propen}
\label{prop1113}
In the previous case 1), denote $\rho' = \rho_{K(Y)}\otimes (\chi_p^\perp)^{-1}$.
Let $v | \ell \neq p$ be a place of $E$. Then,
\begin{enumerate}
\item If $v \notdivides \Cond(\chi)$, then $\rho_{K(Y)}$ and $\rho'$ are unramified at $v$.
\item If $v | \Cond(\chi)$, then $\dim_{K(Y)} (\rho'_{K(Y)})^{I_v} = 2$.
\end{enumerate}
In case 2), denote $r' = r_{K(Y)}\otimes(\chi_p^\perp)^{-1}$. Let $v | \ell \neq p$ be a place of $E$. Then,
\begin{enumerate}
\item If $v \notdivides \Cond(\chi)$, then $r_{K(Y)}$ and $r'$ are unramified at $v$.
\item If $v | \Cond(\chi)$, then $\dim_{K(Y)} (r')^{I_v} = 1$.
\end{enumerate}

\end{propen}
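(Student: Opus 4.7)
The strategy is to exploit the Zariski density of the classical set $Z$ in $Y$ together with local--global compatibility at the places $v \nmid p$ (as recalled in \cite{Ski}) and our careful choice of the type $(K_\ell,J_\ell)$ from Proposition \ref{proptypes}. For every $z \in Z\cup\{y_0\}$ the representation $\rho_z^{ss}$ is attached to an automorphic representation $\Pi_z$ of $GU(2,1)$ satisfying $\Hom_{K_f}(J,\Pi_z)\neq 0$, so the local component $\Pi_{z,\ell}$ is controlled at every $\ell\neq p$. In the sub-case 2, $r$ is a direct summand of $\rho_{K(Y)}^{ss}$ (obtained as the kernel of a continuous pseudocharacter factorization), hence its trace extends continuously on $Y$ and the same density machinery applies.

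First consider a prime $v\nmid p\Cond(\chi)$. By construction $K_\ell$ is hyperspecial (or very special if $\ell$ is ramified in $E$), so $\Pi_{z,\ell}$ is unramified, and local--global compatibility forces $\rho_z|_{G_{E_v}}$ to be unramified. Hence for every $g\in I_v$ one has $\tr\rho_z(g)=\tr\rho_z(1)=3$ throughout the dense set $Z$, and by continuity the universal trace $\tr\rho_{K(Y)}(g)$ is identically $3$ on $I_v$. Invoking the (ABS) property of \cite{BC1} we conclude that $\rho_{K(Y)}$ (and $r$ in case 2) is unramified at $v$. The twist by $(\chi_p^\perp)^{-1}$ preserves this because $\chi^\perp$ is unramified at $v$ (it has the same conductor as $\chi$).

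Now let $v\mid\Cond(\chi)$. By Proposition \ref{proptypes}(2), the Langlands base change of $\Pi_{z,\ell}$ to $\GL_3(E_v)$ has, after unfolding the similitude, the form $\phi_1\chi_0 \oplus \phi_2\chi_0\oplus \phi_3$ with $\phi_i$ unramified. Local--global compatibility yields the same description for the Frobenius semisimplification of $\rho_z|_{G_{E_v}}$; twisting by $(\chi_p^\perp)^{-1}$ turns the two characters carrying $\chi_0$ into unramified ones, while the third becomes a ramified character (the ramified parts of $\chi_0$ and of $\chi_p^\perp$ cancel on the former and add on the latter, since $\chi_0/\chi_p^\perp$ is unramified). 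Thus $\dim_F(\rho'_z)^{I_v}=2$ for every $z\in Z$. To pass to the generic point, note that since $v\nmid p$ the image of $I_v$ in $\GL_3(\mathcal O_Y)$ is finite (Grothendieck's $\ell$-adic monodromy, applied to the integral representation over the regular $1$-dimensional base $Y$; see also \cite{BC2} \S 7.8 for the same argument in a family context). A finite-image continuous representation of $I_v$ on a free $\mathcal O_Y$-module $M$ has $M^{I_v}$ a direct factor, and its generic rank coincides with $\dim_F(M\otimes k(z))^{I_v}$ for any classical $z$, giving $\dim_{K(Y)}(\rho')^{I_v}=2$ in case 1. Case 2 is identical after replacing $\rho$ by $r$: since $r_{y_0}^{ss}=\chi\oplus\chi^\perp$, only the character specializing to $\chi^\perp$ becomes unramified after the twist, yielding $\dim_{K(Y)}(r')^{I_v}=1$.

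The main subtlety, and the only step requiring some care, is the propagation from classical $z \in Z$ to the generic fiber: the naive upper/lower semicontinuity of $\dim(\cdot)^{I_v}$ is not sharp, and one genuinely needs finiteness of the image of $I_v$ in $\GL_3(\mathcal O_Y)$ together with the flatness of invariants under a finite group in characteristic $0$. The twist bookkeeping (ensuring that exactly one character is ramified after tensoring with $(\chi_p^\perp)^{-1}$, rather than zero or more) is a direct check using that $\chi_p^\perp/\chi_p = |.|$ and that $\chi_0^2$ has the same ramification as $\chi$ at $v$.
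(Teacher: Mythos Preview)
Your overall strategy---Zariski density of $Z$, local--global compatibility, the type constraints from Proposition~\ref{proptypes}, and finiteness of the image of $I_v$---is exactly the paper's. The one tactical difference worth noting is in the ramified case ($v\mid\Cond(\chi)$): once you know $\rho'(I_v')=1$ for some open $I_v'\subset I_v$ (which you obtain, as does the paper, from the classical points), the restriction $\rho'_{|I_v}$ is a representation of the \emph{finite} group $I_v/I_v'$ and is therefore constant along $Y$ (defined already over a finite extension $F'/F$). The paper then simply specializes at $y_0$, where $\rho_{y_0}^{ss}=1\oplus\chi_p\oplus\chi_p^\perp$ is explicitly known, and reads off $\rho'_{|I_v}\simeq 1\oplus 1\oplus(\chi_p^\perp)^{-1}_{|I_v}$. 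This sidesteps the bookkeeping you carry out at a general classical $z$ (which form does $L(\Pi_{z,v})$ take, and how does the twist interact with it); both routes work, but specializing at $y_0$ is shorter and less error-prone.

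Your case~2 paragraph contains a genuine slip. You write that at $y_0$ ``only the character specializing to $\chi^\perp$ becomes unramified after the twist'', but in fact \emph{both} summands of $r'_{y_0}{}^{ss}=\chi_p(\chi_p^\perp)^{-1}\oplus 1$ are unramified at $v\nmid p$: the polarization $\chi^\perp=\chi|.|^{-1}$ gives $\chi_p(\chi_p^\perp)^{-1}=|.|$, which is unramified away from $p$. So evaluating at $y_0$ yields dimension $2$, not $1$. The paper does not argue this way for $r$; it instead uses that $r_z\subset\rho_z^{ss}$ for $z$ in a cofinite $Z'\subset Z$, whence $\dim_{K(Y)}(r')^{I_v}\ \geq\ \dim_{K(Y)}(\rho'^{ss})^{I_v}-1$, and the right-hand side is $2-1=1$ by the case~1 computation. (Only the lower bound $\geq 1$ is used downstream, in the elimination of case~2.)
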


\begin{proof}
After all the constructions, this can be deduced as in \cite{BC1}. First there exists $g \in \mathcal O_Y$ such that $g(y_0) \neq 0$ and $\rho_{K(Y)}$ has a $\mathcal O_{Y,(g)}$ 
stable lattice. Denote $\rho$ the representation valued in $\mathcal O_{Y,(g)}$, and for all $y \in \Spm(\mathcal O_{Y,(g)}) = Y(g^{-1})$, $\rho_y$ the reduction at $y$.
In case 1), as $\rho_{K(Y)}$ is semi-simple, $\rho_z$ is semi simple for $z \in Z'$, a cofinite subset of $Z \cap Y(g^{-1})$. But now, for $z \in Z'$, $\rho_z = \rho_z^{ss}$ is the Galois representation 
associated to a regular automorphic representation $\Pi_z$ of $GU(2,1)$. 
In case 2), $r_{K(Y)}$ is semi-simple, thus for all $z \in  Z'$, still cofinite in $ Z$, $r_z^{ss} = r_z \subset \rho_z^{ss}$, and,
\[ \dim_{K(Y)}{r'}^{I_v} \geq \dim_{K(Y)} (\rho'^{ss})^{I_v} - 1,\]
and $\dim_{K(Y)} (\rho')^{I_v}$ is related to the ramification of a (tempered endoscopic of type (2,1)) automorphic representation of $GU(2,1)$.
Thus, to show the result, we only need to control ramification at $v$ of (the base change of) $\Pi_z$.

If $v \notdivides \Cond(\chi)$, by construction of the eigenvariety and choice of the maximal compact, $(\Pi_z)_v$ has a vector fixed by $K_\ell$. We can thus conclude as if $\ell$ is 
unramified, $K_\ell$ is hyperspecial and if $\ell$ ramifies, $K_\ell$ is chosen very special and \cite{BC1} proposition 3.1 gives the result for the base change. Now by local-global 
compatibility (for example \cite{Ski}), $\rho_z^{ss}$ (and thus $r_z$ in case 2) is unramified at $v$.

If $v | \Cond(\chi)$, by construction $\Pi_z$ has type $(K_\ell, J_\ell \otimes \chi_{0,\ell}^{-1} \circ \det)$, and thus by proposition \ref{proptypes} the local langlands representation 
associated to $(\Pi_z)_v$ is $\phi_1\oplus \phi_2 \oplus \phi_3\chi_{0,\ell}$ for three unramfied characters $\phi_i$. Thus, by local-global compatibility again, there exists $I_v'$ a finite 
index subgroup of $I_v$ such that $\rho'_z({I_v'}) = 1$. Thus, $(\rho')({I_v'}) =1$. But up to extending scalars, $\rho'_{|I_v}$ is a finite representation $\theta$ of $I_v/I_v'$, defined on $F'$ 
a finite extension of $F$. Thus, $\rho'_{I_v} \otimes_F F'$ is well defined, semi-simple, and evaluating the trace, we get,
\[ 1 \oplus 1 \oplus (((\chi_p)^{\perp})^{-1})_{I_v} = (\rho'_{|I_v} \otimes F')_{y_0}^{ss} \simeq \theta.\]
We thus get the result.\end{proof}

\subsection{Elimination of case (2)}
\label{sect115}
We want to prove that $\rho_{K(Y)}$ is always irreducible, and thus prove that case 2. can never happen. Thus suppose we are in case 2.

\begin{propen}
There exists a continuous representation $\overline r : G_E \fleche \GL_3(F)$ such that $\overline r$ is a non split extension of $\chi^\perp$ by $\chi$,
\[ \overline r = 
\left(
\begin{array}{cc}

 \chi & \star     \\
  &  \chi^\perp  
\end{array}
\right)
\]
such that $\overline r = \overline r^\perp$ and verifying,
\begin{enumerate}
\item $\dim_F (\overline r \otimes \overline \chi)^{I_v} = 2$ if $v \notdivides \Cond(\chi)$.

\item $\dim_F (\overline r \otimes \overline \chi)^{I_v} \geq 1$ if $v | \Cond(\chi)$.
\item  If $p$ splits $D_{cris,\overline v}(\overline r)^{\phi = \chi_{\overline v}^\perp(p)} \neq 0$ and $D_{cris,v}(\overline r)^{\phi = \chi_v^\perp(p)} \neq 0$
\item  If $p$ is inert $D_{cris,\tau}(\overline r)^{\phi^2 = \chi^\perp(p)} \neq 0$
\end{enumerate}
\end{propen}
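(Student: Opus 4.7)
The plan is to execute a Ribet-style deformation argument using the dimension-one family $r_{K(Y)}$, then control the local behavior of the reduction via Proposition \ref{prop1113} and the refinement data at $p$.

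First, since $Y$ is a regular affinoid of dimension one, $\mathcal{O}_{Y,y_0}$ is a discrete valuation ring, so I can choose a $G_E$-stable $\mathcal{O}_{Y,y_0}$-lattice $L \subset r_{K(Y)}$. The mod-maximal-ideal reduction $\overline{r}_L$ has semi-simplification $\chi \oplus \chi^\perp$, and since $\chi \neq \chi^\perp$ (as $\chi^\perp = \chi|\cdot|^{-1}$ and the cyclotomic twist is non-trivial), Ribet's change-of-lattice lemma applied to the absolutely irreducible representation $r_{K(Y)}$ lets me choose $L$ so that $\overline{r}$ is a non-split extension. There are a priori two possible directions of extension, and I would fix the one with $\chi$ as subobject, using the polarization $r_{K(Y)}^\perp \simeq r_{K(Y)}$: this polarization is inherited from $\rho_{K(Y)}^\perp \simeq \rho_{K(Y)}$ (proved via the pseudocharacter from Proposition \ref{thrT}) together with the fact that $r$ is the unique absolutely irreducible two-dimensional subquotient of $\rho$. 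Passing to reduction, $\overline{r}^\perp \simeq \overline{r}$, which forces the non-split direction to be self-dual in the appropriate sense.

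For the local conditions at $v \nmid p$, I would apply Proposition \ref{prop1113}(2) directly. At places $v \nmid p\,\Cond(\chi)$, the representation $r_{K(Y)}$ is unramified, hence so is $\overline{r}$, yielding the required full $I_v$-invariance after twisting. At $v \mid \Cond(\chi)$, the proposition gives $\dim_{K(Y)} (r')^{I_v} = 1$; by lower semi-continuity of the dimension of $I_v$-invariants under specialization of a $G_E$-stable lattice, $\dim_F (\overline{r} \otimes \overline{\chi})^{I_v} \geq 1$, which is exactly what is claimed.

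The crystalline conditions at $p$ are the hard part, and here I would use Kisin's theorem on triangulations of refined families, in the form generalized by Liu (as the paper notes is needed for the inert case). The family $r_{K(Y)}$ inherits, through the functions $F_i$ and local-global compatibility at $p$, a refinement which specializes at classical points $z \in Z$ to the accessible refinement listed in the proposition: crystalline with the Frobenius eigenvalue $p^{h_1-h_3} F_1(z)$ in the first slot. Specializing this trianguline structure to $y_0$ via the theory of $(\varphi,\Gamma)$-modules over affinoid algebras (Kedlaya-Pottharst-Xiao), one obtains a saturated sub-$(\varphi,\Gamma)$-module of $D_{\rm rig}(r_{y_0})$ whose Frobenius eigenvalue is $\chi^\perp(p)$ (respectively $\chi_v^\perp(p)$ at each place above $p$ in the split case). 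This sub-line descends, after choosing the Ribet lattice compatibly, to a sub-$(\varphi,\Gamma)$-module of $D_{\rm rig}(\overline{r})$ arising from a crystalline character; since $\chi$ is unramified at $p$ (as $p \nmid \Cond(\chi)$) and $\overline{r}$ contains $\chi$ as a sub, the Dieudonné module $D_{\rm cris}(\overline{r})$ has a non-zero eigenspace for $\phi^2 = \chi^\perp(p)$ (or $\phi = \chi_v^\perp(p)$ in the split case). The main technical obstacle is ensuring that the Ribet lattice can be chosen simultaneously compatibly with the polarization and with the trianguline sub-line at $p$, which requires tracking the interaction between the family's triangulation and the specialization map at the non-smooth point $y_0$ where $r_{y_0}^{ss}$ becomes reducible.
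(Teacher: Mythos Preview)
Your approach is essentially the same as the paper's: Ribet's lemma to produce the non-split lattice, Proposition \ref{prop1113} for the inertia conditions away from $p$, and a Kisin--Liu type argument for the crystalline eigenspace at $p$.

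The one point where you diverge is in your final paragraph, and it is worth flagging how the paper handles it. You worry about choosing the Ribet lattice ``simultaneously compatibly with the polarization and with the trianguline sub-line at $p$,'' and you phrase the $p$-adic step as descending a triangulation through the specialization map. The paper sidesteps this compatibility problem entirely: after choosing the Ribet lattice over the DVR $\mathcal O_{Y,y_0}$, one invokes \cite{BC1} Lemme 7.3 to spread it out to an $\mathcal O_\Omega$-stable lattice $\Lambda_\Omega$ over an affinoid neighbourhood $\Omega \ni y_0$, with $\overline{r_{\Lambda_\Omega,y_0}} = \overline r$. Then Liu's theorem (\cite{Liu}, Theorem 0.3.2 and 0.1.2) is applied directly to the family $(\Omega, r_{\Lambda_\Omega}, \dots)$ as a weakly refined family, using that for $z$ in a Zariski-dense subset of $Z \cap \Omega$ one has $D_{cris,\tau}(r_z)^{\varphi^2 = p^{h_1}F_1} = D_{cris,\tau}(\rho_z)^{\varphi^2 = p^{h_1}F_1} \neq 0$. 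The output of Liu's theorem is a statement about $D_{cris}$ of the specialisation at $y_0$ of \emph{this particular lattice}, which is exactly $\overline r$. So no separate compatibility check between the lattice and a triangulation is needed; the crystalline eigenspace condition is produced for $\overline r$ automatically. Your KPX-style formulation would also work, but the paper's route via \cite{BC1} Lemme 7.3 plus \cite{Liu} is more direct and avoids tracking the $(\varphi,\Gamma)$-module through the reduction.
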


\begin{proof}
We first sketch the proof in case $p$ inert we will detail a bit the argument in proposition \ref{propab}.
First, by Ribet's Lemma (see \cite{BellRibet} Corollaire 1 or \cite{CheApp} Appendice, Lemma 3.1 and \cite{BC1} Lemme 7.3) there exists a $g \neq 0 \in \mathcal O_Y$ and a $\mathcal O_{Y,(g)}$-lattice $\Lambda$ stable by $r_{K(Y)}$ such that
$\overline r := \overline{r_{\Lambda}} = \left(
\begin{array}{cc}

 \chi & \star     \\
  &  \chi^\perp  
\end{array}
\right)$
is a non split extension. We can moreover easily assure that this lattice is a direct factor of a lattice stable by $\rho_{K(Y)}$.  Then, condition 1. and 2. follows from the proposition \ref{prop1113}. For condition 3., we can use the analog of Kisin's argument as extended by Liu, \cite{Liu}, as in the proof of the next proposition, as for all $z \in Z$, 
\[D_{cris,\tau}(r_z)^{\phi^2 = p^{h_1}F_1} = D_{cris,\tau}(\rho_z)^{\phi^2=p^{h_1}F_1} \neq 0\]
as shown by proposition \ref{propirr}.
The split case is similar to \cite{BC1} Proposition 9.3 and Lemme 9.1 (see also Remarque 6.3.8 or apply the results to $\overline r_{\overline v}$).  
 \end{proof}

Denote by $r' = r \otimes (\chi_p^\perp)^{-1} = r \otimes \overline{\chi_p}$, which is an extension of $1$ by $\chi_p\overline{\chi_p} = \omega_p$ (the cyclotomic character).

\begin{lemmen}
\label{lemmacrys}
The representation $r'$ is crystalline at $p$.
\end{lemmen}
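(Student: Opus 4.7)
The strategy is to reduce to showing the specialization $\overline{r'}$ at $y_0$ is crystalline at each place above $p$. Since $p \nmid \Cond(\chi)$, the character $\overline{\chi_p}$ is crystalline at $p$, and twisting preserves crystallinity, so it suffices to show the specialization $\overline{r}$ is crystalline at $p$. The approach is to exhibit $r$ as a refined family on $Y$ in the sense of Bellaïche--Chenevier and then invoke Kisin's theorem (split case) or its inert-case generalization due to Liu, cited in the introduction.

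First I would observe that for every $z\in Z$, $\rho_z$ is crystalline at each place above $p$ by Proposition \ref{cryspoint}, hence its $G_E$-subrepresentation $r_z$ is also crystalline. The refinement on $\rho$ given by the functions $F_i$ restricts to a refinement on $r$: at $y_0$ the two eigenvalues of crystalline Frobenius of $\overline{r}^{ss}_{|G_{E_p}} = (\chi\oplus\chi^\perp)_{|G_{E_p}}$ are $\chi_p(p)$ and $\chi_p^\perp(p)$, which match two of the three refinement eigenvalues of $\rho$ at $y_0$ (in the split case, $F_1$ and $F_3$; in the inert case, $F_1$ determines the other by the polarization). This gives $r$ the structure of a refined family.

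Next I would apply Kisin--Liu to obtain, on a neighborhood $Y' \ni y_0$, a triangulation of $(\varphi,\Gamma)$-modules
\[ 0 \to \mathcal{R}_{Y'}(\tilde\delta_1) \to D^{\dagger}_{\mathrm{rig}}(r_{|G_{E_p}}) \to \mathcal{R}_{Y'}(\tilde\delta_2) \to 0, \]
whose specialization at $y_0$ presents $\overline r_{|G_{E_p}}$ as an extension with graded pieces the $(\varphi,\Gamma)$-modules of the crystalline characters $\chi_p$ (sub) and $\chi_p^\perp$ (quotient). The matching with the filtration $\chi \subset \overline r$ is forced by comparison of $\tau$-Hodge--Tate weights: $\chi_p$ has weight $-a$ while $\chi_p^\perp$ has weight $1-a$, and Kisin's construction places the character with the algebraic weight appearing in the refinement eigenvalue as the sub.

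The final step is to deduce crystallinity from the triangulation. The triangulation at $y_0$ is non-critical, since the sub has the strictly smaller Hodge--Tate weight ($-a < 1-a$). For a two-dimensional trianguline representation with non-critical triangulation whose graded pieces are (the $(\varphi,\Gamma)$-modules of) crystalline characters of distinct Hodge--Tate weights, the representation is itself crystalline: equivalently the extension class lies in $H^1_f(G_{E_p},\chi_p(\chi_p^\perp)^{-1})$ rather than merely in $H^1_g$, which follows from the exact sequence relating $H^1_f$, $H^1_g$ and the ``critical'' line, combined with the fact that the refinement eigenvalue appears with Newton-equal-to-Hodge slope on the sub. The main obstacle I expect is the careful verification of Liu's theorem in the inert case, where one works with $(\varphi,\Gamma)$-modules over the Robba ring for the unramified quadratic extension $E_p/\mathbb{Q}_p$, and correctly matching the refinement of $r$ with the sub--quotient ordering of $\overline r_{y_0}$.
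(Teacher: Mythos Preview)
Your proposal takes a far more elaborate route than the paper, and in doing so introduces a questionable step. The paper's argument is a direct dimension count: the crystalline period $D_{\mathrm{cris},\tau}(\overline r)^{\varphi^2=\chi^\perp(p)}\neq 0$ is \emph{already} part of the statement of the preceding proposition (obtained there via Liu/Kisin), so here one simply combines it with the line $D_{\mathrm{cris},\tau}(\chi_p)\subset D_{\mathrm{cris},\tau}(\overline r)$ coming from left-exactness and the sub $\chi_p\subset\overline r$. Since $\varphi^2$ acts on the latter line by $\chi_p(p)=p^{-2}\chi^\perp(p)\neq\chi^\perp(p)$, these two lines are distinct, so $\dim_F D_{\mathrm{cris},\tau}(\overline r)=2$ and $\overline r$ is crystalline. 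The split case is the same, using the $\overline v$-period and polarization. No triangulation, no non-criticality argument is needed.

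By contrast, you are re-proving the Kisin--Liu input and then piling on the full triangulation machinery. More seriously, your identification of the sub in the triangulation looks wrong: the accessible refinement $\sigma$ at $y_0$ has $\chi^\perp(p)$ as its \emph{first} eigenvalue (Proposition~\ref{rafpin}), so the Kisin--Liu triangulation of $D^\dagger_{\mathrm{rig}}(\overline r)$ has the rank-one piece attached to $\chi^\perp$ as the sub, not $\chi$. This is the opposite of the filtration on the representation $\overline r$ (where $\chi$ is the sub). Your sentence ``Kisin's construction places the character with the algebraic weight appearing in the refinement eigenvalue as the sub'' conflates the Hodge ordering with the refinement ordering; they need not agree, and here they do not. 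Finally, the assertion that ``non-critical triangulation with crystalline graded pieces implies crystalline'' is not automatic and would itself require an argument about $H^1_f$ versus $H^1_{st}$; the paper avoids this entirely by the two-line eigenvalue comparison above.
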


\begin{proof}
 As $\chi^\perp$ is crystalline (at $v$ and $\overline v$ if $p$ splits, at $p$ if $p$ is inert), it is enough to prove that $r$ is crystalline. Suppose first $p$ is inert.
As $D_{crys,\tau}$ is left-exact, because $\overline r$ is extension of $\chi_p^\perp$ by $\chi_p$ we have,
\[ D_{crys,\tau}(\chi_p) \subset D_{crys,\tau}(\overline r),\]
but on $D_{crys,\tau}(\chi_p)$ $\varphi^2$ acts as $\chi_p(p) = \chi_p^\perp(p)p^{-2}$, thus this line is distinct from $D_{crys,\tau}(r)^{\varphi^2=\chi^\perp(p)}$ and thus $D_{crys,\tau}(r)$ is of dimension 2.
But because of the action of $\varphi$, $D_{crys}(r)$ is a $K\otimes_{\ZZ_p} F$-module of dimension 2, i.e. $r$ is crystalline. If $p$ splits, as $\overline r^\perp \simeq r$, it suffices to prove that $\overline r$ is crystalline at $\overline v$. But we use $D_{cris,\overline v}(\overline r)^{\phi = \chi_{\overline v}^\perp(p)}$ as in the inert case to get the result.
\end{proof}

Thus $r'$ gives a non zero element in $H^1_f(E,\omega_p)$ but by \cite{BC1} Lemme 9.3, which is a well-know result, $H^1_f(E,\omega_p) = \{0\}$ thus $r'$ must be trivial, which
gives a contradiction. We are thus in case where $\rho_{K(Y)}$ is irreducible.

\subsection{Good reduction at $p$}	

Suppose $p$ inert. The result for $p$ split is analogous to \cite{BC1} Proposition 9.3. Denote $u = \chi_p^\perp(p)$.
\begin{propen}
\label{propab}
There exists a continuous representation $\overline{\rho} : G_E \fleche \GL_3(F)$ such that,
\begin{enumerate}
\item For all place $v$ of $E$ not dividing $p$, we have, 
\begin{enumerate}
\item $\dim_F(\overline \rho \otimes (\chi_p^\perp)^{-1})^{I_v} \geq 2$ if $v | \Cond(\chi)$.
\item $\dim_F(\overline \rho \otimes (\chi_p^\perp)^{-1})^{I_v} = 3$ if $v \notdivides \Cond(\chi)$.
\end{enumerate}
\item $D_{cris,\tau}(\overline{\rho})^{\varphi^2 = u}$ is non zero.
\item $\overline{\rho}^{ss} \simeq \chi_p \oplus \chi_p^\perp \oplus 1$ and one of the two assertions is true :
\begin{enumerate}
\item Either $\overline{\rho}$ has a subquotient $r$ of dimension 2, such that $r^\perp \simeq r$ and $r$ is a non trivial extension of $\chi_p^\perp$ by $\chi_p$.
\item Either $\overline{\rho} \simeq \overline{\rho}^\perp$; $\overline{\rho}$ has a unique sub-representation $r_1$ of dimension 2 and a unique subquotient $r_2$ of dimension 2, with $r_1$ a non trivial extension of 1 by $\chi_p$ and $r_2$ a non trivial extension of $\chi_p^\perp$ by 1, and $r_1^\perp \simeq r_2$.
\end{enumerate}
\end{enumerate}
\end{propen}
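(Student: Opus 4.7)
The plan is to apply a Ribet-style lattice argument to the absolutely irreducible representation $\rho_{K(Y)}$, whose irreducibility was established in subsection \ref{sect115}. After choosing $g \in \mathcal O_Y$ with $g(y_0) \neq 0$ such that $\rho_{K(Y)}$ stabilizes an $\mathcal O_{Y,(g)}$-lattice $\Lambda$, the reduction $\overline\rho := \Lambda \otimes_{\mathcal O_{Y,(g)}} F$ is a continuous three-dimensional $F$-representation of $G_E$ with semisimplification $1 \oplus \chi_p \oplus \chi_p^\perp$. Since these three characters are pairwise distinct (their $\tau$-Hodge-Tate weights are $0, -a, a-1$ with $a \geq 1$), the Jordan-H\"older filtration of $\overline\rho$ is well-defined; and by the generalized Ribet lemma already invoked in subsection \ref{sect115}, we may choose $\Lambda$ so that $\overline\rho$ is not semisimple, i.e.\ at least one non-split extension between two of the characters appears.

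Next I would classify the possible Jordan-H\"older structures subject to three constraints: (i) the self-duality $\rho_{K(Y)}^\perp \simeq \rho_{K(Y)}$, which acts on the poset of Galois-stable lattices (up to homothety) by an involution; (ii) the refinement at $y_0$ being $(\chi_p^\perp(p),1,\chi_p(p))$ as computed in proposition \ref{rafpin}; and (iii) Liu's triangulation theorem \cite{Liu} for the refined family at $\tau$, which records the order in which these characters occur in the $\varphi$-triangulation. The dichotomy (3)(a)/(b) corresponds precisely to whether there exists a $G_E$-stable lattice that is self-dual (up to scalar) under the perfect pairing $\Lambda \times \Lambda \fleche \mathcal O_{Y,(g)}$ induced by $\rho^\perp \simeq \rho$: if yes, we obtain case (b) with $\overline\rho \simeq \overline\rho^\perp$ and the unique filtration $\chi_p \subset r_1 \subset r_2^\vee{}^{\perp} \subset \overline\rho$ imposed by the refinement order; if no, then passing to an extremal element in a chain of neighbouring lattices produces a self-dual two-dimensional subquotient $r$ as in case (a), with $r^\perp \simeq r$ forced by pairing arguments.

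For condition (2), the crystalline eigenvalue property, I would invoke Liu's triangulation result: at each $z \in Z$, $\rho_z$ is crystalline with $D_{cris,\tau}(\rho_z)^{\varphi^2 = p^{h_1(z)-h_3(z)}F_1(z)} \neq 0$, and Liu's theorem upgrades this pointwise data to a triangulation of $\rho_{K(Y)}$ over a neighborhood of $y_0$. Specializing to $y_0$ and using the lattice $\Lambda$ to descend the triangulation filtration gives $D_{cris,\tau}(\overline\rho)^{\varphi^2 = u} \neq 0$ with $u = \chi_p^\perp(p) = p^{h_1(y_0)-h_3(y_0)}F_1(y_0)$. For condition (1), the ramification control outside $p$: the same analysis as in Proposition \ref{prop1113}, together with the observation that $\overline\rho$ is a subquotient of the reduction of $\rho_{K(Y)}$, yields $\dim_F(\overline\rho \otimes (\chi_p^\perp)^{-1})^{I_v} = 3$ for $v \nmid \Cond(\chi)$ (since $\rho_{K(Y)}$ is unramified there) and $\geq 2$ for $v \mid \Cond(\chi)$ (using the type $(K_\ell,J_\ell)$ of proposition \ref{proptypes}).

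The main obstacle will be pinning down the precise dichotomy in (3) and controlling the self-duality of the chosen lattice. In the split case treated in \cite{BC1} one has three accessible refinements, giving three points on the eigenvariety and more flexibility; here at an inert $p$ there is a single refinement (proposition \ref{rafpin}) and hence a single family, so the self-dual lattice argument must be carried out inside one irreducible component of $\mathcal E$ at $y_0$. This requires a careful analysis of the generalized matrix algebra attached to the pseudocharacter $T$ at $y_0$, in the spirit of \cite{BC2} Chapter 1, to show that the two possibilities exhaust the cases and that the extensions realized in $\overline\rho$ respect the ordering dictated by Liu's triangulation.
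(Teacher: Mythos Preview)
Your overall strategy is correct and matches the paper's: a Ribet-type lattice argument for part (3), Liu's results for part (2), and proposition \ref{prop1113} for part (1). However, you are entangling ingredients that the paper keeps completely separate, and this makes your sketch of part (3) more complicated than it needs to be.

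The dichotomy (3)(a)/(b) is a purely lattice-theoretic statement that uses \emph{only} the irreducibility of $\rho_{K(Y)}$, the fact that the three reduction characters $1,\chi_p,\chi_p^\perp$ are pairwise distinct, and the polarisation $\rho^\perp \simeq \rho$. The paper simply quotes \cite{BC1} Proposition 7.1 (the three-dimensional polarised Ribet lemma) applied over the rigid local ring $\mathcal O$ at $y_0$, which is a DVR; this proposition outputs a lattice $\Lambda$ whose reduction $\overline\rho$ satisfies one of the two alternatives. Neither the refinement at $y_0$ nor Liu's triangulation plays any role here, and the specific ordering of the characters in the filtration of case (b) is dictated by the polarisation alone (since $(\cdot)^\perp$ swaps $\chi_p$ and $\chi_p^\perp$ and fixes $1$), not by the ordering of Frobenius eigenvalues. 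Your suggestion that the triangulation ``imposes'' the filtration order in (3)(b), and your final paragraph about analysing the generalized matrix algebra to exhaust the cases, are therefore unnecessary detours: the black-box \cite{BC1} 7.1 already does all of this.

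For part (2), your idea is right but the mechanism is slightly different from what you describe. One does not ``descend the triangulation filtration to the lattice''. Instead, after choosing $\Lambda$ over the local ring, one invokes \cite{BC1} Lemma 7.3 to spread $\Lambda$ to an $\mathcal O_\Omega$-lattice $\Lambda_\Omega$ over an affinoid neighbourhood $\Omega \ni y_0$ with the property that the fibre of $\rho_{\Lambda_\Omega}$ at $y_0$ is exactly the (possibly non-semisimple) $\overline\rho$. One then checks that $(\Omega,\rho_{\Lambda_\Omega},(h_i^K)_i,F_1,Z)$ is a weakly refined family in the sense of \cite{Liu} Definition 0.3.1 (the paper verifies condition (f) explicitly via a character on $\mathcal O^\times$), after which Liu's Theorems 0.3.2 and 0.1.2 give $D_{cris,\tau}(\overline\rho)^{\varphi^2 = u} \neq 0$ directly at $y_0$. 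The point is that Liu's theorem is applied to the integral family realising the chosen lattice, so that its specialisation at $y_0$ \emph{is} $\overline\rho$.
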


\begin{proof}
Denote by $\mathcal O$ the rigid local ring of $Y$ at $y_0$, a discrete valuation ring of residual field $F$, denote $L$ its fraction field, and $\rho_L$ the representation which is the scalar extension of $\rho$ to $L$.
As $\overline{\rho_L}^{ss} = 1 \oplus \chi_p \oplus \chi_p^\perp$ which are pairwise distincts characters, we can use also \cite{BC1} proposition 7.1, the analog to Ribet's theorem, to find $\Lambda \subset L^3$ a lattice stable by ${\rho}_L$ such that the reduced representation $\overline{\rho} = \overline{\rho_{\Lambda}}$ satisfies condition (3)(a) or (3)(b). The condition (i) is true by what preceed. We can argue as in \cite{BC1} to get (ii), but we will need a generalisation to $G_K$ if $p$ is inert. Fortunately what we need is in \cite{Liu}.
As in  \cite{BC1} Lemma 7.3 there is an affinoid $Y \supset \Omega \ni y_0$ such that $\rho_L$ as a $\mathcal O_{\Omega}$-stable lattice $\Lambda_\Omega$ such that $\overline\rho_{\Lambda_\Omega,y_0} = \overline{\rho}$. Denote $\rho = \rho_{\Lambda_\Omega}$. Let thus $Z' \subset \Omega$ the points that are in $Z \subset Y$, in $\Omega$, and such that $\overline{\rho_{z}}$ is semi-simple (it is a cofinite subset of $Z \cap \Omega$ as $\rho_{K(\Omega)}$ is semi-simple (irreducible)).
By choice of $Z$, we have that for all $z \in Z'$, 
\[ D_{crys,\tau}(\rho_z)^{\phi^2 =p^{h_1(z)-h_3(z)}F_1(z)} \neq 0.\]
As $\rho$ is polarized, its $\sigma\tau$-Hodge-Tate weights are $h^\sigma(z) = (-h_3,-h_2,-h_1)$. Set $h^K_i = (h_i,h_{4-i}) \in F_\tau \times F_{\sigma\tau} = K\otimes_{\QQ_p} F$.
Thus $(\Omega,\rho, (h^K_i)_i,F_1,Z)$ is a weakly refined (polarised) $p$-adic representation of $G_K$ of dimension 3 in the sense of \cite{Liu} Definition 0.3.1.
To verify (f) of \cite{Liu} Definition 0.3.1, recall that over the weight space $\mathcal W$ we had an universal character $\chi = \chi_1 \times \chi_2 : \mathcal O^\times \times \mathcal O^1 \fleche  \mathcal{O}(\mathcal W)^\times$, and as $\mathcal W$ is regarded over $K$, we can split $\mathcal O(\mathcal W)\otimes_{\QQ_p}  K = \mathcal O(W)_\tau \times\mathcal O(W)_{\sigma\tau}$. Under this isomorphism, the derivative at $1$ of $\chi_1$, denoted $(wt_\tau(\chi_1),wt_{\sigma\tau}(\chi_1))$ is at every $\kappa \in \ZZ^3 \subset \mathcal W$ given by
\[ (k_1,k_3) = (h_3,1-h_1).\]
Thus set $\psi = \tau^{-1}(\chi_1\circ c)$. Its derivative at 1 is given by $(k_3-1,k_1) = (-h_1,h_3)$ at classical points $\kappa \in \ZZ^3$.
Thus, the character,
\[ \mathcal O^\times \overset{\psi}{\fleche} \mathcal O(\mathcal W)^\times \fleche \mathcal O(B)^\times \fleche \mathcal O(\Omega)^\times,\]
has the desired property (f).

Denote $\rho' = \rho \otimes \psi$ (where $\psi$ is precomposed by product of the two Lubin-Tate characters of $K$, $G_K \fleche O^\times$). Thus $\rho'$ has 
$\kappa_1^{K'} = (0,0)$ as smallest Hodge-Tate weight.
In Particular by \cite{Liu} Theorem 0.3.2, $\Omega_{fs} = \Omega$. But then by theorem 0.1.2 applied to $S = \Omega$, $k,n$ big enough, and $\rho'$, we have that,
\[ D_{crys}^+(\rho'_{y_0})^{\varphi^2 = F_1} \simeq D_{Sen}^+(\rho'_{y_0})^\Gamma,\]
(see remark 3.3.5 and corollary 1.5.4 of \cite{Liu}, as $0$ is the only non-positive Hodge-Tate weight\footnote{In \cite{Liu} the convenction of the Hodge-Tate weights is opposite to ours, there the Hodge-Tate weight of the cyclotomic character is 1.} of $\rho'$, corollary 1.5.4 applies), and $D_{Sen}^+(\rho'_{y_0})^\Gamma \neq 0$.

Thus $D_{crys,\tau}(\rho'_{y_0})^{\varphi^2=F_1} \neq 0$ which means \[D_{crys,\tau}(\rho_{y_0})^{\varphi^2 = p^{h_1(y_0)-h_3(y_0)}F_1} = D_{crys,\tau}(\rho_{y_0})^{\varphi^2 = u} \neq 0.\]
 \end{proof}

\subsection{Elimination of case (a)}

We can do as in \cite{BC1}, and as we eliminated case 2. of proposition \ref{propirr}. Suppose we are in case (a), there is thus a subquotient $r$ of $\overline{\rho}$ such that $r^\perp \simeq r$ and $r$ is an extension of $\chi_p^\perp$ by $\chi_p$. Denote by $r' = r \otimes (\chi_p^\perp)^{-1} = r \otimes \overline{\chi_p}$, which is an extension of $1$ by $\chi_p\overline{\chi_p} = \omega_p$ (the cyclotomic character).

\begin{lemmen}
The representation $r'$ is crystalline at $p$ (at $v_1,v_2 | p$ is $p$ is split).
\end{lemmen}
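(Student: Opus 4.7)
The plan is to adapt the argument of Lemma \ref{lemmacrys}, adding a preliminary step that descends a crystalline Frobenius eigenvector from $\overline{\rho}$ to the two-dimensional subquotient $r$. Since $\chi_p$ and $\chi_p^\perp$ are already crystalline at every place above $p$, it suffices to show $r$ itself is crystalline; then $r' = r\otimes \overline{\chi_p}$ will follow by twisting. The isomorphism $r^\perp \simeq r$ then reduces the split case to crystallinity at a single place, so I concentrate on the inert case using $\varphi^2$ on $D_{cris,\tau}$; the argument at $v$ in the split case will be strictly parallel, using $\varphi$ in place of $\varphi^2$.

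The first step is structural. Because $\overline{\rho}^{ss} = 1\oplus \chi_p\oplus \chi_p^\perp$ while $r^{ss}=\chi_p\oplus \chi_p^\perp$, the ``missing'' Jordan--Hölder constituent is the trivial character, and a dimension count forces $r$ to arise either as a subrepresentation of $\overline{\rho}$ with quotient $\mathbf 1$, or as a quotient of $\overline{\rho}$ with kernel $\mathbf 1$. In either case the left-exact functor $D_{cris,\tau}$ produces a short exact sequence relating $D_{cris,\tau}(r)$ to $D_{cris,\tau}(\overline{\rho})$ and $D_{cris,\tau}(\mathbf 1)=F$, and $\varphi^2$ acts as the identity on the trivial piece.

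Next I transfer the $u$-eigenvector, where $u=\chi_p^\perp(p)$. Since $\chi^\perp = \chi|.|^{-1}$ has motivic weight $1$, the eigenvalue $u$ is a Weil number with $|u|_\infty = p$; in particular $u\neq 1$. Proposition \ref{propab}(2) supplies a nonzero $u$-eigenvector in $D_{cris,\tau}(\overline{\rho})$, and the preceding exact sequence shows that this vector survives (it cannot lie in, respectively map to, the trivial piece because $\varphi^2=1\neq u$ there), yielding $D_{cris,\tau}(r)^{\varphi^2=u}\neq 0$.

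Finally I conclude as in Lemma \ref{lemmacrys}. The inclusion $\chi_p\subset r$ provides, again by left-exactness, a line $D_{cris,\tau}(\chi_p)\subset D_{cris,\tau}(r)$ on which $\varphi^2$ acts by $\chi_p(p) = u\cdot p^{-2}$, an eigenvalue distinct from $u$. Combining this line with the $u$-eigenspace above forces $\dim_F D_{cris,\tau}(r)\geq 2 = \dim r$, hence equality, so $r$ is crystalline; twisting by the crystalline character $\overline{\chi_p}$ gives the claim for $r'$. The only delicate point is the bound $u\neq 1$, which ultimately rests on standard Weil-type control of Frobenius eigenvalues of algebraic Hecke characters; once this is in hand everything else is a formal consequence of left-exactness of $D_{cris}$ and the distinctness of the two relevant $\varphi^2$-eigenvalues.
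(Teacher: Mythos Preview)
Your argument is correct and follows essentially the same route as the paper's proof: reduce to showing $r$ is crystalline, use left-exactness of $V\mapsto D_{cris,\tau}(V)^{\varphi^2=u}$ together with $D_{cris,\tau}(\mathbf 1)^{\varphi^2=u}=0$ to transfer the nonzero $u$-eigenvector from $\overline\rho$ to $r$, and then finish exactly as in Lemma~\ref{lemmacrys}. The one minor slip is the phrase ``produces a short exact sequence'': $D_{cris,\tau}$ is only left-exact, so you get a left-exact (not short exact) sequence, but your subsequent reasoning only uses left-exactness and is unaffected.
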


\begin{proof}
The split case is identical to \cite{BC1}, lemma 9.1 and lemma \ref{lemmacrys}
 Suppose $p$ is inert. As $\chi_p^\perp$ is crystalline, it is enough to prove that $r$ is crystalline. But $V \mapsto D_{crys,\tau}(V)^{\varphi^2 = u}$ is left-exact, thus if we denote $u = \chi^\perp(p)$,
\[ \dim_F D_{crys,\tau}(\overline \rho)^{\varphi^2=u} \leq \dim_F D_{crys,\tau}(r)^{\varphi^2=u} + D_{crys,\tau}(1)^{\varphi^2=u}.\]
But $D_{crys,\tau}(1)^{\varphi^2=u} = 0$ thus $D_{crys,\tau}(r)^{\varphi^2=u} \neq 0$.
The end of the proof is identical to lemma \ref{lemmacrys}.
\end{proof}

\begin{lemmen}
The representation $r'$ is unramified at every place $w \notdivides p$.
\end{lemmen}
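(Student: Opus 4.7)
The plan is to deduce unramifiedness of $r'$ at every $w \nmid p$ directly from the inertia invariant dimensions of $V := \overline\rho \otimes (\chi_p^\perp)^{-1}$ provided by Proposition \ref{propab}(1), exploiting the fact that the ``third'' Jordan--Hölder factor of $V$, namely $\overline{\chi_p}$, is ramified at exactly the primes dividing $\Cond(\chi)$, while the two Jordan--Hölder factors $\omega_p$ and $1$ of $r'$ are everywhere unramified outside $p$.

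First I would set up the notation. Writing $V^{ss} = \omega_p \oplus \overline{\chi_p} \oplus 1$, the subquotient $r \subset \overline\rho$ (resp.\ quotient $\overline\rho \twoheadrightarrow r$) becomes, after twist, a subrepresentation $r' \hookrightarrow V$ with quotient $V/r' \simeq \overline{\chi_p}$ (resp.\ a quotient $V \twoheadrightarrow r'$ with kernel $\overline{\chi_p}$); no other configuration is compatible with the three Jordan--Hölder factors. In both configurations we obtain a short exact sequence of $G_E$-representations with $\overline{\chi_p}$ on one side and $r'$ on the other.

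Now fix $w \nmid p$. If $w \nmid \Cond(\chi)$, Proposition \ref{propab}(1)(b) states $\dim_F V^{I_w} = 3$, so $V$ itself is unramified at $w$ and hence so is its subquotient $r'$. If $w \mid \Cond(\chi)$, Proposition \ref{propab}(1)(a) states $\dim_F V^{I_w} \geq 2$, and since $\chi$ (hence $\overline{\chi_p}$) is ramified at $w$ we have $(\overline{\chi_p})^{I_w} = 0$. Applying the left-exact functor $(-)^{I_w}$ to the short exact sequence relating $\overline{\chi_p}$, $V$ and $r'$ yields an injection $V^{I_w} \hookrightarrow r'^{I_w}$ (when $r'$ is a quotient) or an equality $r'^{I_w} = V^{I_w}$ (when $r'$ is a subrepresentation). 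In either case
\[ \dim_F r'^{I_w} \geq \dim_F V^{I_w} \geq 2 = \dim_F r', \]
so $r'^{I_w} = r'$ and $r'$ is unramified at $w$, as required.

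No significant obstacle is anticipated: the argument is an elementary inertia-invariant dimension count, and the only subtlety is keeping track of which Jordan--Hölder factor sits outside $r'$, which is forced by the shape of $\overline{\rho}^{ss}$ in case (3)(a) of Proposition \ref{propab}. Combined with the preceding lemma, $r'$ is then a crystalline extension of $1$ by $\omega_p$ unramified everywhere outside $p$, hence defines a class in $H^1_f(E,\omega_p)$, which forces the contradiction with $H^1_f(E,\omega_p) = \{0\}$ and eliminates case (a).
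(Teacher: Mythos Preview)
Your proof is correct and follows the same approach as the paper, which simply refers to \cite{BC1} Lemme 9.2; that argument is precisely the inertia-invariant dimension count you carry out, using Proposition \ref{propab}(1) together with the observation that a $2$-dimensional subquotient of a $3$-dimensional representation is necessarily a sub or a quotient, with the remaining Jordan--H\"older factor $\overline{\chi_p}$ ramified exactly at primes dividing $\Cond(\chi)$.
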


\begin{proof}
This is exactly identical to \cite{BC1} Lemme 9.2.
\end{proof}

Thus by \cite{BC1} Lemme 9.3, $r$ must be trivial, which contradicts Proposition \ref{propab} 3)(a).

\subsection{Conclusion}
 We are thus in case 3)(b), with $r_1$ a non trivial extension of $1$ by $\chi_p$.

\begin{lemmen}
$r_1$ is crystalline at $p$ if $p$ is inert, and at $v_1,v_2 |p$ if $p$ splits.
\end{lemmen}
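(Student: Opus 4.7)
The plan is to prove that $r_2 := \overline{\rho}/\chi_p$, which is a non-split extension of $\chi_p^\perp$ by $1$, is crystalline at $p$, and then deduce the crystallinity of $r_1$ via the identification $r_1^\perp \simeq r_2$ coming from case (3)(b) of proposition \ref{propab}. Working with $r_2$ rather than $r_1$ directly is key, because $r_2$ contains the trivial sub-object $1$, whose crystallinity supplies a ``free'' $\varphi^2 = 1$ period; no such obvious auxiliary period is visible in $r_1$.

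In the inert case, I would apply the left-exact functor $D_{crys,\tau}$ to the short exact sequence
\[ 0 \fleche \chi_p \fleche \overline{\rho} \fleche r_2 \fleche 0, \]
obtaining an injection $D_{crys,\tau}(\overline{\rho})/D_{crys,\tau}(\chi_p) \hookrightarrow D_{crys,\tau}(r_2)$. By proposition \ref{propab}(ii), pick a non-zero $v \in D_{crys,\tau}(\overline{\rho})^{\varphi^2 = u}$ with $u = \chi_p^\perp(p)$. The three Frobenius eigenvalues $\chi_p(p), 1, u = \chi_p^\perp(p)$ of the refinement at $y_0$ are pairwise distinct; in particular $\chi_p(p) \neq u$, so $v \notin D_{crys,\tau}(\chi_p)$, and its image $\bar{v}$ in $D_{crys,\tau}(r_2)$ is a non-zero $\varphi^2 = u$ eigenvector. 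Independently, the embedding $1 \hookrightarrow r_2$ induces $D_{crys,\tau}(1) \hookrightarrow D_{crys,\tau}(r_2)$, a one-dimensional $F$-subspace on which $\varphi^2$ acts trivially. Since $u \neq 1$, $\bar{v}$ and a generator of $D_{crys,\tau}(1)$ are $F$-linearly independent, so $\dim_F D_{crys,\tau}(r_2) \geq 2 = \dim_F r_2$, proving that $r_2$ is crystalline at $p$.

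Next, the involution $\rho \mapsto \rho^\perp = \rho^{c,\vee}$ preserves crystallinity (duality preserves it trivially, and the conjugation $c$, which acts by swapping the embeddings $\tau \leftrightarrow \sigma\tau$, also respects the category of crystalline representations). Since $r_1^\perp \simeq r_2$, involutivity of $\perp$ yields $r_1 \simeq r_2^\perp$, and hence $r_1$ is crystalline at $p$. For the split case, the argument proceeds place-by-place at $v_1, v_2 \mid p$, using the analog of proposition \ref{propab}(ii) for the split setting (compare \cite{BC1}, Proposition 9.3 and Lemme 9.1): at each $w \mid p$, the refinement furnishes a crystalline period in $D_{crys,w}(\overline{\rho})$ whose eigenvalue is distinct both from $\chi_p(p)$ (so it descends non-trivially to $r_2$) and from $1$ (so it is linearly independent from the period coming from $1 \subset r_2$).

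The main potential obstacle is ensuring that the $\varphi^2 = u$ period produced by the weakly refined family structure does not already live in $D_{crys,\tau}(\chi_p)$; this is exactly the distinctness $\chi_p(p) \neq u$, which holds because these are two distinct entries of the refinement at $y_0$.
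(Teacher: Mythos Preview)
Your proposal is correct and follows essentially the same route as the paper: reduce to crystallinity of $r_2$ via $r_1 \simeq r_2^\perp$, use left-exactness of $D_{crys,\tau}$ on $0 \to \chi_p \to \overline{\rho} \to r_2 \to 0$ together with $\chi_p(p) \neq u$ to push the $\varphi^2 = u$ period into $D_{crys,\tau}(r_2)$, and pair it with the $\varphi^2 = 1$ line from $1 \hookrightarrow r_2$ to get $\dim_F D_{crys,\tau}(r_2) = 2$. The paper phrases the descent step as the dimension inequality $\dim D_{crys,\tau}(\overline{\rho})^{\varphi^2=u} \leq \dim D_{crys,\tau}(r_2)^{\varphi^2=u} + \dim D_{crys,\tau}(\chi_p)^{\varphi^2=u}$, which is exactly your injection argument in counting form.
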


\begin{proof}
Suppose $p$ inert. As $r_1 \simeq r_2^\perp$, we only need to prove that $r_2$ is crystalline.
Because $D_{crys,\tau}(\cdot)^{\varphi^2 = u}$ is left-exact, we again have,
\[ \dim_F D_{crys,\tau}(\overline \rho)^{\varphi^2=u} \leq \dim_F D_{crys,\tau}(r_2)^{\varphi^2=u} + \dim_F D_{crys,\tau}(\chi_p)^{\varphi^2=u}.\]
As $D_{crys,\tau}(\chi_p)^{\varphi^2=u} = \{0\}$ and $\dim_F D_{crys,\tau}(\overline \rho)^{\varphi^2=u} \neq 0$, we have $\dim_F D_{crys,\tau}(r_2)^{\varphi^2=u}\neq \{0\}$.
Moreover,
\[ D_{crys,\tau}(1) \subset D_{crys,\tau}(r_2),\]
by left-exacness of $D_{crys,\tau}$, which gives a line where $\varphi^2$ acts as $1 \neq u$. Thus there are at least two different lines in $D_{crys,\tau}(r_2)$ which means this is $2$-dimensional and by existence of $\varphi$, $r_2$ (thus $r_1$) is crystalline.

Now suppose $p$ splits. Then the proof is identical to \cite{BC1} Lemme 9.4, as $1 \neq \chi_v(p) \neq \chi_v^\perp(p) \neq 1$ (Recall $|\chi_v(p)|_\CC = |\chi_{\overline v}(p)|_\CC = p^{-\frac{1}{2}}$). 
\end{proof}

\begin{theoren}
The representation $r_1$ gives a non-zero element of $H^1_f(E,\chi_p)$.
\end{theoren}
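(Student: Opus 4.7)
The extension
\[ 0 \to \chi_p \to r_1 \to 1 \to 0 \]
is non-split by Proposition \ref{propab}(3)(b), so it determines a non-zero class $[r_1]\in H^1(E,\chi_p)$. The remaining task is to verify the local conditions defining $H^1_f(E,\chi_p)$ at every finite place of $E$.

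At a place $v\nmid p\Cond(\chi)$, $\chi_p$ is unramified at $v$ and the condition demands $\dim_F r_1^{I_v}=2$. Proposition \ref{propab}(1)(b) gives $(\overline{\rho}\otimes(\chi_p^\perp)^{-1})^{I_v}$ of dimension $3$; since $\chi_p^\perp$ is also unramified at $v$, $\overline{\rho}|_{I_v}$ must be trivial, and hence so is $r_1|_{I_v}$. At a place $v\mid p$, $\chi_p$ is crystalline (as $p\nmid\Cond(\chi)$) and the condition demands $\dim_F D_{\mathrm{cris},v}(r_1)=2$; this holds because the lemma immediately preceding the theorem proves $r_1$ crystalline at each $v\mid p$, and $\dim_F r_1=2$.

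The technical heart is the case $v\mid\Cond(\chi)$, $v\nmid p$, where $\chi_p^{I_v}=0$ and one must establish $\dim_F r_1^{I_v}=1$, equivalently that $r_1|_{I_v}$ is a split extension. The strategy combines Proposition \ref{propab}(1)(a) with the polarisation $r_1^\perp\simeq r_2$ of Proposition \ref{propab}(3)(b). First, the hypothesis $\chi^\perp=\chi|\cdot|^{-1}$ yields $\chi_p^\perp=\chi_p\cdot\omega^{-1}$ with $\omega$ cyclotomic; since $\omega$ is unramified away from $p$, one has $\chi_p|_{I_v}=\chi_p^\perp|_{I_v}=:\eta$, a non-trivial character of $I_v$. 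Thus the semisimplification of $\overline{\rho}|_{I_v}$ is $\eta\oplus 1\oplus\eta$, so its $\eta$-isotypic part has dimension at most $2$; by Proposition \ref{propab}(1)(a) it has dimension exactly $2$. The extra $\eta$-line beyond the one contained in $\chi_p\subset\overline{\rho}$ produces an $\eta$-subrepresentation of $\overline{\rho}/\chi_p=r_2$ that splits the $I_v$-extension $0\to 1\to r_2\to\chi_p^\perp\to 0$. Finally, the identity $r_1^\perp\simeq r_2$ transfers the splitting of $r_2|_{I_v}$ to one of $r_1|_{I_v}$: when $v=\overline{v}$ (inert or ramified), complex conjugation stabilises $I_v$ and $r_2|_{I_v}$ semisimple implies $r_1|_{I_v}$ semisimple via dualising and pulling back; when $v\neq\overline{v}$ (split), the polarisation relates $r_2|_{I_v}$ to $r_1|_{I_{\overline{v}}}$ and vice-versa, and running the argument at both $v$ and $\overline{v}$ (both of which lie in $\Cond(\chi)$, since $\chi\chi^c=\omega$ is unramified at such primes) yields the splitting at each.

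The main obstacle is this last case: the coincidence $\chi_p|_{I_v}=\chi_p^\perp|_{I_v}$ means that the two ramified graded pieces of $\overline{\rho}$ become indistinguishable on inertia, so the $\eta$-isotypic dimension count alone is insufficient; one has to combine it with the canonical three-step filtration and the perpendicular pairing to pin down the splitting of $r_1$ itself, rather than just of $r_2$. Once this is done, $[r_1]$ lies in $H^1_f(E,\chi_p)\setminus\{0\}$ and the theorem follows.
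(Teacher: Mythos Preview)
Your argument is correct, and for $v\nmid\Cond(\chi)$ and $v\mid p$ it matches the paper exactly. At the places $v\mid\Cond(\chi)$, however, you take a longer route than necessary. The paper does not use the dimension inequality of Proposition~\ref{propab}(1)(a) at all here; instead it goes back to the \emph{proof} of Proposition~\ref{prop1113} and extracts the stronger statement established there: the family representation (hence its specialisation $\overline\rho$) restricted to $I_v$ factors through a \emph{finite} quotient $I_v/I_v'$. Since we are in characteristic zero, Maschke's theorem makes $\overline\rho|_{I_v}$ automatically semisimple, so $r_1|_{I_v}$, being a subrepresentation, is semisimple too and $\dim r_1^{I_v}=1$ follows at once. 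This bypasses entirely the need to first split $r_2|_{I_v}$ and then transport the splitting via $r_1^\perp\simeq r_2$.

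Your approach has the virtue of using only the statement of Proposition~\ref{propab} as a black box, whereas the paper dips into an earlier proof; but the price is the polarisation bookkeeping (including the case split on whether $v=\overline v$ and the observation that $\Cond(\chi)$ is stable under conjugation). Both are valid; the paper's is shorter because the finite-image fact is already sitting there from the type theory at~$\ell$.
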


\begin{proof}
We only need to prove that $r_1$ has good reduction outside $p$. 
But then as $\rho$ in unramified outside $p\Cond(\chi)$, by proposition \ref{propab}, we only need to check for $v | \Cond(\chi)$.
We have shown in the proof of Proposition \ref{prop1113} that there exists an open subgroup $I_w' \subset I_w$ such that $\rho'_{|I_w}$ factors through $I_w/I_{w}'$ and  $\rho'_{|I_w}= 1 \oplus 1 \oplus (\chi_p^{\perp})^{-1}_{|I_w}$. Thus $r_1^{I_w}$ is then of dimension 1.  
\end{proof}

\appendix
\section{Calculations on the weight space}
\label{AppW}

In this appendix we explain a bit more the structure of the weight space $\mathcal W$ defined in section \ref{sectW}.
$\mathcal W$ is represented by a disjoint union of ($(p+1)(p^2-1)$) 3-dimensional open balls over $\mathcal O$. Indeed (if $p \neq 2$)
\[ \mathcal O^\times \simeq (\mathbb F_{p^2})^\times \times (1 + p\mathcal O),\]
which induced, up to the choice of a basis of $\mathcal O$ over $\ZZ_p$, an isomorphism,
\[ \Hom_{cont}(\mathcal O^\times,\mathbb G_m) \simeq \widehat{(\ZZ/(p^2-1)\ZZ)} \times B_2(1,1^-),\]
where $B_2(1,1^-)$ is the open 2-dimensional ball centered in 1, of radius 1. And, as a $\ZZ_p$-module
\[ \mathcal O^1 \simeq S \times \ZZ_p,\]
where $S$ is a finite group of cardinal $p+1$.

\begin{proof}[Proof]
We have the exact sequence,
\[ 0 \fleche \mathcal O^1 \fleche \mathcal O^\times \overset{Nm}{\fleche} \ZZ_p^\times \fleche 0,\]
(surjectivity is given by local class field theory for example). Reducing modulo $p$, we have the surjectivity of $\mathbb F_{p^2} \overset{\overline{Nm}}{\fleche} \FP$. We thus have the diagram,
\begin{center}
\begin{tikzpicture}[description/.style={fill=white,inner sep=2pt}] 
\matrix (m) [matrix of math nodes, row sep=3em, column sep=2.5em, text height=1.5ex, text depth=0.25ex] at (0,0)
{ 
& 0 & 0 & 0 & \\
0 & \{x \in \mathcal O^1 : x \equiv 1 \pmod p\} & 1+p\mathcal O & 1+p\ZZ_p &   \\
0 & \mathcal O^1 & \mathcal O^\times & \ZZ_p^\times & 0 \\
0 & \{x \in \FF_{p^2} : x^{p+1} = 1\} & \FF_{p^2}^\times & \FP^\times & 0 \\
 &   & 0 & 0 & \\
 };
\path[->,font=\scriptsize] 
(m-2-3) edge node[auto] {${Nm}^1$} (m-2-4)
(m-2-1) edge node[auto] {$$} (m-2-2)
(m-2-2) edge node[auto] {$$} (m-2-3)
(m-3-3) edge node[auto] {$Nm$} (m-3-4)
(m-3-1) edge node[auto] {$$} (m-3-2)
(m-3-2) edge node[auto] {$$} (m-3-3)
(m-3-4) edge node[auto] {$$} (m-3-5)
(m-4-3) edge node[auto] {$\overline{Nm}$} (m-4-4)
(m-4-1) edge node[auto] {$$} (m-4-2)
(m-4-2) edge node[auto] {$$} (m-4-3)
(m-4-4) edge node[auto] {$$} (m-4-5)
(m-1-2) edge node[auto] {$$} (m-2-2)
(m-1-3) edge node[auto] {$$} (m-2-3)
(m-1-4) edge node[auto] {$$} (m-2-4)
(m-4-3) edge node[auto] {$$} (m-5-3)
(m-4-4) edge node[auto] {$$} (m-5-4)
(m-3-2) edge node[auto] {$$} (m-4-2)
(m-3-3) edge node[auto] {$$} (m-4-3)
(m-3-4) edge node[auto] {$$} (m-4-4)
(m-2-2) edge node[auto] {$$} (m-3-2)
(m-2-3) edge node[auto] {$$} (m-3-3)
(m-2-4) edge node[auto] {$$} (m-3-4);
\end{tikzpicture}
\end{center}
The application $Nm^1 = 1 + p\mathcal O \fleche 1 + p\ZZ_p$ is surjective ; indeed, for all $z$ inside $1+p\ZZ_p$, because $Nm$ is surjective, there exists $u \in \mathcal O^\times$ such that 
$uu^\sigma = 1+pz$ (denote by $\sigma$ the conjugation, and $\overline{\bullet}$ reduction modulo $p$). We deduce that $\overline u \in \{ x \in \mathbb F_{p^2} : x^{p+1}=1\}$.
 We then set $u' = u/[\overline u]$, where $[.]$ denote the Teichmuller lift. Then 
 $u' \in 1 + p\mathcal O$ and $(u')(u')^\sigma = uu^\sigma/([\overline u][\overline u]^\sigma) = uu^\sigma/([\overline u^{p+1}]) = 1 + pz$. The second equality is because $[.]$ 
 commutes with Frobenius. (We could also prove the surjectivity by a method of successive approximations).
The map $\mathcal O^1 \fleche \{x  \in \mathbb F_{p^2} : x^{p+1} = 1\}$ is also surjective : for all $x \in \{ x \in \mathbb F_{p^2}:x^{p+1} =1 \}$, $[x][x]^\sigma = [x^{p+1}] = [1] = 1$. 
Thus, up to choosing a base of $\mathcal O$ over $\ZZ_p$, we can with the logarithm identify $1 + p\mathcal O$ to $\ZZ_p^2$; this assures that $\{ x \in \mathcal O^1 : x \equiv 1 \pmod p\} \simeq \ZZ_p$ (because logarithm exchanges trace and $Nm$).
\end{proof} 

In particular,
\[ \Hom_{cont}(\mathcal O^1,\mathbb G_m) \simeq \coprod_{\hat S} B_1(1,1^-).\]
Thus, $\mathcal W$ is isomorphic to a union of $(p+1)(p^2-1)$ open balls of dimension 3.
There is also a universal character,
\[ \kappa^{un} : T^1(\ZZ_p) \fleche \ZZ_p[[T^1(\ZZ_p)]].\]
The following lemma is essential,
\begin{lemmen}
Every weight $\kappa \in \mathcal W(K)$ is automatically locally $(\QQ_p-$)analytic.
\end{lemmen}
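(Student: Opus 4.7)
The plan is to reduce everything to the one-variable statement: every continuous character $\ZZ_p \to K^\times$ is locally $\QQ_p$-analytic, and then invoke the product decomposition of $T^1(\ZZ_p)$ established just above the lemma.

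First I would write, from the calculations already carried out,
\[
T^1(\ZZ_p) \;=\; \mathcal O^\times \times \mathcal O^1 \;\simeq\; F \times (1+p\mathcal O) \times \ZZ_p,
\]
where $F$ is the finite group $\mathbb F_{p^2}^\times \times S$ of order $(p^2-1)(p+1)$, and after choosing a $\ZZ_p$-basis of $\mathcal O$ the $p$-adic logarithm identifies $1 + p\mathcal O$ with $\ZZ_p^2$. A continuous character of a finite group to $\mathbb G_m$ factors through a finite quotient and is trivially locally analytic, so it suffices to treat a continuous character $\kappa\colon \ZZ_p \to K^\times$.

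Next, since $\ZZ_p$ is compact and pro-$p$, $\kappa(\ZZ_p)$ lies in a compact subgroup of $K^\times$; continuity at $0$ gives an integer $N$ with $\kappa(p^N\ZZ_p) \subset 1 + p^m \mathcal O_K$ for any prescribed $m$. Choosing $N$ so that $u := \kappa(p^N)$ satisfies $v_p(u-1) > \tfrac{1}{p-1}$, the element $\log u$ is defined in $K$ and has valuation strictly greater than $\tfrac{1}{p-1}$, so $\exp(y \log u)$ converges for all $y \in \ZZ_p$. I would then compare the two continuous characters $y \mapsto \kappa(p^N y)$ and $y \mapsto \exp(y \log u)$ on $\ZZ_p$: they agree at $y=1$ by construction, hence on $\ZZ$ (using multiplicativity), and by density and continuity on all of $\ZZ_p$. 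Therefore $\kappa$ is $\QQ_p$-analytic on the open subgroup $p^N \ZZ_p$, which is precisely the local analyticity condition.

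Finally, applying this coordinate by coordinate to the $\ZZ_p^3$-factor and extending trivially across the finite factor $F$ gives the statement for arbitrary $\kappa \in \mathcal W(K)$. There is no real obstacle here; the only point that requires a touch of care is ensuring that $N$ can be chosen large enough to place $u$ inside the convergence disk of $\log$ and $\log u$ inside the convergence disk of $\exp$ on $\ZZ_p$, which is exactly the input that compactness and continuity provide.
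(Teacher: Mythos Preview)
Your argument is correct. The reduction to a continuous character $\ZZ_p \to K^\times$ via the product decomposition is exactly the right move, and your use of $\log/\exp$ to produce an analytic formula on the open subgroup $p^N\ZZ_p$ is the standard way to see this; the verification that the two characters agree on $\ZZ$ and hence on $\ZZ_p$ is clean.

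By contrast, the paper gives no proof of this lemma at all: it simply states it as essential and immediately passes to the sharper, uniform statement (for a quasi-compact open $\mathcal U \subset \mathcal W$, the universal character is $w_{\mathcal U}$-analytic for some $w_{\mathcal U}$), which is dispatched by a bare reference to \cite{Urb}, Lemma~3.4.6. So your write-up is more self-contained than what appears in the paper; the trade-off is that the paper's uniform version is what is actually used downstream (to define $\mathcal W(w)$ and the family sheaves), and your pointwise argument does not by itself give a single radius of analyticity that works over an affinoid of the weight space. If you want to match what the paper needs, you would have to uniformise your choice of $N$ over an affinoid, which again amounts to the content of Urban's lemma.
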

Actually we can be more precise,
\begin{lemmen}
Let $\mathcal U \subset \mathcal W$ a quasi-compact open, then there exists $w_\mathcal U$ such that $\kappa^{un}_{|\mathcal U}$ is $w_\mathcal U$-analytic.
\end{lemmen}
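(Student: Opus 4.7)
The plan is to reduce the statement to the pro-$p$ part of $T^1(\ZZ_p)$ and exploit the fact that quasi-compact subsets of open polydiscs are contained in closed polydiscs of strictly smaller radius. First, I would use the decomposition $T^1(\ZZ_p) \simeq F \times \ZZ_p^3$ established in the preceding lemma (with $F = \mathbb F_{p^2}^\times \times S$ finite of order $(p+1)(p^2-1)$) to identify $\mathcal W$ with the disjoint union $\coprod_{\chi \in \widehat F} \mathcal W_\chi$, where each component $\mathcal W_\chi \simeq \Hom_{\mathrm{cont}}(\ZZ_p^3, \mathbb G_m)$ is an open $3$-dimensional unit ball consisting of continuous characters whose restriction to $F$ equals $\chi$. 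Because the $\mathcal W_\chi$ are open and pairwise disjoint in $\mathcal W$, any quasi-compact open $\mathcal U \subset \mathcal W$ meets only finitely many of them; it therefore suffices to establish the claim on each $\mathcal U_\chi := \mathcal U \cap \mathcal W_\chi$ and take the maximum of the resulting radii.

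Second, having reduced to $\mathcal U \subset \mathcal W_\chi$ for a fixed $\chi$, I would choose topological generators $e_1, e_2, e_3$ of $\ZZ_p^3 \subset T^1(\ZZ_p)$ and identify $\mathcal W_\chi$ with the open polydisc $\{(t_1,t_2,t_3) : |t_i| < 1\}$ via $\kappa \mapsto (\kappa(e_i)-1)_{i=1,2,3}$. Since $\mathcal U$ is quasi-compact inside this open polydisc, it is contained in a closed polydisc of some radius strictly less than $1$: there exists $w_0 > 0$ (depending on $\mathcal U$) such that $|\kappa(e_i)-1| \leq p^{-w_0}$ for every $\kappa \in \mathcal U$ and every $i$.

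Third, I would translate this bound into $w$-analyticity of $\kappa^{un}_{|\mathcal U}$. Writing formally $\kappa^{un}(e_i^x) = \exp(x \log \kappa^{un}(e_i))$ for $x \in \ZZ_p$, the bound $|\log \kappa^{un}(e_i)| \leq p^{-w_0}$ (valid once $w_0 > 0$, so that $\log$ converges) shows that this power series in $x$ converges on any disc of radius $p^{-w}$ with $w - w_0 > 1/(p-1)$. Taking $w_\mathcal U$ to be any such $w$, the universal character $\kappa^{un}_{|\mathcal U}$ extends analytically to $p^{-w_\mathcal U}$-balls around each integer in the pro-$p$ part, uniformly in $\kappa \in \mathcal U$, which is exactly the definition of $w_\mathcal U$-analyticity; multiplying by the locally constant character $\chi$ on the finite factor $F$ does not affect this. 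The main (and essentially only) technical point is the explicit bookkeeping between the closed polydisc radius $p^{-w_0}$ on the weight side and the analyticity radius $p^{-w}$ on the group side, mediated by the exponential/logarithm and the usual Mahler-type constant $1/(p-1)$; nothing else in the argument is substantial.
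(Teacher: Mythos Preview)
Your argument is correct. The paper itself gives no proof here: it simply cites \cite{Urb}, Lemma~3.4.6, and your three-step reduction (finite decomposition of $T^1(\ZZ_p)$, quasi-compactness forcing $|\kappa(e_i)-1|\le p^{-w_0}$ on $\mathcal U$, then the $\exp/\log$ estimate) is precisely the standard proof of that lemma. So you have supplied what the paper outsources.

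One small bookkeeping point: from $|\log\kappa(e_i)|\le p^{-w_0}$, the series $\exp\!\big((x-a)\log\kappa(e_i)\big)$ converges for $|x-a|<p^{\,w_0-1/(p-1)}$, so the sharp sufficient condition is $w>\tfrac{1}{p-1}-w_0$, not $w-w_0>\tfrac{1}{p-1}$. Your stated inequality is strictly stronger than the correct one (since $w_0>0$), so the conclusion that \emph{some} $w_\mathcal U$ exists is unaffected; only the constant is suboptimal. Since you already flag this step as ``explicit bookkeeping'' rather than a substantive idea, this does not constitute a gap.
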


\begin{proof}[Proof]
It is \cite{Urb} Lemma 3.4.6.
\end{proof}

We will construct $\mathcal W(w)$, an open subset of $\mathcal W$ containing the $w$-analytic $\kappa$ (it is an affinoid). Set $w \in ]n-1,n]\cap v(\overline\QQ_p)$. We define it this way, following \cite{AIP}.
First set $\mathfrak W(w)^0$ to be $\Spf \mathcal O_K<<S_1,S_2,S_3>>$ where $K$ is a finite extension of $\QQ_p$ containing an element $p^w$ of valuation $w$.
Define $\mathfrak T_w$ the subtorus of $\mathfrak T$ the formal torus associated to $T^0$, given by,
\[ \mathfrak T_w (R) = \Ker(\mathfrak T(R) \fleche \mathfrak T(R/p^wR),\]
for any flat, $p$-adically complete $\mathcal O_K$-algebra $R$. Denote $X_i'$ the coordinates on $\mathfrak T_w$, so that $1+p^wX_i' = 1+X_i$ on $\mathfrak T$, and define the universal character,
\[
\kappa^{0un} :
\begin{array}{ccc}
\mathfrak T_w\times \mathfrak W(w)^0  & \fleche   & \widehat{\mathbb G_m}  \\
 (1+p^wX_1',1+p^wX_2',1+p^wX_3',S_1,S_2,S_3) & \longmapsto  & {\prod_{i=1}^3 (1+p^wX_i)^{S_ip^{-w+\frac{2}{p-1}}}}
\end{array}
\]
Then define $\mathcal W(w)^0$ to be the rigid fiber of $\mathfrak W(w)^0$ and finally, $W(w)$ to be the fiber product,
\[ \mathcal W \times_{\Hom_{cont}((1+p\mathcal O)\times(1+p\mathcal O)^1,\CC_p^\times)}\mathcal W(w)^0,\]
where the map $\mathcal W(w)^0 \fleche \Hom_{cont}((1+p\mathcal O)\times(1+p\mathcal O)^1,\CC_p^\times)$ is given by,
\[ (s_1,s_2,s_3) \longmapsto ((1+p^nx_1,1+p^nx_2,1+p^nx_3) \mapsto \prod_{i=1}^3(1+p^nx_i)^{s_ip^{-w + \frac{2}{p-1}}}.\]

Then we can write $\mathcal W = \bigcup_{w \geq 0} \mathcal W(w)$ as an increasing union of affinoids.

\section{Kernel of Frobenius}
\label{AppA}
\begin{propen}
On the stack $\mathcal{BT}^{\mathcal O}_{(2,1),pol}$ and $\overline X$, the Cartier divisor $\ha_\tau$ is reduced.
\end{propen}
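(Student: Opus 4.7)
The plan is to reduce to an explicit local computation on the universal deformation ring of a geometric point of the non-$\mu$-ordinary locus. Since $\overline X$ admits a smooth morphism to the stack $\mathcal{BT}^{\mathcal{O}}_{(2,1),pol}$ (via the universal $p$-divisible $\mathcal{O}$-module with its polarisation), and reducedness of a Cartier divisor is preserved by smooth pullback, it suffices to prove the statement on the stack. Moreover, $\widetilde{\ha_\tau}$ is invertible on the open $\mu$-ordinary substack, so I only need to check reducedness at a geometric point $G_0$ of its vanishing locus, working in the complete local ring of the stack at such a point.

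Next I would use Grothendieck--Messing and the explicit description of polarised $p$-divisible $\mathcal{O}$-modules of signature $(2,1)$ over $k = \overline{\FF_p}$. The Dieudonn\'e module $M_0$ decomposes under $\mathcal{O}$ as $M_0 = M_{0,\tau} \oplus M_{0,\sigma\tau}$, each of $W(k)$-rank $3$, and Verschiebung swaps the two summands, so $V^2$ induces an endomorphism of $M_{0,\tau}$ respecting the Hodge filtration $\omega_{G,\tau} \subset M_{0,\tau}/p$. The universal deformation ring is a smooth $W(k)$-algebra of relative dimension $2$, with coordinates given by the position of the lifted Hodge filtration inside the crystal (or equivalently by a display). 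In these coordinates, $\widetilde{\ha_\tau}$ is computed (up to a unit) as the determinant of the induced map $\omega_{G,\tau} \to \omega_{G,\tau}^{(p^2)}$, giving an explicit element $f$ of the two-variable power series ring $R = W(k)[\![x,y]\!]$ that one has to show generates a reduced ideal modulo $p$.

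I expect the main obstacle to be the explicit linear-algebra manipulation of $V^2$ when $G_0$ is supersingular, which is where the non-$\mu$-ordinary locus is concentrated in the inert case. Using the classification of isocrystals attached to polarised $p$-divisible $\mathcal{O}$-modules of signature $(2,1)$, one can bring $V$ on $M_0$ into a normal form; then the universal deformation can be written down by the recipe of [Her2] (or via displays), and $f$ becomes a concrete polynomial in $x,y$ (modulo $p$). One must check that $f \bmod p$ has no repeated irreducible factors in $k[\![x,y]\!]$; an efficient way is to observe that $V(f)=1$ at the generic point of the non-$\mu$-ordinary stratum (by compatibility with the Newton stratification established in [Her2], together with the dimension count that this stratum is pure of codimension $1$) and then rule out a higher multiplicity by a tangent-space computation, namely by showing that some partial derivative of $f$ is non-zero mod $(p,f)$. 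The same argument then transfers to $\overline X$ by smooth descent, yielding that $\widetilde{\ha_\tau}$ defines a reduced Cartier divisor there as well.
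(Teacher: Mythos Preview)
Your approach is essentially the one the paper has in mind: the paper's proof is a citation to \cite{dsg}, Theorem 2.8, together with the remark that one proves it by working on the universal deformation space at a (supersingular) point, which is exactly the reduction and local computation you outline. Your sketch is a faithful expansion of that hint, so there is no genuine divergence in method; the only difference is that the paper defers the explicit Dieudonn\'e-theoretic calculation to the reference rather than carrying it out.
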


\begin{proof}[Proof]
This is \cite{dsg} Theorem 2.8, which can be proved by considering the deformation space at a point. Unfortunately we can't use the result of \cite{Her1} because of the polarisation (but a similar proof works).
\end{proof}

\begin{propen}
Let $G/\Spec(\mathcal O_C)$ be a $p$-divisible $\mathcal O$-module. Suppose $\ha_\tau(G) < \frac{1}{2p^2}$, and let $K_1$ the first Frobenius-subgroup of $G$ (see theorem \ref{thrfiltcan}). Then \[K_1 \times_{\Spec(\mathcal O_C)} \Spec(\mathcal O_C/p^{\frac{1}{2p^2}}) = \Ker F^2 \times_{\Spec(\mathcal O_C/p} \Spec(\mathcal O_C/p^{\frac{1}{2p^2}}).\]
\end{propen}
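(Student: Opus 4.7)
The plan is to compare $K_1$ and $\ker F^2$ as two finite flat subgroups of $G[p^2]$ of the same order $p^6$, which already coincide on the special fiber $\Spec(\mathcal O_C/\mathfrak m_C)$ by the definition of $K_m$. I will argue with partial degrees in the sense of \cite{FarHN}, using the precise degree bounds provided by Theorem \ref{thrfiltcan}.

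First, I would compute (or bound below) the partial degrees of $\ker F^2\subset G[p^2]$ directly in terms of $\ha_\tau(G)$. Since $\widetilde{\ha_\tau}$ is given by the determinant of $V^2$ on the $\tau$-component of $\omega_G$, the quantity $\deg_\tau(\ker F^2)+p\deg_{\sigma\tau}(\ker F^2)$ (and its $\sigma\tau$-counterpart) can be read off from the factorisation $p=VF$, producing an identity of the shape
\[ \deg_\tau(\ker F^2)+p\deg_{\sigma\tau}(\ker F^2)=2(2p+1)-c\,\ha_\tau(G),\]
for an explicit constant $c$, and symmetrically for $\sigma\tau$.

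Next, applying Theorem \ref{thrfiltcan} with $n=1$ and $n=2$ gives the matching lower bounds
\[ \deg_\tau(H_\tau^2)+p\deg_{\sigma\tau}(H_\tau^2)\ \geq\ 2(2p+1)-\tfrac{p^4-1}{p^2-1}\ha_\tau(G),\qquad \deg_{\sigma\tau}(H_{\sigma\tau}^1)+p\deg_\tau(H_{\sigma\tau}^1)\ \geq\ (p+2)-\ha_\tau(G).\]
Combining them through $K_1=H_\tau^2+H_{\sigma\tau}^1$ (noting that $H_\tau^2\cap H_{\sigma\tau}^1=H_\tau^1$ by the compatibility of the canonical filtration with $p$-torsion, so that degrees add correctly), one concludes that $\deg_?(K_1)$ matches $\deg_?(\ker F^2)$ up to an error controlled linearly by $\ha_\tau(G)$, and in particular that $K_1$ and $\ker F^2$ have the same total degree: both are breakpoints of the Harder-Narasimhan polygon of $G[p^2]$ of the same slope.

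To conclude, I would apply the comparison lemma for finite flat subgroups of common order in $G[p^2]/\mathcal O_C$ (as in \cite{FarHN} and \cite{Her2}, Section 2): two such subgroups $H_1,H_2$ which coincide over the special fiber and satisfy $\deg(H_1\cap H_2)\geq \deg(H_1)-\eps$ (equivalently, the Fitting ideal of $H_1/(H_1\cap H_2)$ has valuation $\leq\eps$) coincide on $\Spec(\mathcal O_C/p^{\eps/N})$ for a controlled $N$ depending on the level of torsion. Plugging in the hypothesis $\ha_\tau(G)<\frac{1}{2p^2}$ and the degree computation above, the defect $\eps$ is small enough that the two subgroups are forced to agree modulo $p^{1/(2p^2)}$. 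The main obstacle I expect is keeping precise track of the constants in the degree identity for $\ker F^2$ and in the comparison lemma, since the extraction of the sharp exponent $\tfrac{1}{2p^2}$ (rather than some weaker $\tfrac{1}{cp^2}$) requires a careful bookkeeping of the worst case between the $H_\tau^2$ contribution (which carries the $p^2$ factor coming from the iterated Verschiebung) and the $H_{\sigma\tau}^1$ contribution.
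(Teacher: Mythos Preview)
Your approach via pointwise degree estimates over $\Spec(\mathcal O_C)$ is genuinely different from the paper's, and as written it has a real gap. The paper does not compare degrees at all: it passes to a smooth presentation $\mathfrak X$ of the moduli stack $\mathcal{BT}^{\mathcal O}_{r,(2,1),pol}$, chooses $K$ totally ramified with $v(\pi_K)=\tfrac{1}{2p^2}$, and uses the preceding proposition of the appendix (that $\widetilde{\ha_\tau}$ is a \emph{reduced} Cartier divisor) to deduce that the special fiber $\mathfrak X(v)\otimes\kappa_K$ is reduced. Over a reduced base, two finite flat closed subgroup schemes of $G[p^2]$ that agree on every geometric point agree scheme-theoretically; since $K_1$ (extended to $\mathfrak X(v)$ by normality, Proposition~\ref{propKernormal}) and $\Ker F^2$ agree pointwise, they coincide over $\mathfrak X(v)\otimes\kappa_K$, and one pulls back along any $\mathcal O_C$-point. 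The exponent $\tfrac{1}{2p^2}$ is thus exactly $v(\pi_K)$, with no constant-chasing.

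The gap in your argument is twofold. First, $\Ker F^2$ is only defined over $\mathcal O_C/p$ (there is no Frobenius in mixed characteristic), so there is no Fargues degree of $\Ker F^2$ over $\mathcal O_C$ to compare with $\deg K_1$; you would need a lift, but $K_1$ is the candidate lift and that is precisely what you are trying to establish. Second, the ``comparison lemma'' you invoke---that two subgroups of the same order agreeing on the special fiber with $\deg(H_1\cap H_2)\geq\deg(H_1)-\eps$ coincide modulo $p^{\eps/N}$---is not a standard statement in \cite{FarHN} or \cite{Her2} in this form, and you yourself flag that extracting the sharp exponent $\tfrac{1}{2p^2}$ from such an estimate is the main obstacle. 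The paper's route via reducedness of the moduli space bypasses this entirely; the key input you are missing is the reducedness of $\widetilde{\ha_\tau}$, which you never invoke.
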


Denote, for $K/\QQ_{p^2}$, by $\mathfrak X/\Spf(\mathcal O_K)$ a (smooth) presentation of $\mathcal{BT}_{r,(2,1),pol}^\mathcal O/\Spf(\mathcal O_K)$ (which is smooth, see for example \cite{Wed2})  and for $v \in v(K)$, $\mathfrak X(v)$ is the open subset of the blow-up along $I_v = (p^v,\ha_\tau)$ where $I_v$ is generated by $\ha_\tau$.
As $\mathfrak X$ is smooth and $\ha_\tau$ is reduced, $\mathfrak X(v)$ is normal and its special fiber (modulo $\pi_K$) is reduced.

Take $v = \frac{1}{2p^2}$ and $K$ a totally ramified extension of $\QQ_p$ of degree $\frac{1}{2p^2}$ (so that $v(\pi_K) = \frac{1}{2p^2}$).

Then over $X(v)$, the rigid fiber over $K$ of $\mathfrak X(v)$, we have a subgroup $K_1 \subset G[p^2]$, and by the proposition \ref{propKernormal} this subgroup extend to a subgroup over 
$\mathfrak X(v)$. Now, over $\mathfrak X(v) \otimes \mathcal O_K/\pi_K = \mathfrak X(v) \otimes \kappa_K$ the rigid fiber of $\mathfrak X(v)$, we have two subgroups,
$K_1$ and $\Ker F^2$, which coincide on every point (by \cite{Her2} section 9 or the very proof of the proposition \ref{propKernormal}) but as $\mathfrak X(v) \otimes \mathcal O_K/\pi_K$ is reduced, $K_1 = \Ker F^2$ over $X(v) \otimes \mathcal O_K/\pi_K$. As every $\mathcal O_C$-point of $\mathcal{BT}_{(2,1),pol}^\mathcal O$ gives a point of $\mathfrak X$, we have the result using $G[p^r]$ for $r$ big enough (bigger than 3 is enough).

\begin{coren}
Let $G$ as in the previous proposition, but suppose $\ha_\tau(G) < \frac{1}{2p^4}$. Then $\ha_\tau(G/K_1) = p^2\ha_\tau(G)$
\end{coren}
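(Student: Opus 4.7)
The plan is to combine the identification $K_1 \equiv \Ker F^2$ modulo a small power of $p$ from the previous proposition with the well-known transformation of the Hasse invariant under Frobenius twist.

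First I would apply the previous proposition to obtain an isomorphism
\[ G[p^r]/K_1 \times_{\Spec(\mathcal O_C)} \Spec(\mathcal O_C/p^{1/(2p^2)}) \;\simeq\; G[p^r]/\Ker F^2 \times_{\Spec(\mathcal O_C)} \Spec(\mathcal O_C/p^{1/(2p^2)}) \]
for $r$ large enough (say $r \geq 3$). Since the quotient by $\Ker F^2$ is precisely the Frobenius twist $G^{(p^2)}$, this gives an isomorphism of $p$-divisible $\mathcal O$-modules
\[ G/K_1 \;\simeq\; G^{(p^2)} \quad \text{over}\ \Spec(\mathcal O_C/p^{1/(2p^2)}). \]

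Next I would compute $\ha_\tau(G^{(p^2)})$ in terms of $\ha_\tau(G)$. Since $\widetilde{\ha_\tau}$ is a section of a line bundle built from $\omega_{G,?}$ constructed via (a lift of) $V^2$, and since Frobenius twist raises such sections to the $p^2$-th power of the underlying local coordinate, one has the equality (before truncation by $1$)
\[ \ha_\tau(G^{(p^2)}) \;=\; p^2\, \ha_\tau(G). \]
This is a standard fact (compare \cite{Her2}, Proposition 8.1, which gives the analogous inequality for arbitrary subgroups); for $H = \Ker F^2$, the inequality becomes an equality. By hypothesis $\ha_\tau(G) < \frac{1}{2p^4}$, so $p^2 \ha_\tau(G) < \frac{1}{2p^2}$, and in particular the value is strictly less than $1$ and well below the truncation threshold.

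Finally I would argue that the Hasse invariants of $G/K_1$ and $G^{(p^2)}$ must coincide. A priori $\widetilde{\ha_\tau}$ is a section on the special fiber (modulo $p$), but its valuation $\ha_\tau(G/K_1)$ is detected already on any reduction $\mathcal O_C/p^\alpha$ as long as $\alpha$ exceeds this valuation; here $\alpha = 1/(2p^2)$ and $\ha_\tau(G^{(p^2)}) = p^2 \ha_\tau(G) < 1/(2p^2) = \alpha$. Since the two $p$-divisible $\mathcal O$-modules are isomorphic over $\mathcal O_C/p^{1/(2p^2)}$, their Hasse invariants, viewed in this quotient, agree, yielding $\ha_\tau(G/K_1) = p^2 \ha_\tau(G)$. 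The main subtlety is exactly this last step: one must verify that the truncated, mod-$p$-defined invariant $\ha_\tau$ is recoverable from a reduction modulo a much smaller power of $p$, which works precisely because the predicted value $p^2 \ha_\tau(G)$ lies below the reduction depth $1/(2p^2)$.
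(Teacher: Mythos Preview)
Your proposal is correct and follows essentially the same route as the paper: both use the previous proposition to identify $G/K_1$ with $G^{(p^2)}$ modulo $p^{1/(2p^2)}$, use that the Hasse invariant of the Frobenius twist is $p^2\ha_\tau(G)$, and then conclude from $p^2\ha_\tau(G) < \frac{1}{2p^2}$ that the valuation is detected at this depth of reduction. The paper phrases the last step as $\inf\{p^2\ha_\tau(G),\frac{1}{2p^2}\} = \inf\{\ha_\tau(G/K_1),\frac{1}{2p^2}\}$, which is exactly your ``recoverable below the reduction depth'' observation.
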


\begin{proof}[Proof]
Recall that $\ha_\tau = \ha_{\sigma\tau}$ and $\ha_{\tau}$ is given by $\det(V^2)$ without any division. 
By the previous proposition, the map $G[p^2] \fleche G[p^2]/K_1$ coincide modulo $\pi_K$ with the map $G[p^2] \overset{F^2}{\fleche} G[p^2]^{(p^2)}$.
Thus, there is an isomorphism modulo $\pi_K$ : $\det(\omega_{(G/K_1)^D,\sigma\tau}) \simeq \det(\omega_{G^D,\sigma\tau}^{\otimes p^2})$ which identifies (modulo $\pi_K$) $\widetilde{\ha_{\sigma\tau}}(G/K_1)$ with $\widetilde{\ha_{\sigma\tau}}(G)^{\otimes p^2}$.
Thus we get, \[\inf\{p^2\ha_\tau(G),\frac{1}{2p^2}\} = \inf\{\ha_\tau(G/K_1),\frac{1}{2p^2}\}.\] As $p^2\ha_\tau(G) < \frac{1}{2p^2}$, we get the result.
\end{proof}

\section{Devissage of the formal coherent locally analytic sheaves}

Let $\kappa \in \mathcal W(w)$ a character and $\kappa^0$ its restriction to $\mathcal W(w)^0$, and $w < m - \frac{p^2m-1}{p^2-1}$. Denote on $\mathfrak X_1(p^{2m})(v)$ the sheaf $\mathfrak w_w^{\kappa^0\dag}$ defined as,
\[ \zeta_*\mathcal O_{\mathfrak IW^+_w}[\kappa^0], \quad \text{where } \zeta : \mathfrak{IW}_w^{+} \fleche \mathfrak X_1(p^{2m})(v).\]
If we set $\pi : \mathfrak X_1(p^{2m})(v) \fleche \mathfrak X(v)$, then the sheaf $\mathfrak w_w^{\kappa\dag}$ of overconvergent forms is given by,
\[ (\pi_*\mathfrak w_w^{\kappa^0\dag})(-\kappa)^{B(\ZZ_p)\mathfrak B_w},\]
where $(-)(-\kappa')$ denote a twist of the action of $B(\ZZ_p)\mathfrak B_w$ and $(-)^{B(\ZZ_p)\mathfrak B_w}$ means taking invariants. Remark that after the twist, the action of ${B(\ZZ_p)\mathfrak B_w}$ factors through $B_n$.

Consider the projection "in family"
\[ \zeta \times 1 : \mathfrak{IW}_w^+ \times \mathfrak W(w)^0 \fleche \mathfrak X_1(p^{2m})(v) \times \mathfrak W(w)^0,\]
and denote \[\mathfrak w_w^{\kappa^{0,un}\dag} = (\zeta\times1)_*\mathcal O_{\mathfrak{IW}_w^+ \times \mathfrak W(w)^0}[\kappa^{0,un}],\]
the family of sheaves over $ \mathfrak X_1(p^{2m})(v) \times \mathfrak W(w)^0$.

Let $\Spf(R)$ a small enough open in $\mathfrak X_1(p^{2m})(v)$. Recall that we denote by $\psi$ the univeral polarized trivialisation of $K_m^D$, denote $e_1,e_2$ a basis of $\mathcal O/p^m\mathcal O\oplus \mathcal O/p^{2m}\mathcal O$, $e_1^{\sigma\tau} = \HT_{\sigma\tau,w}(e_1),e_2^{\sigma\tau} = \HT_{\sigma\tau,w}(e_2),e^{\tau} = \HT_{\tau,w}(e_2)$ the images of this basis in $\mathcal F_{\sigma\tau}/p^w,\mathcal F_{\tau}/p^w$. Denote $f_1^{\sigma\tau},f_2^{\sigma\tau},f^{\tau}$ a lift of this basis in $\mathcal F_{\sigma\tau},\mathcal F_{\tau}$.

With this choices we can identify $\mathfrak{IW}_{w|\Spf(R)}^+$ with matrices,
\[
\left(
\begin{array}{ccc}
 1 &   &   \\
 p^w\mathfrak B(0,1) &  1 &   \\
  &   &   1
\end{array}
\right) \times 
\left(
\begin{array}{c}
  1 + p^w\mathfrak B(0,1)   \\
   1 + p^w\mathfrak B(0,1)  \\
  1 + p^w\mathfrak B(0,1) 
\end{array}
\right)
\times_{\Spf(\mathcal O_K)} \Spf(R).
\]
Denote $X_0$ the coordinate in the $3$x$3$ matrix and $X_1,X_2,X_3$ the coordinates of the balls inside the column.
Thus, we can identify a function $f$ on $\mathfrak{IW}_{w|\Spf(R)}^+$ to a formal series in $R<<X_0,X_1,X_2,X_3>>$.

Now, let $\kappa^0 \in \mathfrak W(w)^0$, then $f \in \mathfrak w_w^{\kappa^0\dag}$ if it verifies,
\[ f(X_0,\lambda X_1,\lambda X_2,\lambda X_3) = (\kappa^0)'(\lambda) f(X_0,X_1,X_2,X_3), \quad \forall \lambda \in \mathfrak T_w(R).\]
In particular, we deduce that there exists a unique $g \in R<<X_0>>$ such that,
\[ f(X_0,X_1,X_2,X_3) = g(X_0)\kappa^0(X_1,X_2,X_3),\]
and thus there is a bijection $\mathfrak w_w^{\kappa^0\dag} \simeq R<<X_0>>$. The same hold in family,

\begin{lemmen}
\label{lemdec1}
For all $f \in \mathfrak w_w^{\kappa^{0un}\dag}(R \hat\otimes \mathcal O_K<<S_1,S_2,S_3>>)$, there exists a unique $g \in R<<S_1,S_2,S_3,X_0>>$ such that,
\[f(X_0,X_1,X_2,X_3) = g(X_0)(\kappa^{0un})'(1+p^wX_1,1+p^wX_2,1+p^wX_3).\]
This decomposition induces a bijection
\[  \mathfrak w_w^{\kappa^{0un}\dag}(R \hat\otimes \mathcal O_K<<S_1,S_2,S_3>>) \simeq R<<S_1,S_2,S_3,X_0>>.\]
\end{lemmen}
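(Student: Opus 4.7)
The plan is to imitate the argument for the fixed-weight case (i.e.\ the paragraph preceding the lemma) but now over the universal base $\mathcal O_K\langle\langle S_1,S_2,S_3\rangle\rangle$, using the tautological $\mathfrak T_w$-equivariance built into the definition of the sheaf. Working \'etale-locally on $\Spf(R)$, we already have a trivialisation
\[ \mathfrak{IW}^+_{w|\Spf(R)} \simeq \Spf R\langle\langle X_0,X_1,X_2,X_3\rangle\rangle,\]
compatible with the action of $\mathfrak T_w$ on the last three coordinates: an element $\lambda=(1+p^wY_1,1+p^wY_2,1+p^wY_3)\in\mathfrak T_w$ sends $(X_0,1+p^wX_i)$ to $(X_0,(1+p^wY_i)(1+p^wX_i))$. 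Base-changing to $\mathfrak W(w)^0=\Spf\mathcal O_K\langle\langle S_1,S_2,S_3\rangle\rangle$, sections of $(\zeta\times 1)_*\mathcal O_{\mathfrak{IW}^+_w\times\mathfrak W(w)^0}$ on $\Spf(R\hat\otimes\mathcal O_K\langle\langle S_1,S_2,S_3\rangle\rangle)$ are nothing but elements of $R\langle\langle S_1,S_2,S_3,X_0,X_1,X_2,X_3\rangle\rangle$.

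The next step is to translate the $\kappa^{0,un}$-equivariance into a concrete functional equation. By definition $f\in\mathfrak w_w^{\kappa^{0,un}\dag}$ iff, for every $\lambda=(1+p^wY_1,1+p^wY_2,1+p^wY_3)\in\mathfrak T_w$,
\[ f\bigl(X_0,(1+p^wY_1)(1+p^wX_1),(1+p^wY_2)(1+p^wX_2),(1+p^wY_3)(1+p^wX_3)\bigr)\]
\[= (\kappa^{0,un})'(1+p^wY_1,1+p^wY_2,1+p^wY_3)\cdot f(X_0,1+p^wX_1,1+p^wX_2,1+p^wX_3).\]
Specialising $X_1=X_2=X_3=0$ in this identity yields
\[ f(X_0,1+p^wY_1,1+p^wY_2,1+p^wY_3) = (\kappa^{0,un})'(1+p^wY_1,1+p^wY_2,1+p^wY_3)\cdot g(X_0),\]
where $g(X_0):=f(X_0,1,1,1)\in R\langle\langle S_1,S_2,S_3,X_0\rangle\rangle$. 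This proves existence of the decomposition together with the explicit formula $g(X_0)=f(X_0,1,1,1)$, which makes uniqueness manifest.

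For the reverse direction one simply checks that, for any $g\in R\langle\langle S_1,S_2,S_3,X_0\rangle\rangle$, the formula $f(X_0,X_1,X_2,X_3):=g(X_0)(\kappa^{0,un})'(1+p^wX_1,1+p^wX_2,1+p^wX_3)$ defines an honest element of $R\hat\otimes\mathcal O_K\langle\langle S_1,S_2,S_3\rangle\rangle\langle\langle X_0,X_1,X_2,X_3\rangle\rangle$ which is manifestly $\kappa^{0,un}$-equivariant by multiplicativity of $(\kappa^{0,un})'$. The two constructions are visibly inverse to one another, giving the asserted bijection.

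The only genuinely non-formal point is checking that $(\kappa^{0,un})'(1+p^wX_1,1+p^wX_2,1+p^wX_3)$ converges to an element of $R\hat\otimes\mathcal O_K\langle\langle S_1,S_2,S_3,X_1,X_2,X_3\rangle\rangle$; this is precisely the role of the normalising factor $p^{-w+2/(p-1)}$ appearing in the definition of $\kappa^{0,un}$ in Appendix~\ref{AppW}, which ensures $\log(1+p^wX_i)\cdot S_i p^{-w+2/(p-1)}$ lies in the ring (using $v(\log(1+p^wx))\ge w$ for $w>1/(p-1)$ and the fact that $\exp$ converges on elements of valuation $>1/(p-1)$). This is the one verification worth spelling out, and it is entirely analogous to the analogous check in \cite{AIP}.
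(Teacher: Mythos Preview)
Your proof is correct and follows exactly the approach the paper indicates: the paper does not give a separate proof of this lemma at all, but simply says ``The same hold in family'' after spelling out the fixed-weight case, and your argument is precisely that same equivariance-plus-specialisation-at-$X_i=0$ computation carried out over $\mathcal O_K\langle\langle S_1,S_2,S_3\rangle\rangle$. Your final paragraph on convergence of $(\kappa^{0,un})'(1+p^wX_i)$ is in fact the content of the \emph{next} lemma in the paper (proved there by reference to \cite{AIP}, Lemma 8.1.5.3), so you have folded that step in as well.
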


\begin{lemmen}
Let $\pi$ be a uniformiser of $\mathcal O_K$. Then,
\[ \kappa^{0un}((1+p^wX_i)) \in 1 + \pi \mathcal O_K<< S_1,S_2,S_3,X_1,X_2,X_3>>.\]
\end{lemmen}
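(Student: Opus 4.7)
The plan is to unfold the definition of $\kappa^{0un}$ from Appendix \ref{AppW}, which gives
\[\kappa^{0un}(1+p^w X_i) = (1+p^w X_i)^{S_i p^{-w+\frac{2}{p-1}}} = \exp\!\bigl(S_i p^{-w+\frac{2}{p-1}} \log(1+p^w X_i)\bigr),\]
interpreting the exponentiation via the usual composition of the $p$-adic $\exp$ and $\log$. Since $\log(1+p^w X_i) = \sum_{n\geq 1} \frac{(-1)^{n-1}}{n} (p^w X_i)^n$ converges as a formal series, the first task is to estimate the valuations of the coefficients of the argument of $\exp$.

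Next I would compute that the coefficient of $X_i^n$ in $S_i\, p^{-w+\frac{2}{p-1}} \log(1+p^w X_i)$ is, up to sign, $\frac{S_i}{n} p^{(n-1)w + \frac{2}{p-1}}$, whose $p$-adic valuation is $(n-1)w + \frac{2}{p-1} - v_p(n)$. For $n=1$ this equals $\frac{2}{p-1}$; for $n = p$ it equals $(p-1)w + \frac{2}{p-1} - 1$; and for $n = p^k$ with $k \geq 2$ the term $(p^k-1)w$ dominates $v_p(p^k) = k$. A short case analysis, with the worst case being $n=p$, shows that for $w$ in the admissible range these valuations are uniformly bounded below by some constant $\alpha > \frac{1}{p-1}$. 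In particular
\[S_i\, p^{-w+\frac{2}{p-1}} \log(1+p^w X_i) \in p^\alpha\, \mathcal{O}_K\langle\langle S_i, X_i \rangle\rangle.\]

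Then by standard convergence of the $p$-adic exponential on formal power series whose coefficients lie in an ideal of valuation strictly greater than $\frac{1}{p-1}$, we get
\[\exp\!\bigl(S_i p^{-w+\frac{2}{p-1}} \log(1+p^w X_i)\bigr) \in 1 + p^\alpha\, \mathcal{O}_K\langle\langle S_i, X_i\rangle\rangle.\]
Taking the product of the three factors $(i=1,2,3)$ stays in $1 + p^\alpha\, \mathcal{O}_K\langle\langle S_1,S_2,S_3,X_1,X_2,X_3\rangle\rangle$, and since $\alpha > 0$ we have $p^\alpha \in \pi\, \mathcal{O}_K$, yielding the claim.

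The only real obstacle is the uniform valuation estimate $\alpha > \frac{1}{p-1}$, but this is engineered into the definition of $\kappa^{0un}$: the constant $\frac{2}{p-1}$ in the exponent provides exactly one extra $\frac{1}{p-1}$ beyond what convergence of $\exp$ requires, which is just enough to absorb the worst-case denominator $v_p(n) = 1$ at $n=p$ (the only place where integrality could a priori fail).
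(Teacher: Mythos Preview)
Your approach is correct and is precisely the computation carried out in \cite{AIP} Lemma 8.1.5.3, which is all the paper invokes here; you have simply unpacked the citation. One small imprecision: your closing heuristic that the extra $\tfrac{1}{p-1}$ ``is just enough to absorb the worst-case denominator $v_p(n)=1$ at $n=p$'' understates the role of the term $(p-1)w$ --- at $n=p$ the valuation is $(p-1)w + \tfrac{2}{p-1} - 1$, and it is the combination of $(p-1)w$ with $\tfrac{2}{p-1}$ (not $\tfrac{2}{p-1}$ alone) that pushes this above $\tfrac{1}{p-1}$, which indeed holds once $w > \tfrac{p-2}{(p-1)^2}$, a condition implied by the standing hypothesis $w > m-1 + \tfrac{p+v}{p^2-1}$.
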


\begin{proof}[Proof]
The calculation is made is \cite{AIP} Lemma 8.1.5.3.
\end{proof}

\begin{coren}
Denote  $\mathfrak w_{w,1}^{\kappa^{0un}\dag}$ the reduction modulo $\pi$ of $\mathfrak w_w^{\kappa^{0un}\dag}$.
Then the sheaf  $\mathfrak w_{w,1}^{\kappa^{0un}\dag}$ is constant on $(\mathfrak X_{1}(p^{2n})\times  \mathfrak W(w)^0)\times \Spec(\mathcal O_K/\pi)$ : it is the inverse image of a sheaf on $\mathfrak X_{1}(p^{2n}) \times \Spf(\mathcal O_K/\pi)$.
\end{coren}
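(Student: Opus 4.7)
The plan is to reduce everything to the explicit local description furnished by Lemma \ref{lemdec1} together with the congruence $\kappa^{0un}(1+p^wX_i)\equiv 1\pmod{\pi}$ supplied by the preceding lemma.

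First I would work locally on a small affine open $\Spf(R)\subset \mathfrak X_1(p^{2m})(v)$ on which the torsor $\mathfrak{IW}_w^+\to\mathfrak X_1(p^{2m})(v)$ is trivialized by a choice of lifts $f_1^{\sigma\tau},f_2^{\sigma\tau},f^\tau$, so that Lemma \ref{lemdec1} identifies the sections
\[ \mathfrak w_w^{\kappa^{0un}\dag}\bigl(R\hat\otimes\mathcal O_K\langle\langle S_1,S_2,S_3\rangle\rangle\bigr)\;\overset{\sim}{\fleche}\;R\langle\langle S_1,S_2,S_3,X_0\rangle\rangle,\qquad f\longmapsto g,\]
through the formula $f(X_0,X_1,X_2,X_3)=g(X_0)\cdot(\kappa^{0un})'(1+p^wX_1,1+p^wX_2,1+p^wX_3)$. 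Reducing this identification modulo $\pi$ and applying the previous lemma, which says $(\kappa^{0un})'(1+p^wX_i)\in 1+\pi\,\mathcal O_K\langle\langle S_1,S_2,S_3,X_1,X_2,X_3\rangle\rangle$, one obtains $f\equiv g(X_0)\pmod{\pi}$. Hence the local sections of $\mathfrak w_{w,1}^{\kappa^{0un}\dag}$ are $(R/\pi)\langle\langle S_1,S_2,S_3,X_0\rangle\rangle$ as an $(R/\pi)\langle\langle S_1,S_2,S_3\rangle\rangle$-module, and this is manifestly the completed tensor product
\[ (R/\pi)\langle\langle X_0\rangle\rangle\,\hat\otimes_{R/\pi}\,(R/\pi)\langle\langle S_1,S_2,S_3\rangle\rangle,\]
i.e. the inverse image, via the first projection, of the sheaf on $\mathfrak X_1(p^{2m})\times\Spf(\mathcal O_K/\pi)$ locally given by $(R/\pi)\langle\langle X_0\rangle\rangle$.

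The one point that needs care, and which I expect to be the main (although mild) obstacle, is the glueing between two such local trivializations. Changing the lifts $f_i^{\sigma\tau},f^\tau$ alters the coordinate $X_0$ and rescales the coordinates $X_1,X_2,X_3$ by an element $\lambda\in\mathfrak T_w(R)$; on sections this transition is multiplication by $(\kappa^{0un})'(\lambda)$. Since $\lambda$ is of the form $1+p^w(\cdots)$, the same congruence from the previous lemma gives $(\kappa^{0un})'(\lambda)\equiv 1\pmod{\pi}$, so the transition cocycle is trivial modulo $\pi$ on the $S_i$-direction. Consequently the mod-$\pi$ local identifications $\mathfrak w_{w,1}^{\kappa^{0un}\dag}\simeq (R/\pi)\langle\langle X_0\rangle\rangle\,\hat\otimes_{R/\pi}\,(R/\pi)\langle\langle S_1,S_2,S_3\rangle\rangle$ glue to a global isomorphism with the pullback of a well-defined sheaf on $\mathfrak X_1(p^{2m})\times\Spf(\mathcal O_K/\pi)$, proving the corollary.
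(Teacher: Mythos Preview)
Your proposal is correct and follows exactly the approach the paper intends: the corollary is stated without proof in the paper, as it is meant to be an immediate consequence of Lemma~\ref{lemdec1} (the local identification $f\mapsto g$) together with the preceding lemma $\kappa^{0un}(1+p^wX_i)\equiv 1\pmod\pi$, which is precisely the combination you invoke.

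One small imprecision worth tightening: the effect of changing the lifts $f_i^{\sigma\tau},f^\tau$ is not literally a rescaling of $X_1,X_2,X_3$ by a single $\lambda\in\mathfrak T_w(R)$, and the transition on the $g$'s is not simply multiplication by $(\kappa^{0un})'(\lambda)$; rather, comparing the two expressions $g(X_0)\kappa^{0un}(1+p^wX_i)=g'(X_0')\kappa^{0un}(1+p^wX_i')$ shows that $g'$ differs from $g$ by the coordinate change $X_0\rightsquigarrow X_0'$ (which involves only the matrix $P$ with entries in $R$, hence no $S_i$) composed with multiplication by the ratio $\kappa^{0un}(1+p^wX_i)/\kappa^{0un}(1+p^wX_i')$. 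Your essential point stands: this ratio lies in $1+\pi\mathcal O_K\langle\langle S,X\rangle\rangle$ by the preceding lemma, so modulo $\pi$ the transition is independent of the $S_i$, and the local pieces $(R/\pi)\langle\langle X_0\rangle\rangle$ glue over $\mathfrak X_1(p^{2m})\times\Spf(\mathcal O_K/\pi)$. (The congruences of Lemma~\ref{lem94}, placed just after this corollary in the paper, make the $X_0\rightsquigarrow X_0'$ part explicit.)
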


Let $f_1^{\sigma\tau'},f_2^{\sigma\tau'},f^{\tau'}$ be an other lift of the basis image of $\HT_{\star,w}$. Let 
\[ P = 
\left(
\begin{array}{ccc}
1+p^wa_1  & p^wa_2  &   \\
 p^wa_3 & 1+p^wa_4  &   \\
  &   &   1+p^w a_5
\end{array}
\right)
\]
be the base change matrix from $\underline f$ to $\underline f'$ and $\underline X'$ the coordinates on $\mathfrak{IW}_{w|\Spf(R)}^+$ relatively to $\underline f'$.

\begin{lemmen}
\label{lem94}
We have the following congruences,
\[ X_0 \equiv X_0' + a_3 \pmod {p^w},\]
\[ X_1 \equiv X_1'  + a_1 \pmod {p^w};\]
\[ X_2 \equiv X_2' + a_4 \pmod {p^w},\]
\[ X_3 \equiv X_3' + a_5 \pmod {p^w}.\]
\end{lemmen}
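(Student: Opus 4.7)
The lemma is a direct local computation once one unwinds what the coordinates $(X_0, X_1, X_2, X_3)$ on $\mathfrak{IW}_{w|\Spf(R)}^+$ concretely record. Relative to the basis $\underline f = (f_1^{\sigma\tau}, f_2^{\sigma\tau}, f^\tau)$, a point of $\mathfrak{IW}_w^+$ over $\Spf(R)$ corresponds to the following data: the line $\Fil^1 \subset \mathcal F_{\sigma\tau}$ generated by $f_1^{\sigma\tau} + p^w X_0\, f_2^{\sigma\tau}$, the trivialization $P_1^{\sigma\tau} = (1+p^w X_1)(f_1^{\sigma\tau} + p^w X_0\, f_2^{\sigma\tau})$ of $\Fil^1$, the trivialization $P_2^{\sigma\tau} \equiv (1+p^w X_2)\, f_2^{\sigma\tau} \pmod{\Fil^1}$ of $\mathcal F_{\sigma\tau}/\Fil^1$, and the trivialization $P^\tau = (1+p^w X_3)\, f^\tau$ of $\mathcal F_\tau$. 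The same data are described by $(X_0', X_1', X_2', X_3')$ relative to $\underline f'$.

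The plan is then: express the primed data in terms of the basis $\underline f$ by substituting $\underline f' = P \underline f$, and match coefficients with the unprimed description. For $P_1^{\sigma\tau}$ one expands
\[
(1+p^w X_1')\, f_1^{\sigma\tau\prime} + p^w X_0'\, f_2^{\sigma\tau\prime}
= (1+p^w X_1')\bigl[(1+p^w a_1) f_1^{\sigma\tau} + p^w a_3\, f_2^{\sigma\tau}\bigr] + p^w X_0'\bigl[p^w a_2 f_1^{\sigma\tau} + (1+p^w a_4) f_2^{\sigma\tau}\bigr],
\]
and reduces modulo $p^{2w}$ using $(1+p^w x)(1+p^w y)\equiv 1 + p^w(x+y)\pmod{p^{2w}}$. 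The coefficient of $f_1^{\sigma\tau}$ is $1 + p^w(X_1' + a_1)$, and the coefficient of $f_2^{\sigma\tau}$ is $p^w(X_0' + a_3)$; matching with $(1+p^w X_1) f_1^{\sigma\tau} + p^w X_0\, f_2^{\sigma\tau}$ yields $X_0 \equiv X_0' + a_3$ and $X_1 \equiv X_1' + a_1$ modulo $p^w$. The congruence on $X_3$ is immediate from $f^{\tau\prime} = (1+p^w a_5) f^\tau$.

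The only mildly delicate point will be $X_2$, because there one works modulo $\Fil^1$. Using $f_1^{\sigma\tau} \equiv -p^w X_0 (1+p^w X_1)^{-1} f_2^{\sigma\tau} \pmod{\Fil^1}$, one substitutes into $(1+p^w X_2')\, f_2^{\sigma\tau\prime} = (1+p^w X_2')\bigl[p^w a_2 f_1^{\sigma\tau} + (1+p^w a_4) f_2^{\sigma\tau}\bigr]$; the $a_2$-contribution is of order $p^{2w}$ and hence vanishes modulo $p^{2w}$, so the coefficient of $f_2^{\sigma\tau}$ modulo $\Fil^1$ and modulo $p^{2w}$ is just $1 + p^w(X_2' + a_4)$, giving $X_2 \equiv X_2' + a_4 \pmod{p^w}$.

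There is no real obstacle; the argument is purely bookkeeping in the explicit local model. The only choice that has to be made is the convention $\underline f' = P \underline f$ for the base-change matrix, which is fixed by the requirement that the resulting congruences have the signs stated.
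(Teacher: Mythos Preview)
Your argument is correct and is essentially the same computation as the paper's, only unpacked coordinate by coordinate. The paper condenses all four congruences into a single matrix identity inside $\mathcal T^\times_{an}/U_{an}$: writing the point as $I_3 + p^w\underline X$ (with $\underline X$ lower triangular in $\GL_2\times\GL_1$) relative to $\underline f$ and as $I_3 + p^w\underline X'$ relative to $\underline f'$, one has $P(I_3 + p^w\underline X)U = I_3 + p^w\underline X'$ for some upper unipotent $U = I_3 + p^w N$, and expanding modulo $p^{2w}$ gives $P_0 + \underline X + N \equiv \underline X'$, from which the four congruences are read off the lower triangular entries. Your treatment of $\Fil^1$, $P_1^{\sigma\tau}$, $P_2^{\sigma\tau}$, $P^\tau$ separately is exactly the entrywise version of this matrix identity, with the upper unipotent $U$ hidden in your ``work modulo $\Fil^1$'' step for $X_2$; the two approaches are interchangeable.

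One small cosmetic remark: in your $X_2$ step, since $\Fil^1$ is the line generated by $f_1^{\sigma\tau} + p^w X_0 f_2^{\sigma\tau}$, the relation modulo $\Fil^1$ is simply $f_1^{\sigma\tau} \equiv -p^w X_0 f_2^{\sigma\tau}$ (no $(1+p^wX_1)^{-1}$ is needed), but this of course does not affect the computation modulo $p^{2w}$.
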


\begin{proof}[Proof]
Indeed, as seen inside $\mathcal T^\times_an/U_an$, we have that the two systems of coordinates verifies,
\[ P(I_3 + p^w\underline X)U = I_3 + p^w \underline X',\]
where $U \in \GL_2 \times \GL_1$ is a unipotent matrix of the form $I_3 + p^wN$, $N$ upper triangular nilpotent and,
\[ \underline X = 
\left(
\begin{array}{ccc}
X_1  &   &   \\
 X_0 &  X_2 &   \\
  &   &   X_3
\end{array}
\right).
\]
Thus, write $P = I_3 + p^wP_0$, then $I_3 + p^w(P_0 + \underline X + N) \equiv I_3 + p^w\underline X'\pmod{p^{2w}}$.
\end{proof}
 We can thus deduce the following corollary for the family of sheaves,
 
 \begin{coren}
  \label{cor95}
 Let $\kappa^0 \in \mathcal W(w)(K)$. The quasi-coherent sheaf $\mathfrak w_{w}^{\kappa^{0,un}\dag}$ on $\mathfrak X_1(p^{2m})\times \mathfrak W(w)^0$ is a small Banach sheaf.
 
 \end{coren}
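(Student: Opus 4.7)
The plan is to verify the small formal Banach sheaf axioms of \cite{AIP} Definition A.1.2.1 by combining the three preceding local results. Recall that to show $\mathfrak w_w^{\kappa^{0,un}\dag}$ is a small formal Banach sheaf one needs an open cover $(\Spf R_i)$ of $\mathfrak X_1(p^{2m})(v)\times \mathfrak W(w)^0$ together with $R_i$-linear identifications of the sections with the $\pi$-adic completion of a free module, such that the transition isomorphisms on overlaps converge $\pi$-adically to the identity modulo $\pi$, and moreover the mod-$\pi$ reduction is a filtered colimit of coherent sheaves.

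I would proceed as follows. First, pass to a covering of $\mathfrak X_1(p^{2m})(v)$ by small affine opens $\Spf R$ on which $\omega_{G,\tau}$, $\omega_{G,\sigma\tau}$, and the polarised trivialisation $\psi$ of $K_m^D$ are all compatible with bases, and fix (for each chart) a lift $(f_1^{\sigma\tau},f_2^{\sigma\tau},f^\tau)$ of the Hodge--Tate images. Lemma \ref{lemdec1} then yields an $R\hat\otimes\mathcal O_K\langle\!\langle S_1,S_2,S_3\rangle\!\rangle$-linear bijection
\[ \mathfrak w_w^{\kappa^{0,un}\dag}(\Spf R\times \mathfrak W(w)^0)\;\simeq\; R\langle\!\langle S_1,S_2,S_3,X_0\rangle\!\rangle,\]
i.e.\ locally the sections form the $\pi$-adic completion of a free $\mathcal O(\Spf R\times\mathfrak W(w)^0)$-module with orthonormal basis $\{X_0^n\}_{n\ge 0}$. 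This is exactly the orthonormalisability condition for a (formal) Banach sheaf.

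Second, I would analyse the gluing. Given two choices of lifts $\underline f$, $\underline f'$ of the Hodge--Tate bases over the same chart, with change-of-basis matrix $P=I+p^w P_0$, Lemma \ref{lem94} shows that the new coordinates satisfy $X_i\equiv X_i'+a_i\pmod{p^w}$ (in particular modulo $\pi$ when $w\ge 1$). Combined with the elementary estimate of the previous lemma, $\kappa^{0,un}(1+p^w X_i)\in 1+\pi\,\mathcal O_K\langle\!\langle S_\bullet,X_\bullet\rangle\!\rangle$, this implies that the transition matrix between the two orthonormal expansions of an element of $\mathfrak w_w^{\kappa^{0,un}\dag}$ converges $\pi$-adically to the identity: each transition coefficient is a power series in $X_0$ whose reduction modulo $\pi$ is simply the identity in the variable $X_0$. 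Hence the local orthonormal models glue to give a well-defined formal Banach sheaf in the sense of AIP.

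Third, I would check smallness, i.e.\ that the mod-$\pi$ reduction $\mathfrak w_{w,1}^{\kappa^{0,un}\dag}$ is an increasing union of coherent sheaves on $(\mathfrak X_1(p^{2m})(v)\times\mathfrak W(w)^0)\otimes \mathcal O_K/\pi$. By the preceding corollary, $\mathfrak w_{w,1}^{\kappa^{0,un}\dag}$ is pulled back from a sheaf on $\mathfrak X_1(p^{2m})(v)\otimes\mathcal O_K/\pi$, and in local coordinates it is the free $\mathcal O_{\mathfrak X_1(p^{2m})(v)}/\pi$-module $\bigoplus_{n\ge 0}(\mathcal O_{\mathfrak X_1(p^{2m})(v)}/\pi)\cdot X_0^n$. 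Filtering by degree in $X_0$ one obtains the required exhaustion by coherent subsheaves. Verifying that this filtration glues compatibly under the change-of-basis formulas of Lemma \ref{lem94} is the only mildly delicate point: since the transition is $X_0\mapsto X_0'+a_3$ modulo $\pi$, the degree filtration is preserved on the nose (it is stable under affine transformations of $X_0$), so global coherent subsheaves exist. Putting these three steps together verifies all the axioms and concludes that $\mathfrak w_w^{\kappa^{0,un}\dag}$ is a small formal Banach sheaf. The main (mild) obstacle is the bookkeeping in step three, namely tracking how the $X_0$-degree filtration glues; Lemma \ref{lem94} reduces this to the triviality that degree in $X_0$ is an affine invariant.
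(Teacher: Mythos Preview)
Your approach is essentially the same as the paper's, which concentrates almost exclusively on your step three: filter $\mathfrak w_{w,1}^{\kappa^{0,un}\dag}$ by total degree in $X_0$, invoke Lemma \ref{lem94} to see that this filtration is independent of the choice of lift and hence glues, and observe that the successive quotients are copies of the structure sheaf.

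One inaccuracy in your step two: you assert that the transition matrix between orthonormal expansions reduces modulo $\pi$ to the identity. This is not correct. By Lemma \ref{lem94} the change of chart is $X_0 \equiv X_0' + a_3 \pmod{p^w}$, so modulo $\pi$ the transition is the affine substitution $X_0 \mapsto X_0' + a_3$; in the monomial basis this is upper-triangular unipotent but not the identity (unless $a_3=0$). Fortunately this does not damage the argument: the formal Banach sheaf axioms only require the transitions to be topological isomorphisms, and for smallness what you actually need, and correctly use in step three, is merely that the degree filtration is preserved, which the affine formula gives on the nose.

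You should also make explicit, as the paper does, that each graded piece $\mathfrak w_{w,1}^{\kappa^{0,un}\dag,\le r}/\mathfrak w_{w,1}^{\kappa^{0,un}\dag,\le r-1}$ is isomorphic to the trivial sheaf $\mathcal O$: smallness in the sense of \cite{AIP} Definition A.1.2.1 asks for iterated extensions of the structure sheaf, not merely coherence.
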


\begin{proof}[Proof]
We just have to check that on $\mathfrak X_1(p^{2m}) \times \mathfrak W(w)^0 \times \Spec(O_K/\pi)$ the sheaf $\mathfrak w_{w,1}^{\kappa^{0,un}\dag}$
 is an inductive limit of coherent sheaves which are extensions of the trivial sheaf.
Write $\mathfrak w_{w,1}^{\kappa^{0,un}\dag, \geq r}$ the subsheaf of sections that are locally polynomials in $X_0$ of total degree smaller than $r$. This makes sense globally by Lemma \ref{lem94}, and moreover, $\mathfrak w_{w,1}^{\kappa^{0,un}\dag}$ is the inductive limit over $r$ of these sheaves.
But then, $\mathfrak w_{w,1}^{\kappa^{0,un}\dag, \geq r} \pmod\mathfrak w_{w,1}^{\kappa^{0,un}\dag, \geq r-1}$ is isomorphic to the trivial sheaf.
\end{proof}

\section{Non tempered representations and $(\mathfrak q,K)$-cohomology}
\label{AppB}
We are interested in calculating the $(\mathfrak q,K)$-cohomology of the representation $\pi^n(\chi)$ defined in proposition \ref{transfert} to show it appears in the global sections of a 
coherent automorphic sheaf on the Picard modular surface. 

We have the following theorem of Harris (\cite{Har} Lemma 5.2.3 and proposition 5.4.2, \cite{Gold} Theorem 2.6.1)

\begin{theoren}
Let $\pi = \pi_\infty \otimes \pi_f$ be an automorphic representation of $U(2,1)$ of Harrish-Chandra parameter $\lambda$, and such that $H^0(\mathfrak q,K,\pi_\infty \otimes V_\sigma^\vee) \neq 0$, then there is a $U(2,1)(\mathbb A_f)$ equivariant embedding,
\[ \pi_f \hookrightarrow H^0(X,\mathcal V_{\sigma}^\vee),\]
where $\mathcal V_{\sigma}^\vee$ is the automorphic vector bundle associated to the representation $V_\sigma^\vee$ of $K = K_\infty$.
\end{theoren}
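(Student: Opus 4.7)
The plan is to produce the embedding explicitly by unfolding what $(\mathfrak q, K)$-cohomology in degree zero means. By definition, $H^0(\mathfrak q, K, \pi_\infty \otimes V_\sigma^\vee) = \Hom_K(V_\sigma, \pi_\infty)^{\mathfrak p^-}$, i.e.\ the space of $K$-equivariant linear maps $V_\sigma \to (\pi_\infty)_{K\text{-fin}}$ whose image is annihilated by the antiholomorphic tangent piece $\mathfrak p^-$. Fix a nonzero such $\phi$. Because $\pi$ is automorphic, each $K$-finite vector $\phi(v)\otimes w_f \in \pi$ (with $w_f \in \pi_f$) can be concretely realized as a function on $G(\mathbb Q)\backslash G(\mathbb A)$, $C^\infty$ in the archimedean variable. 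I would then set, for $w_f \in \pi_f$,
\[ F_{w_f} \colon G(\mathbb A) \longrightarrow V_\sigma^\vee, \qquad F_{w_f}(g)(v) = \bigl(\phi(v)\otimes w_f\bigr)(g), \]
and assert that $w_f \mapsto F_{w_f}$ is the desired $U(2,1)(\mathbb A_f)$-equivariant embedding.

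The verifications to carry out are the standard four: (i) $G(\mathbb Q)$-invariance of $F_{w_f}$ on the left, which is just automorphy of elements of $\pi$; (ii) the right $K_\infty$-equivariance $F_{w_f}(gk) = \sigma^\vee(k) F_{w_f}(g)$, inherited from the $K_\infty$-equivariance of $\phi$; (iii) right $K_f$-equivariance of the assignment $w_f \mapsto F_{w_f}$, which is a tautology; and (iv) the Cauchy--Riemann equation $X \cdot F_{w_f} = 0$ for $X \in \mathfrak p^-$, which is exactly the condition that $\phi$ kills $\mathfrak p^-$ at the level of $(\mathfrak q, K)$-cohomology. Together these say that $F_{w_f}$ descends to a holomorphic section of $\mathcal V_\sigma^\vee$ over $Y_K(\mathbb C) = G(\mathbb Q)\backslash G(\mathbb A)/K_\infty K_f$. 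For injectivity: given $\phi \neq 0$ and $w_f \neq 0$, pick $v \in V_\sigma$ with $\phi(v) \neq 0$; then $\phi(v)\otimes w_f$ is a nonzero vector in the (unitary) automorphic representation $\pi$, hence nonzero as a function on $G(\mathbb A)$, so $F_{w_f} \neq 0$.

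The real obstacle is passage from $Y_K$ to the full toroidal compactification $X = X_K$, especially since the applications in the paper concern non-tempered representations such as $\pi^n(\chi)$ whose growth at the cusps is a priori not as good as in the discrete series case. For the Picard modular surface this is handled by a Koecher-type principle: the boundary $D = X\setminus Y_K$ has codimension $\geq 1$ in a smooth projective surface and the automorphic bundle $\mathcal V_\sigma^\vee$ extends canonically across $D$ (using the semi-abelian scheme over $X$ and its $\mathcal O_E$-equivariant conormal sheaf, as set up in Section~2), so bounded holomorphic sections on $Y_K$ automatically extend to sections on $X$. The nontrivial input is therefore an estimate showing $F_{w_f}$ is bounded near $D$; in the Picard case the $1$-dimensional nature of the boundary strata and Harish-Chandra's general moderate-growth estimate for automorphic forms suffice. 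Once this is in place the map $F_{w_f} \in H^0(Y_K, \mathcal V_\sigma^\vee) \hookrightarrow H^0(X, \mathcal V_\sigma^\vee)$ is the sought embedding, and its $\mathbb T$-equivariance for the prime-to-level Hecke algebra is immediate from the construction, since the Hecke action on both sides is induced by the right $G(\mathbb A_f)$-action and we have been $G(\mathbb A_f)$-equivariant throughout.
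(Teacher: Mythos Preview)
The paper does not prove this theorem: it is quoted verbatim as a result of Harris (\cite{Har} Lemma 5.2.3 and Proposition 5.4.2, see also \cite{Gold} Theorem 2.6.1) and used as a black box. So there is no ``paper's own proof'' to compare against; your sketch is an attempt to reprove Harris's theorem directly.

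Your construction of the map on the open variety $Y_K$ is the standard one and is correct: the identification $H^0(\mathfrak q,K,\pi_\infty\otimes V_\sigma^\vee)=\Hom_K(V_\sigma,\pi_\infty)^{\mathfrak p^-}$ is exactly what is needed, and items (i)--(iv) give you holomorphic $V_\sigma^\vee$-valued sections over $Y_K$ with the asserted equivariance and injectivity.

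The gap is in the extension across the boundary. The Koecher principle requires the boundary to have codimension at least $2$; for the Picard surface the toroidal boundary $D$ is a union of CM elliptic curves, hence a divisor of codimension $1$, so Koecher does not apply. You then appeal to ``bounded sections extend'' together with Harish-Chandra's moderate growth, but moderate growth is polynomial, not bounded, so this does not close the argument either. What Harris actually does (and this is the substantive content of the cited references) is analyze the Fourier--Jacobi expansion of $F_{w_f}$ near each boundary component and show, using the $\mathfrak p^-$-annihilation and the structure of the unipotent radical, that the terms with negative index vanish; this is what forces holomorphic extension across $D$. That step is genuinely nontrivial in codimension $1$ and cannot be replaced by a soft Hartogs/Koecher argument.
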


Thus we only need to calculate the $(\mathfrak q,K)$-cohomology of $\pi^n(\chi)_\infty$, and even the one of the restriction of the representation to $SU(2,1)$.
Fortunately we can explicitly do so, rewriting the induction $\ind-n_{B}^{SU(2,1)}(\RR)(\chi_\infty)$, as a space of function, and determining the quotient corresponding to $\pi^n(\chi)$.
In \cite{Wall}, Wallach calculated all the representations of $SU(2,1)(\RR)$ using this description of the induction. As explained in \cite{Wall} p181, the induction space $\ind-n_B^{SU(2,1)(\RR)}(\chi)$ corresponds to $X^{\Lambda}$ with $\Lambda = (a-1)\Lambda_1 + (-a)\Lambda_2$ (which is thus reducible). The shift by $-\Lambda_1 - \Lambda_2$ is due to the normalisation by the modulus character in the induction. Its discrete series subobject corresponds to one of the discrete series $D_{\widetilde{\Lambda}}^-$ described p183, and its quotient corresponds to the non-tempered representation $(T_{a-2}^-,Z_{a-2}^-)$ (defined p184, and the fact that it appears in the said induction is Lemma 7.12). As the name doesn't suggest, $T_{a-2}^-$ – which coincides with the restriction of $\pi^n(\chi)_\infty$ to $SU(2,1)(\RR)$, will be holomorphic (but we can exchange holomorphic and anti-holomorphic by changing the complex structure of the Picard surface).

\begin{propen}
\label{propH0}
Let $(\sigma,V_\sigma) = \Sym^{a-1} \otimes\det^{-a} : M \mapsto \Sym^{a-1}(\overline{M})\otimes \det(\overline{M})^{-a}$ the representation of $U(2) = SK_\infty \subset SU(2,1)(\RR)$. Then
\[ H^0(\mathfrak q,K_\infty,T_{a-2}^-\otimes V_\sigma^\vee) \neq 0.\]
\end{propen}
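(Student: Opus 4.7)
The plan is to identify $V_\sigma$ as the minimal $K_\infty$-type of $T^-_{a-2}$ by exploiting Wallach's explicit presentation in \cite{Wall} of this representation as the quotient of $\overline{I} = \ind^{SU(2,1)(\RR)}_B(\chi^w_\infty)$ by its discrete series subrepresentation $D^-$, and then to apply the standard identification of degree zero $(\mathfrak q, K_\infty)$-cohomology with the space of $\mathfrak p^+$-invariant vectors of the given $K_\infty$-type. This generalizes the argument given in the body for the regular case $a>1$ and, in particular, covers the singular weight $a=1$ where the BGG/Hodge argument does not apply directly.

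First I would apply the formula
\[
H^0(\mathfrak q, K_\infty, T^-_{a-2}\otimes V_\sigma^\vee) \;=\; \Hom_{K_\infty}\!\bigl(V_\sigma,\,(T^-_{a-2})^{\mathfrak p^+}\bigr),
\]
valid because $\mathfrak q = \mathfrak k_\CC \oplus \mathfrak p^+$ is a parabolic subalgebra of $\mathfrak g_\CC$. The statement then reduces to exhibiting a nonzero $K_\infty$-equivariant embedding $V_\sigma \hookrightarrow T^-_{a-2}$ whose image is annihilated by $\mathfrak p^+$. Since $T^-_{a-2}$ is a quotient of $\overline I$, I would first compute $\Hom_{K_\infty}(V_\sigma,\overline I)$ by Frobenius reciprocity: the restriction of $V_\sigma = \Sym^{a-1}(\overline{\,\cdot\,})\otimes \det^{-a}$ to $T\cap K_\infty = U(1)\times U(1)$ decomposes, as already recorded in the body, as $t^a e^{a-1} \oplus t^{a+1}e^{a-2} \oplus \dots \oplus t^{2a-1}$, whereas $\chi_\infty^w|_{T\cap K_\infty} = t^{2a-1}$. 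Thus $\Hom_{T\cap K_\infty}(V_\sigma|_{T\cap K_\infty},\chi_\infty^w|_{T\cap K_\infty})$ is one-dimensional, and hence so is $\Hom_{K_\infty}(V_\sigma, \overline I)$, which is consistent with $V_\sigma$ being a highest (and therefore minimal) $K_\infty$-type of $\overline I$.

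The last step is to check that this unique copy of $V_\sigma$ inside $\overline I$ is not already contained in $D^-$, so that it survives the projection to $T^-_{a-2}$. This is where I would invoke Wallach's determination of the minimal $K_\infty$-types: in \cite{Wall}, Lemma 7.12 and the surrounding discussion, the minimal $K_\infty$-type of $T^-_{a-2}$ is computed explicitly in terms of its highest weight and corresponds, under the identification $SK_\infty = U(2)$, exactly to $\Sym^{a-1}\otimes\det^{-a} = V_\sigma$; meanwhile the minimal $K_\infty$-types of $D^-$ are different and can be read off either from Blattner's formula or directly from Wallach in the present range. Consequently the $V_\sigma$-component of $\overline I$ descends to a nonzero map $V_\sigma \hookrightarrow T^-_{a-2}$. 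Since any minimal $K_\infty$-type of a highest-weight module is automatically killed by $\mathfrak p^+$, the image lies in $(T^-_{a-2})^{\mathfrak p^+}$, producing the required non-vanishing class.

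The main obstacle is precisely this last identification: confirming that the unique $V_\sigma$-type of $\overline I$ sits in $T^-_{a-2}$ rather than in $D^-$. In the regular range $a>1$, the BGG/Hodge decomposition of $H^1_{dR,c}$ recalled in the body already provides an independent confirmation via $(\mathfrak g, K_\infty)$-cohomology (as sketched just above the proposition), which reassures us that no sign or convention has been mismatched. The singular case $a=1$ is the genuinely new one: there $D^-$ degenerates to a non-holomorphic limit of discrete series, $V_\sigma$ becomes the trivial $K_\infty$-representation, and one must appeal directly to Wallach's classification (or to an explicit computation with lowest weight vectors) to see that the trivial $K_\infty$-type does lie in $T^-_{-1}$, matching the well-known fact that this non-tempered constituent has trivial minimal $K_\infty$-type.
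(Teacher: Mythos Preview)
Your argument is essentially correct and leads to the result, but it follows a genuinely different route from the paper's proof. One caveat: with the paper's conventions one has $\mathfrak q = \mathfrak k_\CC \oplus \mathfrak p^-$, so the relevant identity is $H^0(\mathfrak q,K_\infty,T^-_{a-2}\otimes V_\sigma^\vee)=\Hom_{K_\infty}(V_\sigma,(T^-_{a-2})^{\mathfrak p^-})$; you should swap $\mathfrak p^+$ and $\mathfrak p^-$ throughout (cf.\ Remark~\ref{remchangecomplexstructure} and the formula $H^q(\mathfrak q,K,V\otimes V_\tau)=(H^q(\mathfrak p^-,V)\otimes V_\tau)^K$ used in the appendix). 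With that fix, your structural argument via Frobenius reciprocity and minimal $K_\infty$-type identification goes through.

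The paper instead works entirely inside Wallach's explicit function-space model $Z^-_{a-2}$ of $T^-_{a-2}$: it writes down generators $X^-,Y^-$ of $\mathfrak p^-$, computes their action on functions of $(\overline{z_1},\overline{z_2})$ to be (up to constants) the partial derivatives $\partial/\partial\overline{z_1}$ and $\partial/\partial\overline{z_2}$, and hence identifies $(Z^-_{a-2})^{\mathfrak p^-}$ with the space of homogeneous polynomials of degree $a-1$ in $\overline{z_1},\overline{z_2}$; the $K_\infty$-structure on this space is then visibly $V_\sigma$. This explicit computation is uniform in $a\geq 1$ and completely self-contained, so there is no need to separate out the singular case $a=1$ or to invoke the abstract fact that minimal $K_\infty$-types of holomorphic representations are killed by the appropriate half of $\mathfrak p_\CC$. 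Your approach is more conceptual and situates the result in the general theory of lowest $K$-types, but it outsources the key identification (that $V_\sigma$ lands in $T^-_{a-2}$ rather than in the discrete series $D^-$) to Wallach and to Blattner's formula, whereas the paper's computation makes this manifest.
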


To show the previous proposition, denote \[J_0 = \left(
\begin{array}{ccc}
 1 &   &   \\
  &  1 &   \\
  &   &   -1
\end{array}
\right)
\]
the hermitian form of signature $(2,1)$ used in \cite{Wall}. Denote
\[ P = \left(
\begin{array}{ccc}
 \frac{1}{\sqrt 2} &   & \frac{1}{\sqrt 2}  \\
  &  1 &   \\
 \frac{1}{\sqrt 2} &   &   -\frac{1}{\sqrt 2}
\end{array}
\right)
\]
the base change matrix (so that $PJ_0P = J$, $\overline P = P^{-1} = P$).
In this new presentation, the complex structure is given by $h' = PhP$, i.e.,
\[ h' : z \in \CC \mapsto \left(
\begin{array}{ccc}
  z &   &   \\
  &   z &   \\
  &   &   \overline z
  \end{array}
\right) \in 
U_{J_0}(\RR).\]

In this form, the Lie algebra of $U_{J_0}(\RR)$ is given by,
\[ \mathfrak g = \{
\left(
\begin{array}{ccc}
ia_0  & b  & c  \\
 -\overline b &  ie_0 &  f \\
\overline c & \overline f & i l_0
\end{array}
\right), a_0,e_0,l_0 \in \RR\}.
\]
using the action of $h'(i)$ we can decompose $\mathfrak g = \mathfrak k + \mathfrak p$ with
\[ \mathfrak p = 
 \{
\left(
\begin{array}{ccc}
0  & 0  & c  \\
 &  0 &f\\
 \overline c & \overline f & 0 
\end{array}
\right), c_0,a_0 \in \RR\}.
\]
Extending scalars to $\CC$, we can further decompose,
$\mathfrak p_\CC = \mathfrak p^+ \oplus \mathfrak p^-$, where conjugacy by $h'(z)$ on $\mathfrak p^+$ and $\mathfrak p^-$ is given by $z/\overline z$ and $\overline z/z$ respectively.
Explicitely, $\mathfrak p^-$ is generated by 
\[ X^- = N^+\otimes i - N^- \otimes 1 \quad \text{and} \quad Y^- = M^+\otimes i - M^- \otimes 1,\]
\[
N^- = \left(
\begin{array}{ccc}
0  &   &  i \\
  &  0 &   \\
-i  &   & 0 
\end{array}
\right) \quad N^+ = \left(
\begin{array}{ccc}
0  &   &  1 \\
  &  0 &   \\
1 &   & 0 
\end{array}
\right)\]
\[
M^- = \left(
\begin{array}{ccc}
0  &   & 0 \\
  &  0 & i  \\
0 & -i  & 0 
\end{array}
\right) \quad M^+ = \left(
\begin{array}{ccc}
0  &   &  0 \\
  &  0 & 1  \\
0&  1 & 0 
\end{array}
\right)\]
and $\mathfrak p^+$ is generated
\[ X^+ = N^+\otimes i + N^- \otimes 1 \quad \text{and} \quad Y^+ = M^+\otimes i + M^- \otimes 1.\]

To calculate the action of $\mathfrak p^-$ on our representation, we use the following formula for a matrice $X$ and $f \in \mathfrak g$ :
\[ X\cdot f = (\frac{d}{dt} \exp(tX)\bullet f)_{t=0},\]

As $Z_{a-2}^-$ is a space of holomorphic functions, we get the following exponentials for the matrices $M^\pm,N^\pm$ :
\[ \exp(tM^-) =
\left(
\begin{array}{ccc}
 1&   & 0  \\
  & \ch t  & i\sh t  \\
&     -i\sh t & \ch t   
\end{array}
\right) \quad \exp(tM^+) =
\left(
\begin{array}{ccc}
 1&   & 0  \\
  & \ch t  & \sh t  \\
&     \sh t & \ch t   
\end{array}
\right)
\]
\[ \exp(tN^-) =
\left(
\begin{array}{ccc}
\ch t&   & i \sh t  \\
  & 1  & 0 \\
-i \sh t &    & \ch t   
\end{array}
\right) \quad \exp(tN^+) =
\left(
\begin{array}{ccc}
\ch t&   & \sh t  \\
  & 1  &0 \\
 \sh t &    & \ch t   
\end{array}
\right)
\]
and the actions of the matrices $M^\pm,N^\pm$ is given by,
\[ N^+f = -(a-2)\overline{z_1} f + (\overline{z_1}^2 - 1) \frac{df}{d\overline{z_1}} + \overline{z_1z_2}\frac{df}{d\overline{z_2}},\]
\[ N^- f = -i(a-2)\overline{z_1} f + i(\overline{z_1}^2 + 1)\frac{df}{d\overline{z_1}} + i\overline{z_1z_2}\frac{df}{d\overline{z_2}},\]
\[ M^+ f = -(a-2)\overline{z_2} f + \overline{z_1z_2}\frac{df}{d\overline{z_1}} +(\overline{z_1}^2 - 1) \frac{df}{d\overline{z_2}},\]
\[ M^- f = -i(a-2)\overline{z_2} f + i\overline{z_1z_2}\frac{df}{d\overline{z_1}} + i(\overline{z_1}^2 + 1)\frac{df}{d\overline{z_2}}.\]

We deduce that the action of $\mathfrak p^-$ is given by,
\[ Y^- f\left(
\begin{array}{c}
 z_1 \\
 z_2  
\end{array}\right) = 
-2i\frac{df}{d\overline{z_2}},\]
and
\[ X^-f\left(
\begin{array}{c}
 z_1 \\
 z_2  
\end{array}\right) = 
-2i\frac{df}{d\overline{z_1}}
 \]
and the action of $\mathfrak p^+$ by
\[ Y^+ f\left(
\begin{array}{c}
 z_1 \\
 z_2  
\end{array}\right) = 
-2i(a-1)\overline{z_2}f + 2i\overline{z_1z_2}\frac{df}{d\overline{z_1}}+2i\overline{z_2}^2\frac{df}{d\overline{z_2}}
,\]
and
\[ X^+f\left(
\begin{array}{c}
 z_1 \\
 z_2  
\end{array}\right) = 
-2i(a-1)\overline{z_1}f + 2i\overline{z_1z_2}\frac{df}{d\overline{z_2}} + 2i\overline{z_1}^2\frac{df}{d\overline{z_1}}
 \]
As $Z_{a-2}^-$ is defined as a completion of the quotient of holomorphic polynomials in variables $\overline{z_1},\overline{z_2}$ by the subspace of polynomials of degrees less of equal than $(a-2)$, 
$H^0(\mathfrak p^-,Z_{a-2}^-) = (Z_{a-2}^+)^{\mathfrak p^- = 0}$ is identified with homogeneous polynomials in $\overline{z_1},\overline{z_2}$ of degree $a-1$.

As for a representation $\tau$ of $K_\infty$ we have,
\[H^q(\mathfrak q,K,V\otimes V_\tau) = (H^q(\mathfrak p^-,V)\otimes V_\tau)^K\]
(cf. \cite{Har} 4.14), we have that $H^0(\mathfrak q,K,Z_{a-1}^+\otimes V_\sigma^\vee) \neq 0$.

\begin{remaen}
Using a slightly more precise calculation for $U(2,1)$ instead of $SU(2,1)$, we could show that for $U(2,1)$,
\[ H^0(\mathfrak q,K_\infty,\pi^n(\chi)\otimes V_{(0,1-a,a)}) \neq 0,\]
in particular, the Hecke eigenvalues of $\pi^n(\chi)$ appears in the global sections over $X$, the Picard modular variety, of the automorphic sheaf $\omega^{(0,1-a,a)}$.
\end{remaen}

\bibliographystyle{alpha-fr} 
\bibliography{biblio} 

\end{document}